\newtheorem{thm}{Theorem}[section]
\newtheorem{cor}[thm]{Corollary}
\newtheorem{lem}[thm]{Lemma}
\newtheorem{exm}[thm]{Example}
\newtheorem{prop}[thm]{Proposition}
\theoremstyle{definition}
\newtheorem{defn}[thm]{Definition}
\theoremstyle{remark}
\newtheorem{rem}[thm]{\bf Remark}
\numberwithin{equation}{section}
\begin{document}
\title[The dual actions, equiv. autoequiv. and stable tilting objects]{The dual actions, equivariant autoequivalences and stable tilting objects}

\author[Jianmin Chen, Xiao-Wu Chen, Shiquan Ruan] {Jianmin Chen, Xiao-Wu Chen$^*$, Shiquan Ruan}

\thanks{$^*$ the corresponding author}
\subjclass[2010]{18E30, 16S35, 58E40, 16D99}
\date{the revised version, \today}
\keywords{group action, equivariantization, autoequivalence,  tilting object, weighted projective line}%
\maketitle

\dedicatory{}%
\commby{}%

\begin{abstract}
For a finite abelian group action on a linear category, we study the dual action given by the character group acting on the category of equivariant objects. We prove that the groups of equivariant autoequivalences on these two categories are isomorphic. In the triangulated situation, this isomorphism implies that the classifications of stable tilting objects for these two categories are in a natural bijection. We apply these results to stable tilting complexes on weighted projective lines of tubular type.
\end{abstract}

\tableofcontents

\section{Introduction}

Weighted projective lines are introduced in \cite{GL87}, which provide a geometric approach to canonical algebras in the sense of \cite{Rin}. It is well known that weighted projective lines are related to   smooth projective curves via equivariantization; compare \cite{Po, Sch, Len16}. More precisely, let $X$ be a smooth projective curve, and  $G$ be a finite group of automorphisms on $X$ such that $X/G$ is isomorphic to the projective line. Then the category of $G$-equivariant coherent sheaves on $X$ is equivalent to the category of coherent sheaves on some weighted projective line, whose  weight structure  is given by  the ramification data of the quotient map $X \to X/G$.

The equivariantization with respect to various finite group actions is very important in the study of weighted projective lines and their derived categories. We are motivated by the classification of $\tau^2$-stable tilting complexes on weighted projective lines, where $\tau$ is the Auslander-Reiten translation; see \cite{Jas}. We observe that $\tau^2$ induces a cyclic group action on the derived category of a weighted projective line.  Then it is natural to ask how this $\tau^2$-action interacts with the tilting theory on the weighted projective line. More generally, given a triangulated category $\mathcal{T}$ with a finite group action, we are interested in  the interaction of the equivariantization and the tilting theory on $\mathcal{T}$. This leads to the study of stable tilting objects in $\mathcal{T}$, that is, those tilting objects fixed by the group action. We mention that stable tilting objects arise naturally in quite different setups; see \cite{Asa11, Asa13, Nov}.

Let us describe the main results of this work. For this, we explain first what  the classification of (stable) tilting objects really means.

Let $k$ be a field, and $\mathcal{T}$ be a $k$-linear triangulated category. The tilting objects and their endomorphism algebras are of great interest to representation theorists. We denote by ${\rm Tilt}(\mathcal{T})$ the set of isoclasses of basic tilting objects. The group ${\rm Aut}_\vartriangle(\mathcal{T})$ of triangle autoequivalences on $\mathcal{T}$ acts naturally on ${\rm Tilt}(\mathcal{T})$. Then the classification of basic tilting objects in $\mathcal{T}$ boils down to the orbit set ${\rm Tilt}(\mathcal{T})/{\rm Aut}_\vartriangle(\mathcal{T})$. Indeed, this orbit set is in a bijection to the set of isoclasses of  the endomorphism algebras of  those tilting objects; see Corollary \ref{cor:bij}.

Let $G$ be a finite abelian group, which acts on $\mathcal{T}$. There is an equivariant version of the classification problem. More precisely, we denote by ${\rm Tilt}^G(\mathcal{T})$ the subset of ${\rm Tilt}(\mathcal{T})$ formed by $G$-stable objects, that is, those basic tilting objects fixed by the $G$-action. We denote by ${\rm Aut}^G_\vartriangle(\mathcal{T})$ the centralizer of $G$ in ${\rm Aut}_\vartriangle(\mathcal{T})$. Then ${\rm Aut}^G_\vartriangle(\mathcal{T})$ acts on ${\rm Tilt}^G(\mathcal{T})$ naturally. We should be concerned with the corresponding orbit set.

However, it seems that the group ${\rm Aut}^G_\vartriangle(\mathcal{T})$ is not the right object to study in the equivariant situation. Instead, we take the group ${\rm Aut}_\vartriangle(\mathcal{T}; G)$ of $G$-\emph{equivariant} triangle autoequivalences on $\mathcal{T}$ into consideration. Moreover, we have a forgetful homomorphism
$${\rm Aut}_\vartriangle(\mathcal{T}; G)\longrightarrow {\rm Aut}^G_\vartriangle(\mathcal{T}),$$
 which is in general neither injective nor surjective. We will be concerned with the orbit set ${\rm Tilt}^G(\mathcal{T})/{\rm Aut}_\vartriangle(\mathcal{T}; G)$, which parameterizes  the classification of basic $G$-stable tilting objects in $\mathcal{T}$. This orbit set is in a bijection to a certain set of equivalence classes of $G$-crossed systems; see Proposition \ref{prop:tri-stable}.

We denote by $\mathcal{T}^G$ the category of $G$-equivariant objects; in nice situations, it is naturally triangulated \cite{Bal,Chen15,El2014}. The character group $\widehat{G}$ of $G$ acts naturally on $\mathcal{T}^G$, called the \emph{dual action}; see Definition \ref{defn:dual-act}. Then the classification of basic $\widehat{G}$-stable tilting objects in $\mathcal{T}^G$ is parameterized by the orbit set ${\rm Tilt}^{\widehat{G}}(\mathcal{T}^G)/{\rm Aut}_\vartriangle(\mathcal{T}^G; \widehat{G})$. Here, ${\rm Tilt}^{\widehat{G}}(\mathcal{T}^G)$ denotes the set of isoclasses of basic $\widehat{G}$-stable tilting objects in $\mathcal{T}^G$, which carries a natural action by the group ${\rm Aut}_\vartriangle(\mathcal{T}^G; \widehat{G})$ of $\widehat{G}$-equivariant triangle autoequivalences on $\mathcal{T}^G$.

The first main result claims that the two classifications are in a natural bijection; see Theorems \ref{thm:duality-iso-tri} and \ref{thm:tilting}.

\vskip 5pt

\noindent {\bf Theorem A}. \emph{Let $G$ be a finite abelian group, which splits over $k$. Let $\mathcal{T}$ be a triangulated category, which is Hom-finite and Krull-Schmidt.  Assume that $G$ acts on $\mathcal{T}$ by triangle autoequivalences. Consider the dual $\widehat{G}$-action on $\mathcal{T}^G$. Then there exist an isomorphism of groups
$${\rm Aut}_\vartriangle(\mathcal{T}; G) \stackrel{\sim}\longrightarrow {{\rm Aut}_\vartriangle(\mathcal{T}^G; \widehat{G})},$$
and a bijection  between the sets of isoclasses
$${\rm Tilt}^G(\mathcal{T})\stackrel{\sim}\longrightarrow  {\rm Tilt}^{\widehat{G}}(\mathcal{T}^G),$$
which are compatible. In particular, we have an induced bijection between the orbit sets
 $${\rm Tilt}^G(\mathcal{T})/{{\rm Aut}_\vartriangle(\mathcal{T}; G)}\stackrel{\sim}\longrightarrow {\rm Tilt}^{\widehat{G}}(\mathcal{T}^G)/{{\rm Aut}_\vartriangle(\mathcal{T}^G; \widehat{G})}.$$}

\vskip 5pt

The main ingredient of the proof is the duality theorem in \cite{El2014}, which states that the category of $\widehat{G}$-equivariant objects in $\mathcal{T}^G$ is naturally triangle equivalent to $\mathcal{T}$.  We mention that the duality theorem is implicit  in \cite{RR} with a completely different setup, and might be deduced from a general theorem in \cite{DGNO}.

Theorem A  may have potential application for  finite abelian group actions on varieties of higher dimension \cite{Pl}. In this paper, we apply it to weighted projective lines of tubular type. We obtain new insight on the classification of $\tau^2$-stable tilting complexes in \cite{Jas}, where the Auslander-Reiten translation  $\tau$ is given by the degree-shift with respect to the dualizing element $\vec{\omega}$ in the Picard group; see \cite{GL87}.

 Let $\mathbb{Y}$ be a weighted projective line of tubular type $(6,3,2)$ or $(4,4,2)$, where $p=6$ or $4$ is the order of $\vec{\omega}$, respectively. Assume that $a$ is a prime number, which divides $p$. Then the cyclic group $\mathbb{Z}(\frac{p}{a}\vec{\omega})$ has order $a$, and acts on the category ${\rm coh}\mbox{-}\mathbb{Y}$ of coherent sheaves on $\mathbb{Y}$ via degree-shift by $\frac{p}{a}\vec{\omega}$, or equivalently, by $\tau^{\frac{p}{a}}$.  Then the category  $({\rm coh}\mbox{-}\mathbb{Y})^{\mathbb{Z}(\frac{p}{a}\vec{\omega})}$ of equivariant sheaves is equivalent to the category ${\rm coh}\mbox{-}\mathbb{X}$ for another weighted projective line $\mathbb{X}$ of tubular type; see \cite{CC17, Len16}.

We denote by $C_{p, a}$  the character group  of $\mathbb{Z}(\frac{p}{a}\vec{\omega})$, which is also cyclic of order $a$. Then we have the dual $C_{p, a}$-action on $({\rm coh}\mbox{-}\mathbb{Y})^{\mathbb{Z}(\frac{p}{a}\vec{\omega})}$. On the other hand, there is an explicit automorphism $g_{p, a}$ on $\mathbb{X}$ of order $a$, which induces a $C_{p,a}$-action on ${\rm coh}\mbox{-}\mathbb{X}$; see Subsection~8.1. These actions extend to their bounded derived categories, and we will consider the corresponding equivariant derived categories.

The following second main result summarizes  Propositions \ref{4,4,2}, \ref{6,3,2 (I)} and \ref{6,3,2 (II)}.
\vskip 5pt

\noindent {\bf Theorem B}. \emph{Keep the notation as above. Then the following statements hold.
\begin{enumerate}
\item[(1)] There is an equivalence of triangulated categories
$$\mathbf{D}^b({\rm coh}\mbox{-}\mathbb{X}) \stackrel{\sim}\longrightarrow \mathbf{D}^b({\rm coh}\mbox{-}\mathbb{Y})^{\mathbb{Z}(\frac{p}{a}\vec{\omega})},$$
which is equivariant with respect to the above two $C_{p,a}$-actions.  Consequently, we have an equivalence of triangulated categories
    $$ \mathbf{D}^b({\rm coh}\mbox{-}\mathbb{X})^{C_{p,a}} \stackrel{\sim} \longrightarrow \mathbf{D}^b({\rm coh}\mbox{-}\mathbb{Y}).$$
    \item[(2)] There is a bijection between the sets of isoclasses
   \begin{equation*}
    \left\{
    \begin{aligned}
    &\tau^{\frac{p}{a}}\mbox{-stable tilting complexes} \\
    &\qquad  \mbox{ on } \mathbb{Y}
    \end{aligned}
    \right\}
    \stackrel{\iota} \longrightarrow
       \left\{
    \begin{aligned}
    & g_{p, a}\mbox{-stable tilting complexes } \\
    & \qquad \mbox{ on } \mathbb{X}.
    \end{aligned}
    \right\},
    \end{equation*}
    which is compatible with the actions by the equivariant triangle autoequivalence groups.
\end{enumerate}}

\vskip 5pt

We mention that the equivalences in (1) are based on results in  \cite{CC17}; see also \cite{Len16}. The bijection in (2) follows from Theorem A.

The paper is organized as follows.  Let $G$ be a finite group. In Section 2, we recall the notion of equivariant functors between two categories with $G$-actions. We observe that any group actions on module categories, that fix the regular modules, correspond to crossed systems; this observation is applied to $G$-stable objects. In Section 3, we prove that up to isomorphism, a cyclic group action is in a bijection to certain compatible pairs. For module categories, this is related to certain elements in the outer automorphism group of the algebra.

We recall the associated monads for a group action in Section 4. Assume now that $G$ is abelian. For a given $G$-action on a linear category $\mathcal{C}$, we have the dual action on the category $\mathcal{C}^G$ of $G$-equivariant objects by the character group $\widehat{G}$. In Theorem \ref{thm:duality}, we describe explicitly the equivalence between $\mathcal{C}$ and the category of $\widehat{G}$-equivariant objects in $\mathcal{C}^G$. This result is due to \cite{El2014}.  In Section 5, we prove that the groups of equivariant autoequivalences with respect to the given action and its dual action are naturally isomorphic; see Theorem \ref{thm:duality-iso}. We obtain an additive version of Theorem A in Proposition \ref{prop:stable-bij}, which claims a natural bijection between the sets of isoclasses of  basic stable objects.

We study triangle $G$-actions on a triangulated category $\mathcal{T}$ in Section 6. We obtain Theorem \ref{thm:duality-iso-tri}, which is analogous to Theorem \ref{thm:duality-iso} on equivariant triangle autoequivalences. In Section 7, we prove Theorem \ref{thm:tilting}, which claims that the classification of basic stable tilting objects for the given action and that for its dual action are in natural bijections.

In Section 8, we apply the results to the classifications of stable tilting complexes on weighted projective lines of different tubular types. In Section 9, we obtain an exact sequence, which involves the forgetful homomorphism and the second cohomological groups of $G$ with values in $k^*$.

We collect in Appendix A some identities for group actions and provide full proofs. In Appendix B, we recall the notion of strongly $\mathbf{K}$-standard category from \cite{CY}. This subtle notion plays a role in the study of stable tilting objects in Section~7.

\section{Group actions and equivariantization}

In this section, we recall basic notions on group actions and equivariantization. Any group action on a module category fixing the regular module corresponds to a  weak action on the given ring. Stable objects under the action naturally give rise to crossed systems.

\subsection{Group actions and equivariant functors} We will recall basic facts on group actions and equivariant functors. The details are found in \cite{DGNO}; compare \cite{De,RR,El2014}.

Let $G$ be an arbitrary group. We write $G$ multiplicatively and denote its unit by $e$. Let $\mathcal{C}$ be an arbitrary category.

A $G$-\emph{action} on $\mathcal{C}$ consists of the data $\{F_g, \varepsilon_{g, h}|\; g, h\in G\}$, where each $F_g\colon \mathcal{C}\rightarrow \mathcal{C}$ is an autoequivalence and each $\varepsilon_{g, h}\colon F_gF_h\rightarrow F_{gh}$ is a natural isomorphism such that the following $2$-cocycle condition holds
\begin{align}\label{equ:2-coc}
\varepsilon_{gh, k}\circ \varepsilon_{g,h}F_k=\varepsilon_{g, hk}\circ F_g\varepsilon_{h,k}
\end{align}
for all $g, h, k\in G$. This condition is equivalent to the following commutative diagram.
\[\xymatrix{F_gF_hF_k \ar[rr]^-{\varepsilon_{g,h}F_k} \ar[d]_-{F_g\varepsilon_{h,k}}&& F_{gh}F_k \ar[d]^-{\varepsilon_{gh, k}}\\
F_gF_{hk}\ar[rr]^-{\varepsilon_{g, hk}} && F_{ghk}
}\]
We observe that there exists a unique natural isomorphism $u\colon F_e\rightarrow {\rm Id}_\mathcal{C}$, called the \emph{unit} of the action, satisfying $\varepsilon_{e,e}=F_eu$; moreover, we have $F_{e}u=uF_e$ by (\ref{equ:2-coc}); see also Lemma \ref{lemA:G-action}.

The given $G$-action is \emph{strict} provided that each $F_g$ is an automorphism on $\mathcal{C}$ and that each isomorphism $\varepsilon_{g, h}$ is the identity, in which case the unit $u$ equals the identity. We observe that a strict $G$-action on $\mathcal{C}$ coincides with  a group homomorphism from $G$ to the automorphism group of $\mathcal{C}$.

Let us fix a $G$-action $\{F_g, \varepsilon_{g, h}|\; g, h\in G\}$ on $\mathcal{C}$. An object $X$ is \emph{$G$-stable} provided that $X$ is isomorphic to $F_g(X)$ for each $g\in G$. In this case, we also say that the $G$-action \emph{fixes} $X$. The full subcategory of $G$-stable objects does not behave well. For example, even if the category $\mathcal{C}$ is abelian, the subcategory of $G$-stable objects is not abelian in general.

A \emph{$G$-equivariant object} in $\mathcal{C}$ is a pair $(X, \alpha)$, where $X$ is an object in $\mathcal{C}$ and $\alpha$ assigns, for each $g\in G$, an isomorphism $\alpha_g\colon X\rightarrow F_g(X)$ subject to the relations
 \begin{align}\label{equ:rel}
 \alpha_{gh}=(\varepsilon_{g,h})_X \circ F_g(\alpha_{h}) \circ \alpha_g.
 \end{align}
 These relations imply that $\alpha_e=u^{-1}_X$. A morphism $f\colon (X, \alpha)\rightarrow (Y, \beta)$ between  two $G$-equivariant objects is a morphism $f\colon X\rightarrow Y$ in $\mathcal{C}$ such that
  $\beta_g\circ f=F_g(f)\circ \alpha_g$ for each $g\in G$. We denote by
$\mathcal{C}^G$    the category of $G$-equivariant objects. The \emph{forgetful functor}
   $U\colon \mathcal{C}^G\rightarrow \mathcal{C}$ is defined by $U(X, \alpha)=X$ and $U(f)=f$.

The following observation will be useful. \emph{Associated to} two given $G$-equivariant objects $(X, \alpha)$ and $(Y, \beta)$, the Hom set ${\rm Hom}_\mathcal{C}(X, Y)$ carries a (left) $G$-action by $g.f=\beta_g^{-1}\circ F_g(f)\circ \alpha_g$ for each $g\in G$ and $f\colon X\rightarrow Y$. Then by the very definition,  we have the following identity
\begin{align}\label{equ:fixed}
{\rm Hom}_{\mathcal{C}^G}((X, \alpha), (Y, \beta))={\rm Hom}_\mathcal{C}(X, Y)^G.
\end{align}
Here, for any set $S$ with a $G$-action, we denote by $S^G$ the subset of fixed elements.

For a given $G$-action on $\mathcal{C}$,  the process forming the category $\mathcal{C}^G$ of $G$-equivariant objects is known as the \emph{equivariantization}; see \cite[Section 4]{DGNO}.

Assume that $\mathcal{C}$ has a $G$-action $\{F_g, \varepsilon_{g, h}|\; g, h\in G\}$ and that $\mathcal{D}$ is another category with a $G$-action $\{F'_g, \varepsilon'_{g, h}|\; g, h\in G\}$.  Let $F\colon \mathcal{C}\rightarrow \mathcal{D}$ be a functor. The functor $F$ is \emph{$G$-equivariant} with respect to these two $G$-actions provided that there are natural isomorphisms $\delta_g\colon FF_g\rightarrow F'_gF$ of functors subject to the conditions
\begin{align}\label{equ:equi-fun}
\delta_{gh}\circ F\varepsilon_{g, h}=\varepsilon'_{g, h}F\circ (F'_g\delta_h\circ \delta_gF_h).
\end{align}
That is, the following diagram is required to be commutative.
\[\xymatrix{
FF_gF_h \ar[rr]^-{F\varepsilon_{g,h}} \ar[d]_-{F'_g\delta_h\circ \delta_g F_h} &&  F F_{gh}\ar[d]^-{\delta_{gh}} \\
F'_gF'_h F \ar[rr]^-{\varepsilon'_{g, h}F} && F'_{gh}F
}\]
Indeed, by a \emph{$G$-equivariant functor}, we really mean the data $(F, (\delta_g)_{g\in G})$, which will be abbreviated as $(F, \delta)$; compare \cite[Definition 4.8]{Asa11}. The composition of two equivariant functors
$$ \mathcal{C}\xrightarrow{(F, \delta)} \mathcal{D} \xrightarrow{(E, \partial)} \mathcal{E}$$
is defined to be $(EF, (\partial_gF\circ E\delta_g)_{g\in G})=(EF, \partial F\circ E\delta)$.

We say that the functor $F\colon \mathcal{C}\rightarrow \mathcal{D}$ is \emph{strictly $G$-equivariant} provided that $FF_g=F'_gF$ for each $g\in G$ and that $F\varepsilon_{g, h}=\varepsilon'_{g, h}F$ for each $g, h\in G$. In other words, $(F, ({\rm Id}_{FF_g})_{g\in G})$ is a $G$-equivariant functor. We will denote this strictly $G$-equivariant functor simply by $F$.

A $G$-equivariant functor $(F, \delta)\colon \mathcal{C}\rightarrow \mathcal{D}$ gives rise to a functor
\begin{align}\label{equ:fun-equi}
(F, \delta)^G\colon \mathcal{C}^G\longrightarrow \mathcal{D}^G
\end{align}
sending $(X, \alpha)$ to $(F(X), \tilde{\alpha})$, where $\tilde{\alpha}_g\colon F(X)\rightarrow F'_gF(X)$ equals $(\delta_g)_X\circ F(\alpha_g)$ for each $g\in G$. The functor $(F, \delta)^G$ acts on morphisms by $F$. This construction is compatible with the composition of equivariant functors.

Two $G$-equivariant functors $(F, \delta)\colon \mathcal{C}\rightarrow \mathcal{D}$ and $(F', \delta')\colon \mathcal{C}\rightarrow \mathcal{D}$ are \emph{isomorphic} provided that there is a natural isomorphism $\phi \colon F\rightarrow F'$ satisfying
\begin{align}\label{equ:iso-equi}
{F_g'} \phi \circ \delta_g={\delta_g'} \circ \phi F_g,
\end{align}
for each $g\in G$. In this case, the two functors $(F, \delta)^G$ and $(F',\delta')^G$ are naturally isomorphic.

The following fact is standard; compare \cite[Lemma 4.10]{Asa11}.

\begin{lem}\label{lem:G-equ}
Let $(F, \delta)\colon \mathcal{C}\rightarrow \mathcal{D}$ be a $G$-equivariant functor as above. Assume that $F$ is an equivalence of categories. Then the functor $(F,\delta)^G\colon \mathcal{C}^G\rightarrow \mathcal{D}^G$ is also an equivalence of categories.
 \end{lem}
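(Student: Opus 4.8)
The plan is to show that $(F,\delta)^G$ is fully faithful and essentially surjective, which together give that it is an equivalence. Throughout I use that $F$, being an equivalence, is fully faithful, essentially surjective, and reflects isomorphisms.

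For full faithfulness, fix equivariant objects $(X,\alpha)$ and $(Y,\beta)$ in $\mathcal{C}^G$, and write $(F(X),\tilde{\alpha})$ and $(F(Y),\tilde{\beta})$ for their images under $(F,\delta)^G$. By the identity \eqref{equ:fixed}, the two relevant morphism sets are the $G$-fixed points of $\mathrm{Hom}_\mathcal{C}(X,Y)$ and of $\mathrm{Hom}_\mathcal{D}(F(X),F(Y))$, respectively. Since $F$ is fully faithful, it induces a bijection between these two Hom sets, so it suffices to check that this bijection intertwines the two $G$-actions. Recalling the action $g.f=\beta_g^{-1}\circ F_g(f)\circ\alpha_g$ together with the formulas $\tilde{\alpha}_g=(\delta_g)_X\circ F(\alpha_g)$ and $\tilde{\beta}_g=(\delta_g)_Y\circ F(\beta_g)$, a direct computation reduces the required equality $F(g.f)=g.F(f)$ to the naturality of $\delta_g\colon FF_g\rightarrow F'_gF$ evaluated at $f$. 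Hence the bijection restricts to $G$-fixed points, giving full faithfulness of $(F,\delta)^G$.

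For essential surjectivity, take an arbitrary equivariant object $(Y,\beta)$ in $\mathcal{D}^G$. As $F$ is essentially surjective, choose $X$ in $\mathcal{C}$ together with an isomorphism $\theta\colon F(X)\rightarrow Y$ in $\mathcal{D}$. I want an equivariant structure $\alpha$ on $X$ for which $\theta$ becomes an isomorphism $(F(X),\tilde{\alpha})\rightarrow(Y,\beta)$ in $\mathcal{D}^G$; the morphism condition forces $\tilde{\alpha}_g=F'_g(\theta)^{-1}\circ\beta_g\circ\theta$, equivalently $F(\alpha_g)=(\delta_g)_X^{-1}\circ F'_g(\theta)^{-1}\circ\beta_g\circ\theta$. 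Since $F$ is fully faithful, this determines a unique morphism $\alpha_g\colon X\rightarrow F_g(X)$, and since the right-hand side is a composite of isomorphisms and $F$ reflects isomorphisms, each $\alpha_g$ is an isomorphism.

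It remains to verify that $(X,\alpha)$ is genuinely a $G$-equivariant object, i.e.\ that $\alpha$ satisfies the relations \eqref{equ:rel}. This is the main point of the argument. Because $F$ is faithful, it suffices to check the relation after applying $F$; substituting the defining formula for $F(\alpha_g)$ and expanding, the verification becomes a bookkeeping identity that combines the relations \eqref{equ:rel} already satisfied by $\beta$ with the cocycle-type compatibility \eqref{equ:equi-fun} for $(F,\delta)$. Granting this, $(X,\alpha)$ lies in $\mathcal{C}^G$ and $\theta$ exhibits $(F,\delta)^G(X,\alpha)\cong(Y,\beta)$, proving essential surjectivity. The only real obstacle is this last compatibility check; it is routine but must be carried out carefully, as it is precisely where the defining condition \eqref{equ:equi-fun} of an equivariant functor enters. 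Alternatively, one may construct a $G$-equivariant quasi-inverse $(H,\partial)$ of $(F,\delta)$ by transporting $\delta$ along the unit and counit of an adjoint equivalence $H\dashv F$, and then invoke the stated compatibility of the assignment $(-)^G$ with composition and with isomorphisms of equivariant functors to conclude; the same condition \eqref{equ:equi-fun} reappears as the obstacle there.
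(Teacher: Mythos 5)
Your proposal is correct and follows essentially the same route as the paper: full faithfulness via the fixed-point identity (\ref{equ:fixed}) together with the $G$-equivariance of the bijection $f\mapsto F(f)$, and denseness by choosing $\theta\colon F(X)\rightarrow Y$, defining each $\alpha_g$ uniquely through the fully faithfulness of $F$ by the same formula, and then checking that $(X,\alpha)$ is equivariant. The paper likewise leaves that final equivariance check as a routine verification, and your identification of where (\ref{equ:rel}) for $\beta$ and the compatibility (\ref{equ:equi-fun}) enter is exactly right.
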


\begin{proof}
Assume that $(X, \alpha)$ and $(Y, \beta)$ are two $G$-equivariant objects in $\mathcal{C}^G$. Then the set ${\rm Hom}_\mathcal{C}(X, Y)$ carries a $G$-action. Associated to $G$-equivariant objects $(F(X), \widetilde{\alpha})$ and $(F(Y), \widetilde{\beta})$  in $\mathcal{D}^G$, the set ${\rm Hom}_\mathcal{D}(F(X), F(Y))$ also carries a $G$-action. We observe that the bijection
$${\rm Hom}_\mathcal{C}(X, Y)\longrightarrow {\rm Hom}_\mathcal{D}(F(X), F(Y)),$$
sending $f$ to $F(f)$,  is compatible with these two $G$-actions. Then we have the induced bijection between the subsets of fixed elements. Applying (\ref{equ:fixed}), this bijection implies  the fully faithfulness of $(F,\delta)^G$.

 It remains to prove the denseness of $(F,\delta)^G$. Take an object $(Z, \gamma)$ in $\mathcal{D}^G$. We have an isomorphism $\theta\colon F(X)\rightarrow Z$ in $\mathcal{D}$ for some object $X$ in $\mathcal{C}$. For each $g\in G$, there is a unique isomorphism $\alpha_g\colon X\rightarrow F_g(X)$ satisfying $$F(\alpha_g)=(\delta_X)^{-1}\circ F'_g(\theta^{-1})\circ \gamma_g\circ \theta.$$
  One verifies that $(X, \alpha)$ is indeed a $G$-equivariant object and that $\theta\colon (F,\delta)^G(X, \alpha)\rightarrow (Z, \gamma)$ is a required isomorphism.
\end{proof}

Two $G$-actions $\{F_g, \varepsilon_{g,h}|\; g,h\in G\}$ and $\{F'_g, \varepsilon'_{g,h}|\; g,h\in G\}$ on $\mathcal{C}$ are \emph{isomorphic} provided that there are natural isomorphisms $\delta_g\colon F_g\rightarrow F'_g$ such that $({\rm Id}_\mathcal{C}, \delta)\colon \mathcal{C}\rightarrow \mathcal{C}$ is a $G$-equivariant functor, or equivalently, the following identities hold
\begin{align}\label{equ:iso-action}
\delta_{gh}\circ \varepsilon_{g, h}=\varepsilon'_{g, h}\circ (F'_g\delta_h\circ \delta_g F_h).
\end{align}
In other words, the following diagram is commutative
\[\xymatrix{
F_gF_h \ar[rr]^-{\varepsilon_{g,h}}\ar[d]_-{F'_g\delta_h\circ \delta_gF_h} && F_{gh}\ar[d]^-{\delta_{gh}}\\
F'_g F'_h \ar[rr]^-{\varepsilon'_{g,h}} && F'_{gh}
}\]
for any $g, h\in G$.

Up to isomorphism, we may adjust the autoequivalences  appearing in a $G$-action by any given natural isomorphisms. More precisely, the following statement is routine.

\begin{lem}\label{lem:change}
Let $\{F_g, \varepsilon_{g,h}|\; g,h\in G\}$ be a given $G$-action on $\mathcal{C}$. Assume that for each $g\in G$, there is an autoequivalence $F'_g$ on $\mathcal{C}$ with a natural isomorphism $\delta_g\colon F_g\rightarrow F'_g$. Then there exists a unique $G$-action $\{F'_g, \varepsilon'_{g,h}|\; g, h\in G\}$ on $\mathcal{C}$ satisfying (\ref{equ:iso-action}).

In particular, the two $G$-actions $\{F_g, \varepsilon_{g,h}|\; g,h\in G\}$  and $\{F'_g, \varepsilon'_{g,h}|\; g, h\in G\}$ are isomorphic.
\end{lem}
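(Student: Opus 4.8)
The plan is to read (\ref{equ:iso-action}) as an equation that \emph{determines} the sought natural isomorphisms $\varepsilon'_{g,h}$, and then to check that the resulting family obeys the $2$-cocycle condition (\ref{equ:2-coc}). Fix $g,h\in G$ and observe that in (\ref{equ:iso-action}) every term other than $\varepsilon'_{g,h}$ is a prescribed natural isomorphism: the composite $F'_g\delta_h\circ\delta_gF_h$ is a natural isomorphism $F_gF_h\to F'_gF'_h$, while $\delta_{gh}\circ\varepsilon_{g,h}$ is a natural isomorphism $F_gF_h\to F'_{gh}$. Hence (\ref{equ:iso-action}) forces
$$\varepsilon'_{g,h}=\delta_{gh}\circ\varepsilon_{g,h}\circ(F'_g\delta_h\circ\delta_gF_h)^{-1},$$
which simultaneously establishes the uniqueness claim and defines the candidate $\varepsilon'_{g,h}\colon F'_gF'_h\to F'_{gh}$; it is a natural isomorphism, being a composite of natural isomorphisms.

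It then remains to verify that $\{F'_g,\varepsilon'_{g,h}\}$ satisfies (\ref{equ:2-coc}), which is the only real content and the main obstacle. The guiding idea is transport of structure: the data $({\rm Id}_\mathcal{C},\delta)$ is designed to be an equivariant isomorphism from the first action to the second, since (\ref{equ:iso-action}) is exactly (\ref{equ:equi-fun}) specialized to $F={\rm Id}_\mathcal{C}$. Thus the cocycle identity on the primed side ought to be recovered from the one on the unprimed side by conjugating with the $\delta$'s. Concretely, I would substitute the displayed formula for each of the four factors in the desired identity $\varepsilon'_{gh,k}\circ\varepsilon'_{g,h}F'_k=\varepsilon'_{g,hk}\circ F'_g\varepsilon'_{h,k}$, precompose with the isomorphism $F'_gF'_hF'_k\to F_gF_hF_k$ assembled from the $\delta_g^{-1}$, postcompose with $\delta_{ghk}^{-1}$, and aim to cancel all $\delta$-terms so as to reduce the identity to $\varepsilon_{gh,k}\circ\varepsilon_{g,h}F_k=\varepsilon_{g,hk}\circ F_g\varepsilon_{h,k}$, which holds by hypothesis.

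The bookkeeping that makes these cancellations work is the middle-four interchange law for natural transformations: for whiskerable $\alpha\colon A\to A'$ and $\beta\colon B\to B'$ one has $\beta A'\circ B\alpha=B'\alpha\circ\beta A$. Repeated applications of this law, together with the naturality of each $\delta_g$, let me commute a whiskered $\delta_k$ past an $\varepsilon$ and convert a whiskering such as $\varepsilon'_{g,h}F'_k$ into the matching expression over $F_k$. I expect the delicate points to be (i) keeping the left and right whiskerings $F'_g\delta_h$ versus $\delta_gF_h$ straight, since $F_g$ and $F'_g$ are genuinely different functors, and (ii) correctly handling the outer whiskering by $F'_k$, where an extra copy of $\delta_k$ must be introduced and then absorbed. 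Once these reductions are carried out the two sides coincide precisely by (\ref{equ:2-coc}) for $\varepsilon$.

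This yields a $G$-action $\{F'_g,\varepsilon'_{g,h}\}$; its unit is then automatically the unique natural isomorphism $F'_e\to{\rm Id}_\mathcal{C}$ determined as in the remarks preceding the lemma, so no separate verification of the unit is needed. Since the formula for $\varepsilon'_{g,h}$ was forced, the constructed action is the unique one satisfying (\ref{equ:iso-action}), completing the proof.
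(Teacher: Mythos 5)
Your proposal is correct and follows essentially the same route as the paper: the paper likewise reads (\ref{equ:iso-action}) as forcing $\varepsilon'_{g,h}=\delta_{gh}\circ\varepsilon_{g,h}\circ(F'_g\delta_h\circ\delta_gF_h)^{-1}$, which gives uniqueness, and then declares the verification of (\ref{equ:2-coc}) to be routine. Your sketch of that verification — conjugating by the $\delta$'s and cancelling via the interchange law (the paper's Lemma \ref{lemA:rule}) and naturality so as to reduce to the cocycle identity for $\varepsilon$ — is exactly how the routine check goes, so nothing is missing.
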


\begin{proof}
The uniqueness follows by (\ref{equ:iso-action}), since we have $$\varepsilon'_{g, h}=\delta_{gh}\circ \varepsilon_{g, h}\circ (F'_g\delta_h\circ \delta_gF_h)^{-1}.$$ It is routine to verify that these natural isomorphisms $\varepsilon'_{g,h}$ satisfy (\ref{equ:2-coc}).
\end{proof}

The following fact is standard.

\begin{lem}\label{lem:transport}
Let $F\colon \mathcal{C}\rightarrow \mathcal{D}$ be an equivalence of categories. Assume that $\mathcal{C}$ has a $G$-action $\{F_g, \varepsilon_{g,h}|\; g,h\in G\}$. Then there is a $G$-action $\{F'_g, \varepsilon'_{g,h}|\; g,h\in G\}$ on $\mathcal{D}$ with natural isomorphisms $\delta_g\colon FF_g\rightarrow F'_gF$ such that $(F, \delta)$ is a $G$-equivariant functor. Such a $G$-action on $\mathcal{D}$ is unique up to isomorphism.
\end{lem}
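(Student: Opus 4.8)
The plan is to transport the action by conjugating with a quasi-inverse of $F$. Fix a quasi-inverse $F^{-1}\colon \mathcal{D}\to\mathcal{C}$, which we may take to be an adjoint equivalence, together with natural isomorphisms $\eta\colon {\rm Id}_\mathcal{C}\to F^{-1}F$ and $\xi\colon FF^{-1}\to {\rm Id}_\mathcal{D}$ satisfying the triangle identities. I would set $F'_g:=FF_gF^{-1}$, which is an autoequivalence on $\mathcal{D}$ as a composite of equivalences, and define the comparison isomorphism $\delta_g:=FF_g\eta\colon FF_g\to FF_gF^{-1}F=F'_gF$. The natural isomorphisms $\varepsilon'_{g,h}\colon F'_gF'_h\to F'_{gh}$ are then obtained by inserting $\eta^{-1}$ in the middle slot and applying $\varepsilon_{g,h}$, namely
\[
\varepsilon'_{g,h}=(F\varepsilon_{g,h}F^{-1})\circ (FF_g\,\eta^{-1}\,F_hF^{-1}),
\]
where the second factor is the whiskering of $\eta^{-1}\colon F^{-1}F\to{\rm Id}_\mathcal{C}$ by $FF_g$ on the left and $F_hF^{-1}$ on the right. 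Conceptually this is nothing but composing the given action, viewed as a weak monoidal functor from $G$ into the monoidal category of autoequivalences of $\mathcal{C}$, with the strong monoidal equivalence ``conjugation by $F$'' into the autoequivalences of $\mathcal{D}$; the explicit formulas merely spell this out.

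With these definitions in hand there are two things to verify, both routine diagram chases. First, that $\{F'_g,\varepsilon'_{g,h}\}$ is a $G$-action, i.e. that $\varepsilon'_{g,h}$ satisfies the $2$-cocycle condition (\ref{equ:2-coc}); this is inherited from the corresponding condition for $\{F_g,\varepsilon_{g,h}\}$, the passage between the two being handled by the triangle identities $\xi F\circ F\eta={\rm id}_F$ and $F^{-1}\xi\circ\eta F^{-1}={\rm id}_{F^{-1}}$, which let the various inserted copies of $\eta$ and $\eta^{-1}$ cancel. Second, that $(F,\delta)$ satisfies the equivariance condition (\ref{equ:equi-fun}); in fact $\varepsilon'_{g,h}$ was reverse-engineered precisely so that (\ref{equ:equi-fun}) holds, and a direct check reduces it once more to a triangle identity. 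This establishes existence.

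For uniqueness up to isomorphism, suppose $\{F''_g,\varepsilon''_{g,h}\}$ is another $G$-action on $\mathcal{D}$ carrying natural isomorphisms $\delta'_g\colon FF_g\to F''_gF$ that make $(F,\delta')$ equivariant. For each $g$ the composite $\delta'_g\circ\delta_g^{-1}\colon F'_gF\to F''_gF$ is a natural isomorphism between the two transported autoequivalences precomposed with $F$. Since $F$ is an equivalence, hence essentially surjective, whiskering by $F$ on the right is injective on natural transformations, and there is a unique natural isomorphism $\phi_g\colon F'_g\to F''_g$ with $\phi_gF=\delta'_g\circ\delta_g^{-1}$, explicitly $\phi_g=F''_g\xi\circ (\delta'_g\circ\delta_g^{-1})F^{-1}\circ F'_g\xi^{-1}$. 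It then remains to check that the family $(\phi_g)_{g\in G}$ constitutes an isomorphism of $G$-actions, i.e. satisfies (\ref{equ:iso-action}); this follows by comparing the two instances of (\ref{equ:equi-fun}) for $(F,\delta)$ and $(F,\delta')$ and again invoking the injectivity of whiskering by $F$.

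The genuine content, and the only place demanding care, is the bookkeeping of units and counits in the first verification: one must track where each $\eta$, $\eta^{-1}$ and $\xi$ is inserted so that the triangle identities apply in the correct order, after which the $2$-cocycle condition for the transported action becomes a formal consequence of that for the original. Everything else is forced by the requirement that $(F,\delta)$ be equivariant, which is exactly what pins down the transported action uniquely up to isomorphism.
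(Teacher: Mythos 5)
Your proposal is correct and follows essentially the same route as the paper: the transported action $F'_g=FF_gF^{-1}$, the comparison $\delta_g=FF_g\eta$, and the formula for $\varepsilon'_{g,h}$ coincide with the paper's construction (with $\eta$ playing the role of the paper's unit $a$), and your uniqueness argument via the bijectivity of whiskering by an equivalence is exactly the paper's appeal to its Lemma \ref{lemA:Nat} followed by checking (\ref{equ:iso-action}).
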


We will say that the $G$-action on $\mathcal{D}$ is \emph{transported} from the given one on $\mathcal{C}$.

\begin{proof}
We take a quasi-inverse $F^{-1}$ of $F$ with unit $a\colon {\rm Id}_\mathcal{C}\rightarrow F^{-1}F$. We may take $F'_g=FF_gF^{-1}$, $\varepsilon'_{g,h}=F\varepsilon_{g, h}F^{-1}\circ FF_ga^{-1}F_hF^{-1}$ and $\delta_g=FF_ga$.

For the uniqueness, we assume that there is another $G$-action $\{F''_g, \varepsilon''_{g, h}|\; g,h\in G\}$ on $\mathcal{D}$ with natural isomorphisms $\delta'_g\colon FF_g\rightarrow F''_g F$. There is a unique isomorphism $\partial_g\colon F'_g\rightarrow F''_g$ satisfying $\delta'_g=\partial_g F\circ \delta_g$; see Lemma \ref{lemA:Nat}. Then the two $G$-actions on $\mathcal{D}$ are isomorphic via the $G$-equivariant functor $({\rm Id}_\mathcal{D}, \partial)$.
\end{proof}

\subsection{Actions on module categories} Let $R$ be a ring with identity. We denote by ${\rm Mod}\mbox{-}R$ the category of right $R$-modules. By ${\rm mod}\mbox{-}R$ and ${\rm proj}\mbox{-}R$, we mean the full subcategories of finitely presented $R$-modules and finitely generated projective $R$-modules, respectively. For an $R$-module $M=M_R$, we usually denote the $R$-action by ``$.$". We denote by  $R=R_R$ the regular right $R$-module.

We will recall that $G$-actions on ${\rm Mod}\mbox{-}R$, which fix $R$, are in a bijection to weak $G$-actions on $R$.

We denote by ${\rm Aut}(R)$ the group of automorphisms on $R$, and by $R^\times$ the multiplicative group formed by invertible elements in $R$.

For an automorphism $\sigma\in {\rm Aut}(R)$ and an $R$-module $M$, the \emph{twisted module} ${^\sigma M}$ is defined as follows: ${^\sigma M}=M$ as an abelian group, and the new $R$-action ``$_\circ$" is given by $m_\circ r=m.\sigma(r)$. This gives rise to an automorphism $${^\sigma(-)}\colon {\rm Mod}\mbox{-}R\longrightarrow {\rm Mod}\mbox{-}R$$
 of categories, called  the \emph{twisting automorphism}. It acts on morphisms by the identity.

 We observe that there is an isomorphism $R\rightarrow {^\sigma R}$ of right $R$-modules, which sends $r$ to $\sigma(r)$. Moreover, for another automorphism $\sigma'\in {\rm Aut}(R)$, we have ${^{\sigma'}(^\sigma M)}={^{(\sigma\sigma')}M}$.

\begin{lem}\label{lem:mod} Keep the notation as above.
\begin{enumerate}
\item Any autoequivalence $F$ on ${\rm Mod}\mbox{-}R$ satisfying $F(R)\simeq R$ is isomorphic to $^\sigma(-)$ for some $\sigma\in {\rm Aut}(R)$.
\item The twisting automorphisms $^\sigma(-)$ and $^{\sigma'}(-)$ are isomorphic if and only if there exists $a\in R^\times$ with $\sigma'(x)=a^{-1}\sigma(x) a$ for all $x\in R$.
\item For a given natural isomorphism $\varepsilon\colon {^\sigma(-)}\rightarrow {^{\sigma'}(-)}$, there is a unique element $a\in R^\times$ satisfying
    \begin{align}
    \varepsilon_M(m)=m.a
    \end{align}
    for any $R$-module $M$ and $m\in M$. Here, the dot ``$.$" denotes the original $R$-action on $M$, not the one on ${^{\sigma}M}$ or ${^{\sigma'}M}$.
    \end{enumerate}
\end{lem}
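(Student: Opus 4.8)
I would treat the three parts in the order (3), (2), (1): the explicit computation in (3) is the computational core and feeds directly into the ``only if'' direction of (2), while (1) is logically separate but technically the heaviest.

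For (3), the idea is to test $\varepsilon$ against the representing homomorphisms. For $m\in M$ let $\rho_m\colon R\to M$ be the right $R$-module map $r\mapsto m.r$, so that $\rho_m(1)=m$. Since the twisting functors act as the identity on morphisms, naturality of $\varepsilon$ at $\rho_m$ reads
\[
\varepsilon_M\circ {^\sigma(\rho_m)}={^{\sigma'}(\rho_m)}\circ \varepsilon_R .
\]
Put $a:=\varepsilon_R(1)\in R$, reading $R={^\sigma R}={^{\sigma'}R}$ as abelian groups. Evaluating the square at $1$ gives $\varepsilon_M(m)={^{\sigma'}(\rho_m)}(a)=\rho_m(a)=m.a$, the desired formula. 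Running the same argument for $\varepsilon^{-1}$ yields $b\in R$ with $\varepsilon_M^{-1}(m)=m.b$; composing gives $m.(ab)=m=m.(ba)$ for all $m$, whence $ab=1=ba$ (take $M=R$, $m=1$) and $a\in R^\times$. Uniqueness is immediate by specializing to $M=R$, $m=1$.

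For (2), the ``if'' direction is an explicit construction: given $a\in R^\times$ with $\sigma'(x)=a^{-1}\sigma(x)a$, define $\varepsilon_M(m)=m.a$. A direct check shows $\varepsilon_M\colon {^\sigma M}\to {^{\sigma'}M}$ is right $R$-linear exactly because $\sigma(r)a=a\sigma'(r)$ for all $r$, that it is invertible with inverse $m\mapsto m.a^{-1}$, and that it is natural in $M$ (any $R$-linear $f$ satisfies $f(m.a)=f(m).a$). The ``only if'' direction now follows from (3): a natural isomorphism $\varepsilon$ supplies $a\in R^\times$ with $\varepsilon_M(m)=m.a$, and since $\varepsilon_R$ is in particular a homomorphism ${^\sigma R}\to {^{\sigma'}R}$, the same computation forces $\sigma(r)a=a\sigma'(r)$, i.e. $\sigma'(x)=a^{-1}\sigma(x)a$.

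For (1), recall that left multiplication gives a ring isomorphism $R\xrightarrow{\sim}{\rm End}_R(R_R)$, $a\mapsto(x\mapsto ax)$. Choosing an isomorphism $\theta\colon F(R)\to R$ and conjugating the ring isomorphism ${\rm End}_R(R_R)\xrightarrow{F}{\rm End}_R(F(R))$ by $\theta$ produces a ring automorphism $\sigma\in{\rm Aut}(R)$ recording how $F$ transforms the $R$-action on the generator. The main obstacle is then to upgrade the single isomorphism $\theta$ to a natural isomorphism $F\cong {^\sigma(-)}$ on all of ${\rm Mod}\mbox{-}R$. I would argue that both $F$ and ${^\sigma(-)}$ preserve arbitrary coproducts and cokernels (the former being an equivalence, the latter an exact functor given by relabelling the action) and agree on $R$ compatibly with $\sigma$; since every module has a free presentation $R^{(J)}\to R^{(I)}\to M\to 0$, transporting $\theta$ coproduct-wise to free modules, checking compatibility on the matrix maps between free modules, and passing to cokernels yields the natural isomorphism. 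Equivalently one may invoke the Eilenberg--Watts theorem, $F\cong -\otimes_R F(R)$, noting that the invertible bimodule $F(R)$ has its left action twisted by $\sigma$, so $F(R)\cong {^\sigma R}$ as bimodules and $F\cong {^\sigma(-)}$. I expect the only delicate point to be the direction-of-$\sigma$ bookkeeping (whether $\sigma$ or $\sigma^{-1}$ appears), which is absorbed by replacing $\sigma$ with its inverse if necessary.
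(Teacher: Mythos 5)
Your proposal is correct and follows precisely the route the paper indicates: the paper's own proof merely declares all three parts well known, noting only that in (3) one takes $a=\varepsilon_R(1)$ and that (2) is implied by (3), which are exactly the pivots of your argument. Your write-up simply supplies the details the paper omits — the naturality-square computation against $\rho_m\colon R\to M$, the invertibility of $a$ via $\varepsilon^{-1}$, and the standard free-presentation/Eilenberg--Watts argument for (1) (where your remark about absorbing the $\sigma$-versus-$\sigma^{-1}$ bookkeeping is indeed the only delicate point).
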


The same results hold for the categories ${\rm mod}\mbox{-}R$ and ${\rm proj}\mbox{-}R$.

\begin{proof}
These statements are all well known. In (3), we observe that $a=\varepsilon_R(1)$. Moreover, the statement (2) is implied by (3).
\end{proof}

The following notion is standard; compare \cite[Section 1.4]{NV}.

\begin{defn}\label{defn:weakact}
By a \emph{weak $G$-action} on $R$, we mean a pair $(\rho, c)$, where $\rho\colon G\rightarrow {\rm Aut}(R)$ and $c\colon G\times G\rightarrow R^\times$ are maps subject to the conditions
\begin{enumerate}
\item[(WA1)] $\rho(gh)(x)=c(g, h)^{-1}\cdot \rho(g)(\rho(h)(x))\cdot c(g, h)$;
\item[(WA2)] $c(g, h)\cdot c(gh, k)=\rho(g)(c(h, k))\cdot c(g, hk),$
\end{enumerate}
for any $g, h, k\in G$ and $x\in R$. Here, we use the central dot to denote the multiplication in $R$.

Two weak $G$-actions $(\rho, c)$ and $(\rho', c')$ are \emph{isomorphic} provided that there is a map $\delta\colon G\rightarrow R^\times$ satisfying $\rho'(g)(x)=\delta(g)^{-1}\cdot \rho(g)(x)\cdot \delta(g)$ and $c'(g, h)=\delta(g)^{-1}\cdot \rho(g)(\delta(h)^{-1}) \cdot  c(g, h) \cdot \delta(gh)$. \hfill $\square$
\end{defn}

We observe from (WA1) that $\rho(e)(y)=c(e,e)\cdot y\cdot c(e,e)^{-1}$ for all $y\in R$. Moreover, by taking $h=e$ in (WA2), we infer that $c(e, k)=c(e,e)$ and $c(g, e)=\rho(g)(c(e,e))$.

In the literature, the triple  $(R, \rho, c)$ is called a \emph{$G$-crossed system}. The corresponding \emph{crossed product} $R\ast G$ is a ring which is defined as follows: $R\ast G$ is  a free left $R$-module with basis $\{\bar{g}|\; g\in G\}$, and its multiplication is given by
$$(r_1\bar{g})(r_2\bar{h})=(r_1\cdot \rho(g)(r_2)\cdot c(g, h))\overline{gh}.$$
We observe a ring embedding $R\rightarrow R\ast G$ sending $r$ to $(r\cdot c(e, e)^{-1})\bar{e}$. In particular, the identity of $R\ast G$ is $c(e,e)^{-1}\bar{e}$.

By a \emph{$G$-action} on $R$, we mean a weak $G$-action $(\rho, c)$ with $c(g, h)=1$ for all $g, h \in G$; then the map $\rho$ is a group homomorphism. In this case, the crossed product $R\ast G$ is called the \emph{skew group ring}.

For a given weak $G$-action $(\rho, c)$ on $R$, we consider the following natural isomorphism on ${\rm Mod}\mbox{-}R$
\begin{align}\label{equ:c}
c_{g, h}\colon {^{\rho(h^{-1})\rho(g^{-1})}(-)}\longrightarrow {^{\rho((gh)^{-1})}(-)}
\end{align}
such that $(c_{g, h})_M(m)=m.c(h^{-1}, g^{-1})$; compare (WA1) and Lemma \ref{lem:mod}(3). Indeed, this gives rise to a $G$-action $\{{^{\rho(g^{-1})}(-)}, c_{g, h}|\; g, h \in G\}$ on ${\rm Mod}\mbox{-}R$, where the condition (\ref{equ:2-coc}) follows from (WA2).

\begin{prop}\label{prop:act-mod}
There is a bijection from  the set of isoclasses of weak $G$-actions on $R$ to the set of isoclasses  of $G$-actions on ${\rm Mod}\mbox{-}R$
$$\{\mbox{weak } G\mbox{-actions on } R\}{/\simeq} \; \longleftrightarrow\;  \{G\mbox{-actions on {\rm Mod}-}R \mbox{ fixing }R\}{/\simeq},$$
which sends $(\rho, c)$ to $\{{^{\rho(g^{-1})}(-)}, c_{g, h}|\; g, h \in G\}$ on ${\rm Mod}\mbox{-}R$.

 Under this bijection, we have an isomorphism of categories
$$({\rm Mod}\mbox{-}R)^G\stackrel{\sim}\longrightarrow {\rm Mod}\mbox{-}R\ast G.$$
\end{prop}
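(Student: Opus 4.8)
The plan is to treat the two assertions in turn, reducing everything to the dictionary between natural isomorphisms of twisting automorphisms and elements of $R^\times$ provided by Lemma \ref{lem:mod}. For the bijection, well-definedness of the assigned map is already recorded before the statement: the data $\{{}^{\rho(g^{-1})}(-), c_{g,h}\}$ satisfies the $2$-cocycle condition (\ref{equ:2-coc}) by (WA2), and each twisting automorphism fixes $R$ since ${}^{\sigma}R\simeq R$. I would first check that the map respects isomorphism: if $(\rho,c)$ and $(\rho',c')$ are isomorphic weak $G$-actions via $\delta\colon G\to R^\times$, then the natural isomorphisms $\delta_g\colon {}^{\rho(g^{-1})}(-)\to{}^{\rho'(g^{-1})}(-)$ determined by $\delta(g^{-1})$ through Lemma \ref{lem:mod}(3) satisfy the compatibility (\ref{equ:iso-action}); this is a direct translation of the two defining identities in Definition \ref{defn:weakact}. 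To construct the inverse map, I start from an arbitrary $G$-action $\{F_g,\varepsilon_{g,h}\}$ on ${\rm Mod}\mbox{-}R$ fixing $R$. By Lemma \ref{lem:mod}(1) each $F_g\simeq{}^{\sigma_g}(-)$, so Lemma \ref{lem:change} lets me replace the action, up to isomorphism, by one with $F_g={}^{\sigma_g}(-)$ on the nose. Setting $\rho(g):=\sigma_{g^{-1}}$ (the inversion being forced by ${}^{\sigma'}({}^{\sigma}(-))={}^{(\sigma\sigma')}(-)$, so that the $F_g$ compose covariantly), each $\varepsilon_{g,h}\colon{}^{\rho(h^{-1})\rho(g^{-1})}(-)\to{}^{\rho((gh)^{-1})}(-)$ is governed by a unique $c(h^{-1},g^{-1})\in R^\times$ via Lemma \ref{lem:mod}(3), which defines $c$. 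Then (WA1) is exactly the statement (via Lemma \ref{lem:mod}(2)) that $\varepsilon_{g,h}$ is an isomorphism between the indicated twists, while (WA2) is exactly the $2$-cocycle condition (\ref{equ:2-coc}). Finally the two constructions are mutually inverse on isoclasses, again by the uniqueness clauses of Lemma \ref{lem:mod}(2),(3).

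For the isomorphism of categories, I unwind both sides. A $G$-equivariant object is a right $R$-module $X$ together with isomorphisms $\alpha_g\colon X\to{}^{\rho(g^{-1})}X$, i.e. additive bijections with $\alpha_g(x.r)=\alpha_g(x).\rho(g^{-1})(r)$, satisfying (\ref{equ:rel}), which by (\ref{equ:c}) reads $\alpha_{gh}(x)=\alpha_h(\alpha_g(x)).c(h^{-1},g^{-1})$. On the other side, a right $R\ast G$-module is a right $R$-module $X$ (via the embedding $r\mapsto (r\,c(e,e)^{-1})\bar e$) together with additive operators $\beta_g$ giving the action of $\bar g$; using $r\bar g=(r\,c(e,e)^{-1}\bar e)\bar g$ one checks every element acts by $x\cdot(r\bar g)=\beta_g(x.r)$, and the ring relations reduce to $\beta_g(x.\rho(g)(r))=\beta_g(x).r$ together with $\beta_h(\beta_g(x))=\beta_{gh}(x.c(g,h))$. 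I would define $\Phi(X,\alpha)=X$ with $\bar g$ acting through $\alpha_g$ (after the substitution $g\mapsto g^{-1}$ and an $R^\times$-renormalization dictated by $c(e,e)$ and $c(g,g^{-1})$, needed to absorb the discrepancy between $\rho(g^{-1})$ and $\rho(g)^{-1}$), so that the two displayed module relations become precisely the $R$-semilinearity of the $\alpha_g$ and the equivariance identity (\ref{equ:rel}); the inverse functor $\Psi$ reads $\alpha_g$ back off the $\bar g$-action. Since a morphism in either category is an $R$-linear map commuting with the respective extra structure, $\Phi$ and $\Psi$ act as the identity on underlying morphisms, so they are mutually inverse isomorphisms of categories rather than merely equivalences.

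The conceptual content is light; the entire difficulty is bookkeeping. The hard part will be pinning down the correct renormalizing units in the definition of $\Phi$: because $(\rho,c)$ is only a \emph{weak} action, $\rho(g^{-1})$ differs from the inverse automorphism $\rho(g)^{-1}$ by conjugation by a unit built from $c(g^{-1},g)$ and $c(e,e)$ (as extracted from (WA1)), and the crossed product is not normalized, since $c(e,e)\ne 1$ in general. Matching the cocycle argument $c(h^{-1},g^{-1})$ appearing in (\ref{equ:rel}) with the cocycle $c(g,h)$ in the relation $\bar g\bar h=c(g,h)\overline{gh}$, while simultaneously reconciling these conjugation and normalization discrepancies, is the one step that demands care; everything else follows formally from Lemma \ref{lem:mod} and the definitions.
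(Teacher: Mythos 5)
Your first half --- the bijection between weak $G$-actions on $R$ and $G$-actions on ${\rm Mod}\mbox{-}R$ fixing $R$ --- is correct and is essentially the paper's own argument: strictify an arbitrary action fixing $R$ via Lemma \ref{lem:mod}(1) and Lemma \ref{lem:change}, read off $c$ from the $\varepsilon_{g,h}$ via Lemma \ref{lem:mod}(3), and identify (WA1) with the existence of the isomorphisms (\ref{equ:c}) and (WA2) with the cocycle condition (\ref{equ:2-coc}); injectivity and the mutual-inverse check also reduce to the uniqueness clauses of Lemma \ref{lem:mod}, exactly as in the paper.

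The second half, however, has a genuine gap, and it sits precisely at the step you yourself flag as ``the one step that demands care.'' Your unwinding of both sides is right: a right $R\ast G$-module is an $R$-module $X$ with additive operators $\beta_g$ satisfying $\beta_g(x.\rho(g)(r))=\beta_g(x).r$ and $\beta_h(\beta_g(x))=\beta_{gh}(x.c(g,h))$, and (\ref{equ:rel}) for the action in question reads $\alpha_{gh}(x)=\alpha_h(\alpha_g(x)).c(h^{-1},g^{-1})$. But the functor itself is never written down: you defer it to an ``$R^\times$-renormalization dictated by $c(e,e)$ and $c(g,g^{-1})$,'' and that hint points in the wrong direction. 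If you let $\bar{g}$ act by $x\mapsto \alpha_{g^{-1}}(x).u_g$ for \emph{any} unit $u_g\in R^\times$, the first relation forces $\rho(g)^2(r)=u_g\cdot r\cdot u_g^{-1}$ for all $r$, i.e.\ $\rho(g)^2$ inner --- false in general, already for honest $G$-actions with $\rho(g)^2$ outer. No choice of renormalizing units can repair this. The correct resolution, which is the paper's formula, is \emph{inversion} rather than renormalization: set $m.(r\bar{g})=(\alpha_{g^{-1}})^{-1}(m.r)$. Then the ``discrepancy between $\rho(g^{-1})$ and $\rho(g)^{-1}$'' never arises: $\alpha_{g^{-1}}\colon X\rightarrow {^{\rho(g)}X}$ is semilinear, $\alpha_{g^{-1}}(x.r)=\alpha_{g^{-1}}(x).\rho(g)(r)$, so its inverse satisfies the first relation on the nose; and substituting $(g,h)\mapsto (h^{-1},g^{-1})$ in the identity above gives $\alpha_{(gh)^{-1}}(x)=\alpha_{g^{-1}}(\alpha_{h^{-1}}(x)).c(g,h)$, which is exactly the second relation for $\beta_g=(\alpha_{g^{-1}})^{-1}$. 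The only place $c(e,e)$ enters is the embedding $r\mapsto (r\cdot c(e,e)^{-1})\bar{e}$ defining the underlying $R$-structure for the inverse functor, which you already have; with that, your observation that both functors are the identity on underlying maps does yield an isomorphism of categories.
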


The bijection holds for  ${\rm mod}\mbox{-}R$ and ${\rm proj}\mbox{-}R$. Moreover,  if the group $G$ is finite,  we have an isomorphism of categories
$$({\rm mod}\mbox{-}R)^G\stackrel{\sim}\longrightarrow {\rm mod}\mbox{-}R\ast G.$$

\begin{proof}
The above map is injective by Lemma \ref{lem:mod}(3). For the surjectivity, take any $G$-action $\{F_g, \varepsilon_{g,h}|\; g, h\in G\}$ on ${\rm Mod}\mbox{-}R$ that fixes $R$. By Lemmas \ref{lem:mod}(1) and \ref{lem:change}, we may assume that $F_g={^{\rho(g^{-1})}(-)}$ for some map $\rho\colon G\rightarrow {\rm Aut}(R)$. By Lemma \ref{lem:mod}(3), the natural isomorphisms $\varepsilon_{g, h}$ give rise to the map $c$ such that $\varepsilon_{g, h}$ coincide with $c_{g, h}$ in (\ref{equ:c}). Moreover, $(\rho, c)$ is a weak $G$-action on $R$, where (\ref{equ:2-coc}) implies (WA2). This proves the surjectivity.

The last isomorphism sends a $G$-equivariant $R$-module $(M, \alpha)$ to the $R\ast G$-module $M$, whose action is given by $m.(r\bar{g})=(\alpha_{g^{-1}})^{-1}(m.r)$. Here, the expression $m.r$ means the action on $M$ by the element $r\in R$. The inverse functor sends an $R\ast G$-module $X$ to $(X, \beta)$, where the underlying $R$-module structure on $X$ is given by $x.r=x.((r\cdot c(e, e)^{-1})\bar{e})$, and the isomorphism $\beta_g\colon X\rightarrow {^{\rho(g^{-1})}X}$ is defined by $\beta_g(x.\overline{g^{-1}})=x$ for each $x\in X$ and $g\in G$.
\end{proof}

\subsection{The stable objects and crossed systems} We will show that $G$-crossed systems arise naturally from $G$-stable objects; compare \cite{DLS}.

Let $\mathcal{C}$ be an additive category with a fixed $G$-action $\{F_g, \varepsilon_{g, h}|\; g,h\in G\}$. Recall that an object $T$ is $G$-stable provided that $T\simeq F_g(T)$ for each $g\in G$. We denote by ${\rm add}\; T$ the full subcategory consisting of direct summands of finite direct sums of copies of $T$. In this case, we obtain the restricted $G$-action on ${\rm add}\; T$.

We take a $G$-stable object $T$ and set $R={\rm End}_\mathcal{C}(T)$ to be its endomorphism ring. Choose
for each $g\in G$ an isomorphism $\alpha_g\colon T\rightarrow F_g(T)$. Then we have a ring automorphism $\rho(g)\in {\rm Aut}(R)$ such that
$$\rho(g)^{-1}(a)=(\alpha_{g^{-1}})^{-1}\circ F_{g^{-1}}(a)\circ \alpha_{g^{-1}}$$
for each $a\in R$. For $g,h \in G$, there is a unique element $c(g, h)\in R^\times$, or equivalently, a unique automorphism $c(g, h)$ of $T$,  satisfying
\begin{align}\label{equ:definec}
F_{(gh)^{-1}}(c(g, h))\circ \alpha_{(gh)^{-1}}=(\varepsilon_{h^{-1},g^{-1}})_T \circ F_{h^{-1}}(\alpha_{g^{-1}})\circ \alpha_{h^{-1}}.
\end{align}
This defines a  weak $G$-action $(\rho, c)$ on $R$. We observe that if we choose another family of isomorphisms $\beta_g\colon T\rightarrow F_g(T)$, then the resulting weak $G$-action on $R$ is isomorphic to $(\rho, c)$. In particular, we have a $G$-crossed system $(R, \rho, c)$.

The well-known functor
$${\rm Hom}_\mathcal{C}(T, -)\colon \mathcal{C}\longrightarrow {\rm Mod}\mbox{-}R$$
restricts to a fully faithful functor ${\rm add}\; T\stackrel{\sim}\longrightarrow {\rm proj}\mbox{-}R$; moreover, it is dense if $\mathcal{C}$ is idempotent complete. Here, we recall that an additive  category $\mathcal{C}$ is \emph{idempotent complete} provided that each idempotent $e\colon X\rightarrow X$ splits, that is, there exist morphisms $u\colon X\rightarrow Z$ and $v\colon Z\rightarrow X$ satisfying $e=v\circ u$ and ${\rm Id}_Z=u\circ v$.

For each object $X\in \mathcal{C}$ and $g\in G$, there is a natural isomorphism of $R$-modules
$$(\phi_g)_X\colon {\rm Hom}_\mathcal{C}(T, F_g(X))\longrightarrow {^{\rho(g^{-1})}{\rm Hom}_\mathcal{C}(T, X)},$$
such that $f=F_g((\phi_g)_X(f))\circ \alpha_g$ for each $f\colon T\rightarrow F_g(X)$.

Let $T$ and the resulting $G$-crossed system $(R, \rho, c)$ be as above. As in Proposition~\ref{prop:act-mod}, we consider the $G$-action $\{{^{\rho(g^{-1})}(-)},c_{g,h}|\; g,h\in G\}$ on ${\rm Mod}\mbox{-}R$ and its restricted $G$-action on ${\rm proj}\mbox{-}R$.

\begin{lem}\label{lem:stable-hom}
Keep the notation as above. Then the data $({\rm Hom}_\mathcal{C}(T, -), (\phi_g)_{g\in G})$ is a $G$-equivariant functor. In particular, if $\mathcal{C}$ is idempotent complete,  we have a $G$-equivariant equivalence
\begin{align}
({\rm Hom}_\mathcal{C}(T, -), \phi)\colon {\rm add}\; T\stackrel{\sim}\longrightarrow {\rm proj}\mbox{-}R.
\end{align}
\end{lem}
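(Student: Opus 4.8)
The plan is to verify directly that the natural isomorphisms $(\phi_g)_X$ satisfy the compatibility condition~(\ref{equ:equi-fun}) defining a $G$-equivariant functor, and then to invoke Lemma~\ref{lem:G-equ} together with the (already recalled) fact that ${\rm Hom}_\mathcal{C}(T, -)$ restricts to an equivalence ${\rm add}\; T\stackrel{\sim}\longrightarrow {\rm proj}\mbox{-}R$ when $\mathcal{C}$ is idempotent complete. Here the target category carries the $G$-action $\{{^{\rho(g^{-1})}(-)}, c_{g,h}\}$ from Proposition~\ref{prop:act-mod}, so that the two actions relevant to~(\ref{equ:equi-fun}) are $\{F_g, \varepsilon_{g,h}\}$ on ${\rm add}\; T$ and $\{{^{\rho(g^{-1})}(-)}, c_{g,h}\}$ on ${\rm proj}\mbox{-}R$.

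First I would check that each $(\phi_g)_X$ is a well-defined isomorphism of $R$-modules. The defining relation $f = F_g\big((\phi_g)_X(f)\big)\circ \alpha_g$ determines $(\phi_g)_X(f)$ uniquely, since $F_g$ is an equivalence and $\alpha_g$ is an isomorphism; $R$-linearity is then a routine consequence of the way $\rho(g^{-1})$ twists the action, matching the definition of $\rho(g)^{-1}(a) = (\alpha_{g^{-1}})^{-1}\circ F_{g^{-1}}(a)\circ \alpha_{g^{-1}}$. Naturality in $X$ follows because the relation is natural in $f$.

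The main step is the cocycle compatibility~(\ref{equ:equi-fun}), namely
\begin{align*}
\phi_{gh}\circ \big({\rm Hom}_\mathcal{C}(T, -)\big)\varepsilon_{g, h} = c_{g, h}\big({\rm Hom}_\mathcal{C}(T, -)\big)\circ \Big({^{\rho(g^{-1})}(-)}\,\phi_h\circ \phi_g\, F_h\Big).
\end{align*}
I would evaluate both sides on an arbitrary $f\colon T\rightarrow F_gF_h(X)$ and unwind them using the characterizing equation for $\phi$, the definition~(\ref{equ:definec}) of $c(g,h)$ in terms of $\varepsilon_{h^{-1}, g^{-1}}$ and the chosen isomorphisms $\alpha_g$, and the description of $c_{g,h}$ from~(\ref{equ:c}) as right multiplication by $c(h^{-1}, g^{-1})$. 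The computation reduces to pushing $f$ through the two composite isomorphisms $T\rightarrow F_{gh}(X)$ coming from $\varepsilon_{g,h}$ on one hand and from the iterated $\alpha$'s on the other, and comparing; the $2$-cocycle condition~(\ref{equ:2-coc}) is what makes the two routes agree after accounting for the correction term $c(h^{-1}, g^{-1})$. This indexing bookkeeping—the reindexing $g\mapsto g^{-1}$ built into $\rho$ and $c_{g,h}$, and tracking which side of a composite each $\alpha$ and $\varepsilon$ lands on—is where the proof is most error-prone, so I would anchor every identity to its defining equation rather than manipulate symbolically.

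Once the equivariance of the data is established, the ``in particular'' clause is immediate: when $\mathcal{C}$ is idempotent complete the underlying functor ${\rm Hom}_\mathcal{C}(T, -)$ restricts to an equivalence ${\rm add}\; T\stackrel{\sim}\longrightarrow {\rm proj}\mbox{-}R$, and Lemma~\ref{lem:G-equ} (applied with $F$ an equivalence) upgrades the $G$-equivariant functor to a $G$-equivariant equivalence, as claimed.
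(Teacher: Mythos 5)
Your proposal is correct and is essentially the paper's proof: the paper simply observes that (\ref{equ:definec}) implies (\ref{equ:equi-fun}) for the data $({\rm Hom}_\mathcal{C}(T,-),(\phi_g)_{g\in G})$, omitting exactly the computation you outline, and the ``in particular'' clause follows from the already-recalled fact that ${\rm Hom}_\mathcal{C}(T,-)$ restricts to an equivalence ${\rm add}\;T\stackrel{\sim}\longrightarrow{\rm proj}\mbox{-}R$ when $\mathcal{C}$ is idempotent complete. Two small corrections to your write-up: the verification needs only (\ref{equ:definec}) applied to the pair $(h^{-1},g^{-1})$ together with the naturality of $\varepsilon_{g,h}$, not the $2$-cocycle condition (\ref{equ:2-coc}); and the appeal to Lemma \ref{lem:G-equ} at the end is both unnecessary and misdirected (that lemma concerns the induced functor $(F,\delta)^G$ between categories of equivariant objects), since a $G$-equivariant functor whose underlying functor is an equivalence is, by the paper's usage, a $G$-equivariant equivalence by definition.
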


\begin{proof}
We just observe that (\ref{equ:definec}) implies (\ref{equ:equi-fun}) for the data  $({\rm Hom}_\mathcal{C}(T, -), (\phi_g)_{g\in G})$.  We omit the details.
\end{proof}

Let $f\colon R\rightarrow S$ be an isomorphism between two  rings. For a weak $G$-action $(\rho, c)$ on $R$, the isomorphism $f$ induces a weak $G$-action $f_*(\rho, c)=(\bar{\rho}, \bar{c})$ on $S$ as follows: $\bar{\rho}(g)=f\circ \rho(g)\circ f^{-1}$ and $\bar{c}=f\circ c$.

\begin{defn}\label{defn:equiv}
Two $G$-crossed systems $(R, \rho, c)$ and $(R', \rho', c')$ are \emph{equivalent} provided that there is an isomorphism $f\colon R\rightarrow R'$ of rings such that the two weak  $G$-actions  $f_*(\rho, c)$ and $(\rho',c') $ on $R'$ are isomorphic in the sense of Definition \ref{defn:weakact}. \hfill $\square$
\end{defn}

We assume that in the situation of Definition \ref{defn:equiv}, there is a map $\delta\colon G\rightarrow R'^\times$ giving the isomorphism between $f_*(\rho, c)=(\bar{\rho}, \bar{c})$ and $(\rho', c')$. For each right $R$-module $X$, we denote by $f_*(X)=X$ the corresponding $R'$-module, where the $R'$-action ``$_\circ$" is given by $x_\circ a'=x.f^{-1}(a')$. This gives rise to an isomorphism $f_*\colon {\rm Mod}\mbox{-}R\rightarrow {\rm Mod}\mbox{-}R'$ of categories. For each $g\in G$, there is a natural isomorphism of $R'$-modules
$$(\delta_g)_X\colon f_*{(^{\rho(g^{-1})}X)}\longrightarrow {^{\rho'(g^{-1})}f_*(X)}, \; x\mapsto x_\circ \delta(g^{-1}).$$
Here, we use $\rho'(g^{-1})(a')=\delta(g^{-1})^{-1} \cdot \bar{\rho}(g^{-1})(a') \cdot \delta(g^{-1})$ to verify this natural isomorphism; see Lemma \ref{lem:mod}(2).

In the following lemma, we consider the $G$-action $\{{^{\rho(g^{-1})}(-)},c_{g,h}|\; g,h\in G\}$ on  ${\rm Mod}\mbox{-}R$ and $G$-action $\{{^{\rho'(g^{-1})}(-)},c'_{g,h}|\; g,h\in G\}$ on ${\rm Mod}\mbox{-}R'$; see Proposition \ref{prop:act-mod}.

\begin{lem}\label{lem:fstar}
Keep the notation as above. Then we have a $G$-equivariant isomorphism of categories
$$(f_*, (\delta_g)_{g\in G})\colon {\rm Mod}\mbox{-}R\longrightarrow {\rm Mod}\mbox{-}R'.$$
\end{lem}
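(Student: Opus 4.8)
The plan is to verify that the data $(f_*, (\delta_g)_{g\in G})$ satisfies the defining condition (\ref{equ:equi-fun}) of a $G$-equivariant functor. The functor $f_*$ is already an isomorphism of categories, with strict inverse $(f^{-1})_*$, and each $\delta_g$ is a natural isomorphism by Lemma \ref{lem:mod}(2), using the relation $\rho'(g^{-1})(a')=\delta(g^{-1})^{-1}\cdot\bar{\rho}(g^{-1})(a')\cdot\delta(g^{-1})$ already recorded above. Once equivariance is established, the fact that every $\delta_g$ is an isomorphism turns $(f_*,\delta)$ into a $G$-equivariant isomorphism of categories.

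The cleanest route is to factor $(f_*,\delta)$ as a composite of two equivariant functors. First I would observe that $f_*$ is \emph{strictly} $G$-equivariant from ${\rm Mod}\mbox{-}R$ with the $(\rho,c)$-action to ${\rm Mod}\mbox{-}R'$ equipped with the transported action $f_*(\rho,c)=(\bar{\rho},\bar{c})$. Indeed, a direct comparison of the underlying $R'$-module structures gives $f_*({^{\rho(g^{-1})}X})={^{\bar{\rho}(g^{-1})}f_*(X)}$ (using $\bar{\rho}(g)=f\circ\rho(g)\circ f^{-1}$), so $f_*F_g=F'_gf_*$; and since $f_*$ acts by the identity on underlying maps, $f_*(c_{g,h})$ is right multiplication by $\bar{c}(h^{-1},g^{-1})=f(c(h^{-1},g^{-1}))$, which is precisely $\bar{c}_{g,h}$ whiskered by $f_*$. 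Hence $f_*(c_{g,h})=\bar{c}_{g,h}f_*$, establishing strictness.

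Second I would show that $({\rm Id},\delta)$ is a $G$-equivariant functor from ${\rm Mod}\mbox{-}R'$ with the $(\bar{\rho},\bar{c})$-action to ${\rm Mod}\mbox{-}R'$ with the $(\rho',c')$-action; equivalently, by (\ref{equ:iso-action}), that $\delta$ exhibits these two $G$-actions as isomorphic. Using Lemma \ref{lem:mod}(3) to represent every natural transformation in sight as right multiplication by a unit of $R'$, and using that left-whiskering by a twisting automorphism preserves the multiplier while right-whiskering by ${^{\bar{\rho}(h^{-1})}(-)}$ twists the multiplier by $\bar{\rho}(h^{-1})$, condition (\ref{equ:iso-action}) collapses to the single identity $\bar{c}(h^{-1},g^{-1})\,\delta((gh)^{-1})=\bar{\rho}(h^{-1})(\delta(g^{-1}))\,\delta(h^{-1})\,c'(h^{-1},g^{-1})$ in $R'^\times$. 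This is exactly the second isomorphism relation of Definition \ref{defn:weakact} relating $\bar{c}$ and $c'$, evaluated at the pair $(h^{-1},g^{-1})$, which holds by hypothesis. Composing $f_*$ with $({\rm Id},\delta)$ and comparing the resulting coherence data with the formula $x\mapsto x_\circ\delta(g^{-1})$ recovers precisely $(f_*,\delta)$, which is therefore a $G$-equivariant isomorphism of categories.

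I expect the only genuine obstacle to be the bookkeeping in the reduction of (\ref{equ:iso-action}): correctly tracking the inversions $g\mapsto g^{-1}$ and the swapped order of arguments in $c(h^{-1},g^{-1})$ built into (\ref{equ:c}), and in particular producing the twist $\bar{\rho}(h^{-1})$ from the right-whiskering $\delta_gF_h$ rather than from the left-whiskering $F'_g\delta_h$. Once the whiskering rules for twisting automorphisms are applied carefully, the term-by-term match with Definition \ref{defn:weakact} is immediate, and no deeper input is required.
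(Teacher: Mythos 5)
Your proposal is correct, and its computational core is identical to the paper's proof: everything reduces to the single identity
\[
\bar{c}(h^{-1}, g^{-1})\cdot \delta(h^{-1}g^{-1})=\bar{\rho}(h^{-1})(\delta(g^{-1}))\cdot \delta(h^{-1})\cdot c'(h^{-1}, g^{-1})
\]
in $R'^\times$, which is precisely the second relation in Definition \ref{defn:weakact} for the isomorphism from $(\bar{\rho}, \bar{c})$ to $(\rho', c')$, evaluated at the pair $(h^{-1}, g^{-1})$; the paper's proof consists of observing that (\ref{equ:equi-fun}) for $(f_*, (\delta_g)_{g\in G})$ follows directly from this identity. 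Where you differ is in the organization: you factor $(f_*, \delta)$ as the strictly $G$-equivariant isomorphism $f_*\colon {\rm Mod}\mbox{-}R\rightarrow {\rm Mod}\mbox{-}R'$, with target carrying the action attached to the transported weak action $(\bar{\rho}, \bar{c})=f_*(\rho, c)$, followed by $({\rm Id}, \tilde{\delta})$, which exhibits an isomorphism of $G$-actions on ${\rm Mod}\mbox{-}R'$ in the sense of (\ref{equ:iso-action}). Both factors check out: the strictness of $f_*$ is immediate from $\bar{\rho}(g)=f\circ\rho(g)\circ f^{-1}$ and $\bar{c}=f\circ c$, your whiskering bookkeeping (left whiskering by a twisting automorphism preserves the multiplier, right whiskering by ${^{\bar{\rho}(h^{-1})}(-)}$ twists it by $\bar{\rho}(h^{-1})$) is exactly right, and the paper's composition rule for equivariant functors recovers $(f_*, (\delta_g)_{g\in G})$ on the nose. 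What the factorization buys is modularity: it separates the trivial change of base ring along $f$ from the change of weak action along $\delta$, where the genuine content lives, and it makes explicit that $\delta$ is literally an isomorphism of $G$-actions. What it costs is length; since (\ref{equ:iso-action}) is just (\ref{equ:equi-fun}) specialized to the identity functor, the paper's one-step direct verification performs the same computation without the intermediate category.
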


\begin{proof}
The condition (\ref{equ:equi-fun}) for the data $(f_*, (\delta_g)_{g\in G})$ follows directly from $$\bar{c}(h^{-1}, g^{-1})\cdot \delta(h^{-1}g^{-1})=\bar{\rho}(h^{-1})(\delta(g^{-1}))\cdot \delta(h^{-1})\cdot c'(h^{-1}, g^{-1}).$$
 This identity is one of  the properties of the map $\delta\colon G\rightarrow {R'}^\times$, which gives the isomorphism from $(\bar{\rho}, \bar{c})$ to $(\rho', c')$.
\end{proof}

The following observation will be used later.

\begin{lem}\label{lem:equiv-cross}
Let $T$ be a $G$-stable object  in $\mathcal{C}$ with the $G$-crossed system $(R, \rho, c)$ as above. Assume that $(F, \delta)$ is an equivariant autoequivalence on $\mathcal{C}$. Set $T'=F(T)$, which is also $G$-stable. Denote the corresponding $G$-crossed system by $(R', \rho', c')$ with $R'={\rm End}_\mathcal{C}(T')$. Then $(R, \rho, c)$ is equivalent to $(R', \rho', c')$.
\end{lem}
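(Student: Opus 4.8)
The plan is to produce an explicit ring isomorphism $f\colon R\to R'$ and then show that the weak $G$-action $(\rho,c)$ on $R$ is carried by $f$ to a weak $G$-action isomorphic to $(\rho',c')$. Since $T'=F(T)$ and $F$ is an equivalence, $F$ induces a ring isomorphism $f\colon R={\rm End}_\mathcal{C}(T)\to R'={\rm End}_\mathcal{C}(T')$ by $f(a)=F(a)$; this is the natural candidate. I would fix the chosen isomorphisms $\alpha_g\colon T\to F_g(T)$ defining $(R,\rho,c)$ via \eqref{equ:definec}, and then use these together with the equivariance data $\delta_g\colon FF_g\to F_gF$ to build a \emph{canonical} family of isomorphisms $\alpha'_g\colon T'\to F_g(T')$ for $T'$. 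The obvious choice is $\alpha'_g=(\delta_g)_T\circ F(\alpha_g)\colon F(T)\to FF_g(T)\to F_gF(T)$, which is exactly the structure map appearing in the equivariantization formula for $(F,\delta)^G$ in \eqref{equ:fun-equi}. With this choice, the crossed system $(R',\rho',c')$ computed from $\alpha'_g$ should turn out to be precisely $f_*(\rho,c)$.

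The key steps, in order, are as follows. First I would record that $f$ is a ring isomorphism because $F$ is fully faithful and dense on ${\rm add}\,T$. Second, using the defining relation $\rho(g)^{-1}(a)=(\alpha_{g^{-1}})^{-1}\circ F_{g^{-1}}(a)\circ\alpha_{g^{-1}}$ and the naturality of $\delta_{g^{-1}}$ applied to the morphism $a\colon T\to T$, I would compute $\rho'(g)^{-1}(f(a))=(\alpha'_{g^{-1}})^{-1}\circ F_{g^{-1}}(f(a))\circ\alpha'_{g^{-1}}$ and check that it equals $f(\rho(g)^{-1}(a))$; this shows $\rho'(g)=f\circ\rho(g)\circ f^{-1}=\bar\rho(g)$. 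The naturality square for $\delta_{g^{-1}}$ is what cancels the $\delta$-terms, leaving only the conjugation by $F(\alpha_{g^{-1}})$, which $f=F(-)$ transports correctly. Third, for the cocycle I would substitute $\alpha'_g$ into \eqref{equ:definec} and use the compatibility condition \eqref{equ:equi-fun} relating $\delta_{gh}$, $F\varepsilon_{g,h}$ and $\varepsilon_{g,h}F$; after cancelling the $\delta$-contributions this should yield $c'(g,h)=f(c(g,h))=\bar c(g,h)$, so that $(\rho',c')=f_*(\rho,c)$ on the nose. Finally, the general remark already in the text — that a different choice of structure isomorphisms changes the resulting weak action only up to isomorphism — upgrades this to the statement that the crossed system built from \emph{any} choice of $\alpha'_g$ is equivalent to $(R,\rho,c)$, giving the conclusion via Definition \ref{defn:equiv}.

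The main obstacle I expect is the third step: correctly tracking how the equivariance constraint \eqref{equ:equi-fun} interacts with the twisted, inverse-indexed cocycle formula \eqref{equ:definec}, since the latter involves $\varepsilon$ evaluated at $(h^{-1},g^{-1})$ and several nested applications of $F_{h^{-1}}$, $\delta_{g^{-1}}$, and $\delta_{h^{-1}}$. The bookkeeping of which functor is applied to which natural transformation, and in what order, is where a sign- or index-error is easy to make. The clean way around this is to argue more structurally: the assignment $\alpha_g\mapsto\alpha'_g=(\delta_g)_T\circ F(\alpha_g)$ is exactly the rule by which the equivariant functor $(F,\delta)^G$ sends a $G$-equivariant structure on $T$ to one on $F(T)$, so the compatibility \eqref{equ:equi-fun} for $(F,\delta)$ \emph{is} the statement that $\alpha'$ satisfies the equivariance relation \eqref{equ:rel} whenever $\alpha$ does. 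One can then invoke the identification in Subsection 2.3 (and Lemma \ref{lem:fstar}) that reads off $(\rho,c)$ from the structure maps, so that $f=F(-)$ intertwines the two readings and realizes the desired equivalence of crossed systems; this reduces the verification to the already-established coherence of $(F,\delta)$ rather than a bare-hands cocycle computation.
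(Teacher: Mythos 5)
Your proposal is correct and follows essentially the same route as the paper: the paper's own (very terse) proof takes exactly the isomorphisms $\alpha'_g=(\delta_g)_T\circ F(\alpha_g)$ to define $(\rho',c')$ on $T'=F(T)$ and then observes that $f(a)=F(a)$ gives the required equivalence of crossed systems, which is precisely your construction, with your steps two and three supplying the naturality and \eqref{equ:equi-fun} verifications the paper leaves implicit. Your final appeal to independence of the choice of structure isomorphisms (up to isomorphism of weak actions) is also the same remark the paper relies on from Subsection 2.3.
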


\begin{proof}
We take for each $g\in G$ the isomorphism $\delta_T\circ F(\alpha_g)\colon T'\rightarrow F_g(T')$ to define the weak $G$-action $(\rho', c')$ on $R'$. Recall that $R={\rm End}_\mathcal{C}(T)$ and $R'={\rm End}_\mathcal{C}(T')$.  Then the isomorphism $f\colon R\rightarrow R'$ of rings, defined by $f(a)=F(a)$, yields the required equivalence between the two $G$-crossed systems.
\end{proof}

\section{Cyclic group actions}

In this section, we study the action by a cyclic group. Indeed, actions by a cyclic group are classified by compatible pairs. For module categories, compatible pairs correspond to compatible elements in the outer automorphism groups of the algebras.

\subsection{Compatible pairs}

Let $\mathcal{C}$ be a category,  and let $d\geq 2$. Denote by $C_d=\{e, g, \cdots, g^{d-1} \}$ the cyclic group of order $d$. Then we have $g^ig^j=g^{[i+j]}$ for $0\leq i, j\leq d-1$, where $[i+j]=i+j$ if $i+j\leq d-1$ and $[i+j]=i+j-d$ otherwise.

The following example is taken from \cite[Example 2.2]{Chen16}.

\begin{exm}\label{exm:cyclic}
{\rm Let $F\colon \mathcal{C}\rightarrow \mathcal{C}$ be an autoequivalence with an isomorphism $c\colon F^d\rightarrow {\rm Id}_\mathcal{C}$ satisfying $Fc=cF$.

We construct a $C_d$-action $\{\bar{F}_{g^i}, \bar{\varepsilon}_{g^i, g^j}|\; 0\leq i, j\leq d-1\}$ on $\mathcal{C}$ as follows: $\bar{F}_{g^i}=F^i$, where $F^0={\rm Id}_\mathcal{C}$; the natural transformation $$\bar{\varepsilon}_{g^i, g^j}\colon \bar{F}_{g^i}\bar{F}_{g^j}\longrightarrow \bar{F}_{g^ig^j}=\bar{F}_{g^{[i+j]}}$$
is given by the identity if $i+j<d$, and by $F^{i+j-d}c$ otherwise. We mention that to verify (\ref{equ:2-coc}), one uses the condition $F^ic=cF^i$ for each $i\geq 0$.

The constructed $C_d$-action $\{\bar{F}_{g^i}, \bar{\varepsilon}_{g^i, g^j}|\; 0\leq i, j\leq d-1\}$ is said to be \emph{induced} from the pair $(F, c)$. It is a strict action if and only if $F$ is an automorphism  with $F^d={\rm Id}_\mathcal{C}$ and  $c$ equals the identity.} \hfill $\square$
\end{exm}

Let us consider an arbitrary $C_d$-action $\{F_{g^i}, \varepsilon_{g^i, g^j}|\; 0\leq i,j\leq d-1\}$ on $\mathcal{C}$. Set $F=F_g$. We define a natural isomorphism $\varepsilon^{(i)}\colon F^i\rightarrow F_{g^i}$ for each $i\geq 0$ as follows: $\varepsilon^{(0)}=u^{-1}$, $\varepsilon^{(1)}={\rm Id}_{F}$, $\varepsilon^{(2)}=\varepsilon_{g, g}$ and $\varepsilon^{(i+1)}=\varepsilon_{g^i, g}\circ \varepsilon^{(i)}F$ for each $i\geq 2$.  Consider the following composition
$$c\colon F^d \stackrel{\varepsilon^{(d)}} \longrightarrow F_{g^d}=F_e\stackrel{u}\longrightarrow {\rm Id}_{\mathcal{C}},$$
where $u$ is the unit of the given $C_d$-action.

\begin{lem}\label{lem:cyclic}
Keep the notation as above.   Then the following statements hold.
\begin{enumerate}
\item The natural isomorphism $c\colon F^d\rightarrow {\rm Id}_\mathcal{C}$ satisfies $Fc=cF$.
\item The given $C_d$-action $\{F_{g^i}, \varepsilon_{g^i, g^j}|\; 0\leq i,j\leq d-1\}$  is isomorphic to the $C_d$-action induced by the pair $(F, c)$.
\end{enumerate}
\end{lem}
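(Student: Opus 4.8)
The plan is to isolate a single \emph{composition formula} for the natural isomorphisms $\varepsilon^{(i)}$ and to deduce both statements from it. Precisely, I claim that for all $m,n\ge 0$,
\[
\varepsilon^{(m+n)}=\varepsilon_{g^m,g^n}\circ\left(F_{g^m}\varepsilon^{(n)}\circ\varepsilon^{(m)}F^n\right),
\]
where group-element subscripts are read modulo $d$ and the composite $F_{g^m}\varepsilon^{(n)}\circ\varepsilon^{(m)}F^n$ is the horizontal composite of $\varepsilon^{(m)}$ and $\varepsilon^{(n)}$. I would prove this by induction on $n$. The case $n=1$ is exactly the defining recursion $\varepsilon^{(m+1)}=\varepsilon_{g^m,g}\circ\varepsilon^{(m)}F$ for $m\ge 2$, and for the small cases $m=0,1$ one checks it against the base values $\varepsilon^{(0)}=u^{-1}$ and $\varepsilon^{(1)}={\rm Id}_F$ using the standard unit identities $\varepsilon_{e,h}=uF_h$ and $\varepsilon_{g,e}=F_gu$ (Lemma \ref{lemA:G-action}). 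For the step $n\to n+1$, I would apply the $n=1$ case at level $m+n$, then the induction hypothesis, and then invoke the $2$-cocycle condition (\ref{equ:2-coc}) with $(g,h,k)=(g^m,g^n,g)$ to rewrite $\varepsilon_{g^{m+n},g}\circ\varepsilon_{g^m,g^n}F$ as $\varepsilon_{g^m,g^{n+1}}\circ F_{g^m}\varepsilon_{g^n,g}$; the surviving factors then reassemble, via the interchange law together with $\varepsilon^{(n+1)}=\varepsilon_{g^n,g}\circ\varepsilon^{(n)}F$, into the horizontal composite $F_{g^m}\varepsilon^{(n+1)}\circ\varepsilon^{(m)}F^{n+1}$.

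For statement (1), I would apply this formula to the \emph{single} transformation $\varepsilon^{(d+1)}$ in two ways, namely with $(m,n)=(d,1)$ and with $(m,n)=(1,d)$. Since $g^d=e$, the two structure maps that appear are $\varepsilon_{e,g}=uF$ and $\varepsilon_{g,e}=Fu$, so the two expressions collapse, using $c=u\circ\varepsilon^{(d)}$, to $\varepsilon^{(d+1)}=cF$ and $\varepsilon^{(d+1)}=Fc$ respectively. Equating them yields $Fc=cF$.

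For statement (2), I would take $\delta_{g^i}=\varepsilon^{(i)}$ for $0\le i\le d-1$; each is a composite of natural isomorphisms, hence an isomorphism $\bar{F}_{g^i}=F^i\to F_{g^i}$. To see that $({\rm Id}_\mathcal{C},\delta)$ is an isomorphism from the induced action to the given one, I verify the condition (\ref{equ:iso-action}), which here reads $\varepsilon^{([i+j])}\circ\bar{\varepsilon}_{g^i,g^j}=\varepsilon_{g^i,g^j}\circ(F_{g^i}\varepsilon^{(j)}\circ\varepsilon^{(i)}F^j)$. By the composition formula the right-hand side equals $\varepsilon^{(i+j)}$, with unreduced exponent. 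When $i+j\le d-1$ this is exactly the left-hand side, since $\bar{\varepsilon}_{g^i,g^j}$ is the identity. When $i+j\ge d$, I use the \emph{wrap formula} $\varepsilon^{(n)}=\varepsilon^{(n-d)}\circ F^{n-d}c$ for $n\ge d$, which follows from the composition formula with $(m,n)=(n-d,d)$ together with $\varepsilon_{g^{n-d},e}=F_{g^{n-d}}u$ and $c=u\circ\varepsilon^{(d)}$; this identifies $\varepsilon^{(i+j)}$ with $\varepsilon^{([i+j])}\circ F^{i+j-d}c=\varepsilon^{([i+j])}\circ\bar{\varepsilon}_{g^i,g^j}$, as required.

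I expect the main obstacle to be the inductive proof of the composition formula, where the $2$-cocycle identity must be combined correctly with the interchange law for horizontal and vertical composition of natural transformations; keeping the various whiskerings $F_{g^m}(-)$, $(-)F^n$ straight is the only genuine difficulty. The second, minor, point requiring care is the wrap-around bookkeeping for $i+j\ge d$, which is exactly where the isomorphism $c$ enters the structure maps $\bar{\varepsilon}_{g^i,g^j}$ of the induced action; but this reduces to the wrap formula, itself a direct consequence of the same composition formula.
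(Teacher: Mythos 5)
Your proof is correct, and for part (2) it follows the same skeleton as the paper: the identity you verify, $\varepsilon^{([i+j])}\circ\bar{\varepsilon}_{g^i,g^j}=\varepsilon_{g^i,g^j}\circ(F_{g^i}\varepsilon^{(j)}\circ\varepsilon^{(i)}F^j)$, is exactly the claim in the paper's proof, with the same two-case split $i+j\leq d-1$ versus $i+j\geq d$. Where you genuinely diverge is in the supporting machinery. The paper deduces the two cases from Lemma \ref{lemA:twocases}(1) and (2), which are themselves obtained from the general coherence result Proposition \ref{propA:gen-2coc} (valid for arbitrary tuples $g_1,\dots,g_n$ and arbitrary groupings, proved by a three-case induction), and for part (1) it simply cites \cite[Lemma 2.1]{Chen16}. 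You instead prove the single composition formula $\varepsilon^{(m+n)}=\varepsilon_{g^m,g^n}\circ(F_{g^m}\varepsilon^{(n)}\circ\varepsilon^{(m)}F^n)$ directly by induction on $n$ — and your induction is sound: the step is precisely the $2$-cocycle condition at $(g^m,g^n,g)$ plus functoriality of whiskering — and then extract everything from it: part (1) by computing $\varepsilon^{(d+1)}$ two ways, via $(m,n)=(d,1)$ and $(1,d)$, using $\varepsilon_{e,g}=uF$ and $\varepsilon_{g,e}=Fu$; and the wrap case of part (2) via the formula $\varepsilon^{(n)}=\varepsilon^{(n-d)}\circ F^{n-d}c$, which is precisely the content of Lemma \ref{lemA:twocases}(2), obtained from the same composition formula together with the interchange law. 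What your route buys is self-containedness and economy: no external citation for (1), and no need for the full strength of Proposition \ref{propA:gen-2coc}. What the paper's route buys is reusability: the coherence proposition is established once, in full generality, for any group action, and is available elsewhere in the paper. One loose end in your write-up: you assert the composition formula for all $m,n\geq 0$ but your induction starts at the base case $n=1$; the case $n=0$ is actually needed (it is the case $j=0$ of part (2)), though it follows immediately from $\varepsilon_{g^m,e}=F_{g^m}u$ and $\varepsilon^{(0)}=u^{-1}$, i.e., from the same unit identities of Lemma \ref{lemA:G-action} that you already invoke for the small values of $m$.
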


\begin{proof}
(1) is contained in \cite[Lemma 2.1]{Chen16}, where we use the identity $\varepsilon^{(i+1)}=\varepsilon_{g, g^i}\circ F\varepsilon^{(i)}$.

(2) We use the notation in Example \ref{exm:cyclic}. For each $0\leq i\leq d-1$, we have the isomorphism $\varepsilon^{(i)} \colon \bar{F}_{g^i}=F^i \rightarrow F_{g^i}$. We claim that
$$\varepsilon^{([i+j])}\circ \bar{\varepsilon}_{g^i, g^j}=\varepsilon_{g^i, g^j}\circ (F_{g^i}\varepsilon^{(j)} \circ \varepsilon^{(i)}\bar{F}_{g^j})$$
holds for any $0\leq i, j\leq d-1$. Then we are done with the required isomorphism.

Indeed, the claim follows from Lemma \ref{lemA:twocases}(1) if $i+j\leq d-1$, and from Lemma \ref{lemA:twocases}(2) if $i+j\geq d$.
\end{proof}

The following terminology will be convenient for us.

\begin{defn}\label{defn:comp}
A \emph{compatible pair} $(F, c)$ of order $d$ consists of an endofunctor $F$ on $\mathcal{C}$ and a natural isomorphism $c\colon F^d\rightarrow {\rm Id}_\mathcal{C}$ satisfying $Fc=cF$.

Two compatible pairs $(F, c)$ and $(F', c')$ are defined to be \emph{isomorphic},  provided that there is a natural isomorphism $a\colon F\rightarrow F'$  satisfying $c=c'\circ a^d$. Here, the natural isomorphism $a^i\colon F^i\rightarrow F'^i$ is defined inductively by $a^1=a$ and $a^{i+1}=aF'^{i}\circ Fa^i$. \hfill $\square$
\end{defn}

We have the following classification result: the cyclic group actions are classified by compatible  pairs.

\begin{prop}\label{prop:cyclic-act}
There is a bijection between the sets of isomorphism classes
$$\{\mbox{compatible pairs of order } d\}{/\simeq} \; \longleftrightarrow\;  \{C_d\mbox{-actions on }\mathcal{C}\}{/\simeq},$$
sending a compatible pair to its induced $C_d$-action.
\end{prop}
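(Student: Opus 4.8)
The plan is to establish a bijection between isomorphism classes of compatible pairs of order $d$ and isomorphism classes of $C_d$-actions on $\mathcal{C}$. Most of the essential work is already done in Example \ref{exm:cyclic} and Lemma \ref{lem:cyclic}, so I would organize the proof around a well-definedness check and two inverse-map verifications. First I would note that Example \ref{exm:cyclic} produces, from any compatible pair $(F, c)$, a genuine $C_d$-action (the $2$-cocycle condition was verified there using $F^i c = c F^i$), giving a map from compatible pairs to $C_d$-actions. Conversely, Lemma \ref{lem:cyclic} associates to any $C_d$-action $\{F_{g^i}, \varepsilon_{g^i, g^j}\}$ the compatible pair $(F_g, c)$ with $c = u \circ \varepsilon^{(d)}$, and part (2) of that lemma already shows that the action induced by this pair is isomorphic to the original action. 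This handles one of the two composites at the level of isomorphism classes.

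The remaining verifications are the following. I would first show that the assignment $(F, c) \mapsto \{\bar{F}_{g^i}, \bar{\varepsilon}_{g^i, g^j}\}$ respects isomorphism, i.e. descends to isomorphism classes: given an isomorphism $a \colon F \to F'$ of functors with $c = c' \circ a^d$, the natural isomorphisms $a^i \colon F^i \to F'^i$ should assemble into an isomorphism of the induced $C_d$-actions, meaning $({\rm Id}_\mathcal{C}, (a^i)_{0 \le i \le d-1})$ satisfies the compatibility condition (\ref{equ:iso-action}). This amounts to checking that $a^{[i+j]} \circ \bar{\varepsilon}_{g^i, g^j} = \bar{\varepsilon}'_{g^i, g^j} \circ (\bar{F}'_{g^i} a^j \circ a^i \bar{F}_{g^j})$ for all $0 \le i, j \le d-1$; this splits into the two cases $i + j < d$ and $i + j \ge d$, exactly as in the proof of Lemma \ref{lem:cyclic}, the second case invoking the hypothesis $c = c' \circ a^d$ together with the inductive definition of $a^d$.

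Second, I would verify that the composite in the other direction is the identity on isomorphism classes: starting from a compatible pair $(F, c)$, forming its induced action, and then extracting the associated compatible pair via Lemma \ref{lem:cyclic} should return a pair isomorphic to $(F, c)$. Here the point is that for the induced action one has $F_g = F$ and, by tracing the definition of $\varepsilon^{(i)}$ through the specific transition isomorphisms $\bar{\varepsilon}_{g^i, g^j}$ of Example \ref{exm:cyclic}, one recovers $\varepsilon^{(d)}$ equal to $F^{d-d} c = c$ composed with the unit, so that the extracted natural isomorphism is again $c$ (up to the canonical identification via the unit $u$). Thus the two maps are mutually inverse on isomorphism classes.

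The main obstacle I anticipate is purely bookkeeping rather than conceptual: carefully tracking the inductively defined natural isomorphisms $\varepsilon^{(i)}$ and $a^i$ through the piecewise definition of the transition isomorphisms $\bar{\varepsilon}_{g^i, g^j}$ (identity versus $F^{i+j-d} c$), and confirming that the boundary case $i + j = d$ and the wrap-around cases $i + j > d$ are handled consistently with the unit $u$. All of these reduce to the coherence identities collected in the appendix (Lemmas \ref{lemA:twocases} and \ref{lemA:Nat}) and the relation $Fc = cF$; since Lemma \ref{lem:cyclic} already packages the harder half, the proof itself can be stated concisely, citing Example \ref{exm:cyclic}, Lemma \ref{lem:cyclic}, and the isomorphism-compatibility check above, with the routine case-by-case computations omitted.
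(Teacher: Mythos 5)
Your two maps are the right ones, and two of your verifications are correct: the check that $(F,c)\mapsto\{\bar{F}_{g^i},\bar{\varepsilon}_{g^i,g^j}\}$ respects isomorphism of compatible pairs matches the paper's (one-line) well-definedness claim, and Lemma \ref{lem:cyclic}(2) indeed gives that the induced action of the extracted pair is isomorphic to the original action, which is surjectivity. The gap is in the final conclusion: you never verify that the extraction map $\Psi$ (sending an action to the pair $(F_g,\, u\circ\varepsilon^{(d)})$) sends \emph{isomorphic} actions to \emph{isomorphic} compatible pairs, so $\Psi$ has not been shown to induce any map on isomorphism classes, and ``mutually inverse on isomorphism classes'' does not follow. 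Concretely, injectivity is unproved: if the actions induced by $(F,c)$ and $(F',c')$ are isomorphic but not equal, your two facts $\Psi(\Phi(F,c))\simeq(F,c)$ and $\Psi(\Phi(F',c'))\simeq(F',c')$ give no relation whatsoever between $(F,c)$ and $(F',c')$. This is a genuine logical insufficiency, not a formality: for maps $\Phi,\Psi$ between sets with equivalence relations, the three properties you check ($\Phi$ descends, $\Phi\Psi(y)\sim y$ for all $y$, $\Psi\Phi(x)\sim x$ for all $x$) do not imply that $\Phi$ descends to a bijection. For instance, take $X=\{1,2\}$ with the trivial relation, $Y=\{a,b\}$ with $a\sim b$, $\Phi(1)=a$, $\Phi(2)=b$, $\Psi(a)=1$, $\Psi(b)=2$: all three properties hold, yet $[1]\neq[2]$ have the same image.

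The missing step is exactly the injectivity computation that the paper performs directly, and it is where the real content lies. Given an isomorphism $(\delta_i)_{0\le i\le d-1}$ between the induced actions of $(F,c)$ and $(F',c')$, i.e.\ satisfying $\delta_{[i+j]}\circ\bar{\varepsilon}_{g^i,g^j}=\bar{\varepsilon}'_{g^i,g^j}\circ(F'^i\delta_j\circ\delta_iF^j)$, one sets $a=\delta_1$; the case $i=j=0$ forces $\delta_0=\mathrm{Id}$, the cases $i+j\le d-1$ force $\delta_i=a^i$ inductively, and the case $i+j=d$ then yields $c=c'\circ a^d$, using $a^d=F'^ia^j\circ a^iF^j$. (Equivalently, you could prove the slightly stronger statement that $\Psi$ respects isomorphism of arbitrary actions.) Once you add this, note that your check that $\Psi\Phi$ returns the original pair becomes redundant: well-definedness of $\Phi$, surjectivity via Lemma \ref{lem:cyclic}, and this injectivity argument already constitute the paper's proof.
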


\begin{proof}
The map is well defined, since isomorphic compatible pairs yield isomorphic actions. The surjectivity follows from Lemma \ref{lem:cyclic}.

For the injectivity,  we take two compatible pairs $(F, c)$ and $(F', c')$. Consider the induced $C_d$-actions $\{\bar{F}_{g^i}, \bar{\varepsilon}_{g^i, g^j}|\; 0\leq i, j\leq d-1\}$ and $\{\bar{F}'_{g^i}, \bar{\varepsilon}'_{g^i, g^j}|\; 0\leq i, j\leq d-1\}$. Assume that they are isomorphic. Then there is an isomorphism $\delta_i\colon \bar{F}_{g^i}=F^i\rightarrow \bar{F}'_{g^i}=F'^i$ for each $0\leq i\leq d-1$,  such that the following identity holds
$$\delta_{[i+j]}\circ \bar{\varepsilon}_{g^i, g^j}= \bar{\varepsilon}'_{g^i, g^j}\circ (F'^i\delta_j\circ \delta_i F^j).$$
Set $a=\delta_1\colon F\rightarrow F'$. We take $i=j=0$ to infer that $\delta_0$ equals the identity. The cases where $i+j\leq d-1$ imply that $\delta_i=a^i$. The case that $i+j=d$ yields that $c=c'\circ a^d$, where we use the fact that $a^d=F'^ia^j\circ a^i F^j$. Hence the two compatible pairs are isomorphic. Then we are done.
\end{proof}

Let $k$ be a field, and let $\mathcal{C}$ be a skeletally small $k$-linear category. Here, being skeletally small means  that the isoclasses of objects form a set. Recall by definition that the \emph{center} $Z(\mathcal{C})$ of the category  $\mathcal{C}$ consists of natural transformations $\lambda\colon {\rm Id}_\mathcal{C}\rightarrow {\rm Id}_\mathcal{C}$. Then $Z(\mathcal{C})$ is a commutative $k$-algebra, whose addition and multiplication are induced by the addition of morphisms and the composition of morphisms in $\mathcal{C}$, respectively.

For a $k$-linear autoequivalence $F\colon \mathcal{C}\rightarrow \mathcal{C}$, we denote by $[F]$ its isoclass. We denote by ${\rm Aut}_k(\mathcal{C})$ the group  formed by isoclasses of $k$-linear autoequivalences on $\mathcal{C}$, whose multiplication is induced by the composition of functors. In the literature, ${\rm Aut}_k(\mathcal{C})$ is called \emph{the group of autoequivalences} on $\mathcal{C}$.

\begin{lem}\label{lem:uniquec}
Let $d\geq 2$. Assume that $Z(\mathcal{C})=k$ and each element in $k$ has a $d$-th root. Let $[F]\in {\rm Aut}_k(\mathcal{C})$ with two isomorphisms $c\colon F^d\rightarrow {\rm Id}_\mathcal{C}$ and $c'\colon F^d\rightarrow {\rm Id}_\mathcal{C}$ satisfying $Fc=cF$ and $Fc'=c'F$ . Then the two compatible pairs $(F, c)$ and $(F, c')$ are isomorphic.
\end{lem}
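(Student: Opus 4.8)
The plan is to construct the required isomorphism $a\colon F\to F$ of functors explicitly as a central scalar. First I would form the composite $\theta:=c\circ (c')^{-1}\colon {\rm Id}_\mathcal{C}\to {\rm Id}_\mathcal{C}$, where $(c')^{-1}\colon {\rm Id}_\mathcal{C}\to F^d$ and $c\colon F^d\to {\rm Id}_\mathcal{C}$. By definition $\theta$ is a natural transformation of the identity functor, hence an element of the center $Z(\mathcal{C})=k$; being a composite of isomorphisms it is invertible, so $\theta\in k^\times$. Using the hypothesis that every element of $k$ admits a $d$-th root, I would choose $\xi\in k$ with $\xi^d=\theta$; since $\theta\neq 0$ we have $\xi\in k^\times$.

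Next I would set $a:=\xi\cdot{\rm Id}_F\colon F\to F$, the natural automorphism given by scalar multiplication by $\xi$ on each object $F(X)$; this is natural and invertible because $\xi\in k^\times$. The key computation is that, with the inductive definition $a^{i+1}=aF^{i}\circ Fa^{i}$ from the paragraph preceding the lemma, one has $a^{i}=\xi^{i}\cdot{\rm Id}_{F^{i}}$ for all $i\geq 1$. Indeed, the whiskering $aF^{i}$ is scalar multiplication by $\xi$ on $F^{i+1}$, while $Fa^{i}=F(\xi^{i}\cdot{\rm Id}_{F^{i}})=\xi^{i}\cdot{\rm Id}_{F^{i+1}}$ by the $k$-linearity of $F$; composing gives $a^{i+1}=\xi^{i+1}\cdot{\rm Id}_{F^{i+1}}$. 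In particular $a^{d}=\xi^{d}\cdot{\rm Id}_{F^{d}}=\theta\cdot{\rm Id}_{F^{d}}$.

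Finally I would verify the defining relation $c=c'\circ a^{d}$ for an isomorphism of compatible pairs. Since $a^d$ is multiplication by the central scalar $\theta$, one has $c'\circ a^{d}=c'\circ(\theta\cdot{\rm Id}_{F^d})=\theta\cdot c'=(c\circ(c')^{-1})\circ c'=c$, where the last two equalities use that $\theta$, as a natural transformation ${\rm Id}_\mathcal{C}\to{\rm Id}_\mathcal{C}$, equals $c\circ(c')^{-1}$. Hence $a\colon F\to F$ exhibits $(F,c)$ and $(F,c')$ as isomorphic compatible pairs.

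The argument is short, so there is no serious obstacle; the one point requiring care is the recognition that $\theta$, being a natural transformation ${\rm Id}_\mathcal{C}\to{\rm Id}_\mathcal{C}$, is forced to be an invertible scalar by the hypothesis $Z(\mathcal{C})=k$, together with the clean inductive identity $a^{d}=\xi^{d}\cdot{\rm Id}_{F^d}$, whose proof relies on the $k$-linearity of $F$. It is worth noting that the compatibility conditions $Fc=cF$ and $Fc'=c'F$ are not actually invoked in this step; they serve only to guarantee that $(F,c)$ and $(F,c')$ are compatible pairs in the first place.
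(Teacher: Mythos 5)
Your proof is correct and follows essentially the same route as the paper: both identify the discrepancy between $c$ and $c'$ as an invertible scalar via $Z(\mathcal{C})=k$, extract a $d$-th root, and take the isomorphism $a$ to be that scalar multiple of ${\rm Id}_F$. Your write-up is in fact slightly more careful than the paper's (which composes in the direction giving an isomorphism $(F,c')\to(F,c)$ without comment), since you verify the inductive identity $a^i=\xi^i\,{\rm Id}_{F^i}$ and check $c=c'\circ a^d$ with the scalars oriented correctly.
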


\begin{proof}
Since $Z(\mathcal{C})=k$, there is a nonzero element $\lambda\in k$ such that $c'=\lambda c$. Assume that $\lambda=\mu^d$. Then the natural isomorphism $\mu F\colon F\rightarrow F$ implies  that the two compatible pairs are isomorphic.
\end{proof}

A $G$-action $\{F_g, \varepsilon_{g, h} |\; g, h\in G\}$ on $\mathcal{C}$ is \emph{$k$-linear}, provided that  each autoequivalence $F_g$ is $k$-linear.  Combining Proposition \ref{prop:cyclic-act} and Lemma \ref{lem:uniquec}, we have the following immediate consequence.

\begin{cor}\label{cor:cyclic-act}
Let $d\geq 2$. Assume that $Z(\mathcal{C})=k$ and each element in $k$ has a $d$-th root. Then there is a bijection between
\begin{enumerate}
\item the set of isoclasses of $k$-linear $C_d$-actions on $\mathcal{C}$, and
\item $\{[F]\in {\rm Aut}_k(\mathcal{C})\; |\;  \mbox{ there exists some compatible pair } (F, c) \mbox{ of order }d\}$,
\end{enumerate}
which sends a $C_d$-action $\{F_{g^i}, \varepsilon_{g^i, g^j}|\; 0\leq i,j\leq d-1\}$  on $\mathcal{C}$ to $[F_g]$. \hfill $\square$
\end{cor}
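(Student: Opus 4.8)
The plan is to derive Corollary \ref{cor:cyclic-act} directly by combining Proposition \ref{prop:cyclic-act} with Lemma \ref{lem:uniquec}, treating it as a purely formal consequence. First I would recall that by Proposition \ref{prop:cyclic-act}, $k$-linear $C_d$-actions on $\mathcal{C}$ (up to isomorphism) are in bijection with isomorphism classes of compatible pairs $(F, c)$ of order $d$ in which $F$ is a $k$-linear autoequivalence. So the entire task reduces to showing that the assignment $(F, c) \mapsto [F]$ descends to a well-defined \emph{bijection} from isoclasses of such compatible pairs onto the set described in item (2). The map to $[F]$ is clearly well defined on isomorphism classes, since an isomorphism of compatible pairs $(F, c) \simeq (F', c')$ includes an isomorphism $a\colon F \rightarrow F'$, forcing $[F] = [F']$; and its image is by definition exactly the set of $[F]$ admitting \emph{some} compatible pair, so surjectivity onto item (2) is immediate.

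The crux is injectivity, and this is precisely where Lemma \ref{lem:uniquec} is needed. Suppose two compatible pairs $(F, c)$ and $(F', c')$ satisfy $[F] = [F']$ in ${\rm Aut}_k(\mathcal{C})$. Then there is an isomorphism $a\colon F \rightarrow F'$ of $k$-linear functors. Transporting $c'$ along $a$ (via the natural isomorphism $a^d\colon F^d \rightarrow F'^d$ built as in the definition of isomorphic compatible pairs) produces a second natural isomorphism $c'' := c'\circ a^d\colon F^d \rightarrow {\rm Id}_\mathcal{C}$ satisfying $Fc'' = c''F$. By construction $(F', c')$ is isomorphic to $(F, c'')$ through $a$. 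Now both $(F, c)$ and $(F, c'')$ are compatible pairs for the \emph{same} functor $F$, so Lemma \ref{lem:uniquec}, whose hypotheses $Z(\mathcal{C}) = k$ and existence of $d$-th roots in $k$ are exactly those assumed in the corollary, applies to give $(F, c) \simeq (F, c'')$. Composing the two isomorphisms yields $(F, c) \simeq (F', c')$, establishing injectivity.

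I expect the main obstacle to be a bookkeeping subtlety rather than a conceptual one: I must verify that $c'' = c'\circ a^d$ genuinely satisfies the commutation relation $Fc'' = c''F$ so that it is a legitimate compatible pair to which Lemma \ref{lem:uniquec} applies. This requires checking that the transport $a^d$ intertwines the relations $Fc' = c'F$ and $Fc'' = c''F$ correctly, using the inductive definition $a^{i+1} = aF'^{i} \circ Fa^i$ and the naturality of $a$. This is routine but is the one place where the definition of isomorphic compatible pairs must be unwound carefully. Once this is in place, the remaining verifications — well-definedness and surjectivity — are formal, and the corollary follows by simply chaining the two cited results. Since the statement is explicitly flagged as an immediate consequence, I would keep the argument brief, indicating the transport construction and citing Lemma \ref{lem:uniquec} for the key uniqueness step.
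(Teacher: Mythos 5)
Your proposal is correct and matches the paper's (implicit) proof exactly: the corollary is stated there as an immediate consequence of Proposition \ref{prop:cyclic-act} and Lemma \ref{lem:uniquec}, which is precisely your reduction — surjectivity and well-definedness are formal, and injectivity follows by transporting $c'$ along $a^d$ to a compatible pair on the same functor $F$ and invoking Lemma \ref{lem:uniquec}. The commutation check $Fc''=c''F$ that you flag is indeed routine, following from the interchange law (Lemma \ref{lemA:rule}) and the identity $aF'^d\circ Fa^d = F'^da\circ a^dF$, i.e.\ associativity of horizontal composition of $a$ with itself.
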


\subsection{Cyclic group actions on module categories}

Let $A$ be a finite dimensional $k$-algebra. We denote by ${\rm Aut}_k(A)$ the automorphism group of the algebra $A$. We say that a weak $G$-action $(\rho, c)$ on $A$ is \emph{$k$-linear} if $\rho$ takes values in ${\rm Aut}_k(A)$.

Recall that an automorphism $\sigma$ is inner provided that there is an invertible element $a\in A^\times$ satisfying $\sigma(x)=a^{-1}xa$. Inner automorphisms form a normal subgroup ${\rm Inn}_k(A)$ of ${\rm Aut}_k(A)$. The quotient group
$${\rm Out}_k(A)={\rm Aut}_k(A)/{{\rm Inn}_k(A)}$$
is called the \emph{outer automorphism group} of $A$, where the corresponding image of $\sigma\in {\rm Aut}_k(A)$ is denoted by $\bar{\sigma}$.

\begin{defn}
Let $d\geq 2$. An automorphism $\sigma\in {\rm Aut}_k(A)$ is \emph{$d$-compatible}, provided that there exists some element $a\in A^\times$ such that $\sigma(a)=a$ and $\sigma^d(x)=a^{-1}xa$ for all $x\in A$. \hfill $\square$
\end{defn}

For any inner automorphism $\delta$, we observe that $\sigma$ is $d$-compatible if and only if so is $\sigma\delta$. If $\sigma$ is $d$-compatible, we will also call the corresponding element $\bar{\sigma}\in {\rm Out}_k(A)$  \emph{$d$-compatible}. These elements are closed under conjugation.

\begin{lem}\label{lem:modA}
Assume that  the algebra $A$ is basic. Let $F$ be a $k$-linear autoequivalence on ${\rm mod}\mbox{-A}$ and $\sigma\in {\rm Aut}_k(A)$. Then the following statements hold.
\begin{enumerate}
\item We have $F(A)\simeq A$, and thus $F$ is isomorphic to $^{\sigma'}(-)$ for some $\sigma'\in {\rm Aut}_k(A)$.
\item There is an isomorphism  ${\rm Out}_k(A)\rightarrow {\rm Aut}_k({\rm mod}\mbox{-A})$ of groups, sending $\bar{\sigma}'$ to $[{^{\sigma'}(-)}]$, the isoclass of the twisting automorphism  $^{\sigma'}(-)$.
\item The automorphism $\sigma$ is $d$-compatible if and only if there is a compatible pair $({^\sigma(-)}, c)$ of order $d$.
\end{enumerate}
\end{lem}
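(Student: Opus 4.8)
The plan is to handle the three statements in order, using (1) to supply the surjectivity in (2) and reducing (3) to a direct computation with twisting functors.

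For (1), I would first observe that $F$, being a $k$-linear equivalence of the abelian category ${\rm mod}\mbox{-}A$, preserves projectivity, since ``$P$ is projective'' is the categorical condition that ${\rm Hom}(P,-)$ be exact. Hence $F$ restricts to an autoequivalence of ${\rm proj}\mbox{-}A={\rm add}\,A$ and permutes the finitely many isoclasses of indecomposable projective modules. Because $A$ is basic, the regular module $A$ is exactly the multiplicity-free direct sum of all indecomposable projectives, so applying this permutation gives $F(A)\simeq A$. By Lemma \ref{lem:mod}(1) (valid for ${\rm mod}\mbox{-}A$), it follows that $F\simeq {}^{\sigma'}(-)$ for some ring automorphism $\sigma'$. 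To check that $\sigma'$ is $k$-linear, I would use that for each $\lambda\in k$ the central endomorphism $\lambda\cdot{\rm Id}_M$ satisfies $F(\lambda\cdot{\rm Id}_M)=\lambda\cdot{\rm Id}_{F(M)}$; transporting this identity across the isomorphism $F\simeq {}^{\sigma'}(-)$ and evaluating via Lemma \ref{lem:mod}(3) forces $\sigma'(\lambda)=\lambda$, so $\sigma'\in{\rm Aut}_k(A)$.

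For (2), the assignment $\bar{\sigma'}\mapsto[{}^{\sigma'}(-)]$ is well defined and injective by Lemma \ref{lem:mod}(2): two twisting automorphisms are isomorphic precisely when the underlying algebra automorphisms differ by an inner one, i.e. coincide in ${\rm Out}_k(A)$. Surjectivity onto the group of $k$-linear autoequivalences is exactly part (1). Compatibility with the group structures follows from the composition law ${}^{\sigma'}({}^{\sigma}(-))={}^{\sigma\sigma'}(-)$ recorded in Subsection 2.2, up to the standard convention for composing autoequivalences. Together these give the claimed group isomorphism.

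For (3), I would use that ${}^{\sigma}(-)^d={}^{\sigma^d}(-)$. Any natural isomorphism $c\colon{}^{\sigma^d}(-)\to{\rm Id}$ has, by Lemma \ref{lem:mod}(3), the form $c_M(m)=m.a$ for a unique $a\in A^\times$, and by Lemma \ref{lem:mod}(2) its mere existence is equivalent to $\sigma^d(x)=axa^{-1}$ for all $x\in A$. It then remains to unpack the condition $Fc=cF$ with $F={}^{\sigma}(-)$: computing both composites $F^{d+1}\to F$ on underlying maps gives $(Fc)_M\colon m\mapsto m.a$ and $(cF)_M\colon m\mapsto m.\sigma(a)$, so $Fc=cF$ holds iff $\sigma(a)=a$. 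Thus a compatible pair $({}^{\sigma}(-),c)$ of order $d$ exists iff there is $a\in A^\times$ with $\sigma^d(x)=axa^{-1}$ and $\sigma(a)=a$, which after replacing $a$ by $a^{-1}$ is precisely the $d$-compatibility of $\sigma$.

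I expect the main obstacle to lie in the bookkeeping of (3), namely correctly identifying which module structure the component $c_{{}^{\sigma}M}$ refers to when forming $cF$: since Lemma \ref{lem:mod}(3) expresses $c_N$ through the original action of $N$, taking $N={}^{\sigma}M$ is exactly what introduces $\sigma(a)$ and hence the condition $\sigma(a)=a$. The various conventions (left versus right, the inverse in $\rho(g^{-1})$, and the direction of conjugation in the definition of $d$-compatibility) must be tracked consistently throughout.
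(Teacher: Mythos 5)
Your proposal is correct and takes essentially the same approach as the paper: all three statements are reduced to Lemma \ref{lem:mod}, with (1) resting on the fact that an equivalence preserves the basic projective generator $A$. The paper's own proof is simply a much terser version of yours, leaving implicit exactly the points you spell out (the $k$-linearity of $\sigma'$, the composition convention in (2), and the computation in (3) showing that $Fc=cF$ forces $\sigma(a)=a$, up to replacing $a$ by $a^{-1}$ to match the stated form of $d$-compatibility).
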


\begin{proof}
Recall that any equivalence preserves basic projective generators. Then (1) follows from Lemma \ref{lem:mod}(1). Lemma \ref{lem:mod}(2)  implies the statement (2). For (3), we  just observe that a compatible isomorphism $c\colon ^{\sigma^d}(-)\rightarrow {\rm Id}_{{\rm mod}\mbox{-}A}$ yields the required element $a\in A^\times$; see Lemma \ref{lem:mod}(3).
\end{proof}

We denote by $Z(A)$ the center of $A$. It is well known that $Z(A)$ is isomorphic to $Z({\rm mod}\mbox{-}A)$.

\begin{prop}\label{prop:cyclic-act-alg}
Let $A$ be a finite dimensional basic $k$-algebra satisfying $Z(A)=k$, and $d\geq 2$. Assume that each element in $k$ has a $d$-th root. Then the following sets are in one-to-one correspondence to each other:
\begin{enumerate}
\item the set of isoclasses of $k$-linear $C_d$-actions on ${\rm mod}\mbox{-}A$;
\item the set of isoclasses  of $k$-linear weak $C_d$-actions on $A$;
\item the set of $d$-compatible elements in ${\rm Out}_k(A)$.
\end{enumerate}
Moreover, the bijection ``$(2)\Leftrightarrow (3)$" induces a bijection between the set of equivalence classes of $k$-linear $C_d$-crossed systems $(A, \rho, c)$ to the set of conjugacy  classes formed by $d$-compatible elements in ${\rm Out}_k(A)$.
\end{prop}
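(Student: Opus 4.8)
The plan is to establish the two bijections $(1)\Leftrightarrow(2)$ and $(1)\Leftrightarrow(3)$ separately, assemble them into a single bijection $\Phi$ from isomorphism classes of $k$-linear weak $C_d$-actions on $A$ to $d$-compatible elements of ${\rm Out}_k(A)$, and then show that $\Phi$ descends to the quotients appearing in the ``moreover'' clause. For $(1)\Leftrightarrow(2)$ I would invoke Proposition \ref{prop:act-mod} for the category ${\rm mod}\mbox{-}A$ and the group $C_d$. The only point to verify is that the clause ``fixing $A$'' is automatic in the $k$-linear setting: by Lemma \ref{lem:modA}(1) every $k$-linear autoequivalence of ${\rm mod}\mbox{-}A$ sends $A$ to an isomorphic copy (as $A$ is basic), so each $F_{g^i}$ fixes $A$, hence so does the whole action. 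Conversely, $k$-linearity of a weak action $(\rho,c)$, meaning $\rho$ takes values in ${\rm Aut}_k(A)$, matches $k$-linearity of the associated twisting autoequivalences $\{{^{\rho(g^{-1})}(-)}\}$; this compatibility is routine (again using Lemma \ref{lem:modA}(1) to arrange $k$-linear representatives). Thus Proposition \ref{prop:act-mod} restricts to the desired bijection between $(2)$ and $(1)$.

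For $(1)\Leftrightarrow(3)$ I would apply Corollary \ref{cor:cyclic-act}, which is available since $Z({\rm mod}\mbox{-}A)\cong Z(A)=k$ and every element of $k$ admits a $d$-th root. It identifies the isoclass set of $k$-linear $C_d$-actions with the set of $[F]\in{\rm Aut}_k({\rm mod}\mbox{-}A)$ admitting a compatible pair $(F,c)$ of order $d$. Now Lemma \ref{lem:modA}(2) supplies the isomorphism ${\rm Out}_k(A)\xrightarrow{\sim}{\rm Aut}_k({\rm mod}\mbox{-}A)$, $\bar\sigma\mapsto[{^\sigma(-)}]$, while Lemma \ref{lem:modA}(3) says that $[{^\sigma(-)}]$ admits such a compatible pair precisely when $\sigma$ is $d$-compatible. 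Transporting the above set through this isomorphism yields exactly the $d$-compatible elements of ${\rm Out}_k(A)$, giving $(1)\Leftrightarrow(3)$. Tracing the chain $(2)\to(1)\to(3)$, the composite bijection $\Phi$ sends the class of $(\rho,c)$ to $\overline{\rho(g^{-1})}=\overline{\rho(g)}^{-1}\in{\rm Out}_k(A)$, the image of the generator, which is well defined because $\bar\rho\colon C_d\to{\rm Out}_k(A)$ is a group homomorphism by (WA1). This is the step where $Z(A)=k$ and the $d$-th root hypothesis are genuinely used, through Corollary \ref{cor:cyclic-act} and Lemma \ref{lem:uniquec}: they guarantee that the isoclass of a $k$-linear $C_d$-action is determined by the single isoclass $[F_g]$, so that $\Phi$ is well defined and bijective.

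For the ``moreover'' part I would show that the equivalence relation of Definition \ref{defn:equiv} on $k$-linear $C_d$-crossed systems $(A,\rho,c)$ corresponds under $\Phi$ exactly to conjugacy. Unwinding Definition \ref{defn:equiv}, the systems $(A,\rho,c)$ and $(A,\rho',c')$ are equivalent iff there is some $f\in{\rm Aut}_k(A)$ with $f_*(\rho,c)\simeq(\rho',c')$ as weak actions (Definition \ref{defn:weakact}). Since isomorphic weak actions have equal $\Phi$-image, and $\Phi$ is injective on isomorphism classes, this holds iff $\Phi(f_*(\rho,c))=\Phi(\rho',c')$. From $f_*(\rho,c)=(\bar\rho,\bar c)$ with $\bar\rho(g^{-1})=f\rho(g^{-1})f^{-1}$ one computes $\Phi(f_*(\rho,c))=\overline{f\rho(g^{-1})f^{-1}}=\bar f\,\Phi(\rho,c)\,\bar f^{-1}$. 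Hence equivalence of the crossed systems holds iff $\Phi(\rho,c)$ and $\Phi(\rho',c')$ are conjugate by some $\bar f$; since ${\rm Aut}_k(A)\to{\rm Out}_k(A)$ is surjective and the $d$-compatible elements are closed under conjugation, this is precisely conjugacy in ${\rm Out}_k(A)$. Therefore $\Phi$ induces the claimed bijection between equivalence classes of crossed systems and conjugacy classes of $d$-compatible elements.

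The main obstacle I anticipate is not computational but conceptual bookkeeping in the last paragraph: one must keep clearly separate the two relations in play, namely \emph{isomorphism} of weak actions, which $\Phi$ collapses to equality of elements, versus \emph{equivalence} of crossed systems, whose extra freedom of a ring automorphism $f$ is exactly what produces conjugation on the $\Phi$-side. Getting the induced map on ${\rm Out}_k(A)$ right, including the harmless inverse arising from the $g^{-1}$ in $\Phi$, and confirming that $f_*$ acts by genuine conjugation rather than some twisted variant, is where I would take the most care.
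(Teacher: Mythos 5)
Your proof is correct and follows essentially the same route as the paper: Lemma \ref{lem:modA}(1) together with Proposition \ref{prop:act-mod} for $(1)\Leftrightarrow(2)$, Lemma \ref{lem:modA}(2)(3) together with Corollary \ref{cor:cyclic-act} for $(1)\Leftrightarrow(3)$, and then the composite bijection $(2)\to(3)$, $(\rho,c)\mapsto\overline{\rho(g^{-1})}$, to deduce the statement on crossed systems versus conjugacy classes. Your explicit tracing of that composite is in fact more careful than the paper's own wording, which says the bijection sends $(\rho,c)$ to the image of $\rho(e)$ --- evidently a slip for the generator, since $\rho(e)$ is inner and hence trivial in ${\rm Out}_k(A)$.
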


\begin{proof}
By Lemma \ref{lem:modA}(1), each $G$-action on ${\rm mod}\mbox{-}A$ fixes $A$. By Proposition \ref{prop:act-mod}, we have the correspondence between (1) and (2). By Lemma \ref{lem:modA}(2) and (3), the set of $d$-compatible elements is in a bijection to the set  in Corollary \ref{cor:cyclic-act}(2). Then Corollary \ref{cor:cyclic-act}  yields the correspondence between (1) and (3).

The bijection from (2) to (3) sends $(\rho, c)$ to the canonical image of $\rho(e)$ in ${\rm Out}_k(A)$. Then the final statement is immediate.
\end{proof}

\section{Associated monads and  dual actions}

In this section, we recall the comparison between the category of equivariant objects and module categories over certain monads. The strict action of the character group on the category of equivariant objects is called the dual action. We recall the details of a duality theorem from \cite{El2014}.

\subsection{Monads and adjoint pairs} In this subsection, we recall basic facts on monads and adjoint pairs. The standard reference is \cite[Chapter VI]{McL}.

Let $\mathcal{C}$ be a category. Recall that a \emph{monad} on  $\mathcal{C}$ is a triple $(M, \eta, \mu)$ consisting of an endofunctor $M\colon \mathcal{C}\rightarrow \mathcal{C}$ and two natural transformations, the \emph{unit} $\eta\colon{\rm Id}_\mathcal{C}\rightarrow M$  and the \emph{multiplication} $\mu \colon M^2\rightarrow M$, subject to the relations $\mu \circ M\mu =\mu \circ \mu M$ and $\mu\circ M\eta={\rm Id}_M=\mu\circ \eta M$. We suppress the unit and multiplication when they are understood, and denote the monad $(M, \eta, \mu)$ simply by $M$.

We will recall that each adjoint pair yields a monad.  Assume that $F\colon \mathcal{C}\rightarrow \mathcal{D}$ is a functor, which admits a right adjoint $U\colon \mathcal{D}\rightarrow \mathcal{C}$. We denote by $\eta\colon {\rm Id}_\mathcal{C}\rightarrow UF$ the \emph{unit} and $\epsilon\colon FU\rightarrow {\rm Id}_\mathcal{D}$ the \emph{counit}; they satisfy $\epsilon F\circ F\eta ={\rm Id}_F$ and $U\epsilon\circ \eta U={\rm Id}_U$. We denote this adjoint pair on $\mathcal{C}$ and $\mathcal{D}$ by the quadruple $(F, U; \eta, \epsilon)$. In other words, an adjoint pair really means the relevant quadruple. However, we suppress the unit and counit, when they are clear from the context.

The adjoint pair $(F, U;\eta, \epsilon)$  defines a monad $(M, \eta, \mu)$ on $\mathcal{C}$, where  $M=UF\colon \mathcal{C}\rightarrow \mathcal{C}$ and $\mu=U\epsilon F\colon M^2=UFUF\rightarrow U{\rm Id}_\mathcal{D}F=M$. The resulting monad $(M, \eta, \mu)$ on $\mathcal{C}$ is said to be \emph{defined} by the adjoint pair $(F, U; \eta, \epsilon)$. Indeed, as we will recall,  any monad is defined by a certain adjoint pair; see \cite[VI.2]{McL}.

For a monad $(M, \eta, \mu)$ on $\mathcal{C}$, an $M$-\emph{module} is a pair $(X, \lambda)$ consisting of an object $X$ in $\mathcal{C}$ and a morphism $\lambda \colon M(X)\rightarrow X$ subject to the conditions $\lambda\circ M\lambda =\lambda\circ \mu_X$ and $\lambda\circ \eta_X={\rm Id}_X$;  the object $X$ is said to be the \emph{underlying object} of the module. A morphism  $f\colon (X, \lambda)\rightarrow (X', \lambda')$  between two $M$-modules is a morphism $f\colon  X\rightarrow X'$ in $\mathcal{C}$ satisfying $f\circ \lambda=\lambda'\circ M(f)$.  Then we have the category  $M\mbox{-Mod}_\mathcal{C}$ of $M$-modules and the \emph{forgetful functor} $U_M\colon M\mbox{-Mod}_\mathcal{C}\rightarrow \mathcal{C}$.

We observe that each object $X$ in $\mathcal{C}$ gives rise to an $M$-module $F_M(X)=(M(X), \mu_X)$, the \emph{free $M$-module} generated by $X$. Indeed, this gives rise to the \emph{free module functor}  $F_M\colon \mathcal{C}\rightarrow M\mbox{-Mod}_\mathcal{C}$ sending $X$ to the free module $F_M(X)$, and a morphism $f\colon X\rightarrow Y$ to the morphism $M(f)\colon F_M(X)\rightarrow F_M(Y)$.

We have the adjoint pair $(F_M, U_M; \eta, \epsilon_M)$ on $\mathcal{C}$ and $M\mbox{-Mod}_\mathcal{C}$, where for an $M$-module $(X, \lambda)$, the counit $\epsilon_M$ is given such that $$(\epsilon_M)_{(X, \lambda)}=\lambda\colon F_M U_M(X,\lambda)=(M(X), \mu_X)\longrightarrow (X, \lambda).$$ The unit of the adjoint pair is given by the unit $\eta$ of the monad $M$, where we observe that $M=U_MF_M$. Moreover, the adjoint pair $(F_M, U_M; \eta, \epsilon_M)$ defines the given monad $(M, \eta, \mu)$ on $\mathcal{C}$.

For the given monad $(M, \eta, \mu)$, the above adjoint pair $(F_M, U_M; \eta, \epsilon_M)$ enjoys the following universal property: for any adjoint pair $(F, U; \eta, \epsilon)$ on $\mathcal{C}$ and $\mathcal{D}$ that defines $M$, there is a unique functor
$$K\colon \mathcal{D}\longrightarrow M\mbox{-Mod}_\mathcal{C}$$
satisfying $KF=F_M$ and $U_MK=U$; see \cite[VI.3]{McL}. This unique functor $K$ will be referred as the \emph{comparison functor} associated to the  adjoint pair $(F, U; \eta, \epsilon)$.

Indeed, the comparison functor $K\colon \mathcal{D}\rightarrow M\mbox{-Mod}_\mathcal{C}$ is described as follows:
 \begin{align}\label{equ:K}
 K(D)=(U(D), U\epsilon_D), \quad K(f)=U(f)
\end{align}
for an object $D$ and a morphism $f$ in $\mathcal{D}$. Here, we observe that $M=UF$ and that $(U(D), U\epsilon_D)$ is an $M$-module.

An \emph{isomorphism} $\iota\colon (M_1, \eta_1, \mu_1)\rightarrow (M_2, \eta_2, \mu_2)$ between two monads on $\mathcal{C}$ is a natural isomorphism $\iota\colon M_1\rightarrow M_2$  of functors satisfying $\iota\circ \eta_1=\eta_2$ and $\iota\circ \mu_1=\mu_2\circ (\iota M_2\circ M_1\iota)$. This isomorphism yields an isomorphism
\begin{align}
\iota^*\colon M_2\mbox{-Mod}_\mathcal{C}\longrightarrow M_1\mbox{-Mod}_\mathcal{C}
\end{align}
 between module categories, which sends an $M_2$-module $(X,\lambda)$ to the $M_1$-module $(X, \lambda\circ \iota_X)$, and acts on morphisms by the identity.

\subsection{The associated monads}

In what follows, we assume that $G$ is a finite group and that $\mathcal{C}$ is an additive category. We fix a $G$-action $\{F_g, \varepsilon_{g, h}|\; g, h\in G\}$ on $\mathcal{C}$. Then the category $\mathcal{C}^G$ of equivariant objects is additive, and the forgetful functor $U\colon \mathcal{C}^G\rightarrow \mathcal{C}$ is also additive.

  We recall that the \emph{induction functor} ${\rm Ind}\colon \mathcal{C}\rightarrow \mathcal{C}^G$ is defined as follows: for an object $X$, set $${\rm Ind}(X)=(\bigoplus_{h\in G} F_h(X), \varepsilon(X)),$$
  where for each $g\in G$, the isomorphism $\varepsilon(X)_g \colon \bigoplus_{h\in G} F_h(X)\rightarrow F_g(\bigoplus_{h\in G} F_h(X))$ is diagonally induced by the isomorphism $(\varepsilon_{g, g^{-1}h})_X^{-1}\colon F_h(X)\rightarrow F_gF_{g^{-1}h}(X)$. Here, to verify that ${\rm Ind}(X)$ is indeed an equivariant object, we need the $2$-cocycle condition (\ref{equ:2-coc}). The induction functor sends a morphism $\theta\colon X\rightarrow Y$ to ${\rm Ind}(\theta)=\bigoplus_{h\in G} F_h(\theta)\colon {\rm Ind}(X)\rightarrow {\rm Ind}(Y)$.

For an object $X$ in $\mathcal{C}$ and an object $(Y, \beta)$ in $\mathcal{C}^G$,
  a morphism ${\rm Ind}(X)\rightarrow (Y, \beta)$ is of the form $\sum_{h\in G}\theta_h\colon \bigoplus_{h\in G} F_h(X)\rightarrow Y$
  satisfying $F_g(\theta_h)=\beta_g\circ \theta_{gh}\circ (\varepsilon_{g, h})_X$ for any $g, h\in G$.  The  adjoint pair $({\rm Ind}, U; \eta, \epsilon)$ is given by the
  following natural isomorphism
\begin{align}\label{equ:adj1}
 {\rm Hom}_{\mathcal{C}^G} ({\rm Ind}(X), (Y, \beta))\stackrel{\sim}\longrightarrow {\rm Hom}_\mathcal{C}(X, U(Y, \beta))
 \end{align}
 sending the morphism $\sum_{h\in G} \theta_h$ to $\theta_e \circ u_X^{-1}\colon X\rightarrow Y$. The corresponding unit $\eta\colon {\rm Id}_\mathcal{C}\rightarrow U{\rm Ind}$ is
 given such that $\eta_X=(u_X^{-1}, 0, \cdots, 0)^t$, where `$t$' denotes the transpose;
 the counit $\epsilon\colon {\rm Ind}U\rightarrow {\rm Id}_{\mathcal{C}^G}$ is given such that
  $\epsilon_{(Y, \beta)}=\sum_{h\in G} \beta_h^{-1}$.

The monad $M=(U{\rm Ind}, \eta, \mu)$ defined by the  adjoint pair $({\rm Ind}, U; \eta, \epsilon)$ is computed as follows. The endofunctor $M=U{\rm Ind}\colon \mathcal{C}\rightarrow \mathcal{C}$ is given by $M(X)=\bigoplus_{h\in G} F_h(X)$ and $M(f)=\bigoplus_{h\in G} F_h(f)$ for an object $X$ and a morphism $f$ in $\mathcal{C}$. The multiplication $\mu\colon M^2\rightarrow M$ is given by
 $$\mu_X=U\epsilon_{{\rm Ind}(X)}\colon M^2(X)=\bigoplus_{h, g\in G} F_hF_g(X) \longrightarrow M(X)=\bigoplus_{h'\in G} F_{h'}(X)$$
 such that the corresponding entry
  $F_hF_g(X)\rightarrow F_{h'}(X)$ equals $\delta_{hg, h'}(\varepsilon_{h, g})_X$. Here, $\delta$ is the Kronecker symbol.

   Applying (\ref{equ:K}), we compute  that  the associated comparison functor
   $$K\colon \mathcal{C}^G\longrightarrow M\mbox{-Mod}_\mathcal{C}$$
    sends a $G$-equivariant object $(X, \alpha)$ to the $M$-module $(X, \lambda)$, where the module action is given by  $\lambda=\sum_{h\in G}(\alpha_h)^{-1}\colon M(X)=\bigoplus_{h\in G}F_h(X)\rightarrow X$. The functor $K$ acts on morphisms by the identity.

  The following result is standard; see  \cite[Proposition~3.11(2)]{El2014} and \cite[Proposition~3.1]{CCZ}.

  \begin{lem}\label{lem:K}
  Keep the notation as above. Then the comparison functor $K\colon \mathcal{C}^G\rightarrow M\mbox{-{\rm Mod}}_\mathcal{C}$  is an isomorphism of categories. \hfill $\square$
  \end{lem}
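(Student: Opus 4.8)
The plan is to verify that the comparison functor $K$ is an isomorphism by exhibiting an explicit inverse, rather than merely an equivalence. Since $K$ acts as the identity on morphisms and is identity-on-objects at the level of underlying objects (it sends $(X,\alpha)$ to $(X,\lambda)$ with the same underlying $X$), the claim reduces to a bijection between the equivariant-structure data $\alpha=(\alpha_g)_{g\in G}$ and the module-action data $\lambda\colon M(X)\to X$. First I would record precisely what the relevant data are: a $G$-equivariant structure on $X$ is a family of isomorphisms $\alpha_g\colon X\to F_g(X)$ satisfying the cocycle relation \eqref{equ:rel}, while an $M$-module structure is a single morphism $\lambda\colon \bigoplus_{h\in G}F_h(X)\to X$ satisfying the two monad-module axioms $\lambda\circ M\lambda=\lambda\circ\mu_X$ and $\lambda\circ\eta_X={\rm Id}_X$.

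The construction of the inverse is the heart of the argument. Given an $M$-module $(X,\lambda)$, I would write $\lambda=\sum_{h\in G}\lambda_h$ with components $\lambda_h\colon F_h(X)\to X$, and \emph{define} candidate isomorphisms by setting $\alpha_g:=(\lambda_g)^{-1}$ for each $g\in G$. The task is then to show three things: that each $\lambda_g$ is indeed invertible (so that $\alpha_g$ makes sense), that the resulting family $(\alpha_g)$ satisfies the cocycle condition \eqref{equ:rel}, and that these two assignments are mutually inverse. I would extract all of this from the two module axioms. The unit axiom $\lambda\circ\eta_X={\rm Id}_X$, together with the computed $\eta_X=(u_X^{-1},0,\dots,0)^t$, immediately gives $\lambda_e\circ u_X^{-1}={\rm Id}_X$, pinning down the $h=e$ component and showing $\lambda_e=u_X$ is invertible. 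The associativity axiom $\lambda\circ M\lambda=\lambda\circ\mu_X$, read off componentwise using the explicit formula $\mu_X$ with entries $\delta_{hg,h'}(\varepsilon_{h,g})_X$, yields for each pair $(h,g)$ the identity $\lambda_{hg}\circ(\varepsilon_{h,g})_X=\lambda_h\circ F_h(\lambda_g)$; this is exactly the cocycle relation \eqref{equ:rel} rewritten in terms of the $\lambda_h$ (hence in terms of the $\alpha_g=\lambda_g^{-1}$). From this relation one also deduces inductively that every $\lambda_g$ is invertible, since $\lambda_{hg}$ is expressed as a composite of $\lambda_h$, an invertible $F_h(\lambda_g)$ once $\lambda_g$ is known invertible, and the isomorphism $(\varepsilon_{h,g})_X^{-1}$.

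The main obstacle, and the step requiring the most care, is the bookkeeping in translating between the single map $\lambda$ and its indexed components, and in checking that the associativity axiom really unpacks to precisely \eqref{equ:rel} rather than to some twisted variant. Concretely, I must track the direction of the cocycle $\varepsilon_{h,g}$ and the placement of the unit $u$ correctly, matching the formula $\lambda=\sum_{h\in G}(\alpha_h)^{-1}$ already recorded for $K$ on objects. Once the correspondence $\alpha_g\leftrightarrow(\lambda_g)^{-1}$ is established as a genuine bijection between structures on each fixed underlying object $X$, and since $K$ is visibly the identity on morphisms, it follows formally that $K$ is a bijection on objects and on Hom-sets, hence an isomorphism of categories (not merely an equivalence). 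Because the whole verification is a direct componentwise computation from the explicit descriptions of $\eta$, $\mu$, and $K$ given above, I expect no conceptual difficulty beyond this indexing care; accordingly, as the statement and its source \cite{El2014} indicate, the proof is routine and the detailed checks may reasonably be omitted.
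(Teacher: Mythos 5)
Your overall strategy---inverting $K$ explicitly by decomposing a module action $\lambda\colon \bigoplus_{h\in G}F_h(X)\to X$ into components $\lambda_h$ and setting $\alpha_g=(\lambda_g)^{-1}$---is the natural one (the paper itself gives no proof, deferring to \cite{El2014,CCZ}), and your componentwise translation of the axioms is correct: the unit axiom gives $\lambda_e=u_X$, and the associativity axiom gives $\lambda_{hg}\circ(\varepsilon_{h,g})_X=\lambda_h\circ F_h(\lambda_g)$, which is exactly \eqref{equ:rel} once all the $\lambda_g$ are known to be invertible.

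However, your justification of that invertibility---the one genuinely non-trivial point of the lemma, since the module axioms nowhere mention isomorphisms---does not work as stated. You argue ``inductively'' that $\lambda_{hg}$ is invertible once $\lambda_h$ and $\lambda_g$ are; this closure property is true, but the induction never gets started. The only component known to be invertible at the outset is $\lambda_e$, and a subset of $G$ containing $e$ and closed under products of its own elements can perfectly well be just $\{e\}$, so no new invertibility is ever produced. A correct argument runs as follows: applying the associativity relation to the pair $(g,g^{-1})$ gives $\lambda_g\circ F_g(\lambda_{g^{-1}})=\lambda_e\circ(\varepsilon_{g,g^{-1}})_X=:\varphi_g$, which is an isomorphism since $\lambda_e=u_X$; hence \emph{every} $\lambda_g$ is a split epimorphism, with section $s_g=F_g(\lambda_{g^{-1}})\circ\varphi_g^{-1}$. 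Consequently $F_g(\lambda_{g^{-1}})=s_g\circ\varphi_g$ is a split monomorphism (a left inverse is $\varphi_g^{-1}\circ\lambda_g$), while it is also a split epimorphism, being the image under $F_g$ of the split epimorphism $\lambda_{g^{-1}}$. A morphism that is both a split monomorphism and a split epimorphism is an isomorphism, so $F_g(\lambda_{g^{-1}})$ is invertible; since the equivalence $F_g$ reflects isomorphisms, $\lambda_{g^{-1}}$ is invertible, and as $g$ is arbitrary all components are isomorphisms. With this step repaired, the rest of your argument (the equivalence of the morphism conditions and the mutual inverseness of the two assignments) goes through as you describe.
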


For an object $(Y, \beta)\in \mathcal{C}^G$ and an object $X\in \mathcal{C}$, any morphism $(Y, \beta)\rightarrow {\rm Ind}(X)$ is of the form $(\theta_h)_{h\in G}\colon Y\rightarrow \bigoplus_{h\in G}F_h(X)$ satisfying $\theta_{gh}=(\varepsilon_{g, h})_X\circ F_g(\theta_h)\circ \beta_g$ for each $g, h\in G$. The adjoint pair $(U, {\rm Ind}; \eta', \epsilon')$ is given by the following natural isomorphism
\begin{align}\label{equ:adj2}
{\rm Hom}_\mathcal{C}(U(Y, \beta), X)\stackrel{\sim}\longrightarrow {\rm Hom}_{\mathcal{C}^G}((Y, \beta), {\rm Ind}(X)),\end{align}
which sends a morphism $f\colon Y\rightarrow X$ to $(F_h(f)\circ \beta_h)_{h\in G}\colon (Y, \beta)\rightarrow {\rm Ind}(X)$. The unit $\eta'\colon {\rm Id}_{\mathcal{C}^G}\rightarrow {\rm Ind}U$ is given such that $\eta'_{(Y, \beta)}=(\beta_h)_{h\in G}$. The counit $\epsilon'\colon U{\rm Ind}\rightarrow {\rm Id}_\mathcal{C}$ is given such that $\epsilon'_{X}=(u_X, 0, \cdots, 0)\colon \bigoplus_{h\in G}F_h(X)\rightarrow X$.

The monad $N=({\rm Ind}U, \eta', \mu')$ defined by the adjoint pair $(U, {\rm Ind}; \eta', \epsilon')$ is computed as follows. The endofunctor $N={\rm Ind}U\colon \mathcal{C}^G\rightarrow \mathcal{C}^G$ is given by $N(Y, \beta)={\rm Ind}(Y)$ and $N(f)={\rm Ind}(f)$ for any object $(Y, \beta)$ and morphism $f$ in $\mathcal{C}^G$. The multiplication $\mu'\colon N^2\rightarrow N$ is given as follows
$$\mu'_{(Y, \beta)}\colon N^2(Y, \beta)=(\bigoplus_{g, h\in G} F_gF_h(Y), \varepsilon({\bigoplus_{h\in G}F_h(Y)}))\longrightarrow N(Y, \beta)=(\bigoplus_{h'\in G}F_{h'}(Y), \varepsilon(Y)),$$
where $\mu'_{(Y, \beta)}={\rm Ind}\epsilon'_{U(Y, \beta)}={\rm Ind}\epsilon'_Y$. Hence,  the corresponding entry $F_gF_h(Y)\rightarrow F_{h'}(Y)$ equals  $\delta_{g, h'}\delta_{h, e}F_g(u_Y)$, where $u\colon F_e\rightarrow {\rm Id}_\mathcal{C}$ is the unit of the action.

Applying (\ref{equ:K}), we infer that the associated comparison functor
$$K'\colon \mathcal{C}\longrightarrow N\mbox{-Mod}_{\mathcal{C}^G}$$
is given by $K'(X)=({\rm Ind}(X), {\rm Ind}\epsilon'_X)$ and $K'(f)={\rm Ind}(f)$ for any object $X$ and morphism $f$ in $\mathcal{C}$.

\begin{lem}\label{lem:K'}
Keep the notation as above. Assume that $\mathcal{C}$ is idempotent complete. Then the comparison functor $K'\colon \mathcal{C}\rightarrow N\mbox{-{\rm Mod}}_{\mathcal{C}^G}$ is an equivalence of categories.
\end{lem}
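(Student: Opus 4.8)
The plan is to prove the equivalence by constructing an \emph{explicit} quasi-inverse $V\colon N\mbox{-Mod}_{\mathcal{C}^G}\to\mathcal{C}$ obtained by splitting a canonical idempotent, rather than invoking Beck's monadicity theorem in the abstract; this fits the concrete bookkeeping already in place and pins down exactly where idempotent completeness is used. Throughout I would use that $K'$ is a comparison functor (so $U_NK'={\rm Ind}$ and $K'U=F_N$), the two adjunctions $({\rm Ind},U;\eta,\epsilon)$ and $(U,{\rm Ind};\eta',\epsilon')$, and the standard unit identities $\varepsilon_{g,e}=F_gu$ from Lemma \ref{lemA:G-action}.

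First I would define $V$. Given an $N$-module $((Y,\beta),\lambda)$, write $U(\lambda)=\sum_{h\in G}\lambda_h$ with $\lambda_h\colon F_h(Y)\to Y$, and set $p=\lambda_e\circ u_Y^{-1}\colon Y\to Y$. Unfolding the module axiom $\lambda\circ\mu'_{(Y,\beta)}=\lambda\circ N\lambda$ on the summand $F_eF_e(Y)$, and using naturality of $u$ together with $F_e(u_Y)=u_{F_e(Y)}$, I expect the identity $\lambda_e=\lambda_e\circ u_Y^{-1}\circ\lambda_e$, whence $p^2=p$. Since $\mathcal{C}$ is idempotent complete, $p$ splits as $Y\xrightarrow{\pi}X\xrightarrow{\iota}Y$ with $\pi\iota={\rm id}_X$ and $\iota\pi=p$, and I set $V((Y,\beta),\lambda)=X$. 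A morphism of $N$-modules commutes with $U(\lambda)$, hence, reading off $e$-components, with the idempotents $p$; so it descends to the images and $V$ becomes a functor. To see $VK'\cong{\rm Id}_\mathcal{C}$, I would compute the idempotent $p$ attached to $K'(X)=({\rm Ind}(X),{\rm Ind}\epsilon'_X)$, whose underlying object is $M(X)=\bigoplus_hF_h(X)$: since $\epsilon'_X=u_X\circ{\rm pr}_e$, naturality of $u$ collapses $p$ to the projection of $M(X)$ onto its summand $F_e(X)$, whose image is $F_e(X)\cong X$ via $u_X$.

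The harder half is $K'V\cong{\rm Id}$, that is, producing naturally in $((Y,\beta),\lambda)$ an isomorphism of $N$-modules ${\rm Ind}(X)\xrightarrow{\sim}(Y,\beta)$. The natural candidates are the adjunction transforms of $\iota$ and $\pi$: set $\theta=\epsilon_{(Y,\beta)}\circ{\rm Ind}(\iota)\colon{\rm Ind}(X)\to(Y,\beta)$ (the ${\rm Ind}\dashv U$ adjunct of $\iota$) and $\psi={\rm Ind}(\pi)\circ\eta'_{(Y,\beta)}\colon(Y,\beta)\to{\rm Ind}(X)$ (the $U\dashv{\rm Ind}$ adjunct of $\pi$). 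I would first check $\theta,\psi$ are $N$-linear, then that they are mutually inverse. Applying $U$ and using the identity $F_h(\lambda_e)=\beta_h\lambda_hF_h(u_Y)$ (extracted from the equivariance $\beta_g\circ\lambda=F_g(\lambda)\circ\varepsilon(Y)_g$ of $\lambda$) together with $\sum_h\lambda_h\beta_h={\rm id}_Y$ (from $\lambda\circ\eta'_{(Y,\beta)}={\rm id}$), one finds $U(\theta\psi)=\sum_h\beta_h^{-1}F_h(\iota\pi)\beta_h=\sum_h\beta_h^{-1}F_h(p)\beta_h=\sum_h\lambda_h\beta_h={\rm id}_Y$, so $\theta\psi={\rm id}$.

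The main obstacle will be the reverse composite $\psi\theta={\rm id}_{{\rm Ind}(X)}$. Its underlying morphism is the matrix with $(h,k)$-entry $F_h(\pi)\,\beta_h\beta_k^{-1}\,F_k(\iota)$, and proving this equals $\delta_{h,k}{\rm id}_{F_h(X)}$ requires the \emph{full} strength of the module axioms together with the cocycle relation for $\varepsilon$ and the equivariance of $\beta$ — in particular the off-diagonal relations $\lambda_g\circ F_g(\lambda_h)=0$ for $h\neq e$, which come from the other summands of $\lambda\circ\mu'=\lambda\circ N\lambda$. This is the computational heart of the lemma; once it is carried out, $V$ is a two-sided quasi-inverse of $K'$ and hence $K'$ is an equivalence. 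Note that the hypothesis of idempotent completeness enters exactly once, in splitting $p$ to produce $X$, and that it is genuinely needed: without it one expects $N\mbox{-Mod}_{\mathcal{C}^G}$ to recover the idempotent completion of $\mathcal{C}$ rather than $\mathcal{C}$ itself.
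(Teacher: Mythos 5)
Your proposal is correct, but it takes a genuinely different route from the paper. The paper's proof is a two-line appeal to separability: the counit $\epsilon'\colon U\,{\rm Ind}\rightarrow {\rm Id}_\mathcal{C}$ admits the natural section $\eta$, hence the induction functor ${\rm Ind}$ is separable, and the lemma then follows by citing the separable monadicity theorem from \cite{Chen15} (for a separable adjunction over an idempotent complete base, the comparison functor is an equivalence). Your argument inlines precisely the content of that cited theorem in the present situation: the idempotent $p=\lambda_e\circ u_Y^{-1}$ you split is exactly the one that the abstract separability argument produces. What the paper's route buys is brevity, at the cost of outsourcing the content to a reference; what yours buys is a self-contained proof, an explicit quasi-inverse, and a precise localization of where idempotent completeness enters --- in keeping with the paper's own emphasis on making equivalences explicit (Theorem \ref{thm:duality}). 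Every computation you do carry out is correct: $p^2=p$ from the $(e,e)$-component of associativity, $F_h(p)=\beta_h\circ\lambda_h$ from the fact that $\lambda$ is a morphism in $\mathcal{C}^G$, and $\theta\psi={\rm id}$. The step you defer, $\psi\theta={\rm id}_{{\rm Ind}(X)}$, does close with exactly the ingredients you name: since $F_h(\iota)$ is a split monomorphism and $F_k(\pi)$ a split epimorphism, the vanishing of the $(h,k)$-entry for $h\neq k$ is equivalent to $F_h(p)\circ\beta_h\circ\beta_k^{-1}\circ F_k(p)=0$, which by $F_g(p)=\beta_g\circ\lambda_g$ reduces to $\lambda_h\circ\beta_h\circ\lambda_k=0$; now the morphism property of $\lambda$ gives
\begin{align*}
\beta_h\circ\lambda_k=F_h(\lambda_{h^{-1}k})\circ(\varepsilon_{h,h^{-1}k})_Y^{-1},
\end{align*}
and the off-diagonal module axiom $\lambda_h\circ F_h(\lambda_{h^{-1}k})=0$ (valid because $h^{-1}k\neq e$) finishes the computation. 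One small economy you may add: once $\theta$ is checked to be $N$-linear and the two composites are shown to be identities in $\mathcal{C}^G$, the $N$-linearity of $\psi$ is automatic, being the inverse of an $N$-linear isomorphism.
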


\begin{proof}
We observe that the counit $\epsilon'$ is a split epimorphism. Then the induction functor ${\rm Ind}$ is separable; for example, see \cite[Lemma 2.2]{Chen15}. Then the statement follows from \cite[Corollary 3.6]{Chen15}; compare \cite[Theorem 4.4]{DGNO} and \cite[Proposition 3.11(4)]{El2014}.
\end{proof}

We mention that the adjoint pairs (\ref{equ:adj1}) and (\ref{equ:adj2}) may be found in \cite[Lemma~4.6(ii)]{DGNO}. The monad $M$ on $\mathcal{C}$ and the monad $N$ on $\mathcal{C}^G$ are said to be \emph{associated} to the given $G$-action.

The following observation will be used later.  Let $(F, \delta)\colon \mathcal{C}\rightarrow \mathcal{C}$ be a $G$-equivariant endofunctor, and  $(F, \delta)^G\colon \mathcal{C}^G\rightarrow \mathcal{C}^G$ be the induced endofunctor on $\mathcal{C}^G$; see (\ref{equ:fun-equi}). For each object $X$, there is an isomorphism
{\footnotesize \begin{align*}
\xi_X = \bigoplus_{h\in G} (\delta_g)_X\colon (F,\delta)^G {\rm Ind}(X)=(\bigoplus_{h\in G}FF_h(X), \widetilde{\varepsilon(X)}) \longrightarrow {\rm Ind}F(X)=(\bigoplus_{h\in G} F_hF(X), \varepsilon(FX)).
\end{align*}}
Here, to see that $\xi_X$ is a morphism in $\mathcal{C}^G$, we use the following direct consequence of (\ref{equ:equi-fun})
$$(\varepsilon_{g, g^{-1}h}F)^{-1}\circ \delta_h=F_g\delta_{g^{-1}h}\circ (\delta_gF_{g^{-1}h}\circ (F\varepsilon_{g, g^{-1}h})^{-1}).$$
Indeed, this gives rise to a natural isomorphism of functors
\begin{align}\label{equ:xi}
\xi \colon (F, \delta)^G {\rm Ind} \longrightarrow {\rm Ind}F.
\end{align}

We say that a natural number $n$ is \emph{invertible} in the category $\mathcal{C}$ provided that for each morphism $f\colon X\rightarrow Y$, there is a unique morphism $g\colon X\rightarrow Y$ satisfying $f=ng$. This unique morphism is denoted by $\frac{1}{n}f$. In case that $\mathcal{C}$ is skeletally small, $n$ is invertible in $\mathcal{C}$ if and only if $n$ is invertible in its center $Z(\mathcal{C})$. We denote by $|G|$ the order of the finite group $G$.

The following observation is well known.

\begin{lem}\label{lem:separable}
Assume that $|G|$ is invertible in $\mathcal{C}$. Then the counit $\epsilon\colon {\rm Ind} U\rightarrow {\rm Id}_{\mathcal{C}^G}$ is a split epimorphism and the unit $\eta'\colon {\rm Id}_{\mathcal{C}^G}\rightarrow {\rm Ind} U$ is a split monomorphism.
\end{lem}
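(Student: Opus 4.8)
The plan is to reduce both assertions to a single averaging computation, exploiting the fact that the counit $\epsilon$ of the adjoint pair $({\rm Ind}, U;\eta,\epsilon)$ and the unit $\eta'$ of the adjoint pair $(U, {\rm Ind};\eta',\epsilon')$ both factor through the same functor ${\rm Ind}U$, but in opposite directions. First I would fix an equivariant object $(Y,\beta)$ and recall the explicit formulas established above: $\eta'_{(Y,\beta)}=(\beta_h)_{h\in G}\colon (Y,\beta)\rightarrow {\rm Ind}U(Y,\beta)={\rm Ind}(Y)$ and $\epsilon_{(Y,\beta)}=\sum_{h\in G}\beta_h^{-1}\colon {\rm Ind}U(Y,\beta)={\rm Ind}(Y)\rightarrow (Y,\beta)$. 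Both are genuine morphisms in $\mathcal{C}^G$, being a unit and a counit of adjunctions, so their composite is an endomorphism of $(Y,\beta)$ in $\mathcal{C}^G$.

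The key step is then the direct computation, where the $h$-th component of $\eta'_{(Y,\beta)}$ is paired with the $h$-th entry of the row $\epsilon_{(Y,\beta)}$:
$$\epsilon_{(Y,\beta)}\circ \eta'_{(Y,\beta)}=\sum_{h\in G}\beta_h^{-1}\circ \beta_h=\sum_{h\in G}{\rm Id}_Y=|G|\cdot {\rm Id}_{(Y,\beta)},$$
where the final identification uses that $U$ is faithful and additive, so that $|G|\cdot{\rm Id}_Y$ in $\mathcal{C}$ lifts to $|G|\cdot{\rm Id}_{(Y,\beta)}$ in $\mathcal{C}^G$. Since this is natural in $(Y,\beta)$, I obtain the identity of natural transformations $\epsilon\circ\eta'=|G|\cdot{\rm Id}$ on the functor ${\rm Id}_{\mathcal{C}^G}$.

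Finally I would invoke the hypothesis that $|G|$ is invertible in $\mathcal{C}$; this guarantees that the scalar morphisms $\frac{1}{|G|}\eta'$ and $\frac{1}{|G|}\epsilon$ are well-defined natural transformations (their components are computed in $\mathcal{C}$ and remain equivariant, since the equivariance relations are linear and $|G|$ is invertible). The displayed identity then yields $\epsilon\circ\bigl(\frac{1}{|G|}\eta'\bigr)={\rm Id}$, so $\epsilon$ is a split epimorphism, and $\bigl(\frac{1}{|G|}\epsilon\bigr)\circ\eta'={\rm Id}$, so $\eta'$ is a split monomorphism. The only point requiring any care — and the (minor) main obstacle — is the bookkeeping that confirms $\eta'$ and $\epsilon$ share the intermediate functor ${\rm Ind}U$ and that their matrix forms compose summand-by-summand to a sum of identities; once the explicit formulas are substituted, the remaining verification is immediate.
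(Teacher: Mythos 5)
Your proposal is correct and is essentially the paper's own proof: the paper exhibits $\frac{1}{|G|}(\beta_h)_{h\in G}=\frac{1}{|G|}\eta'_{(Y,\beta)}$ as the section of $\epsilon_{(Y,\beta)}$ and $\frac{1}{|G|}\sum_{h\in G}\beta_h^{-1}=\frac{1}{|G|}\epsilon_{(Y,\beta)}$ as the retraction of $\eta'_{(Y,\beta)}$, and verifying these is exactly your computation $\epsilon_{(Y,\beta)}\circ\eta'_{(Y,\beta)}=\sum_{h\in G}\beta_h^{-1}\circ\beta_h=|G|\cdot{\rm Id}_{(Y,\beta)}$. The only cosmetic difference is that you state the averaging identity first and then divide by $|G|$, while the paper writes down the scaled morphisms directly.
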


\begin{proof}
The section of $\epsilon_{(Y, \beta)}$ is $\frac{1}{|G|}(\beta_h)_{h\in G}\colon (Y, \beta)\rightarrow {\rm Ind}(Y)$, and the retraction of $\eta'_{(Y, \beta)}$ is $\frac{1}{|G|}\sum_{h\in G}(\beta_h)^{-1}\colon {\rm Ind}(Y)\rightarrow (Y, \beta)$.
\end{proof}

\subsection{The dual action and the double-dual action} Let $k$ be a field. In this subsection, we assume that the additive category $\mathcal{C}$ is $k$-linear. Let $G$ be a finite group. We fix a $k$-linear $G$-action $\{F_g, \varepsilon_{g, h} |\; g, h\in G\}$ on $\mathcal{C}$. In this case, the category $\mathcal{C}^G$ of equivariant objects is naturally $k$-linear.

We denote by $\widehat{G}={\rm Hom}(G, k^*)$ the \emph{character group} of $G$. For a character $\chi$ on $G$ and an object $(X, \alpha)\in \mathcal{C}^G$, we have another equivariant object $(X, \chi\otimes \alpha)$, where for each $g\in G$, the isomorphism $(\chi\otimes\alpha)_g\colon X\rightarrow F_g(X)$ equals $\chi(g^{-1})\alpha_g$. Set $F_\chi(X, \alpha)=(X, \chi\otimes \alpha)$. This yields an automorphism
\begin{align}\label{equ:defn-Fchi}
F_\chi \colon \mathcal{C}^G\longrightarrow \mathcal{C}^G,
\end{align}
which acts on morphisms by the identity.  We observe that $F_{\chi}F_{\chi'}=F_{\chi \chi'}$ for any $\chi,\chi'\in \widehat{G}$. In other words, we have a strict $\widehat{G}$-action on $\mathcal{C}^G$, which is $k$-linear.

The following  terminology will be justified by Theorem \ref{thm:duality}.

\begin{defn}\label{defn:dual-act}
For the given $G$-action $\{F_g, \varepsilon_{g, h} |\; g, h\in G\}$ on $\mathcal{C}$, we call the above strict $\widehat{G}$-action on $\mathcal{C}^G$ the \emph{dual action} by $\widehat{G}$. \hfill $\square$
\end{defn}

We consider the category $(\mathcal{C}^G)^{\widehat{G}}$ of $\widehat{G}$-equivariant objects in $\mathcal{C}^G$, with respect to the dual $\widehat{G}$-action. For an object $X\in \mathcal{C}$ and a character $\chi \in \widehat{G}$, we recall that
$${\rm Ind}(X)=(\bigoplus_{h\in G}F_h(X), \varepsilon(X)) \mbox{  and  }  F_\chi{\rm Ind}(X)=(\bigoplus_{h\in G}F_h(X), \chi\otimes\varepsilon(X)).$$
Therefore, we have  a canonical isomorphism in $\mathcal{C}^G$
$${\rm can}(X)_\chi=\bigoplus_{h\in G}\chi(h)\colon {\rm Ind}(X)\longrightarrow F_\chi{\rm Ind}(X).$$
This gives rise to a $\widehat{G}$-equivariant object $({\rm Ind}(X), {\rm can}(X))$ in $\mathcal{C}^G$. Furthermore, we have a well-defined functor
\begin{align*}
\Theta \colon \mathcal{C}\longrightarrow (\mathcal{C}^G)^{\widehat{G}}
\end{align*}
such that $\Theta(X)=({\rm Ind}(X), {\rm can}(X))$ and $\Theta(f)={\rm Ind}(f)$ for any morphism $f$ in $\mathcal{C}$.

There is a canonical evaluation homomorphism
\begin{align}\label{equ:ev}
{\rm ev}\colon G\longrightarrow \widehat{\widehat{G}}={\rm Hom}(\widehat{G}, k^*)
\end{align}
 given by ${\rm ev}(g)(\chi)=\chi(g)$. Then we obtain a strict $G$-action on the category $(\mathcal{C}^G)^{\widehat{G}}$, which is called the \emph{double-dual action} by $G$. More precisely, each $g\in G$ gives rise to an automorphism
 \begin{align}\label{equ:doubleFg}
 \widehat{F}_g\colon (\mathcal{C}^G)^{\widehat{G}}\longrightarrow (\mathcal{C}^G)^{\widehat{G}},
 \end{align}
 which sends $(A, \alpha)$ to $(A, {\rm ev}(g)\otimes \alpha)$ and acts on morphisms by the identity. Here, $A$ is an object in $\mathcal{C}^G$, and $\alpha_\chi\colon A\rightarrow F_\chi(A)$ is an isomorphism for each $\chi\in \widehat{G}$. We have $({\rm ev}(g)\otimes\alpha)_\chi=\chi(g^{-1})\alpha_\chi$.

For an object $X\in \mathcal{C}$ and $g\in G$, we have that
\begin{center}$\Theta F_g(X)=({\rm Ind}(F_gX), {\rm can}(F_gX))$ and $\widehat{F}_g\Theta(X)=({\rm Ind}(X), {\rm ev}(g)\otimes {\rm can}(X))$.\end{center}
 Recall that
 $$ {\rm Ind}F_g(X)=(\bigoplus_{h\in G}F_hF_g(X), \varepsilon(F_gX)) \mbox{  and  } {\rm Ind}(X)=(\bigoplus_{h\in G}F_{h}(X),  \varepsilon(X)).$$
Then we have   an isomorphism in $\mathcal{C}^G$
$$(\partial_g)_X=\bigoplus_{h\in G} (\varepsilon_{g, h})_X \colon {\rm Ind}F_g(X)\longrightarrow {\rm Ind}(X).$$
 Moreover, this isomorphism gives rise to an isomorphism
$$(\partial_g)_X\colon \Theta F_g(X)\longrightarrow \widehat{F}_g\Theta(X)$$ in $(\mathcal{C}^G)^{\widehat{G}}$. Furthermore, this yields a natural isomorphism of functors
$$\partial_g\colon \Theta F_g\longrightarrow \widehat{F}_g\Theta.$$

\begin{lem}\label{lem:double-dual}
Consider the given $G$-action on $\mathcal{C}$ and the double-dual $G$-action on $(\mathcal{C}^G)^{\widehat{G}}$. Then  $$(\Theta, (\partial_g)_{g\in G}) \colon \mathcal{C}\longrightarrow (\mathcal{C}^G)^{\widehat{G}}
$$
is a $G$-equivariant functor  with respect to these $G$-actions.
\end{lem}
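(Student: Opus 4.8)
The plan is to verify the single equivariance condition (\ref{equ:equi-fun}) for the data $(\Theta, (\partial_g)_{g\in G})$. Explicitly, I must check that for all $g, h \in G$,
$$\partial_{gh}\circ \Theta\varepsilon_{g, h}=\widehat{\varepsilon}_{g, h}\Theta\circ (\widehat{F}_g\partial_h\circ \partial_gF_h),$$
where $\widehat{\varepsilon}_{g,h}$ denotes the structure isomorphisms of the double-dual $G$-action. Since the double-dual action is \emph{strict} (each $\widehat{F}_g$ is an automorphism built from the evaluation homomorphism ${\rm ev}$, and ${\rm ev}(g){\rm ev}(h)={\rm ev}(gh)$), the isomorphisms $\widehat{\varepsilon}_{g,h}$ are all identities. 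Thus the condition collapses to
$$\partial_{gh}\circ \Theta\varepsilon_{g, h}=\widehat{F}_g\partial_h\circ \partial_gF_h,$$
which is a considerable simplification.

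First I would recall that all the functors in sight ($\Theta$, $\widehat{F}_g$, and the underlying $F_g$ on $\mathcal{C}$ after applying ${\rm Ind}$) act on morphisms essentially through ${\rm Ind}$ and the $F_h$, so the whole identity can be checked on underlying objects in $\mathcal{C}$, component-by-component on the direct sum $\bigoplus_{h\in G} F_h(\cdots)(X)$. The key computation is to track the diagonal entries. The map $(\partial_g)_X$ is given diagonally by $(\varepsilon_{h,g})_X$ on the $h$-th summand, and $\Theta\varepsilon_{g,h}={\rm Ind}(\varepsilon_{g,h})$ acts diagonally by $F_{h'}(\varepsilon_{g,h})_X$. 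Since $\widehat{F}_g$ acts on morphisms by the identity, the right-hand side composite $\widehat{F}_g\partial_h\circ \partial_gF_h$ contributes, on the appropriate summand, the composite of two of the $\varepsilon$'s. The target identity then becomes, entrywise, precisely an instance of the $2$-cocycle condition (\ref{equ:2-coc}) relating $\varepsilon_{h',g}$, $\varepsilon_{h'g,h}$ on one side and $\varepsilon_{h',gh}$, $F_{h'}\varepsilon_{g,h}$ on the other.

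The main obstacle, and where care is needed, is bookkeeping the indices of the direct summands correctly and confirming that the $\widehat{G}$-equivariance data (the ${\rm can}$'s and the character twists $\chi \otimes -$) do not interfere: one must check that $(\partial_g)_X$, already verified above to be a morphism $\Theta F_g(X) \to \widehat{F}_g\Theta(X)$ in $(\mathcal{C}^G)^{\widehat{G}}$, has its naturality in $X$ and its interaction with the scalars $\chi(g)$ automatically consistent, so that the verification truly reduces to the $2$-cocycle identity on underlying objects. I expect the scalar factors from ${\rm ev}(g)\otimes$ to cancel cleanly because $\partial_g$ is diagonal and character-independent, leaving a purely $\varepsilon$-theoretic identity. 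I would therefore reduce to checking the entrywise equation and then cite (\ref{equ:2-coc}) directly; the remaining verification is routine diagram-chasing, which I would state as such and omit the elementary details.
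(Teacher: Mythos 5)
Your proposal is correct and follows essentially the same route as the paper: since the double-dual action is strict, the equivariance condition reduces to $\partial_{gh}\circ \Theta\varepsilon_{g, h}=\widehat{F}_g\partial_h\circ \partial_gF_h$, which the paper likewise observes follows directly from the $2$-cocycle identity (\ref{equ:2-coc}). Your entrywise bookkeeping (on the $h'$-th summand, the identity $(\varepsilon_{h'g,h})_X \circ (\varepsilon_{h',g})_{F_h(X)} = (\varepsilon_{h',gh})_X \circ F_{h'}((\varepsilon_{g,h})_X)$ is exactly an instance of (\ref{equ:2-coc})) simply makes explicit the verification the paper leaves implicit.
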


\begin{proof}
Since the $G$-action on $(\mathcal{C}^G)^{\widehat{G}}$ is strict, it suffices to prove that $\partial_{gh}\circ \Theta\varepsilon_{g, h}=\widehat{F}_g\partial_h\circ \partial_gF_h$. Indeed, it follows directly from (\ref{equ:2-coc}).
\end{proof}

\subsection{The duality theorem} In this subsection, let  $G$ be a finite abelian group. We assume that $G$ \emph{splits} over $k$, that is, the group algebra $kG$ is isomorphic to a product of $k$. In particular, the characteristic of $k$ does not divide the order $|G|$ of $G$. In this case, the character group $\widehat{G}$ is isomorphic to $G$ and the evaluation homomorphism (\ref{equ:ev}) is an isomorphism. In particular, we are in the situation, provided that the field $k$ is algebraically closed of characteristic zero.

The following duality theorem is essentially due to \cite[Theorem 4.2]{El2014}, which may be deduced from the general result \cite[Theorem 4.4]{DGNO}; compare \cite[Subsection 5.1]{RR}. We make the equivalence explicit for our purpose.

\begin{thm}\label{thm:duality}
Keep the notation as above. Let $G$ be a finite abelian group, which splits over $k$. Assume that $\mathcal{C}$ is idempotent complete. Then the above functor
$$(\Theta, (\partial_g)_{g\in G})\colon \mathcal{C}\longrightarrow (\mathcal{C}^G)^{\widehat{G}}$$
is a $G$-equivariant equivalence.
\end{thm}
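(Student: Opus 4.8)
The plan is to prove that $(\Theta, (\partial_g)_{g\in G})$ is an equivalence by reducing it to Lemma \ref{lem:K'}, which identifies $\mathcal{C}$ with the category $N\mbox{-Mod}_{\mathcal{C}^G}$ of modules over the monad $N = {\rm Ind}\,U$ associated to the original $G$-action. The key idea is that the category $(\mathcal{C}^G)^{\widehat{G}}$ of $\widehat{G}$-equivariant objects is, by Lemma \ref{lem:K}, isomorphic to the category $M'\mbox{-Mod}_{\mathcal{C}^G}$ of modules over the monad $M'$ on $\mathcal{C}^G$ associated to the \emph{dual} $\widehat{G}$-action. Explicitly, this dual monad is $M' = U'\,{\rm Ind}' = \bigoplus_{\chi \in \widehat{G}} F_\chi$, and since each $F_\chi$ acts as the identity on underlying objects of $\mathcal{C}^G$, we have $M'(A) = \bigoplus_{\chi} A$ as an object of $\mathcal{C}^G$ with a twisted equivariant structure. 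So the strategy is to construct an isomorphism of monads $\iota\colon N \stackrel{\sim}\longrightarrow M'$ on $\mathcal{C}^G$ and then show that the comparison functor $K'$ for $N$ matches up with $\Theta$ under the resulting identification $\iota^*\colon M'\mbox{-Mod}_{\mathcal{C}^G} \stackrel{\sim}\longrightarrow N\mbox{-Mod}_{\mathcal{C}^G}$.

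The heart of the construction is the monad isomorphism. On underlying objects both $N(Y,\beta) = {\rm Ind}(Y) = \bigoplus_{h\in G} F_h(Y)$ and $M'(Y,\beta) = \bigoplus_{\chi\in\widehat{G}} (Y,\beta)$ live over $\mathcal{C}^G$, and since $G$ splits over $k$ with $\widehat{G}\cong G$, both are direct sums of $|G|=|\widehat{G}|$ summands. The natural isomorphism $\iota_{(Y,\beta)}\colon {\rm Ind}(Y) \rightarrow \bigoplus_{\chi} (Y,\beta)$ is the Fourier-type transform whose $(\chi, h)$-entry is $\chi(h)\,\beta_h\colon F_h(Y) \rightarrow Y$; its inverse uses the orthogonality relations $\frac{1}{|G|}\sum_{h} \chi(h)\chi'(h^{-1}) = \delta_{\chi,\chi'}$, which is exactly where splitting of $G$ (invertibility of $|G|$) is needed. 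I would then verify that $\iota$ respects units and multiplications of the two monads, using the explicit formula for $\mu'$ (entries $\delta_{g,h'}\delta_{h,e}F_g(u_Y)$) computed in the excerpt and the corresponding formula for the dual multiplication coming from $F_\chi F_{\chi'} = F_{\chi\chi'}$.

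Having the monad isomorphism $\iota$, I would invoke the uniqueness part of the universal property of comparison functors: the composite $\iota^* \circ K_{\rm dual}$, where $K_{\rm dual}\colon (\mathcal{C}^G)^{\widehat{G}} \stackrel{\sim}\longrightarrow M'\mbox{-Mod}_{\mathcal{C}^G}$ is the isomorphism of Lemma \ref{lem:K}, is an equivalence onto $N\mbox{-Mod}_{\mathcal{C}^G}$. Composing with the equivalence $K'\colon \mathcal{C} \stackrel{\sim}\longrightarrow N\mbox{-Mod}_{\mathcal{C}^G}$ from Lemma \ref{lem:K'} (valid since $\mathcal{C}$ is idempotent complete) gives an equivalence $\mathcal{C} \stackrel{\sim}\longrightarrow (\mathcal{C}^G)^{\widehat{G}}$. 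The remaining task is to check that this composite equivalence coincides with $\Theta$ on objects and morphisms, by tracing the definitions: $K'(X) = ({\rm Ind}(X), {\rm Ind}\epsilon'_X)$ should correspond, under $\iota^*$ and $K_{\rm dual}^{-1}$, precisely to $\Theta(X) = ({\rm Ind}(X), {\rm can}(X))$, with the canonical isomorphism ${\rm can}(X)_\chi = \bigoplus_h \chi(h)$ emerging as the image of the module action under the Fourier transform $\iota$. Finally, Lemma \ref{lem:double-dual} already establishes that $(\Theta, (\partial_g)_{g\in G})$ is a $G$-equivariant functor, so once $\Theta$ is known to be an equivalence of underlying categories, it is automatically a $G$-equivariant equivalence.

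I expect the main obstacle to be the careful bookkeeping in verifying that $\iota$ is a monad morphism and that the comparison functors match: the dual monad $M'$ is phrased in terms of characters $\chi\in\widehat{G}$ while $N$ is phrased in terms of group elements $h\in G$, so every identity must be translated across the Fourier transform, and the twisted equivariant structures $\chi\otimes\alpha$ on the $M'$-side must be reconciled with the ${\rm Ind}$-structures on the $N$-side. The conceptual content is light, but the indices are delicate; a cleaner route, which I would also consider, is to \emph{bypass the monad language} and directly exhibit a quasi-inverse to $\Theta$ sending a $\widehat{G}$-equivariant object $(A,\alpha)$ in $\mathcal{C}^G$ to the image of an idempotent built from $\frac{1}{|G|}\sum_{\chi}\alpha_\chi$, using idempotent completeness of $\mathcal{C}$ to split it, then checking fully faithfulness via the fixed-point identity (\ref{equ:fixed}) applied twice. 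Either way, invertibility of $|G|$ and the splitting hypothesis are the essential inputs.
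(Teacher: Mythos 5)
Your proposal follows essentially the same route as the paper's own proof: the paper also reduces to Lemma \ref{lem:double-dual} for equivariance, constructs the Fourier-type monad isomorphism $f\colon \widehat{M}\rightarrow N$ via orthogonality of characters, and factors $\Theta$ as $\widehat{K}^{-1}\circ f^*\circ K'$ using Lemmas \ref{lem:K} and \ref{lem:K'}. Apart from minor bookkeeping (your entries $\chi(h)\beta_h$ should be $\chi(h)\beta_h^{-1}$ given the convention $\beta_h\colon Y\rightarrow F_h(Y)$, and the $\tfrac{1}{|G|}$ normalization sits in the inverse), your plan is the paper's proof.
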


For the proof, we recall from Lemma \ref{lem:K'} that the monad $N$ on $\mathcal{C}^G$ is associated to the given $G$-action. We denote by $\widehat{M}$ the monad on $\mathcal{C}^G$, which is associated to the dual $\widehat{G}$-action on $\mathcal{C}^G$; that is, $\widehat{M}$ is the monad in Lemma \ref{lem:K} applied to the dual $\widehat{G}$-action.

The following observation may be deduced from the dual of  \cite[Proposition~4.1]{El2014}.

\begin{lem}
Keep the same assumptions as above. Then the two monads $N$ and $\widehat{M}$ on $\mathcal{C}^G$ are isomorphic.
\end{lem}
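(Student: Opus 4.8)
The plan is to explicitly compute both monads $N$ and $\widehat{M}$ on $\mathcal{C}^G$ and exhibit a natural isomorphism between them satisfying the three compatibility conditions (with units and multiplications) in the definition of a monad isomorphism. Recall from the associated-monad computation that $N = {\rm Ind}\, U$, so for an equivariant object $(Y,\beta)$ we have $N(Y,\beta) = {\rm Ind}(Y) = (\bigoplus_{h\in G} F_h(Y), \varepsilon(Y))$. On the other hand, $\widehat{M}$ is the monad of Lemma \ref{lem:K} applied to the \emph{dual} $\widehat{G}$-action, so by the general formula $\widehat{M}(Y,\beta) = \bigoplus_{\chi\in\widehat{G}} F_\chi(Y,\beta)$. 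Since $F_\chi$ fixes the underlying object and only twists the equivariant structure, $\widehat{M}(Y,\beta) = (\bigoplus_{\chi\in\widehat{G}} Y, \text{twisted structure})$. The first task is therefore to identify the two underlying objects $\bigoplus_{h\in G} F_h(Y)$ and $\bigoplus_{\chi\in\widehat{G}} Y$ together with their equivariant data.

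First I would construct the candidate isomorphism at the level of underlying objects. Since $G$ splits over $k$, the evaluation pairing $G\times\widehat{G}\to k^*$ is perfect, and the key arithmetic input is the orthogonality relation $\sum_{\chi\in\widehat{G}} \chi(g) = |G|\,\delta_{g,e}$ (and its dual). The natural candidate $\iota_{(Y,\beta)}\colon \widehat{M}(Y,\beta)\to N(Y,\beta)$ is built from the ``character-table'' matrix with entries $\chi(h)\beta_h$ (or $\beta_h^{-1}$), mapping the $\chi$-component $Y$ into the $h$-component $F_h(Y)$. Its inverse is then the normalized transpose using the $\frac{1}{|G|}$ available because $|G|$ is invertible in $\mathcal{C}$ (Lemma \ref{lem:separable}), and invertibility follows precisely from orthogonality. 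I would verify first that this map is actually a morphism in $\mathcal{C}^G$, i.e. compatible with the $\varepsilon(Y)$-structure on the target and the twisted structure on the source; this uses the equivariance relations \eqref{equ:rel} for $\beta$ together with the $2$-cocycle condition \eqref{equ:2-coc}.

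The remaining steps are to check the three defining conditions for a monad isomorphism: naturality of $\iota$ (straightforward, since both monads act on morphisms essentially by $\bigoplus F_h$ resp. the diagonal), compatibility with the units $\eta' \colon {\rm Id}\to N$ and $\widehat\eta\colon {\rm Id}\to\widehat{M}$, and compatibility with the multiplications $\mu'$ and $\widehat\mu$. The unit condition should reduce to the single-summand ($e\in G$, or the trivial character) matching, controlled by the unit $u$ of the action. The multiplication condition is the genuine computational heart: one must expand $\mu'$ (whose entries are $\delta_{g,h'}\delta_{h,e}F_g(u_Y)$) and the corresponding $\widehat\mu$ for the strict dual action, and confirm that $\iota\circ\widehat\mu = \mu'\circ(\iota N\circ \widehat{M}\iota)$ after resumming over $G$ and $\widehat{G}$.

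The main obstacle I expect is precisely this last verification, because it interleaves two different index sets ($G$ on the $N$-side, $\widehat{G}$ on the $\widehat{M}$-side) and the Fourier-type resummation must be carried out while keeping careful track of the $\varepsilon_{g,h}$ coherence isomorphisms, which are \emph{not} assumed trivial on the $G$-side even though the dual $\widehat{G}$-action is strict. The bookkeeping of where the $\varepsilon$'s and the $\frac{1}{|G|}$ normalization enter, and confirming that the orthogonality relations collapse the double sums correctly, is where the real work lies; the underlying idea, however, is just the classical statement that the regular representation of $G$ and the ``regular representation'' of $\widehat{G}$ are Fourier-dual, packaged at the level of monads. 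Since the excerpt flags that this lemma ``might be deduced from the dual of \cite[Proposition 4.1]{El2014}'', I would also note that one may instead invoke that cited result directly and simply transport it along the duality, trading the explicit computation for a citation.
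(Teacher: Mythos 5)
Your proposal follows essentially the same route as the paper: the explicit componentwise isomorphism $\widehat{M}(X,\alpha)\to N(X,\alpha)$ given by the character matrix $(\chi(h^{-1})\alpha_h)_{h\in G}$ (the paper's convention; your $\chi(h)\beta_h$ differs only by reindexing characters), with inverse $\frac{1}{|G|}\sum_{h\in G}\chi(h)\alpha_h^{-1}$ justified by orthogonality of characters, followed by the naturality and unit/multiplication compatibility checks that the paper likewise records as direct verifications. This matches the paper's proof, including its closing remark that one could alternatively cite the dual of \cite[Proposition 4.1]{El2014}.
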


\begin{proof}
For an object $(X, \alpha)\in \mathcal{C}^G$, we have $N(X, \alpha)={\rm Ind}(X)=(\bigoplus_{h\in G}F_h(X), \varepsilon(X))$ and $\widehat{M}(X, \alpha)=\bigoplus_{\chi\in \widehat{G}} F_\chi(X, \alpha)=\bigoplus_{\chi\in \widehat{G}} (X, \chi\otimes \alpha)$.

 We construct an isomorphism
\begin{align}\label{equ:iso-monad}
f\colon \widehat{M}(X, \alpha)\longrightarrow N(X, \alpha)
 \end{align}
 as follows. The restriction of $f$ to $F_\chi(X, \alpha)$ is given by
 $$(\chi(h^{-1})\alpha_h)_{h\in G}\colon X\longrightarrow \bigoplus_{h\in G}F_h(X);$$
 compare the adjunction (\ref{equ:adj2}). The $\chi$-th component of $f^{-1}$  is given by $$\frac{1}{|G|}\sum_{h\in G}\chi(h)\alpha_h^{-1}\colon \bigoplus_{h\in G}F_h(X)\longrightarrow X.$$
  Here, we use the well-known orthogonality of characters.

The isomorphism $f$ is natural in $(X, \alpha)$. Then the two endofunctors $\widehat{M}$ and $N$ are isomorphic. Using the explicit calculation in Subsection 4.2, it is direct to verify that $f$ is compatible with the units and the multiplications. In other words, we have the required isomorphism of monads.
\end{proof}

\noindent \emph{Proof of Theorem \ref{thm:duality}.}\quad In view of Lemma \ref{lem:double-dual}, it suffices to show that $\Theta$ is an equivalence. We  observe that $\Theta$ equals the composition of the following  sequence of equivalences, which are all explicitly given
$$\mathcal{C}\stackrel{K'}\longrightarrow N\mbox{-Mod}_{\mathcal{C}^G}\stackrel{f^*}\longrightarrow \widehat{M}\mbox{-Mod}_{\mathcal{C}^G}\stackrel{\widehat{K}^{-1}}\longrightarrow (\mathcal{C}^G)^{\widehat{G}}.$$
Here, the equivalence $K'$ is given in Lemma \ref{lem:K'} and the isomorphism $\widehat{K}\colon  (\mathcal{C}^G)^{\widehat{G}} \rightarrow \widehat{M}\mbox{-Mod}_{\mathcal{C}^G}$ is given as in Lemma \ref{equ:K}, which is applied to  the dual $\widehat{G}$-action on $\mathcal{C}^G$. The isomorphism $f\colon \widehat{M}\rightarrow N$ of monads is given in (\ref{equ:iso-monad}). \hfill $\square$

\vskip 5pt

The following example will be used later.

\begin{exm}\label{exm:equi-fun}
{\rm  Let $G$ be a finite group, which might be non-abelian. Let $\mathcal{C}$ be an additive category with a $G$-action $\{F_g, \varepsilon_{g, h}|\; g, h\in G\}$.

Assume that $a\in G$ is a central element. For each $g\in G$, we have a natural isomorphism $$(c_a)_g=(\varepsilon_{g, a})^{-1}\circ \varepsilon_{a, g}\colon F_aF_g\longrightarrow F_gF_a.$$
Here, we use the fact that $ag=ga$. By applying (\ref{equ:2-coc}) three times, we infer (\ref{equ:equi-fun}) with $F=F_a$ and $\varepsilon'=\varepsilon$. In other words, $(F_a, c_a)=(F_a, ((c_a)_g)_{g \in G})$ is a $G$-equivariant endofunctor on $\mathcal{C}$.

We claim that the corresponding endofunctor $(F_a, c_a)^G$ on $\mathcal{C}^G$ is isomorphic to the identity functor. More precisely, we have a natural isomorphism
$$\psi\colon {\rm Id}_{\mathcal{C}^G}\longrightarrow (F_a,c_a)^G$$
such that $\psi_{(X, \alpha)}=\alpha_a\colon (X, \alpha) \rightarrow (F_a(X), \widetilde{\alpha})$.  Here, to verify that $\alpha_a$ is indeed a morphism in $\mathcal{C}^G$, we use the identity $(\varepsilon_{a, g})_X\circ F_a(\alpha_g)\circ \alpha_a=(\varepsilon_{g, a})_X\circ F_g(\alpha_a)\circ \alpha_g$, since both equal $\alpha_{ag}=\alpha_{ga}$ by (\ref{equ:rel}).

We assume further that $\mathcal{C}$ is $k$-linear and that the given $G$-action is $k$-linear. Recall that the dual $\widehat{G}$-action on $\mathcal{C}^G$ sends $\chi$ to the automorphism $F_\chi$. We observe that $(F_a,c_a)^G$ becomes  a strictly $\widehat{G}$-equivariant functor.  We view the identity functor ${\rm Id}_{\mathcal{C}^G}$ also as a strictly $\widehat{G}$-equivariant functor.

In general, the above isomorphism $\psi$ is not an isomorphism between $\widehat{G}$-equivariant functors. Indeed, $F_\chi \psi\neq \psi F_\chi$ provided that $\chi(a)\neq 1$.} \hfill $\square$
\end{exm}

\section{An isomorphism between groups of equivariant autoequivalences}

In this section, we prove that for an action by a finite abelian group, the equivariantization yields an isomorphism between the two groups of equivariant autoequivalences. This isomorphism induces a bijection between the orbit sets of isoclasses of stable objects.

Throughout, we work over a fixed field $k$. So we omit the subindex $k$, and require that all functors are $k$-linear. We assume that $\mathcal{C}$ is a skeletally small $k$-linear additive category.

\subsection{Groups of equivariant autoequivalences}

By ${\rm Aut}(\mathcal{C})={\rm Aut}_k(\mathcal{C})$, we denote the group of isoclasses of $k$-linear autoequivalences on $\mathcal{C}$. For a $k$-linear autoequivalence $F$, we denote by $[F]$ its isoclass. However, by abuse of notation, we sometimes view $F$ as an element in ${\rm Aut}(\mathcal{C})$.

The following fact is standard.

\begin{lem}\label{lem:auto}
Let $\mathcal{C}$ and $\mathcal{D}$ be $k$-linear categories with a given  $k$-linear equivalence $\Theta\colon \mathcal{C}\rightarrow \mathcal{D}$. Then there is a unique isomorphism
$$\Theta_*\colon{\rm Aut}(\mathcal{C})\longrightarrow {\rm Aut}(\mathcal{D})$$
of groups such that for each $k$-linear autoequivalence $F$ on $\mathcal{C}$, there is a $k$-linear autoequivalence $\Theta_*(F)$ on $\mathcal{D}$ with an  isomorphism  $\Theta F\rightarrow \Theta_*(F)\Theta$. \hfill $\square$
\end{lem}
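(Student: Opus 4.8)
The plan is to \emph{transport structure along $\Theta$}. Fix once and for all a quasi-inverse $\Theta^{-1}\colon \mathcal{D}\rightarrow \mathcal{C}$ together with natural isomorphisms $\eta\colon {\rm Id}_\mathcal{C}\rightarrow \Theta^{-1}\Theta$ and $\epsilon\colon \Theta\Theta^{-1}\rightarrow {\rm Id}_\mathcal{D}$. For a $k$-linear autoequivalence $F$ on $\mathcal{C}$, I would set $\Theta_*(F)=\Theta F\Theta^{-1}$, which is again a $k$-linear autoequivalence on $\mathcal{D}$, and declare $\Theta_*([F])=[\Theta F\Theta^{-1}]$. The first point to check is that this is well defined on isoclasses: any natural isomorphism $\phi\colon F\rightarrow F'$ yields the natural isomorphism $\Theta\phi\Theta^{-1}\colon \Theta F\Theta^{-1}\rightarrow \Theta F'\Theta^{-1}$.

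Next I would verify the group-theoretic properties. Since $\Theta^{-1}\Theta\simeq {\rm Id}_\mathcal{C}$ via $\eta$, for two autoequivalences $F, F'$ one has $\Theta F\Theta^{-1}\circ \Theta F'\Theta^{-1}=\Theta F(\Theta^{-1}\Theta)F'\Theta^{-1}\simeq \Theta FF'\Theta^{-1}$, which gives $\Theta_*([F][F'])=\Theta_*([F])\Theta_*([F'])$; moreover $\Theta_*([{\rm Id}_\mathcal{C}])=[\Theta\Theta^{-1}]=[{\rm Id}_\mathcal{D}]$, so $\Theta_*$ is a homomorphism of groups. Constructing $(\Theta^{-1})_*$ in the same fashion and using $\eta$ and $\epsilon$ to see that the two composites act as the identity on isoclasses shows that $\Theta_*$ is an isomorphism with inverse $(\Theta^{-1})_*$.

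Next I would produce the required natural isomorphism: by definition $\Theta_*(F)\Theta=\Theta F\Theta^{-1}\Theta$, and whiskering $\eta$ by $\Theta F$ on the left gives an isomorphism $\Theta F\eta\colon \Theta F\rightarrow \Theta F\Theta^{-1}\Theta=\Theta_*(F)\Theta$, exactly as demanded. Finally, for the uniqueness clause I would argue that the displayed property pins down the isoclass $\Theta_*(F)$: if $H$ is any $k$-linear autoequivalence on $\mathcal{D}$ admitting an isomorphism $\Theta F\rightarrow H\Theta$, then $H\simeq H\Theta\Theta^{-1}\simeq \Theta F\Theta^{-1}=\Theta_*(F)$, where the first isomorphism uses $\epsilon$ and the second uses the assumed one; hence $[H]=\Theta_*([F])$. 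Since $\Theta_*([F])$ is thereby determined by the stated property, the homomorphism $\Theta_*$ is unique. None of these steps presents a genuine obstacle; the only point requiring care is the consistent bookkeeping of $\eta$ and $\epsilon$ when collapsing $\Theta^{-1}\Theta$ and $\Theta\Theta^{-1}$, so that the various natural isomorphisms in play are indeed $k$-linear and natural — which they are, being assembled from $\eta$, $\epsilon$, and whiskering by $\Theta$, $F$, and $\Theta^{-1}$.
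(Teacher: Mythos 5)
Your proof is correct; the paper itself gives no argument for this lemma (it is declared ``standard''), and what you supply is exactly the intended transport-of-structure argument: conjugation $\Theta_*(F)=\Theta F\Theta^{-1}$ by a fixed quasi-inverse, with $\eta$ and $\epsilon$ handling the bookkeeping. This is the same technique the paper uses explicitly in the proof of Lemma \ref{lem:transport}, so your route coincides with the authors' implicit one.
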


Let $G$ be a finite group with a $k$-linear $G$-action  $\{F_g, \varepsilon_{g, h}|\; g, h\in G\}$ on $\mathcal{C}$. The subgroup
$${\rm Aut}^G(\mathcal{C})=\{[F]\in {\rm Aut}(\mathcal{C})\; |\; [FF_g]=[F_gF] \mbox{ for each }g\in G\}$$
 of ${\rm Aut}(\mathcal{C})$ is of interest. However, it does not behave well. Instead, we will consider the group of equivariant autoequivalences.

Recall that a $G$-equivariant endofunctor on $\mathcal{C}$ means the data $(F, \delta)=(F, (\delta_g)_{g\in G})$, where $F\colon \mathcal{C}\rightarrow \mathcal{C}$ is an endofunctor and the natural isomorphisms $\delta_g\colon FF_g\rightarrow F_gF$ satisfy (\ref{equ:equi-fun}). The composition of two equivariant endofunctors $(F, \delta)$ and $(F', \delta')$ is defined to be $(FF', (\delta_gF'\circ F\delta'_g)_{g\in G})$.

 The isoclass  of the $G$-equivariant endofunctor $(F, \delta)$ will be denoted by $[F, \delta]$.  We denote by ${\rm Aut}(\mathcal{C}; G)$ the isoclass group of $k$-linear $G$-equivariant autoequivalences on $\mathcal{C}$, whose multiplication is induced by the composition of equivariant functors. We call ${\rm Aut}(\mathcal{C}; G)$ the \emph{group of $G$-equivariant autoequivalences} on $\mathcal{C}$.

We have the following group homomorphism
\begin{align}\label{equ:forg}
\phi\colon {\rm Aut}(\mathcal{C}; G)\longrightarrow {\rm Aut}^G(\mathcal{C}), \; [F, \delta]\mapsto [F],
\end{align}
called \emph{forgetful homomorphism} associated to the given $G$-action. In general, it is neither injective nor surjective.

We observe the following equivariant version of Lemma \ref{lem:auto}.  Let $\mathcal{C}$ and $\mathcal{D}$ be two $k$-linear categories with $k$-linear $G$-actions $\{F_g, \varepsilon_{g, h}|\; g, h\in G\}$ and $\{F'_g, \varepsilon'_{g, h}|\; g,h\in G\}$, respectively. Assume that we are given a $k$-linear $G$-equivariant equivalence $(\Theta, \partial)\colon \mathcal{C}\rightarrow \mathcal{D}$. We have that $\Theta_*([F_g])=[F'_g]$ by the isomorphism $\partial_g$. Hence, we have a restricted isomorphism $\Theta_*\colon {\rm Aut}^G(\mathcal{C})\rightarrow {\rm Aut}^G(\mathcal{D})$.

 Take any $G$-equivariant $k$-linear autoequivalence $(F, \delta)$ on $\mathcal{C}$. Then there is a $k$-linear autoequivalence $\Theta_*(F)$ on $\mathcal{D}$ with an isomorphism $\gamma\colon \Theta F\rightarrow \Theta_*(F) \Theta$. There are unique natural isomorphisms $\delta'_g\colon \Theta_*(F)F'_g\rightarrow F'_g \Theta_*(F)$ satisfying the following identity
 $$ \delta'_g\Theta \circ \Theta_*(F)\partial_g \circ \gamma F_g = F'_g\gamma \circ \partial_g F\circ \Theta \delta_g.$$
 Moreover, $(\Theta_*(F), (\delta'_g)_{g\in G})$ is a $G$-equivariant endofunctor on $\mathcal{D}$ and $\gamma$ yields an isomorphism between equivariant functors
$$\gamma\colon (\Theta, \partial)(F, \delta)\longrightarrow (\Theta_*(F), \delta')(\Theta, \partial).$$
We put $(\Theta, \partial)_*(F, \delta)=(\Theta_*(F), \delta')$. This gives rise to the upper row isomorphism of the following commutative diagram
\begin{align}\label{equ:commdiag}
\xymatrix{
{\rm Aut}(\mathcal{C}; G) \ar[d]_-{\phi} \ar[rr]^-{(\Theta, \partial)_*} && {\rm Aut}(\mathcal{D}; G) \ar[d]^-{\phi}\\
{\rm Aut}^G(\mathcal{C}) \ar[rr]^-{\Theta_*} && {\rm Aut}^G(\mathcal{D}),
}\end{align}
where the vertical maps are the forgetful homomorphisms.

For the given $G$-action $\{F_g, \varepsilon_{g, h}|\; g, h\in G\}$ on $\mathcal{C}$,  we consider the following normal subgroup of ${\rm Aut}(\mathcal{C}; G)$
 $${\rm Act}(\mathcal{C}; G)=\{[F, \delta]\in {\rm Aut}(\mathcal{C}; G)\; |\; F \mbox{ is isomorphic to } F_a \mbox{ for some } a\in G\}.$$
 Here, ``Act" stands for action.

\begin{cor}\label{cor:Act}
Let  $(\Theta, \partial)\colon \mathcal{C}\rightarrow \mathcal{D}$ be the above  $k$-linear $G$-equivariant equivalence. Then the above isomorphism $(\Theta, \partial)_*$ restricts to an isomorphism
$$(\Theta, \partial)_*\colon {\rm Act}(\mathcal{C}; G)\longrightarrow {\rm Act}(\mathcal{D}; G)$$
\end{cor}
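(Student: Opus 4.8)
The plan is to show that the isomorphism $(\Theta, \partial)_*$ from the preceding discussion carries the distinguished subgroup ${\rm Act}(\mathcal{C}; G)$ into ${\rm Act}(\mathcal{D}; G)$, and that its inverse does likewise; since $(\Theta, \partial)_*$ is already an isomorphism, this amounts to checking that both it and its inverse preserve the defining property of ${\rm Act}$. First I would recall exactly what this defining property says: an element $[F, \delta] \in {\rm Aut}(\mathcal{C}; G)$ lies in ${\rm Act}(\mathcal{C}; G)$ precisely when the underlying autoequivalence $F$ is isomorphic to $F_a$ for some $a \in G$. Since this condition only concerns the underlying autoequivalence and not the equivariant structure, the natural tool is the commutative square (\ref{equ:commdiag}): the forgetful homomorphisms $\phi$ relate the equivariant groups to the groups ${\rm Aut}^G(\mathcal{C})$ and ${\rm Aut}^G(\mathcal{D})$, and the bottom isomorphism $\Theta_*$ records what happens to underlying isoclasses.

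The key computation is to verify that $\Theta_*$ sends $[F_a]$ to $[F'_a]$ for each $a \in G$. This is exactly the content already noted in the text preceding Corollary~\ref{cor:Act}: because $(\Theta, \partial)$ is a $G$-equivariant equivalence, the isomorphism $\partial_a \colon \Theta F_a \rightarrow F'_a \Theta$ witnesses $\Theta_*([F_a]) = [F'_a]$. Thus, given $[F, \delta] \in {\rm Act}(\mathcal{C}; G)$ with $F \simeq F_a$, the defining property of $\phi$ gives $\phi([F, \delta]) = [F] = [F_a]$ in ${\rm Aut}^G(\mathcal{C})$, and commutativity of (\ref{equ:commdiag}) yields
\[
\phi\big((\Theta, \partial)_*[F, \delta]\big) = \Theta_*([F_a]) = [F'_a].
\]
Writing $(\Theta, \partial)_*[F, \delta] = [\Theta_*(F), \delta']$, this says precisely that the underlying autoequivalence $\Theta_*(F)$ is isomorphic to $F'_a$, so $(\Theta, \partial)_*[F, \delta] \in {\rm Act}(\mathcal{D}; G)$. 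Hence $(\Theta, \partial)_*$ maps ${\rm Act}(\mathcal{C}; G)$ into ${\rm Act}(\mathcal{D}; G)$.

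For the reverse containment I would apply the same argument to a quasi-inverse equivariant equivalence of $(\Theta, \partial)$, using that $\Theta_*^{-1}$ carries $[F'_a]$ back to $[F_a]$; equivalently, since $(\Theta, \partial)_*$ is already known to be a group isomorphism, it suffices to observe that $(\Theta, \partial)_*$ restricts to an injective homomorphism ${\rm Act}(\mathcal{C}; G) \rightarrow {\rm Act}(\mathcal{D}; G)$ which is surjective by the symmetric argument. This shows the restriction is an isomorphism of the two subgroups, as claimed. The only point demanding any care is the bookkeeping identity $\Theta_*([F_a]) = [F'_a]$, but this is supplied directly by the natural isomorphism $\partial_a$ coming with the equivariant equivalence, so there is essentially no obstacle; the proof is a short diagram chase using (\ref{equ:commdiag}) together with the fact that membership in ${\rm Act}$ depends only on the underlying isoclass.
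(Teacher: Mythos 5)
Your proof is correct and follows exactly the paper's own argument: the paper proves Corollary \ref{cor:Act} by citing the commutative diagram (\ref{equ:commdiag}) together with the fact that $\Theta_*[F_a]=[F'_a]$, which is precisely the diagram chase you carry out. Your additional care about the reverse containment (via the quasi-inverse or, equivalently, applying $\Theta_*^{-1}$ to $[F'_a]$) is a correct elaboration of what the paper leaves implicit.
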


\begin{proof}
It follows from the commutative diagram (\ref{equ:commdiag}) and the fact that $\Theta_*([F_a])=F'_a$ for each $a\in G$.
\end{proof}

\subsection{An isomorphism theorem}

We fix a $k$-linear $G$-action $\{F_g, \varepsilon_{g, h}|\; g, h\in G\}$ on $\mathcal{C}$, and consider the category $\mathcal{C}^G$ of $G$-equivariant objects. Recall that the character group $\widehat{G}$ has a strict action on $\mathcal{C}^G$ by sending each character $\chi$ to the automorphism $F_\chi$ on $\mathcal{C}^G$; see Subsection 4.3. We then have the corresponding group ${\rm Aut}(\mathcal{C}^G; \widehat{G})$ of $\widehat{G}$-equivariant autoequivalences on $\mathcal{C}^G$ and its subgroup ${\rm Act}(\mathcal{C}^G; \widehat{G})$.

 For a given $G$-equivariant $k$-linear autoequivalence $(F, \delta)$ on $\mathcal{C}$, we have by Lemma \ref{lem:G-equ} the autoequivalence $(F, \delta)^G\colon \mathcal{C}^G\rightarrow \mathcal{C}^G$.  We observe that $F_\chi (F, \delta)^G=(F, \delta)^GF_\chi$ for each $\chi\in \widehat{G}$. In other words,  the autoequivalence $(F,\delta)^G$ is a strictly $\widehat{G}$-equivariant endofunctor. More precisely, the data $((F, \delta)^G, ({\rm Id}_{(F, \delta)^GF_\chi})_{\chi\in \widehat{G}})$ form a strictly $\widehat{G}$-equivariant functor, which will be abbreviated as $(F, \delta)^G$.

  Hence, the following group homomorphism is well defined
\begin{align}\label{equ:(-)^G}
(-)^G\colon {\rm Aut}(\mathcal{C};G) \longrightarrow {\rm Aut}(\mathcal{C}^G; \widehat{G}), \; [F, \delta]\mapsto [(F, \delta)^G].
\end{align}
We refer to $(-)^G$ as the \emph{equivariantization homomorphism} associated to the given $G$-action on $\mathcal{C}$.

\begin{thm}\label{thm:duality-iso}
Let $G$ be a finite abelian group, which splits over $k$. Keep the notation as above. Assume further that $\mathcal{C}$ is idempotent complete. Then the above equivariantization homomorphism $(-)^G$ is an isomorphism.
\end{thm}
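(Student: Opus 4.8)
The plan is to exploit the duality of Theorem \ref{thm:duality} together with a \emph{second} equivariantization, so that applying the construction $(-)^G$ twice returns us, up to transport along the equivalence $\Theta$, to the original data on $\mathcal{C}$. The dual $\widehat{G}$-action on $\mathcal{C}^G$ has its own equivariantization homomorphism, which by the analogue of (\ref{equ:(-)^G}) (with $G$ replaced by $\widehat{G}$ and $\widehat{G}$ by $\widehat{\widehat{G}}$) reads
\[
(-)^{\widehat{G}}\colon {\rm Aut}(\mathcal{C}^G;\widehat{G})\longrightarrow {\rm Aut}((\mathcal{C}^G)^{\widehat{G}};\widehat{\widehat{G}}),
\]
whose target carries the double-dual action; via the evaluation isomorphism ${\rm ev}\colon G\xrightarrow{\sim}\widehat{\widehat{G}}$ of (\ref{equ:ev}) I identify this target with ${\rm Aut}((\mathcal{C}^G)^{\widehat{G}};G)$. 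On the other hand, Theorem \ref{thm:duality} gives a $G$-equivariant equivalence $(\Theta,\partial)\colon\mathcal{C}\to(\mathcal{C}^G)^{\widehat{G}}$, whence by the transport-of-structure construction preceding (\ref{equ:commdiag}) an isomorphism
\[
(\Theta,\partial)_*\colon{\rm Aut}(\mathcal{C};G)\xrightarrow{\sim}{\rm Aut}((\mathcal{C}^G)^{\widehat{G}};G).
\]

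The key claim is that these fit together as $(-)^{\widehat{G}}\circ(-)^{G}=(\Theta,\partial)_*$. To match the underlying functors, fix a $G$-equivariant autoequivalence $(F,\delta)$ and recall that $(F,\delta)^G$ is strictly $\widehat{G}$-equivariant, so $((F,\delta)^G)^{\widehat{G}}$ is a strictly $\widehat{\widehat{G}}$-equivariant autoequivalence of $(\mathcal{C}^G)^{\widehat{G}}$. Here the natural isomorphism $\xi\colon(F,\delta)^G{\rm Ind}\to{\rm Ind}\,F$ of (\ref{equ:xi}), assembled from the $\delta_g$, is exactly what I need: the underlying $G$-equivariant object of $((F,\delta)^G)^{\widehat{G}}\Theta(X)$ is $(F,\delta)^G{\rm Ind}(X)=(\bigoplus_h FF_h(X),\widetilde{\varepsilon(X)})$, that of $\Theta F(X)$ is ${\rm Ind}F(X)=(\bigoplus_h F_hF(X),\varepsilon(FX))$, and $\xi_X$ is an isomorphism between them. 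I would then check that $\xi_X$ is also compatible with the canonical $\widehat{G}$-structures, namely ${\rm can}(FX)$ on $\Theta F(X)$ and the $(F,\delta)^G$-image of ${\rm can}(X)$ on $((F,\delta)^G)^{\widehat{G}}\Theta(X)$, so that $\xi$ upgrades to a natural isomorphism $((F,\delta)^G)^{\widehat{G}}\Theta\cong\Theta F$ of autoequivalences of $(\mathcal{C}^G)^{\widehat{G}}$.

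It then remains to match the $G$-equivariant structures, and this is where I expect the genuine obstacle to lie. By the defining identity of $(\Theta,\partial)_*$ recorded before (\ref{equ:commdiag}), I must verify that, under $\xi$, the strict double-dual structure on $((F,\delta)^G)^{\widehat{G}}$ is carried to the structure transported from $\delta$ along $(\Theta,\partial)$. This is a coherence computation combining the $2$-cocycle condition (\ref{equ:2-coc}), the equivariance relation (\ref{equ:equi-fun}) for $\delta$, and the explicit form of $\partial_g$, which is given diagonally by $(\varepsilon_{h,g})_X$; I expect it to reduce, after cancellation, to the naturality of $\xi$ and the defining relation for $\partial$ in Lemma \ref{lem:double-dual}. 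Granting this, $(-)^{\widehat{G}}\circ(-)^{G}=(\Theta,\partial)_*$ is an isomorphism, so $(-)^{G}$ is injective and $(-)^{\widehat{G}}$ is surjective.

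Finally I would run the very same argument for the dual action on $\mathcal{C}^G$, which is again idempotent complete and splits over $k$: Theorem \ref{thm:duality} applied to $(\mathcal{C}^G,\widehat{G})$ yields that the composite $(-)^{\widehat{\widehat{G}}}\circ(-)^{\widehat{G}}$ equals the corresponding transport isomorphism and is therefore bijective, whence $(-)^{\widehat{G}}$ is injective. Being both injective and surjective, the group homomorphism $(-)^{\widehat{G}}$ is an isomorphism; combined with the fact that $(-)^{\widehat{G}}\circ(-)^{G}=(\Theta,\partial)_*$ is an isomorphism, this forces $(-)^{G}=\bigl((-)^{\widehat{G}}\bigr)^{-1}\circ(\Theta,\partial)_*$ to be an isomorphism as well, which is the assertion of Theorem \ref{thm:duality-iso}.
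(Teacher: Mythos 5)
Your proposal is correct and follows essentially the same route as the paper: the paper's proof establishes exactly your key claim as the commutative triangle (\ref{equ:triangle}), verifies it via the isomorphism $\xi$ of (\ref{equ:xi}) together with its compatibility with the canonical $\widehat{G}$-structures and the $G$-equivariant structures, and then concludes injectivity of $(-)^G$, surjectivity of $(-)^{\widehat{G}}$, and injectivity of $(-)^{\widehat{G}}$ by repeating the argument for the dual action. The only difference is presentational: where you flag the matching of $G$-equivariant structures as the expected obstacle, the paper carries out that coherence check explicitly using (\ref{equ:equi-fun}) and the diagonal form of $\partial_g$.
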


In particular, each $\widehat{G}$-equivariant autoequivalence on $\mathcal{C}^G$ is isomorphic to a strictly $\widehat{G}$-equivariant autoequivalence.

\begin{proof}
We recall that $G$ is identified with $\widehat{\widehat{G}}$ via the evaluation map (\ref{equ:ev}). Then we have the double-dual $G$-action on $(\mathcal{C}^G)^{\widehat{G}}$. It is a strict action by sending $g\in G$ to the automorphism $\widehat{F}_g$ on $(\mathcal{C}^G)^{\widehat{G}}$; see (\ref{equ:doubleFg}). Recall from Theorem \ref{thm:duality} the $G$-equivariant equivalence $(\Theta, (\partial_g)_{g\in G})\colon \mathcal{C}\rightarrow (\mathcal{C}^G)^{\widehat{G}}$. Here, we use the assumption that $\mathcal{C}$ is idempotent complete.

Similar as (\ref{equ:(-)^G}), we have the equivariantization homomorphism associated to the dual $\widehat{G}$-action on $\mathcal{C}^G$
$$(-)^{\widehat{G}}\colon {\rm Aut}(\mathcal{C}^G, \widehat{G})\longrightarrow {\rm Aut}((\mathcal{C}^G)^{\widehat{G}}; G).$$
We claim that there is a commutative triangle of group homomorphisms.
\begin{align}\label{equ:triangle}
\xymatrix{ {\rm Aut}(\mathcal{C}; G) \ar[rd]_-{(-)^G}\ar[rr]^-{(\Theta, \partial)_*} && {\rm Aut}((\mathcal{C}^G)^{\widehat{G}}; G)\\
& {\rm Aut}(\mathcal{C}^G; \widehat{G}) \ar[ru]_-{(-)^{\widehat{G}}}
}
\end{align}

Recall from (\ref{equ:commdiag}) that $(\Theta,\partial)_*$ is an isomorphism. It follows from the claim that $(-)^G$ is injective and that $(-)^{\widehat{G}}$ is surjective. Applying the same argument to the dual $\widehat{G}$-action,  we infer that $(-)^{\widehat{G}}$ is also injective. This implies the required statements.

For the claim, we take an arbitrary  $G$-equivariant autoequivalence $(F, \delta)$ on $\mathcal{C}$. For an object $X$ in $\mathcal{C}$, we have
\begin{center}$\Theta F(X)=({\rm Ind}(FX), {\rm can}(FX))$ and $((F,\delta)^G)^{\widehat{G}}\Theta(X)=((F,\delta)^G {\rm Ind}(X), \widetilde{{\rm can}(X)})$,
\end{center}
where $\widetilde{{\rm can}(X)}_\chi=(F,\delta)^G({\rm can}(X)_\chi)$ for each $\chi \in \widehat{G}$. It follows that the isomorphism $\xi$ in (\ref{equ:xi}) yields a natural isomorphism of functors
$$\xi\colon ((F,\delta)^G)^{\widehat{G}} \Theta\longrightarrow \Theta F.$$
Here, we use the explicit construction of `can' in Subsection 4.3 to verify that ${\rm can}(FX)_\chi\circ \xi_X=F_\chi(\xi_X)\circ \widetilde{{\rm can}(X)}_\chi$ for each $\chi\in\widehat{G}$.

  Moreover, the above isomorphism $\xi$ is an isomorphism of $G$-equivariant functors, that is, the following identity holds
   $$\widehat{F}_g\xi\circ ((F,\delta)^G)^{\widehat{G}}\partial_g=(\partial_g F\circ \Theta\delta_g)\circ \xi F_g.$$
   Here, we use the explicit form of $\partial_g$ and $\xi$. The above identity follows, since $(F, \delta)$ is a $G$-equivariant endofunctor on $\mathcal{C}$, that is, (\ref{equ:equi-fun}) holds with $F'_g=F_g$ and $\varepsilon'_{g, h}=\varepsilon_{g, h}$.  By the very definition of $(\Theta, \partial)_*$,  we conclude from the isomorphism $\xi$ that $(\Theta, \partial)_*(F, \delta)$  is isomorphic to the strictly $G$-equivariant functor $((F, \delta)^G)^{\widehat{G}}$. This proves the  claim.
\end{proof}

We are indebted to Hideto Asashiba for the remark below.

\begin{rem}
One might deduce Theorem \ref{thm:duality-iso} from the following conjectural observation: there is a  $2$-equivalence between the $2$-category of small idempotent complete $k$-linear categories with $G$-actions and the $2$-category of small idempotent complete $k$-linear categories with $\widehat{G}$-actions. This observation might be expected from  \cite[Theorem 4.4]{DGNO}; compare \cite[Theorem 7.5]{Asa17}. \hfill $\square$
\end{rem}

Recall that $Z(\mathcal{C})$ is the center of $\mathcal{C}$; it is a commutative $k$-algebra. The following  well-known fact will be used later; compare Lemma \ref{lemA:Nat}.

\begin{lem}\label{lem:nat}
Let $F\colon \mathcal{C}\rightarrow \mathcal{C}$ be a $k$-linear autoequivalence. Then any natural morphism $F\rightarrow F$ is of the form $\lambda F=F\lambda'$ for some uniquely determined $\lambda, \lambda'\in Z(\mathcal{C})$. Moreover, if $\lambda$ belongs to $k$, then $\lambda=\lambda'$. \hfill $\square$
\end{lem}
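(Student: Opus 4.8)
The plan is to reduce the statement to the bijectivity of two whiskering maps. Write ${\rm Nat}(F,F)$ for the set of natural transformations $F\to F$, and consider the two assignments $\lambda\mapsto \lambda F$ and $\lambda'\mapsto F\lambda'$ from $Z(\mathcal{C})$ to ${\rm Nat}(F,F)$, where $(\lambda F)_X=\lambda_{F(X)}$ and $(F\lambda')_X=F(\lambda'_X)$. Once both are shown to be bijective, any $\theta\colon F\to F$ has a unique $\lambda\in Z(\mathcal{C})$ with $\lambda F=\theta$ and a unique $\lambda'\in Z(\mathcal{C})$ with $F\lambda'=\theta$, which is precisely the claim that $\theta=\lambda F=F\lambda'$ with $\lambda,\lambda'$ uniquely determined.

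The map $\lambda'\mapsto F\lambda'$ is the easy one, to be handled by the full faithfulness of the equivalence $F$. It is injective since $F$ is faithful. For surjectivity, given $\theta$, for each object $X$ the fully faithfulness of $F$ supplies a unique morphism $\lambda'_X\colon X\to X$ with $F(\lambda'_X)=\theta_X$; applying $F$ to the desired naturality square for $\lambda'$ and using the naturality of $\theta$ together with faithfulness shows that $\lambda'=(\lambda'_X)_X$ is a natural transformation ${\rm Id}_\mathcal{C}\to {\rm Id}_\mathcal{C}$, i.e. an element of $Z(\mathcal{C})$, satisfying $F\lambda'=\theta$.

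The map $\lambda\mapsto \lambda F$ is the one that requires real work, and I expect its surjectivity to be the main obstacle. I would fix a quasi-inverse $(G;\eta,\epsilon)$ of $F$, with natural isomorphisms $\eta\colon {\rm Id}_\mathcal{C}\to GF$ and $\epsilon\colon FG\to {\rm Id}_\mathcal{C}$ that may be taken to satisfy the triangle identities. Injectivity follows from essential surjectivity: if $\lambda_{F(X)}=\mu_{F(X)}$ for all $X$, then for any $Y$ choose an isomorphism $\phi\colon Y\to F(X)$ and use the naturality of $\lambda$ and $\mu$ to get $\lambda_Y=\phi^{-1}\circ\lambda_{F(X)}\circ\phi=\phi^{-1}\circ\mu_{F(X)}\circ\phi=\mu_Y$. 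For surjectivity, given $\theta\colon F\to F$, I would set $\lambda_Y=\epsilon_Y\circ \theta_{G(Y)}\circ \epsilon_Y^{-1}$. Naturality of this $\lambda$ follows by conjugating the naturality square of $\theta$ (applied to $G(f)$) through $\epsilon$, using that $\epsilon$ is natural. The identity $\lambda F=\theta$ is the delicate point: one computes $\lambda_{F(X)}=\epsilon_{F(X)}\circ\theta_{GF(X)}\circ\epsilon_{F(X)}^{-1}$ and invokes the triangle identity $\epsilon_{F(X)}\circ F(\eta_X)={\rm id}_{F(X)}$, which identifies $\epsilon_{F(X)}$ with $F(\eta_X)^{-1}$, together with the naturality of $\theta$ along $\eta_X$, to collapse the conjugate to $\theta_X$.

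For the final assertion, suppose $\lambda\in k$, meaning $\lambda=c\,{\rm Id}_\mathcal{C}$ for some $c\in k$ under the structure embedding $k\hookrightarrow Z(\mathcal{C})$. Then $\theta=\lambda F$ has components $\theta_X=c\,{\rm id}_{F(X)}$. Since $F$ is $k$-linear, $c\,{\rm id}_{F(X)}=F(c\,{\rm id}_X)$, so the uniqueness established above forces $\lambda'_X=c\,{\rm id}_X$, that is $\lambda'=c\,{\rm Id}_\mathcal{C}=\lambda$. As an alternative to the explicit construction of the previous paragraph, one may simply invoke Lemma \ref{lemA:Nat} for the two bijections.
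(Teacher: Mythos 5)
Your proof is correct and follows the route the paper intends: Lemma \ref{lem:nat} is exactly the bijectivity of the two whiskering maps $Z(\mathcal{C})\to {\rm Nat}(F,F)$, $\lambda\mapsto \lambda F$ and $\lambda'\mapsto F\lambda'$, which is the special case of Lemma \ref{lemA:Nat} that the paper cites (you even note this shortcut at the end), and your final step correctly isolates the $k$-linearity of $F$ as the ingredient forcing $\lambda=\lambda'$ when $\lambda\in k$. The only difference is that you supply the explicit verification (quasi-inverse, triangle identity, naturality along $\eta_X$) that the paper leaves as ``well known,'' which is fine.
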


 To each character $\chi\in \widehat{G}$, we associate a $G$-equivariant endofunctor
 $$({\rm Id}_\mathcal{C}, (\chi(g)^{-1}{\rm Id}_{F_g})_{g\in G})\colon \mathcal{C}\longrightarrow \mathcal{C},$$
  where $\chi(g)^{-1}{\rm Id}_{F_g}\colon {\rm Id}_\mathcal{C} F_g\rightarrow F_g{\rm Id}_\mathcal{C}$ is a natural isomorphism induced by the multiplication of $\chi(g)^{-1}$.  We observe the following identity
  \begin{align}\label{equ:Fchi}
  ({\rm Id}_\mathcal{C}, (\chi(g)^{-1}{\rm Id}_{F_g})_{g\in G} )^G=F_\chi,
  \end{align}
  where the left hand side is by the construction (\ref{equ:fun-equi}) and $F_\chi$ is defined in (\ref{equ:defn-Fchi}).

Recall the normal subgroup of ${\rm Aut}(\mathcal{C}^G;\widehat{G})$
$${\rm Act}(\mathcal{C}^G;\widehat{G})=\{[H, \delta]\in {\rm Aut}(\mathcal{C}^G;\widehat{G})\; |\; H \mbox{ is isomorphic to } F_\chi \mbox{ for some }\chi \in \widehat{G} \}. $$

\begin{prop}\label{prop:Act}
Keep the assumptions in Theorem \ref{thm:duality-iso}. Assume further that $Z(\mathcal{C})=k=Z(\mathcal{C}^G)$.  Then the equivariantization homomorphism $(-)^G$ restricts to an isomorphism
$$(-)^G\colon {\rm Act}(\mathcal{C}; G)\longrightarrow {\rm Act}(\mathcal{C}^G;  \widehat{G}).$$
\end{prop}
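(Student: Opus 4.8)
The plan is as follows. By Theorem \ref{thm:duality-iso} the homomorphism $(-)^G$ is already an isomorphism of the full groups, so it will restrict to an isomorphism of the indicated subgroups as soon as one shows that it carries ${\rm Act}(\mathcal{C};G)$ \emph{onto} ${\rm Act}(\mathcal{C}^G;\widehat{G})$. I will prove the two inclusions separately: first that the image lands in ${\rm Act}(\mathcal{C}^G;\widehat{G})$, and then, by a duality argument, that every element of ${\rm Act}(\mathcal{C}^G;\widehat{G})$ is hit.

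For the inclusion $(-)^G({\rm Act}(\mathcal{C};G))\subseteq {\rm Act}(\mathcal{C}^G;\widehat{G})$, I start from an arbitrary class $[F,\delta]\in {\rm Act}(\mathcal{C};G)$. Since $F$ is isomorphic to $F_a$ for some $a\in G$, I may replace $(F,\delta)$ by an isomorphic $G$-equivariant functor and assume $F=F_a$ on the nose. As $G$ is abelian, $a$ is central, so Example \ref{exm:equi-fun} provides the distinguished equivariant structure $c_a=((c_a)_g)_{g\in G}$ on $F_a$ together with a \emph{non-equivariant} isomorphism $(F_a,c_a)^G\cong {\rm Id}_{\mathcal{C}^G}$. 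Now I compare $\delta$ with $c_a$: for each $g$ the composite $\delta_g\circ (c_a)_g^{-1}$ is a natural automorphism of the $k$-linear autoequivalence $F_gF_a$, hence by $Z(\mathcal{C})=k$ and Lemma \ref{lem:nat} it is a scalar $\lambda(g)\in k^{*}$, so that $\delta_g=\lambda(g)(c_a)_g$. Feeding this into the cocycle identity (\ref{equ:equi-fun}) (satisfied by both $\delta$ and $c_a$, with $F=F_a$ and $\varepsilon'=\varepsilon$) forces $\lambda(gh)=\lambda(g)\lambda(h)$, that is, $\lambda\in\widehat{G}$. A direct comparison of the defining formulas for $(-)^G$ in (\ref{equ:fun-equi}) and for $F_\chi$ in (\ref{equ:defn-Fchi}) then yields the factorisation $(F_a,\delta)^G=F_{\lambda^{-1}}\circ (F_a,c_a)^G$ of underlying functors; combined with $(F_a,c_a)^G\cong {\rm Id}_{\mathcal{C}^G}$ this shows that the underlying functor of $(F_a,\delta)^G$ is isomorphic to $F_{\lambda^{-1}}$, whence $[(F_a,\delta)^G]\in {\rm Act}(\mathcal{C}^G;\widehat{G})$.

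For surjectivity I exploit the symmetry between the action and its dual. The field $k$, the category $\mathcal{C}^G$ (which is again idempotent complete, since $\mathcal{C}$ is and $|G|$ is invertible, and satisfies $Z(\mathcal{C}^G)=k$ by hypothesis) and the splitting group $\widehat{G}\cong G$ satisfy the same standing assumptions. Hence the inclusion just proved, applied verbatim to the dual $\widehat{G}$-action on $\mathcal{C}^G$, gives $(-)^{\widehat{G}}({\rm Act}(\mathcal{C}^G;\widehat{G}))\subseteq {\rm Act}((\mathcal{C}^G)^{\widehat{G}};G)$. Now take $y\in {\rm Act}(\mathcal{C}^G;\widehat{G})$ and write $y=(-)^G(x)$ for the unique $x\in {\rm Aut}(\mathcal{C};G)$ afforded by Theorem \ref{thm:duality-iso}. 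The commutative triangle (\ref{equ:triangle}) gives $(\Theta,\partial)_*(x)=(-)^{\widehat{G}}(y)$, which lies in ${\rm Act}((\mathcal{C}^G)^{\widehat{G}};G)$ by the preceding inclusion. Since by Corollary \ref{cor:Act} the isomorphism $(\Theta,\partial)_*$ restricts to an isomorphism ${\rm Act}(\mathcal{C};G)\xrightarrow{\sim}{\rm Act}((\mathcal{C}^G)^{\widehat{G}};G)$, its inverse carries ${\rm Act}((\mathcal{C}^G)^{\widehat{G}};G)$ back into ${\rm Act}(\mathcal{C};G)$; hence $x\in {\rm Act}(\mathcal{C};G)$ and $y=(-)^G(x)$ lies in the image of ${\rm Act}(\mathcal{C};G)$. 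Together with the first inclusion this yields $(-)^G({\rm Act}(\mathcal{C};G))={\rm Act}(\mathcal{C}^G;\widehat{G})$, as required.

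The main obstacle is the first inclusion, specifically the identification of the $G$-equivariant structures on a fixed $F_a$ as a $\widehat{G}$-torsor over the distinguished one $c_a$: this is where the hypothesis $Z(\mathcal{C})=k$ is essential, since it is what turns the a priori arbitrary natural automorphisms $\delta_g\circ(c_a)_g^{-1}$ into a single scalar-valued character $\lambda$, and it is precisely this character that reincarnates, under equivariantization, as the dual-action autoequivalence $F_{\lambda^{-1}}$. Once this is in hand the surjectivity is purely formal, resting only on the duality equivalence of Theorem \ref{thm:duality} (through $(\Theta,\partial)_*$ and Corollary \ref{cor:Act}) and on the symmetry of the hypotheses under $G\leftrightarrow\widehat{G}$.
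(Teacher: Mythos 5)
Your proof is correct and follows essentially the same route as the paper: the inclusion $(-)^G({\rm Act}(\mathcal{C};G))\subseteq{\rm Act}(\mathcal{C}^G;\widehat{G})$ via comparison with the distinguished structure $(F_a,c_a)$ of Example \ref{exm:equi-fun}, Lemma \ref{lem:nat} to extract a character (your $\lambda$ is the paper's $\chi^{-1}$), and the factorisation through $F_\chi$ using (\ref{equ:Fchi}); then surjectivity from the dual inclusion, the commutative triangle (\ref{equ:triangle}) and Corollary \ref{cor:Act}. You merely spell out some details the paper leaves implicit (the reduction to $F=F_a$ on the nose, the cocycle computation showing $\lambda$ is multiplicative, and the final diagram chase).
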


\begin{proof}
Take an element $[F_a, \delta]$ from ${\rm Act}(\mathcal{C}; G)$ for some $a\in G$. Recall the $G$-equivariant functor $(F_a, c_a)$ from Example \ref{exm:equi-fun}. By Lemma \ref{lem:nat} there is a unique nonzero scalar $\chi(g)\in k$ such that $\delta_g=\chi(g)^{-1} (c_a)_g$ for each $g\in G$. Indeed, one infers that $\chi$ is a character of $G$. In other words, we have the following identity of $G$-equivariant functors
$$(F_a, \delta)=({\rm Id}_\mathcal{C}, (\chi(g)^{-1}{\rm Id}_{F_g})_{g\in G})\; (F_a, c_a).$$
Hence, the following identity holds
$$(F_a, \delta)^G=({\rm Id}_\mathcal{C}, (\chi(g)^{-1}{\rm Id}_{F_g})_{g\in G})^G \; (F_a, c_a)^G.$$
We recall (\ref{equ:Fchi}), and that the functor $(F_a, c_a)^G$ is isomorphic to ${\rm Id}_{\mathcal{C}^G}$; see Example \ref{exm:equi-fun}. It follows that the underlying functor of the strictly $\widehat{G}$-equivariant functor $(F_a, \delta)^G$ is isomorphic to $F_\chi$. In other words, we have that $[(F_a, \delta)^G]$ lies in ${\rm Act}(\mathcal{C}^G; \widehat{G})$.

We have shown that $({\rm Act}(\mathcal{C}; G))^G\subseteq {\rm Act}(\mathcal{C}^G; \widehat{G})$. By the same argument,  we have $({\rm Act}(\mathcal{C}^G; \widehat{G}))^{\widehat{G}}\subseteq {\rm Act}((\mathcal{C}^G)^{\widehat{G}}; G)$. Now we are done by the commutative triangle (\ref{equ:triangle}) and Corollary \ref{cor:Act}.
\end{proof}

\subsection{Bijections between  stable objects}

 In this subsection, we assume that the $k$-linear category $\mathcal{C}$ is idempotent complete and Hom-finite. Here, the Hom-finiteness means that ${\rm Hom}_\mathcal{C}(X, Y)$ is a finite dimensional $k$-space for any objects $X, Y\in \mathcal{C}$. In particular, the category $\mathcal{C}$ is Krull-Schmidt. An object $M$ in $\mathcal{C}$ is \emph{basic} if each of its indecomposable direct summands has multiplicity one. In this case, the endomorphism algebra ${\rm End}_\mathcal{C}(M)$ is basic.

Let $G$ be a finite abelian group. We fix a $k$-linear $G$-action $\{F_g, \varepsilon_{g, h}|\; g, h\in G\}$ on $\mathcal{C}$. Recall that an object $M$ is $G$-stable provided that $F_g(M)\simeq M$ for each $g\in G$. We denote by ${\rm Stab}^G(\mathcal{C})$ the set of isoclasses of basic $G$-stable objects. It carries a natural ${\rm Aut}(\mathcal{C}; G)$-action by $[F, \delta].M=F(M)$. Here, using the isomorphisms $\delta_g$'s, we observe that $F(M)$ is again $G$-stable.  We are interested in the orbit set ${\rm Stab}^G(\mathcal{C})/{{\rm Aut}(\mathcal{C}; G)}$.

We consider the dual $\widehat{G}$-action on the category $\mathcal{C}^G$ of $G$-equivariant objects. Then ${\rm Stab}^{\widehat{G}}(\mathcal{C}^G)$ denotes the set of isoclasses of basic $\widehat{G}$-stable objects in $\mathcal{C}^G$, which has a natural action by ${\rm Aut}(\mathcal{C}^G; \widehat{G})$. Recall that ${\rm Ind}\colon \mathcal{C}\rightarrow \mathcal{C}^G$ is the induction functor.

The following result shows that basic stable objects in $\mathcal{C}$ and in $\mathcal{C}^G$ are in  a natural bijection, which extends the corresponding result for cyclic group actions on module categories; see \cite[Corollary~13]{Hub} in slightly different terminologies.

\begin{prop}\label{prop:stable-bij}
 Let $G$ be a finite abelian group, which splits over $k$. Then there is a bijection
 $$\iota\colon {\rm Stab}^G(\mathcal{C})\longrightarrow {\rm Stab}^{\widehat{G}}(\mathcal{C}^G), \quad M\mapsto \iota(M),$$
 where the object $\iota(M)$ is uniquely determined by ${\rm add}\; \iota(M)={\rm add}\; {\rm Ind}(M)$. Moreover, $\iota$ induces a bijection on orbit sets
 $${\rm Stab}^G(\mathcal{C})/{{\rm Aut}(\mathcal{C}; G)}\longrightarrow {\rm Stab}^{\widehat{G}}(\mathcal{C}^G)/{{\rm Aut}(\mathcal{C}^G; \widehat{G})}.$$
\end{prop}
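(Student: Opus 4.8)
The plan is to define $\iota$ directly as taking the basic representative of ${\rm add}\,{\rm Ind}(M)$, to exhibit an explicit inverse built from the forgetful functor $U\colon\mathcal{C}^G\to\mathcal{C}$, and then to upgrade the resulting set bijection to an orbit bijection using the equivariance already encoded in the induction functor. First I would check that $\iota$ is well defined. For a basic $G$-stable object $M$, the canonical isomorphisms ${\rm can}(M)_\chi\colon{\rm Ind}(M)\to F_\chi{\rm Ind}(M)$ show that ${\rm Ind}(M)$ is $\widehat{G}$-stable; since $\mathcal{C}^G$ is Krull--Schmidt there is a unique-up-to-isomorphism basic object $\iota(M)$ with ${\rm add}\,\iota(M)={\rm add}\,{\rm Ind}(M)$, and this additive closure is preserved by each automorphism $F_\chi$, so $\iota(M)$ is again $\widehat{G}$-stable and $\iota$ lands in ${\rm Stab}^{\widehat{G}}(\mathcal{C}^G)$. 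For the candidate inverse I would set $\iota'(P)$ to be the basic object with ${\rm add}\,\iota'(P)={\rm add}\,U(P)$; here $U(P)$ is automatically $G$-stable for every $P=(X,\alpha)$, because the structure isomorphisms $\alpha_g\colon X\to F_g(X)$ witness $F_g(X)\simeq X$, so $\iota'$ takes values in ${\rm Stab}^G(\mathcal{C})$.

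Next I would verify that $\iota$ and $\iota'$ are mutually inverse. The identity $\iota'\circ\iota={\rm id}$ is the easy direction: for $G$-stable $M$ one has $U{\rm Ind}(M)=\bigoplus_{h\in G}F_h(M)\simeq M^{|G|}$, so ${\rm add}\,U(\iota(M))={\rm add}\,M$ and hence $\iota'(\iota(M))=M$. The direction $\iota\circ\iota'={\rm id}$ is the crux. The split epimorphism $\epsilon\colon{\rm Ind}\,U\to{\rm Id}_{\mathcal{C}^G}$ of Lemma \ref{lem:separable} exhibits $P$ as a direct summand of ${\rm Ind}(U(P))$, giving one inclusion ${\rm add}\,P\subseteq{\rm add}\,{\rm Ind}(U(P))$. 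For the reverse inclusion I would invoke the monad isomorphism $f$ of (\ref{equ:iso-monad}), which identifies ${\rm Ind}\,U(P)$ with $\bigoplus_{\chi\in\widehat{G}}F_\chi(P)$ inside $\mathcal{C}^G$; when $P$ is $\widehat{G}$-stable this direct sum is isomorphic to $P^{|\widehat{G}|}$, so ${\rm add}\,{\rm Ind}(U(P))={\rm add}\,P$. Since ${\rm Ind}$ is additive, ${\rm add}\,\iota(\iota'(P))={\rm add}\,{\rm Ind}(U(P))={\rm add}\,P$, and as $P$ is basic this forces $\iota(\iota'(P))=P$.

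Finally, to descend to orbit sets I would prove that $\iota$ is compatible with the group actions along the equivariantization isomorphism $(-)^G\colon{\rm Aut}(\mathcal{C};G)\to{\rm Aut}(\mathcal{C}^G;\widehat{G})$ of Theorem \ref{thm:duality-iso}. Given $[F,\delta]$ and $M$, the natural isomorphism $\xi\colon(F,\delta)^G{\rm Ind}\to{\rm Ind}\,F$ of (\ref{equ:xi}) yields $(F,\delta)^G({\rm Ind}(M))\simeq{\rm Ind}(F(M))$, whence ${\rm add}\,(F,\delta)^G(\iota(M))={\rm add}\,{\rm Ind}(F(M))={\rm add}\,\iota(F(M))$, so that $(F,\delta)^G(\iota(M))\simeq\iota(F(M))$. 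Thus $\iota$ intertwines the two actions along the group isomorphism $(-)^G$, and an equivariant bijection along a group isomorphism automatically induces a bijection on orbit sets $M\cdot{\rm Aut}(\mathcal{C};G)\mapsto\iota(M)\cdot{\rm Aut}(\mathcal{C}^G;\widehat{G})$.

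The main obstacle is the reverse inclusion in $\iota\circ\iota'={\rm id}$, namely the Mackey-type decomposition ${\rm Ind}\,U(P)\simeq\bigoplus_{\chi}F_\chi(P)$ in $\mathcal{C}^G$; this is precisely the content supplied by the monad isomorphism (\ref{equ:iso-monad}), so the real work is organizing the argument at the level of additive closures, which is legitimate since both $\mathcal{C}$ and $\mathcal{C}^G$ are Krull--Schmidt and the relevant functors $U$ and ${\rm Ind}$ are additive. Everything else reduces to this bookkeeping together with the already-established facts that $\xi$ intertwines induction with the equivariantized autoequivalences and that $(-)^G$ is a group isomorphism.
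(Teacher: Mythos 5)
Your proposal is correct and follows essentially the same route as the paper: both rest on the $\widehat{G}$-stability of ${\rm Ind}(M)$, the monad isomorphism (\ref{equ:iso-monad}) identifying ${\rm Ind}\,U(P)$ with $\bigoplus_{\chi\in\widehat{G}}F_\chi(P)$, the forgetful functor to recover ${\rm add}\,M$, and the isomorphism $\xi$ of (\ref{equ:xi}) for compatibility with the equivariantization isomorphism. Your packaging of injectivity and surjectivity as an explicit two-sided inverse $\iota'$ (with the split epimorphism of Lemma \ref{lem:separable} as a redundant extra inclusion) is only a cosmetic reorganization of the paper's argument.
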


\begin{proof}
The map $\iota$ is well defined, since ${\rm Ind}(M)$ is always $\widehat{G}$-stable; see Subsection 4.3. It is injective, since ${\rm add}\; U\iota(M)={\rm add} \; M$. Here, $U$ is the forgetful functor.  For the surjectivity, let $(Y, \alpha)$ be a basic $\widehat{G}$-stable object in $\mathcal{C}^G$. Then $F_\chi(Y, \alpha)\simeq (Y, \alpha)$ for each $\chi\in \widehat{G}$. By the isomorphism (\ref{equ:iso-monad}), we infer that ${\rm add}\; {\rm Ind}(Y)={\rm add}\; (Y, \alpha)$. Take a basic object $Y_0$ such that ${\rm add}\; Y_0={\rm add}\; Y$. Since $Y$ is clearly $G$-stable, so is $Y_0$. We observe that $\iota(Y_0)=(Y, \alpha)$. Then we are done.

For the second bijection, it suffices to show that $\iota$ is compatible with the equivariantization isomorphism in Theorem \ref{thm:duality-iso}. Indeed, for a $G$-equivariant autoequivalence $(F, \delta)$ on $\mathcal{C}$ and a $G$-stable object $M$, we recall the isomorphism $\xi_M\colon (F, \delta)^G {\rm Ind}(M)\rightarrow {\rm Ind}F(M)$ in (\ref{equ:xi}). Then we have the following isomorphism
$$\iota F(M)\simeq (F, \delta)^G \iota(M).$$
Since the equivariantization isomorphism sends $[F, \delta]$ to $[(F, \delta)^G]$, we infer the required compatibility.
\end{proof}

\begin{rem}\label{rem:bij}
(1) A basic $G$-stable object $M$ is \emph{$G$-indecomposable} if it is not decomposable into the direct sum of two $G$-stable objects. Then the bijection $\iota$ induces a bijection between $G$-indecomposable objects and $\widehat{G}$-indecomposable objects.

 Denote by ${\rm ind}\; \mathcal{C}$ the set of   isomorphism classes of indecomposable objects in $\mathcal{C}$. Then $G$ acts on ${\rm ind}\; \mathcal{C}$. There is a bijection between the orbit set ${({\rm ind}\; \mathcal{C})}/G$ and the set of $G$-indecomposables, sending a $G$-orbit to its direct sum. Consequently, we obtain a bijection between ${({\rm ind}\; \mathcal{C})}/G$ and ${({\rm ind}\; \mathcal{C}^G)}/\widehat{G}$.

(2) For a basic $G$-stable object $M$, we observe that ${\rm End}_{\mathcal{C}^G}(\iota(M))$ is Morita equivalent to ${\rm End}_{\mathcal{C}^G}({\rm Ind}(M))$. We claim that ${\rm End}_{\mathcal{C}^G}({\rm Ind}(M))$ is isomorphic to ${\rm End}_\mathcal{C}(M)\ast G$, where the crossed product is with respect to the weak $G$-action on ${\rm End}_\mathcal{C}(M)$ described in Subsection 2.3; compare \cite[Proposition 3.1.1]{DLS}. In particular, we have chosen an isomorphism $\alpha_g\colon M\rightarrow F_g(M)$ for each $g\in G$.

The claim follows immediately from the following isomorphisms
$${\rm End}_{\mathcal{C}^G}({\rm Ind}(M))\stackrel{\sim}\longrightarrow \bigoplus_{g\in G} {\rm Hom}_\mathcal{C}(M, F_g(M)) \stackrel{\sim}\longleftarrow  {\rm End}_\mathcal{C}(M)\ast G,$$
where the left isomorphism is by the adjunction (\ref{equ:adj1}) and the right one sends $a\overline{g^{-1}}$ to ${\rm inc}_g \circ F_g(a)\circ \alpha_g$. Here, ${\rm inc}_g\colon F_g(T)\rightarrow \bigoplus_{h\in G}F_h(T)$ is the inclusion, and $a$ belongs to ${\rm End}_\mathcal{C}(M)$. \hfill $\square$
\end{rem}

\section{Groups of  equivariant triangle  autoequivalences}

In this section, we sketch a triangle version of Theorem \ref{thm:duality-iso}. We first recall the notion of a triangle action by a group on a (pre-)triangulated category.

\subsection{Triangle actions}

Let $\mathcal{T}$ be a pre-triangulated category with the translation functor $\Sigma$. Here, by a pre-triangulated category we mean a triangulated category,  which possibly does not satisfy the octahedral axiom.

A triangle functor between two pre-triangulated categories is a pair $(F, \omega)$, where $F\colon \mathcal{T}\rightarrow \mathcal{T}'$ is an additive functor and $\omega\colon F\Sigma\rightarrow \Sigma'F$ is a natural isomorphism such that any exact triangle $X\rightarrow Y\rightarrow Z\stackrel{f}\rightarrow \Sigma(X)$ in $\mathcal{T}$ is sent to an exact triangle $F(X)\rightarrow F(Y)\rightarrow F(Z)\xrightarrow{\omega_X\circ F(f)} \Sigma'F(X)$ in $\mathcal{T}'$. We call $\omega$ the \emph{connecting isomorphism} for $F$. The composition and natural isomorphisms of triangle functors respect the connecting isomorphisms. For example, $({\rm Id}_\mathcal{T}, {\rm Id}_\Sigma)$ is a triangle functor, which is simply denoted by ${\rm Id}_\mathcal{T}$. More generally, the connecting isomorphism  in a triangle functor $(F, \omega)$ is \emph{trivial} provided that $F\Sigma=\Sigma'F$ and that $\omega={\rm Id}_{F\Sigma}$ is the identity transformation. In this case, the triangle functor $(F, {\rm Id}_{F\Sigma})$ is simply denoted by $F$.

Let $\mathcal{T}$ be a pre-triangulated category and $G$ a finite group. A \emph{triangle $G$-action} $\{(F_g, \omega_g), \varepsilon_{g, h} |\; g, h\in G \}$  on $\mathcal{T}$ consists of triangle autoequivalences $(F_g, \omega_g)$ on $\mathcal{T}$ and natural isomorphisms
$$\varepsilon_{g, h}\colon (F_g, \omega_g)(F_h, \omega_h)\longrightarrow (F_{gh}, \omega_{gh})$$
 of triangle functors subject to the condition (\ref{equ:2-coc}). Since the isomorphism $\varepsilon_{g, h}$ respects the connecting isomorphisms, we have the condition
\begin{align}\label{equ:tr-ac}
 \omega_{gh}\circ \varepsilon_{g, h}\Sigma=\Sigma \varepsilon_{g,h}\circ (\omega_gF_h \circ F_g \omega_h).
\end{align}

We consider the category $\mathcal{T}^G$ of $G$-equivariant objects in $\mathcal{T}$; it is an additive category. We observe  by (\ref{equ:tr-ac}) that
$$(\Sigma, (\omega_g^{-1})_{g\in G})\colon \mathcal{T}\longrightarrow \mathcal{T}$$
 is a $G$-equivariant endofunctor. In particular, we have a well-defined endofunctor
 $$(\Sigma, (\omega_{g}^{-1})_{g\in G})^G\colon \mathcal{T}^G\longrightarrow \mathcal{T}^G,$$
 which is abbreviated as $\Sigma^G$; it is an autoequivalence; see Lemma \ref{lem:G-equ}.  For an object $(X, \alpha)$ in $\mathcal{T}^G$, we have $\Sigma^G(X, \alpha)=(\Sigma(X), \tilde{\alpha})$ with $\tilde{\alpha}_g=(\omega_g)_X^{-1}\circ \Sigma(\alpha_g)$ for each $g\in G$. The autoequivalence $\Sigma^G$ acts on morphisms by $\Sigma$.

The following basic result is essentially due to \cite[Corollary 4.3]{Bal}, which is made explicit in \cite[Lemma 4.4]{Chen15} and \cite[Theorem 6.9]{El2014}.

\begin{lem}\label{lem:pretri}
Assume that the pre-triangulated category $\mathcal{T}$ has a triangle $G$-action as above. Suppose that $\mathcal{T}$ is idempotent complete and that $|G|$ is invertible in $\mathcal{T}$. Then $\mathcal{T}^G$ is pre-triangulated  with $\Sigma^G$ its translation functor and satisfying the following condition: a triangle $(X, \alpha)\rightarrow (Y, \beta)\rightarrow (Z, \gamma)\rightarrow \Sigma^G(X, \alpha)$ is exact if and only if the corresponding triangle of underlying objects is exact in $\mathcal{T}$. \hfill $\square$
\end{lem}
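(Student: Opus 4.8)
The plan is to deduce the result from the theorem on separable monads over a (pre-)triangulated category \cite{Bal}, using the monadic description of $\mathcal{T}^G$ already at our disposal. By Lemma \ref{lem:K} the comparison functor identifies $\mathcal{T}^G$ with the Eilenberg--Moore category $M\mbox{-Mod}_\mathcal{T}$ of the monad $M=U{\rm Ind}$ on $\mathcal{T}$, and under this identification the forgetful functor $U\colon \mathcal{T}^G\rightarrow \mathcal{T}$ becomes the monadic forgetful functor. Since $M=\bigoplus_{h\in G}F_h$ is a finite direct sum of triangle autoequivalences, it is itself a triangle functor; the condition (\ref{equ:tr-ac}), which records that each $\varepsilon_{g,h}$ respects the connecting isomorphisms, is precisely what makes the unit $\eta$ and multiplication $\mu$ of $M$ compatible with the connecting isomorphism of $M$. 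Thus $M$ commutes with $\Sigma$ up to a coherent natural isomorphism, and the translation $\Sigma^G$ on $\mathcal{T}^G$ is exactly the automorphism of $M\mbox{-Mod}_\mathcal{T}$ lifting $\Sigma$.

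The first substantive step is to verify that $M$ is a \emph{separable monad}, and this is where the hypothesis that $|G|$ is invertible in $\mathcal{T}$ is used. By Lemma \ref{lem:separable} the counit $\epsilon\colon {\rm Ind}U\rightarrow {\rm Id}_{\mathcal{T}^G}$ of the adjunction $({\rm Ind}, U)$ admits a natural section $\zeta$, namely $\zeta_{(Y,\beta)}=\frac{1}{|G|}(\beta_h)_{h\in G}$. Setting $\gamma=U\zeta{\rm Ind}\colon M\rightarrow M^2$ then yields a section of $\mu=U\epsilon{\rm Ind}$, since $\mu\circ\gamma=U(\epsilon\zeta){\rm Ind}={\rm Id}_M$; the naturality of $\zeta$ makes $\gamma$ a bimodule map. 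Concretely $\gamma$ is the averaging section that distributes the factor $\frac{1}{|G|}$ over the $|G|$ pairs $(h,g)$ with a fixed product, using the isomorphisms $(\varepsilon_{h,g})^{-1}$. Hence $M$ is a separable monad on $\mathcal{T}$, and the separability datum is recorded at exactly the level Balmer's theorem requires.

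With $M$ an exact separable monad on the idempotent complete pre-triangulated category $\mathcal{T}$, Balmer's result \cite[Corollary 4.3]{Bal}, in the pre-triangulated form made explicit in \cite[Lemma 4.4]{Chen15} and \cite[Theorem 6.9]{El2014}, applies: the module category $M\mbox{-Mod}_\mathcal{T}$, and therefore $\mathcal{T}^G$, carries a pre-triangulated structure whose translation is the lift $\Sigma^G$ of $\Sigma$, and in which a triangle is exact precisely when its image under the forgetful functor $U$ is exact in $\mathcal{T}$. Separability also forces $M\mbox{-Mod}_\mathcal{T}$ to be idempotent complete, so the output remains within the class of categories under consideration. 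Uniqueness is then formal: once $\Sigma^G$ is fixed as the translation and a triangle is declared exact exactly when it is exact after forgetting, the class of exact triangles is completely determined, and the content of the theorem is the assertion, supplied by Balmer, that this prescribed class satisfies the axioms of a pre-triangulated category.

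I expect the genuine obstacle to lie not in the formal bookkeeping but in the geometric content that Balmer's theorem encapsulates, namely the completion axiom. Given a morphism $f\colon (X,\alpha)\rightarrow (Y,\beta)$ in $\mathcal{T}^G$, the cone of the underlying morphism in $\mathcal{T}$ carries no canonical equivariant structure, so completing $f$ to an exact triangle inside $\mathcal{T}^G$ is not formal. It is precisely here that the separable section $\gamma$ (equivalently $\zeta$) is used to equip the cone with an $M$-module structure, and then idempotent completeness is needed to split off the genuine cone as a direct summand. Were one to seek a self-contained argument avoiding the citation, this completion step, together with the verification of the octahedral-free axioms (TR1)--(TR3) for the prescribed class, would be the part demanding real work; the remaining properties follow formally from the faithfulness and exactness of $U$.
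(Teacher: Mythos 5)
Your proposal is correct and follows essentially the same route as the paper: the paper states Lemma \ref{lem:pretri} without proof, attributing it to \cite[Corollary 4.3]{Bal} as made explicit in \cite[Lemma 4.4]{Chen15} and \cite[Theorem 6.9]{El2014}, which is precisely the argument you reconstruct --- identify $\mathcal{T}^G$ with $M\mbox{-Mod}_\mathcal{T}$ via Lemma \ref{lem:K}, use the invertibility of $|G|$ (Lemma \ref{lem:separable}) to produce the natural section $\zeta$ of the counit and hence the bimodule section $\gamma=U\zeta\,{\rm Ind}$ of $\mu$, and invoke Balmer's theorem for exact separable monads on idempotent complete categories. Your verification that naturality of $\zeta$ and $\epsilon$ yields the bimodule property, and your closing remarks on where the genuine work lies (equipping cones with module structures via separability and then splitting idempotents), are accurate.
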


 In the case of Lemma \ref{lem:pretri}, both the forgetful functor $U\colon \mathcal{T}^G\rightarrow \mathcal{T}$ and the induction functor ${\rm Ind}\colon \mathcal{T}\rightarrow \mathcal{T}^G$ are triangle functors, where the connecting isomorphism for $U$ is trivial and the one for ${\rm Ind}$ is induced from $\omega_g$'s.

Indeed, in most cases,  the category $\mathcal{T}^G$ is triangulated; see \cite[Corollary 6.10]{El2014}. In general, we do not know whether  $\mathcal{T}^G$ is triangulated under the assumption that $\mathcal{T}$ is triangulated.

Let $k$ be a field.  Let $\mathcal{T}$ be a $k$-linear pre-triangulated category, which is skeletally small. Recall that the \emph{triangle center}  of $\mathcal{T}$ is given by $Z_\vartriangle(\mathcal{T})=\{\lambda\in Z(\mathcal{T})\; |\; \Sigma \lambda=\lambda \Sigma\}$; it is a $k$-subalgebra of the center $Z(\mathcal{T})$. We observe that the triangle center is bijective to the set consisting of natural transformations $\lambda\colon ({\rm Id}_\mathcal{T}, {\rm Id}_{\Sigma})\rightarrow ({\rm Id}_\mathcal{T}, {\rm Id}_{\Sigma})$ as triangle functors.

We denote by ${\rm Aut}_\vartriangle(\mathcal{T})$ the \emph{group of triangle autoequivalences} on $\mathcal{T}$, whose elements are the isoclasses $[F, \omega]$ of $k$-linear triangle autoequivalences $(F, \omega)$ of $\mathcal{T}$ and whose multiplication is given by the composition of triangle functors. The  homomorphism
$${\rm Aut}_\vartriangle(\mathcal{T})\longrightarrow {\rm Aut}(\mathcal{T})$$
 sending $[F, \omega]$ to $[F]$ is in general neither injective nor surjective.

\subsection{A triangle version of Theorem \ref{thm:duality-iso}}

Let $k$ be a field and let $\mathcal{T}$ be a $k$-linear pre-triangulated category, which is skeletally small. Let $G$ be a finite abelian group,  which splits over $k$. We fix a $k$-linear triangle $G$-action $\{(F_g, \omega_g), \varepsilon_{g, h} |\; g, h\in G \}$ on $\mathcal{T}$.

We denote  by ${\rm Aut}_\vartriangle^G(\mathcal{T})$ the subgroup of ${\rm Aut}_\vartriangle(\mathcal{T})$ consisting of elements $[F, \omega]$ which commute with each $[F_g, \omega_g]$. The group of equivariant triangle autoequivalences behaves better.

A $G$-equivariant triangle endofunctor $((F, \omega), \delta)$ on $\mathcal{T}$ consists of a triangle endofunctor $(F, \omega)$ and natural isomorphisms
$$\delta_g\colon (F, \omega) (F_g, \omega_g) \longrightarrow (F_g, \omega_g) (F, \omega)$$
between triangle functors subject to (\ref{equ:equi-fun}). We denote by ${\rm Aut}_\vartriangle(\mathcal{T}; G)$ the \emph{group of equivariant triangle autoequivalences} on $\mathcal{T}$, whose elements are the isoclasses $[(F, \omega), \delta]$ of equivariant triangle autoequivalences. Then we have the \emph{forgetful homomorphism} associated to the given triangle $G$-action
$$\phi\colon {\rm Aut}_\vartriangle(\mathcal{T}; G)\longrightarrow {\rm Aut}_\vartriangle^G(\mathcal{T}), \;  [(F, \omega), \delta]\mapsto [F, \omega].$$
We denote by ${\rm Act}_\vartriangle(\mathcal{T}; G)$ the normal subgroup of
${\rm Aut}_\vartriangle(\mathcal{T}; G)$ consisting of those elements $[(F, \omega), \delta]$ such that $(F, \omega)$ is isomorphic to $(F_a, \omega_a)$ for some $a\in G$.

We consider the dual $\widehat{G}$-action on $\mathcal{T}^G$, which sends $\chi$ to the automorphism $F_\chi$ in (\ref{equ:defn-Fchi}). We observe that $F_\chi$ is a triangle functor with the trivial connecting isomorphism. In particular, the dual $\widehat{G}$-action on $\mathcal{T}^G$ is a triangle action. We then have the corresponding groups  ${\rm Aut}_\vartriangle(\mathcal{T}^G; \widehat{G})$ and ${\rm Act}_\vartriangle(\mathcal{T}^G; \widehat{G})$.

Let $((F, \omega), \delta)\colon \mathcal{T}\rightarrow \mathcal{T}$ be an equivariant triangle autoequivalence. Recall that the autoequivalence $(F, \delta)^G$ on $\mathcal{T}^G$ is strictly $\widehat{G}$-equivariant. We have a natural isomorphism
$$\omega^G\colon (F, \delta)^G \Sigma^G\longrightarrow \Sigma^G (F, \delta)^G$$
given by $(\omega^G)_{(X, \alpha)}=\omega_X$ for each object $(X, \alpha)\in \mathcal{T}^G$. Moreover, the pair $((F, \delta)^G, \omega^G)$ is a triangle endofunctor on $\mathcal{T}^G$. Hence, we have a strictly $\widehat{G}$-equivariant triangle autoequivalence $(((F, \delta)^G, \omega^G), {\rm Id})$ on $\mathcal{T}^G$, which will be abbreviated as $((F, \omega), \delta)^G$.

This gives to the triangle version of the \emph{equivariantization homomorphism} associated to the given  triangle $G$-action
$$(-)^G\colon {\rm Aut}_\vartriangle(\mathcal{T}; G)\longrightarrow {\rm Aut}_\vartriangle(\mathcal{T}^G; \widehat{G}), \; [(F, \omega), \delta]\mapsto [((F, \omega), \delta)^G].$$

\begin{thm}\label{thm:duality-iso-tri}
Let $G$ be a finite abelian group, which splits over $k$.  Assume that the pre-triangulated category $\mathcal{T}$ is idempotent complete with the given $k$-linear triangle $G$-action. Then the above equivariantization homomorphism $(-)^G$ is an isomorphism.

Moreover, if $Z_\vartriangle(\mathcal{T})=k=Z_\vartriangle(\mathcal{T}^G)$, this equivariantization homomorphism restricts to an isomorphism
$$(-)^G\colon {\rm Act}_\vartriangle(\mathcal{T}; G)\longrightarrow {\rm Act}_\vartriangle(\mathcal{T}^G; \widehat{G}).$$
\end{thm}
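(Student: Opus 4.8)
The plan is to rerun the proof of Theorem \ref{thm:duality-iso} verbatim, carrying along a connecting isomorphism at every step. First, by Lemma \ref{lem:pretri} applied twice, both $\mathcal{T}^G$ and $(\mathcal{T}^G)^{\widehat{G}}$ are pre-triangulated, with translations $\Sigma^G$ and $(\Sigma^G)^{\widehat{G}}$, and their exact triangles are created by the respective forgetful functors; moreover the double-dual $G$-action sends $g$ to the triangle functor $\widehat{F}_g$ with trivial connecting isomorphism, so it is a triangle action. I would then upgrade the duality equivalence of Theorem \ref{thm:duality} to a triangle $G$-equivariant equivalence $((\Theta, \omega^\Theta), (\partial_g)_{g\in G})$. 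Since $\mathrm{Ind}$ is a triangle functor (as recorded after Lemma \ref{lem:pretri}) and $\Theta(X)=(\mathrm{Ind}(X), \mathrm{can}(X))$ with each $\mathrm{can}(X)_\chi$ acting by scalars, there is a natural connecting isomorphism $\omega^\Theta\colon \Theta\Sigma \to (\Sigma^G)^{\widehat{G}}\Theta$, and exactness is preserved because triangles downstairs are detected on underlying objects; the $\partial_g$, being assembled from $\varepsilon$, respect connecting isomorphisms by (\ref{equ:tr-ac}). (Alternatively one may invoke the triangle duality of \cite{El2014}.)

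Next I would reproduce the construction preceding (\ref{equ:commdiag}) at the triangle level. Because $((\Theta, \omega^\Theta), \partial)$ is a triangle $G$-equivariant equivalence, transport along it yields an isomorphism
$$(\Theta, \partial)_*\colon {\rm Aut}_\vartriangle(\mathcal{T}; G)\longrightarrow {\rm Aut}_\vartriangle((\mathcal{T}^G)^{\widehat{G}}; G),$$
fitting into the triangle analogue of (\ref{equ:triangle}), namely $(-)^{\widehat{G}}\circ (-)^G=(\Theta, \partial)_*$. The key point is that the isomorphism $\xi$ of (\ref{equ:xi}), which already realizes this identity on underlying functors, is an isomorphism of triangle functors; that is, $\xi\colon (((F,\omega),\delta)^G)^{\widehat{G}}\Theta \to \Theta (F, \omega)$ intertwines the connecting isomorphisms. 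As $\xi$, $\omega^\Theta$ and the connecting isomorphism of $\mathrm{Ind}$ are all built diagonally from the $\delta_g$ and the $\omega_g$, this should reduce to (\ref{equ:tr-ac}) together with the triangle-functor compatibility of $(F, \omega)$ and the $\delta_g$.

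Granting the triangle, the bijectivity of $(\Theta, \partial)_*$ forces $(-)^G$ injective and $(-)^{\widehat{G}}$ surjective; running the identical argument for the dual $\widehat{G}$-action (whose double dual is $\mathcal{T}$ via the evaluation isomorphism (\ref{equ:ev})) yields injectivity of $(-)^{\widehat{G}}$, so both maps are isomorphisms. For the restriction to the $\mathrm{Act}$ subgroups I would mirror Proposition \ref{prop:Act}: by Example \ref{exm:equi-fun} the pair $(F_a, c_a)$ is a triangle $G$-equivariant functor (its connecting isomorphism is $\omega_a$, compatible with $\varepsilon$ by (\ref{equ:tr-ac})) with $(F_a, c_a)^G \cong {\rm Id}_{\mathcal{T}^G}$ as triangle functors. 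Given $[(F_a, \omega_a), \delta]\in {\rm Act}_\vartriangle(\mathcal{T}; G)$, the triangle version of Lemma \ref{lem:nat} — available since $Z_\vartriangle(\mathcal{T})=k$ — factors $\delta_g = \chi(g)^{-1}(c_a)_g$ for a character $\chi$, whence $((F_a,\omega_a),\delta)^G$ has underlying triangle functor isomorphic to $F_\chi$ and lies in ${\rm Act}_\vartriangle(\mathcal{T}^G; \widehat{G})$; the symmetric inclusion (using $Z_\vartriangle(\mathcal{T}^G)=k$), together with the triangle analogue of Corollary \ref{cor:Act} and the commutative triangle, closes the argument.

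The main obstacle I anticipate is purely the bookkeeping of connecting isomorphisms: confirming that $\Theta$ is genuinely a triangle functor and, above all, that $\xi$ respects the triangle structure. No phenomenon arises beyond the additive proof of Theorem \ref{thm:duality-iso}, since every map in sight is built diagonally from $\delta_g$, $\omega$, $\omega_g$ or acts by scalars; the work is in tracking (\ref{equ:tr-ac}) through the composite $\widehat{K}^{-1}f^*K'$ defining $\Theta$, and in observing that the scalar-valued isomorphisms (the $\mathrm{can}$'s and the character $\chi$) are automatically $\Sigma$-compatible because scalars commute with every natural transformation.
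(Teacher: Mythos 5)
Your proposal is correct and follows essentially the same route as the paper: the paper's own proof simply observes that the equivalence $(\Theta,\partial)$ of Theorem \ref{thm:duality} is a triangle functor whose connecting isomorphism is induced by the $\omega_g$'s, and then lets the arguments of Theorem \ref{thm:duality-iso} and Proposition \ref{prop:Act} carry over, which is exactly the bookkeeping you carry out explicitly (the triangle analogue of (\ref{equ:triangle}), the $\Sigma$-compatibility of $\xi$, and the triangle versions of Lemma \ref{lem:nat}, Example \ref{exm:equi-fun} and Corollary \ref{cor:Act}). Your more detailed tracking of the connecting isomorphisms is precisely what the paper leaves implicit, so there is no substantive difference.
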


\begin{proof}
We mention that the triangle version of Theorem \ref{thm:duality} holds. More precisely, we observe that the $G$-equivariant equivalence in Theorem \ref{thm:duality} is a triangle functor, whose connecting isomorphism is induced by the isomorphisms $\omega_g$'s. Then the argument in the proof of Theorem \ref{thm:duality-iso} and Proposition  \ref{prop:Act} carries over to the triangle case.
\end{proof}

\section{Stable tilting objects and bijections}

In this section, we study tilting objects that are stable under a given triangle action. If the acting group is finite abelian, there is a bijection between the sets of isoclasses of stable tilting objects for the given action and for the dual action.

Throughout, let $k$ be a field, and let $\mathcal{T}$ a $k$-linear pre-triangulated category, which is assumed to be idempotent complete and Hom-finite.

\subsection{Tilting objects} We  recall the basic facts on tilting objects. For an object $M$ in $\mathcal{T}$,  we denote by ${\rm thick}\langle M\rangle $ the smallest thick triangulated subcategory of $\mathcal{T}$ containing $M$. Here, the thickness means being closed under direct summands.

\begin{defn}
An object $T\in \mathcal{T}$ is called \emph{tilting} provided that the following conditions are fulfilled.
\begin{enumerate}
\item[(T1)] The object $T$ satisfies ${\rm Hom}_\mathcal{T}(T, \Sigma^i(T))=0$ for $i\neq 0$;
\item[(T2)] ${\rm thick}\langle T \rangle=\mathcal{T}$. \hfill $\square$
\end{enumerate}
\end{defn}

The motivating example is as follows. Let $A$ be a finite dimensional algebra. We denote by $\mathbf{K}^b({\rm proj}\mbox{-}A)$ the bounded homotopy category of projective $A$-modules. We view a module $X$ as a stalk complex concentrated in degree zero, still denoted by $X$.  Then the regular module $A$ is a tilting object in $\mathbf{K}^b({\rm proj}\mbox{-}A)$.

Following \cite[Subsection 8.7]{Kel}, the pre-triangulated category $\mathcal{T}$ is \emph{algebraic} provided that there is a $k$-linear Frobenius category $\mathcal{E}$ with a $k$-linear triangle equivalence $\mathcal{T}\rightarrow \underline{\mathcal{E}}$. Here, $\underline{\mathcal{E}}$ is the stable category modulo projective objects; it is a $k$-linear triangulated category.

The following basic fact is well known, which is essentially contained in \cite[Theorem 8.5]{Kel}; for a sketchy proof, we refer to \cite[Lemma 3.1]{Chen11}.

\begin{prop}\label{prop:tilting}
Let $\mathcal{T}$ be an algebraic triangulated category. Assume that $T$ is a tilting object. Then there is a $k$-linear triangle equivalence
$$(\Psi, u)\colon \mathcal{T}\longrightarrow \mathbf{K}^b({\rm proj}\mbox{-}{\rm End}_\mathcal{T}(T)),$$
which restricts to the equivalence ${\rm Hom}_\mathcal{T}(T, -)\colon {\rm add}\; T\rightarrow {\rm proj}\mbox{-}{\rm End}_\mathcal{T}(T)$.  \hfill $\square$
\end{prop}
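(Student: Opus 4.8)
The plan is to realize both sides as perfect derived categories of a single differential graded (DG) algebra and then exploit the vanishing condition (T1). Write $A={\rm End}_\mathcal{T}(T)$, which is a finite dimensional $k$-algebra since $\mathcal{T}$ is Hom-finite; recall that $\mathbf{K}^b({\rm proj}\mbox{-}A)$ is precisely the category of perfect complexes, i.e. the thick subcategory ${\rm thick}\langle A\rangle$ generated by the regular module $A$ inside the unbounded derived category of $A$. Because $\mathcal{T}$ is algebraical, I would first fix a DG enhancement: a pretriangulated DG category $\mathcal{A}$ (arising from the Frobenius model $\mathcal{E}$ with $\mathcal{T}\simeq\underline{\mathcal{E}}$) whose homotopy category $H^0(\mathcal{A})$ is triangle equivalent to $\mathcal{T}$, together with a lift $\widetilde{T}$ of the tilting object $T$.

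Next I would form the DG endomorphism algebra $B=\mathcal{A}(\widetilde{T},\widetilde{T})$ and compute its cohomology. By construction $H^n(B)\cong{\rm Hom}_\mathcal{T}(T,\Sigma^n(T))$ for all $n$, so (T1) forces $H^n(B)=0$ for $n\neq 0$ and $H^0(B)\cong A$. A DG algebra whose cohomology is concentrated in a single degree is formal: the good truncation $\tau_{\leq 0}B$ is a DG subalgebra, the inclusion $\tau_{\leq 0}B\hookrightarrow B$ is a quasi-isomorphism (as $H^{n}(B)=0$ for $n>0$), and the canonical surjection $\tau_{\leq 0}B\to H^0(B)=A$ is again a quasi-isomorphism of DG algebras. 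Hence $B$ and $A$ are linked by a zig-zag $B\hookleftarrow\tau_{\leq 0}B\twoheadrightarrow A$ of quasi-isomorphisms of DG algebras, and therefore have triangle equivalent perfect derived categories.

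Finally I would invoke the standard theorem of Keller for algebraic triangulated categories: the total right derived functor of $\mathcal{A}(\widetilde{T},-)$ induces a triangle equivalence from ${\rm thick}\langle T\rangle$ onto the perfect derived category of $B$, carrying $T$ to the free rank-one module $B$. Condition (T2) guarantees ${\rm thick}\langle T\rangle=\mathcal{T}$, so composing with the formality equivalence between the perfect derived categories of $B$ and $A$, and with the identification of the latter with $\mathbf{K}^b({\rm proj}\mbox{-}A)$, yields the desired triangle equivalence $(\Psi,u)$, whose connecting isomorphism $u$ is assembled from those of the intermediate triangle functors. Under this composite $T$ is carried to the stalk complex $A$ and the indecomposable summands of $T$ to the indecomposable projectives; thus $\Psi$ restricts on ${\rm add}\,T$ to the equivalence ${\rm add}\,T\stackrel{\sim}\longrightarrow{\rm proj}\mbox{-}A$ given by ${\rm Hom}_\mathcal{T}(T,-)$, already recalled before Lemma~\ref{lem:stable-hom}. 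The main obstacle is the formality step together with verifying that the chosen enhancement and Keller's abstract equivalence genuinely reproduce the naive functor ${\rm Hom}_\mathcal{T}(T,-)$ on ${\rm add}\,T$; once formality is secured, the remaining identifications are bookkeeping with the connecting isomorphisms.
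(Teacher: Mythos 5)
Your proposal is correct and is essentially the same argument the paper relies on: the paper proves this proposition simply by citing \cite[Theorem 8.5]{Kel} and the sketch in \cite[Lemma 3.1]{Chen11}, and the route you spell out --- DG enhancement of the algebraic triangulated category, the DG endomorphism algebra $B$ of a lift of $T$ with $H^n(B)\cong{\rm Hom}_\mathcal{T}(T,\Sigma^n T)$, formality via the truncation zig-zag $B\hookleftarrow\tau_{\leq 0}B\twoheadrightarrow A$ forced by (T1), and Keller's derived Morita equivalence ${\rm thick}\langle T\rangle\simeq{\rm per}(B)$ combined with (T2) --- is precisely the standard proof behind those references, including the observation that (T1) applied to summands of $T$ identifies the restriction to ${\rm add}\;T$ with ${\rm Hom}_\mathcal{T}(T,-)$.
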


We denote by ${\rm Tilt}(\mathcal{T})$ the  set of isoclasses  of basic tilting objects in $\mathcal{T}$. This set
carries an action by ${\rm Aut}_\vartriangle(\mathcal{T})$ as follows: $[F, \omega].T=F(T)$ for any $k$-linear triangle autoequivalence $(F, \omega)$.  We are interested in the orbit set ${\rm Tilt}(\mathcal{T})/{\rm Aut}_\vartriangle(\mathcal{T})$, for which we have another interpretation.

We denote by ${\rm Alg}(\mathcal{T})$ the set of isoclasses of finite dimensional basic $k$-algebras, which are isomorphic to ${\rm End}_\mathcal{T}(T)$ for some basic tilting object $T$. The following map is surjective by definition
\begin{align}\label{equ:alg}
{\rm Tilt}(\mathcal{T})/{\rm Aut}_\vartriangle(\mathcal{T})\longrightarrow {\rm Alg}(\mathcal{T}),\quad  T\mapsto {\rm End}_\mathcal{T}(T).
\end{align}

The following observation shows that this map is bijective, provided that $\mathcal{T}$ is algebraic.

\begin{cor}\label{cor:bij}
Let $\mathcal{T}$ be an algebraic triangulated category.  Assume that $T$ and $T'$ are two tilting objects. Then there is a $k$-linear triangle autoequivalence $(F, \omega)$ on $\mathcal{T}$ satisfying $F(T)\simeq T'$ if and only if the algebras ${\rm End}_\mathcal{T}(T)$ and ${\rm End}_\mathcal{T}(T')$ are isomorphic. Consequently, the map (\ref{equ:alg}) is bijective.
\end{cor}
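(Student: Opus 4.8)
The plan is to transport the whole question to homotopy categories of projective modules by means of Proposition \ref{prop:tilting}, where an algebra isomorphism produces the required triangle autoequivalence essentially for free. Accordingly I would treat the two implications separately and then read off the bijectivity of (\ref{equ:alg}).

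The ``only if'' direction is immediate and uses no triangulated structure: if $(F,\omega)$ is a $k$-linear triangle autoequivalence with $F(T)\simeq T'$, then $F$ is in particular fully faithful, so it induces an algebra isomorphism ${\rm End}_\mathcal{T}(T)\to {\rm End}_\mathcal{T}(F(T))\simeq {\rm End}_\mathcal{T}(T')$.

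For the ``if'' direction, I would start from an algebra isomorphism $\sigma\colon {\rm End}_\mathcal{T}(T)\to {\rm End}_\mathcal{T}(T')$ and invoke Proposition \ref{prop:tilting} to obtain $k$-linear triangle equivalences $(\Psi,u)\colon \mathcal{T}\to \mathbf{K}^b({\rm proj}\mbox{-}{\rm End}_\mathcal{T}(T))$ and $(\Psi',u')\colon \mathcal{T}\to \mathbf{K}^b({\rm proj}\mbox{-}{\rm End}_\mathcal{T}(T'))$, under which $T$ and $T'$ correspond to the respective regular modules, viewed as stalk complexes in degree zero. The isomorphism $\sigma$ induces, applied degreewise, a triangle isomorphism of categories $\sigma_*\colon \mathbf{K}^b({\rm proj}\mbox{-}{\rm End}_\mathcal{T}(T))\to \mathbf{K}^b({\rm proj}\mbox{-}{\rm End}_\mathcal{T}(T'))$ (see the construction of $f_*$ preceding Lemma \ref{lem:fstar}), whose connecting isomorphism is trivial and which carries one regular module to the other. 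Setting $(F,\omega)$ to be the composite triangle autoequivalence $(\Psi',u')^{-1}\circ \sigma_*\circ (\Psi,u)$ and tracing $T$ through the three functors then yields $F(T)\simeq (\Psi',u')^{-1}({\rm End}_\mathcal{T}(T'))\simeq T'$.

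Finally, the bijectivity of (\ref{equ:alg}) follows formally. The map is surjective by the definition of ${\rm Alg}(\mathcal{T})$, so only injectivity on orbits needs checking: two classes $[T],[T']$ in ${\rm Tilt}(\mathcal{T})/{\rm Aut}_\vartriangle(\mathcal{T})$ have the same image exactly when ${\rm End}_\mathcal{T}(T)\simeq {\rm End}_\mathcal{T}(T')$, which by the equivalence just established is precisely the condition that some triangle autoequivalence carries $T$ to $T'$, i.e.\ $[T]=[T']$. I expect the only real work to lie in the bookkeeping of the ``if'' direction — confirming that $\sigma_*$ is a triangle functor with trivial connecting isomorphism and that it matches up the two regular modules — since all the genuinely triangulated input is concentrated in Proposition \ref{prop:tilting}.
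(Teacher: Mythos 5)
Your proposal is correct and follows essentially the same route as the paper: the ``only if'' direction by full faithfulness, and the ``if'' direction by composing the two equivalences from Proposition \ref{prop:tilting} with the triangle isomorphism $\sigma_*$ of homotopy categories induced by the algebra isomorphism, which matches the regular modules. The deduction of the bijectivity of (\ref{equ:alg}) is likewise the paper's argument, so there is nothing to add.
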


\begin{proof}
The ``only if" part is clear, since $F$ induces the isomorphism. Conversely, an isomorphism between these algebras yields an isomorphism $\mathbf{K}^b({\rm proj}\mbox{-}{\rm End}_\mathcal{T}(T))\rightarrow \mathbf{K}^b({\rm proj}\mbox{-}{\rm End}_\mathcal{T}(T'))$ of triangulated categories, which sends ${\rm End}_\mathcal{T}(T)$ to ${\rm End}_\mathcal{T}(T')$. Combining it with the equivalences in Proposition \ref{prop:tilting} applied to $T$ and $T'$, we obtain the required triangle autoequivalence on $\mathcal{T}$.
\end{proof}

\subsection{Stable tilting objects} Let $G$ be a finite group, whose order $|G|$ is invertible in $k$. We fix a $k$-linear triangle $G$-action $\{(F_g, \omega_g), \varepsilon_{g,h}|\; g,h\in G\}$ on $\mathcal{T}$.

We denote by ${\rm Tilt}^G(\mathcal{T})\subseteq {\rm Tilt}(\mathcal{T})$ the subset consisting of basic $G$-stable tilting objects. There is an ${\rm Aut}_\vartriangle(\mathcal{T}; G)$-action on ${\rm Tilt}^G(\mathcal{T})$ as follows: $[(F, \omega), \delta].T=F(T)$. By the isomorphisms $\delta_g$'s, the tilting object $F(T)$ is indeed $G$-stable. We are interested in the orbit set ${\rm Tilt}^G(\mathcal{T})/{{\rm Aut}_\vartriangle(\mathcal{T}; G)}$.  We mention that stable tilting objects are studied in \cite{DLS, Asa11, Asa13,  Jas, ZH, Nov} in quite different setups.

\begin{rem}\label{rem:surj}
We observe that ${\rm Aut}_\vartriangle^G(\mathcal{T})$ also acts on ${\rm Tilt}^G(\mathcal{T})$. Indeed, if the forgetful homomorphism $\phi\colon {{\rm Aut}_\vartriangle(\mathcal{T}; G)}\rightarrow {\rm Aut}_\vartriangle^G(\mathcal{T})$ is surjective, then the orbit sets ${\rm Tilt}^G(\mathcal{T})/{{\rm Aut}_\vartriangle(\mathcal{T}; G)}$ and ${\rm Tilt}^G(\mathcal{T})/{{\rm Aut}_\vartriangle^G(\mathcal{T})}$ coincide.  This surjectivity holds provided that $G$ is cyclic, $Z_\vartriangle(\mathcal{T})=k$ and that the ground field $k$ is nice enough; see Section 9 for details.  \hfill $\square$
\end{rem}

We denote by ${\rm Alg}(\mathcal{T}; G)$ the set of equivalence classes of $G$-crossed systems $(A, \rho, c)$ with $A\in {\rm Alg}(\mathcal{T})$; see Definition \ref{defn:equiv}.

 In what follows, we will construct a $G$-equivariant version of the map (\ref{equ:alg}). For a basic $G$-stable tilting object $T$, we choose for each $g\in G$ an isomorphism $\alpha_g\colon T\rightarrow F_g(T)$. Set $A={\rm End}_\mathcal{T}(T)$. In Subsection 2.3, we already obtain a $G$-crossed system $(A, \rho, c)$ from these isomorphisms. By Lemma \ref{lem:equiv-cross}, the following map is well defined
\begin{align}\label{equ:alg-G}
{\rm Tilt}^G(\mathcal{T})/{{\rm Aut}_\vartriangle(\mathcal{T}; G)}\longrightarrow {\rm Alg}(\mathcal{T}; G), \quad T\mapsto (A, \rho, c).
\end{align}

The condition in the following definition is subtle.

\begin{defn}
Let $\mathcal{T}$ be an algebraic triangulated category as above. We say that $\mathcal{T}$ is \emph{tilt-standard} provided that it has a tilting object $T$ such that the additive category ${\rm add}\; T$ is strongly $\mathbf{K}$-standard in the sense of Definition \ref{defn:sstand}. \hfill $\square$
\end{defn}

In this situation, by Proposition \ref{prop:homot}, the additive category ${\rm add}\; T'$ is strongly $\mathbf{K}$-standard for any tilting object $T'$.

\begin{exm}
{\rm Let $\mathcal{H}$ be a hereditary abelian category such that its bounded derived category $\mathbf{D}^b(\mathcal{H})$ is Hom-finite. Assume that $\mathbf{D}^b(\mathcal{H})$ has a tilting object $T$, which is usually called a \emph{tilting complex} on $\mathcal{H}$. Then $\mathbf{D}^b(\mathcal{H})$  is tilt-standard.

Indeed, the endomorphism algebra $A={\rm End}_{\mathbf{D}^b(\mathcal{H})}(T)$ is piecewise hereditary, in particular, triangular. Then ${\rm proj}\mbox{-}A$ is strongly $\mathbf{K}$-standard; see Appendix B. By the equivalence ${\rm Hom}_{\mathbf{D}^b(\mathcal{H})}(T, -)\colon {\rm add}\; T\rightarrow {\rm proj}\mbox{-}A$, we are done.} \hfill $\square$
\end{exm}

\begin{prop}\label{prop:tri-stable}
Assume that $\mathcal{T}$ is tilt-standard.  Then the map (\ref{equ:alg-G}) is injective.
\end{prop}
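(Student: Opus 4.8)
The plan is to prove injectivity of (\ref{equ:alg-G}) directly: if two basic $G$-stable tilting objects $T$ and $T'$ are sent to equivalent $G$-crossed systems, I will exhibit an equivariant triangle autoequivalence carrying $T$ to $T'$. Write $A={\rm End}_\mathcal{T}(T)$ and $A'={\rm End}_\mathcal{T}(T')$, and fix isomorphisms $\alpha_g\colon T\rightarrow F_g(T)$ and $\alpha'_g\colon T'\rightarrow F_g(T')$ yielding the $G$-crossed systems $(A,\rho,c)$ and $(A',\rho',c')$ of Subsection 2.3. By hypothesis and Definition \ref{defn:equiv}, there are a ring isomorphism $f\colon A\rightarrow A'$ and a map $\delta\colon G\rightarrow (A')^\times$ realizing an isomorphism $f_*(\rho,c)\cong(\rho',c')$ of weak $G$-actions.

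First I would assemble a $G$-equivariant equivalence between ${\rm add}\; T$ and ${\rm add}\; T'$, each with its restricted $G$-action (which is defined since $T$ and $T'$ are $G$-stable). By Lemma \ref{lem:stable-hom}, the functors ${\rm Hom}_\mathcal{T}(T,-)$ and ${\rm Hom}_\mathcal{T}(T',-)$ give $G$-equivariant equivalences ${\rm add}\; T\simeq {\rm proj}\mbox{-}A$ and ${\rm add}\; T'\simeq {\rm proj}\mbox{-}A'$ with respect to the module-category actions attached to $(\rho,c)$ and $(\rho',c')$. By Lemma \ref{lem:fstar}, the pair $(f,\delta)$ produces a $G$-equivariant isomorphism $f_*\colon {\rm proj}\mbox{-}A\rightarrow {\rm proj}\mbox{-}A'$. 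Composing the three yields a $G$-equivariant equivalence $(\Phi,(\partial_g)_{g\in G})\colon {\rm add}\; T\rightarrow {\rm add}\; T'$; since $f_*$ carries the regular module to a module isomorphic to the regular module, one has $\Phi(T)\cong T'$.

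Next I would promote $\Phi$ to a triangle autoequivalence of $\mathcal{T}$ and transport the equivariant data along it. Using the tilting equivalences of Proposition \ref{prop:tilting} for $T$ and $T'$, which identify $\mathcal{T}$ with $\mathbf{K}^b({\rm proj}\mbox{-}A)\simeq\mathbf{K}^b({\rm add}\; T)$ and with $\mathbf{K}^b({\rm proj}\mbox{-}A')$, the isomorphism $f$ induces a $k$-linear triangle autoequivalence $(F,\omega)$ on $\mathcal{T}$ whose restriction to ${\rm add}\; T$ is isomorphic to $\Phi$ and which satisfies $F(T)\cong T'$. For each $g\in G$ the composites $(F,\omega)(F_g,\omega_g)$ and $(F_g,\omega_g)(F,\omega)$ are triangle autoequivalences of $\mathcal{T}$ restricting on ${\rm add}\; T$ to $\Phi F_g$ and $F_g\Phi$, and $\partial_g$ relates these restrictions. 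Here tilt-standardness enters: since ${\rm add}\; T$ is strongly $\mathbf{K}$-standard (Proposition \ref{prop:homot}, Definition \ref{defn:sstand}), restriction to ${\rm add}\; T$ is a bijection between triangle natural transformations of such triangle functors on $\mathcal{T}$ and natural transformations of their restrictions. I would use this to extend each $\partial_g$ to a triangle natural isomorphism $\delta_g\colon (F,\omega)(F_g,\omega_g)\rightarrow (F_g,\omega_g)(F,\omega)$; invertibility on all of $\mathcal{T}$ follows because the locus where $(\delta_g)_X$ is invertible is thick and contains ${\rm add}\; T$, which generates $\mathcal{T}$ by (T2).

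Finally I would verify that $((F,\omega),(\delta_g)_{g\in G})$ is a $G$-equivariant triangle autoequivalence, that is, that (\ref{equ:equi-fun}) holds on $\mathcal{T}$. Both sides of (\ref{equ:equi-fun}) are triangle natural transformations between the same pair of triangle functors, and their restrictions to ${\rm add}\; T$ agree because $(\Phi,(\partial_g))$ is a $G$-equivariant functor, so (\ref{equ:equi-fun}) already holds there. The injectivity half of the strongly $\mathbf{K}$-standard correspondence then forces the two sides to coincide on all of $\mathcal{T}$. Hence $[(F,\omega),\delta]\in {\rm Aut}_\vartriangle(\mathcal{T};G)$ sends $T$ to $T'$, proving injectivity of (\ref{equ:alg-G}). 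I expect the main obstacle to be precisely the last two steps: formulating and applying the strongly $\mathbf{K}$-standard property so that triangle natural transformations on $\mathcal{T}$ are controlled by their restrictions to ${\rm add}\; T$, since this is what simultaneously produces the $\delta_g$ and delivers the cocycle condition; the additive equivariant bookkeeping of the earlier steps is routine by comparison.
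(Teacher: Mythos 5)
Your proposal is correct in substance and is assembled from the same ingredients as the paper's proof --- Lemma \ref{lem:stable-hom}, Lemma \ref{lem:fstar}, Proposition \ref{prop:tilting}, and strong $\mathbf{K}$-standardness via Lemma \ref{lem:appB} --- and the underlying autoequivalence you build, $F\simeq (\Psi')^{-1}\circ f_*\circ \Psi$, is exactly the paper's. The difference is where $\mathbf{K}$-standardness is deployed. The paper first proves the (unlabeled) lemma preceding the proof of Proposition \ref{prop:tri-stable}: for a single $G$-stable tilting object $T$, the tilting equivalence $(\Psi,u)\colon\mathcal{T}\to\mathbf{K}^b({\rm proj}\mbox{-}A)$ can be completed to a $G$-equivariant triangle equivalence $((\Psi,u),\delta)$, the target carrying the triangle $G$-action induced by the crossed system; there Lemma \ref{lem:appB} is applied to the transported autoequivalences $H_g$, which preserve ${\rm proj}\mbox{-}A$ (up to isomorphism closure) precisely because $T$ is $G$-stable. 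The required element of ${\rm Aut}_\vartriangle(\mathcal{T};G)$ is then the composite of three equivariant functors, $((\Psi',u'),\delta')^{-1}\circ f_*\circ((\Psi,u),\delta)$, so the cocycle condition (\ref{equ:equi-fun}) holds automatically by functoriality of composition. You instead construct only the underlying autoequivalence and then manufacture the $\delta_g$ by extending the additive data $\partial_g$ from ${\rm add}\;T$, checking (\ref{equ:equi-fun}) by injectivity of restriction; what the paper's arrangement buys is that this extension-and-injectivity principle never has to be formulated at all.

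That principle is also the one step of your argument not covered verbatim by Appendix B, as you yourself anticipate. Lemma \ref{lem:appB}(1) concerns endofunctors with $F(\mathcal{A})\subseteq\mathcal{A}$, and Lemma \ref{lem:appB}(2) concerns natural isomorphisms between extensions $\mathbf{K}^b(F_1)\to\mathbf{K}^b(F_2)$ of autoequivalences of $\mathcal{A}$; but your functors $FF_g$, $F_gF$, $FF_gF_h$ and $F_{gh}F$ carry ${\rm add}\;T$ into ${\rm add}\;T'$, so they are not heart-preserving endofunctors in the sense of that lemma. The gap is fixable: whisker every transformation with a quasi-inverse of $(F,\omega)$, which by Lemma \ref{lemA:Nat} is a bijection on natural transformations and turns all the functors in question into endofunctors preserving ${\rm add}\;T$; then Lemma \ref{lem:appB}(1) (with its isomorphisms normalized so that they restrict to identities on the heart) together with \ref{lem:appB}(2) yields both the existence of a triangle-natural isomorphism $\delta_g$ restricting to $\partial_g$ and the injectivity you invoke for the cocycle identity, after which un-whiskering recovers the data on $\mathcal{T}$. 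With this reduction made explicit your proof of the injectivity of (\ref{equ:alg-G}) is complete; without it, the central claim rests on a statement stronger than what Appendix B actually proves.
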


\begin{rem}\label{rem:non-sur}
We mention that the map (\ref{equ:alg-G}) is not surjective in general. Indeed, we consider the Grothendieck group $K_0(\mathcal{T})$ of the triangulated category $\mathcal{T}$,  and the induced $G$-action on it. On the algebra side, we have the $G$-action on the $K_0$-group $K_0(A)$. More precisely, for $g\in G$ and $P\in {\rm proj}\mbox{-}A$ with its class $[P]\in K_0(A)$, the action is given by $g.[P]=[{^{\rho(g^{-1})}P}]$. Since $K_0(A)$ is naturally isomorphic to $K_0(\mathbf{K}^b({\rm proj}\mbox{-}A))$,  we might identify $K_0(A)$ with $K_0(\mathcal{T})$ via the equivalence in Proposition \ref{prop:tilting}. We observe that if $(A, \rho, c)$ lies in the image of (\ref{equ:alg-G}), then the two $G$-actions on $K_0(\mathcal{T})$ and $K_0(A)$ are necessarily compatible. \hfill $\square$
\end{rem}

In the cyclic group case, Proposition \ref{prop:tri-stable} provides a practical  way for classifying the  stable tilting objects. For $d\geq 2$ and an algebra $A$, we denote by ${\rm Out}(A)_d$ the subset of ${\rm Out}(A)$ consisting of $d$-compatible elements; see Subsection 3.2. Since those elements are closed under conjugation, we denote by $\overline{{\rm Out}(A)_d}$ the set of conjugation classes contained in ${\rm Out}(A)_d$ .

In view of Proposition \ref{prop:cyclic-act-alg}, we have the following immediate consequence of Proposition \ref{prop:tri-stable}.

\begin{cor}\label{cor:cyclc-cross}
Let $G=C_d=\langle g\rangle$ be the cyclic group of order $d$. Assume that $Z_\vartriangle(\mathcal{T})=k$ and that each element in $k$ has a $d$-th root. Keep the assumptions as above. Then there is an injective map
$${\rm Tilt}^{C_d}(\mathcal{T})/{{\rm Aut}_\vartriangle(\mathcal{T}; C_d)}\longrightarrow \{(A, \sigma)\; |\; A\in {\rm Alg}(\mathcal{T}), \sigma\in \overline{{\rm Out}(A)_d},   K_0\mbox{-}{\rm compatible} \}.$$
Here, the $K_0$-compatiblity of  $(A, \sigma)$ means that the action of $\sigma$ on $K_0(A)$ is isomorphic to the $g$-action on $K_0(\mathcal{T})$; see Remark \ref{rem:non-sur}. \hfill $\square$
\end{cor}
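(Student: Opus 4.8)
The plan is to obtain the asserted map by composing the injective map (\ref{equ:alg-G}) with the identification of $C_d$-crossed systems furnished by Proposition \ref{prop:cyclic-act-alg}, and then to cut down the codomain using the $K_0$-compatibility recorded in Remark \ref{rem:non-sur}. Since the statement is flagged as an immediate consequence, essentially all the substantive work has already been done upstream; the task is to assemble the pieces correctly.

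First I would check that the hypotheses of Proposition \ref{prop:cyclic-act-alg} hold uniformly for every $A\in {\rm Alg}(\mathcal{T})$. Writing $A={\rm End}_\mathcal{T}(T)$ for a basic tilting object $T$ and invoking the triangle equivalence $(\Psi, u)\colon \mathcal{T}\to \mathbf{K}^b({\rm proj}\mbox{-}A)$ of Proposition \ref{prop:tilting}, which preserves the triangle center, I get $Z(A)\cong Z_\vartriangle(\mathbf{K}^b({\rm proj}\mbox{-}A))\cong Z_\vartriangle(\mathcal{T})=k$. Thus each such $A$ is a basic finite dimensional $k$-algebra with $Z(A)=k$, and together with $d\geq 2$ and the assumption that every element of $k$ has a $d$-th root, the hypotheses of Proposition \ref{prop:cyclic-act-alg} are met for every $A\in {\rm Alg}(\mathcal{T})$.

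Next I would unwind the codomain ${\rm Alg}(\mathcal{T}; C_d)$, namely the set of equivalence classes of $C_d$-crossed systems $(A, \rho, c)$ with $A\in {\rm Alg}(\mathcal{T})$. Since an equivalence in the sense of Definition \ref{defn:equiv} requires a ring isomorphism between the underlying algebras, this set decomposes as the disjoint union, over the isoclasses $A\in {\rm Alg}(\mathcal{T})$, of the equivalence classes of $C_d$-crossed systems on the fixed algebra $A$. By the final statement of Proposition \ref{prop:cyclic-act-alg}, the latter is in bijection with the conjugacy classes $\overline{{\rm Out}(A)_d}$ of $d$-compatible elements. Assembling over $A$ yields a bijection
$${\rm Alg}(\mathcal{T}; C_d)\stackrel{\sim}\longrightarrow \{(A, \sigma)\; |\; A\in {\rm Alg}(\mathcal{T}),\ \sigma\in \overline{{\rm Out}(A)_d}\}.$$
Precomposing with the map (\ref{equ:alg-G}), which is injective by Proposition \ref{prop:tri-stable} thanks to the tilt-standardness of $\mathcal{T}$, produces an injective map from ${\rm Tilt}^{C_d}(\mathcal{T})/{{\rm Aut}_\vartriangle(\mathcal{T}; C_d)}$ into this set of pairs.

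Finally I would restrict the codomain. By Remark \ref{rem:non-sur}, whenever $(A, \rho, c)$ lies in the image of (\ref{equ:alg-G}), the induced $g$-action on $K_0(A)$ agrees, under the identification $K_0(A)\cong K_0(\mathcal{T})$ coming from Proposition \ref{prop:tilting}, with the $g$-action on $K_0(\mathcal{T})$; that is, the corresponding pair $(A, \sigma)$ is $K_0$-compatible. Hence the image of the composite is contained in the subset of $K_0$-compatible pairs, and restricting the codomain to a subset still containing the image preserves injectivity, giving the asserted map. The only genuinely substantive input is the injectivity of (\ref{equ:alg-G}), which rests on the delicate strongly $\mathbf{K}$-standard hypothesis and is already supplied by Proposition \ref{prop:tri-stable}; the remaining steps are bookkeeping translations between crossed systems and outer automorphisms, so I anticipate no serious obstacle beyond verifying the $k$-linearity and the center computation above.
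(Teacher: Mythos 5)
Your proposal is correct and follows essentially the same route as the paper, which obtains the corollary immediately by combining the injectivity of (\ref{equ:alg-G}) from Proposition \ref{prop:tri-stable} with the crossed-system/outer-automorphism dictionary of Proposition \ref{prop:cyclic-act-alg} and the $K_0$-compatibility observation of Remark \ref{rem:non-sur}. The one step you state without justification, $Z(A)\cong Z_\vartriangle(\mathbf{K}^b({\rm proj}\mbox{-}A))$, does hold under the standing hypotheses: tilt-standardness makes ${\rm proj}\mbox{-}A$ strongly $\mathbf{K}$-standard, so the restriction homomorphism (\ref{equ:res}) is injective as well as split surjective (alternatively, split surjectivity alone already forces $Z(A)=k$ once $Z_\vartriangle(\mathcal{T})=k$).
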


Let $T$ be a $G$-stable tilting object as above with $A={\rm End}_\mathcal{T}(T)$. We have obtained the $G$-crossed system $(A, \rho, c)$. For any autoquivalence $H$ on ${\rm proj}\mbox{-}A$, it naturally induces a triangle autoequivalence on $\mathbf{K}^b({\rm proj}\mbox{-}A)$, which has a trivial connecting isomorphism. The induced triangle equivalence is still denoted by $H$. Consequently, from the crossed system,  we obtain a triangle $G$-action $\{{^{\rho(g^{-1})}(-)},c_{g,h}|\; g,h\in G\}$ on  $\mathbf{K}^b({\rm proj}\mbox{-}A)$; see Subsection 2.3.

\begin{lem}
Assume that $\mathcal{T}$ is tilt-standard. Keep the notation as above, and recall the equivalence $(\Psi, u)$ in Proposition \ref{prop:tilting}. Then for each $g\in G$, there exists a natural isomorphism $\delta_g\colon (\Psi, u) (F_g, \omega_g)\rightarrow {^{\rho(g^{-1})}(-)} (\Psi, u)$ of triangle functors, such that
$$ ((\Psi, u), \delta)\colon \mathcal{T}\longrightarrow \mathbf{K}^b({\rm proj}\mbox{-}A)$$
is a $G$-equivariant triangle equivalence.
\end{lem}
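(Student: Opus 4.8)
The plan is to build the equivariant structure $\delta$ first on the subcategory ${\rm add}\; T$, where a natural candidate is already available, and then to propagate it across the whole homotopy category by means of the strong $\mathbf{K}$-standard hypothesis. Since $T$ is $G$-stable we have $F_g({\rm add}\; T)={\rm add}\; T$ for each $g$, and by Proposition~\ref{prop:tilting} the equivalence $(\Psi, u)$ restricts on ${\rm add}\; T$ to ${\rm Hom}_\mathcal{T}(T, -)$. Consequently, restricted to ${\rm add}\; T$, the two triangle functors $(\Psi, u)(F_g, \omega_g)$ and ${}^{\rho(g^{-1})}(-)(\Psi, u)$ become ${\rm Hom}_\mathcal{T}(T, F_g(-))$ and ${}^{\rho(g^{-1})}{\rm Hom}_\mathcal{T}(T, -)$, both taking values in the stalk subcategory ${\rm proj}\mbox{-}A$. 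By Lemma~\ref{lem:stable-hom}, the natural isomorphism $(\phi_g)_X\colon {\rm Hom}_\mathcal{T}(T, F_g(X))\to {}^{\rho(g^{-1})}{\rm Hom}_\mathcal{T}(T, X)$ relates these restrictions, and the family $(\phi_g)_{g\in G}$ makes $({\rm Hom}_\mathcal{T}(T, -), \phi)$ a $G$-equivariant functor, so that the $\phi_g$ already satisfy (\ref{equ:equi-fun}) on ${\rm add}\; T$.

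Next I would extend each $\phi_g$ to all of $\mathcal{T}$. Identifying $\mathcal{T}$ with $\mathbf{K}^b({\rm proj}\mbox{-}A)$ through $(\Psi, u)$, the subcategory ${\rm add}\; T$ is carried onto ${\rm proj}\mbox{-}A$, viewed as stalk complexes in degree zero, which is strongly $\mathbf{K}$-standard since $\mathcal{T}$ is tilt-standard (Proposition~\ref{prop:homot}, Definition~\ref{defn:sstand}). The defining property of a strongly $\mathbf{K}$-standard category guarantees that a natural isomorphism between the restrictions to ${\rm proj}\mbox{-}A$ of two triangle functors out of $\mathbf{K}^b({\rm proj}\mbox{-}A)$ extends uniquely to a natural isomorphism of triangle functors. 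Applying this to the two triangle functors above, I obtain from $\phi_g$ a unique natural isomorphism of triangle functors $\delta_g\colon (\Psi, u)(F_g, \omega_g)\to {}^{\rho(g^{-1})}(-)(\Psi, u)$; in particular $\delta_g$ is automatically compatible with the connecting isomorphisms built from $u$ and $\omega_g$.

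It then remains to check that $(\delta_g)_{g\in G}$ satisfies the equivariance condition (\ref{equ:equi-fun}) relative to the actions $\{(F_g, \omega_g), \varepsilon_{g, h}\}$ and $\{{}^{\rho(g^{-1})}(-), c_{g, h}\}$. Both sides of (\ref{equ:equi-fun}) for $\delta$ are natural isomorphisms of triangle functors with the same source and target, and their restrictions to ${\rm add}\; T$ are precisely the two sides of the corresponding identity for the family $(\phi_g)$, which coincide by Lemma~\ref{lem:stable-hom}. Hence, by the uniqueness half of the strong $\mathbf{K}$-standard extension, the two sides of (\ref{equ:equi-fun}) for $\delta$ agree on all of $\mathcal{T}$. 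Finally, since $(\Psi, u)$ is a triangle equivalence, $((\Psi, u), \delta)$ is a $G$-equivariant triangle equivalence, as desired.

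The hard part will be the extension step, not the bookkeeping. In general a natural isomorphism between the restrictions of two triangle functors to a generating subcategory need neither extend to the ambient triangulated category nor be unique when it does; this is exactly the subtlety that the strongly $\mathbf{K}$-standard condition is designed to remove. It is the uniqueness clause of that condition that does the real work, since it lets the cocycle identity (\ref{equ:equi-fun}), together with compatibility with the connecting isomorphisms, be transported from ${\rm add}\; T$ up to $\mathcal{T}$ with no further computation.
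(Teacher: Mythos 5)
Your argument is correct in substance and rests on the same two pillars as the paper's proof --- Lemma \ref{lem:stable-hom} and the strong $\mathbf{K}$-standardness of ${\rm proj}\mbox{-}A$ via Lemma \ref{lem:appB} --- but it runs in the opposite direction. The paper works top-down: it transports the whole triangle $G$-action along $(\Psi, u)$ (compare Lemma \ref{lem:transport}), so that $((\Psi,u),\delta')$ is automatically equivariant with respect to the transported action $\{(H_g, v_g)\}$; it then identifies each $(H_g, v_g)$ with the extension of the twisting automorphism ${}^{\rho(g^{-1})}(-)$, using Lemma \ref{lem:stable-hom} to recognize the restriction $H_g|_{{\rm proj}\mbox{-}A}$ and Lemma \ref{lem:appB}(1) to pass up to $\mathbf{K}^b({\rm proj}\mbox{-}A)$, and finally adjusts the $\delta_g$ via Lemma \ref{lem:change}. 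You work bottom-up: Lemma \ref{lem:stable-hom} hands you the equivariance data $\phi_g$ on ${\rm add}\; T$ together with the identity (\ref{equ:equi-fun}) for the specific cocycles $c_{g,h}$, and you extend each $\phi_g$ to all of $\mathcal{T}$, transferring (\ref{equ:equi-fun}) upward by uniqueness of extensions. A merit of your route is that it makes explicit the point the paper compresses into ``the result follows immediately'', namely that the equivariance obtained is with respect to the particular action $\{{}^{\rho(g^{-1})}(-), c_{g,h}\}$ attached to the crossed system, which is what the proof of Proposition \ref{prop:tri-stable} actually requires; the cost is that you must justify the extension step, which the transport trick lets the paper sidestep.

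That extension step is the one place needing repair. The property you invoke is \emph{not} the defining property of a strongly $\mathbf{K}$-standard category --- Definition \ref{defn:sstand} speaks of pseudo-identities and of injectivity of (\ref{equ:res}) --- and, as stated for arbitrary triangle functors out of $\mathbf{K}^b({\rm proj}\mbox{-}A)$, it is not available in the paper. What is true, and is all you need, is a consequence of Lemma \ref{lem:appB} for triangle \emph{autoequivalences} preserving the stalk subcategory (up to isomorphism closure); your two functors have this form after composing with a quasi-inverse of $(\Psi, u)$, precisely because $T$ is $G$-stable. Existence of the extension follows from Lemma \ref{lem:appB}(1): it gives isomorphisms $\xi_H\colon (H, v)\rightarrow \mathbf{K}^b(H|_{{\rm proj}\mbox{-}A})$ of triangle functors, which you must first normalize so that $\xi_H$ restricts to the identity on ${\rm proj}\mbox{-}A$ (replace $\xi_H$ by $\mathbf{K}^b(\xi_H|_{{\rm proj}\mbox{-}A})^{-1}\circ \xi_H$); conjugating $\mathbf{K}^b(\phi_g)$ by the normalized isomorphisms then yields an extension whose restriction equals $\phi_g$ on the nose --- without this normalization your cocycle-transfer argument, which needs $\delta_g|_{{\rm add}\; T}=\phi_g$ exactly and not merely up to isomorphism, does not go through. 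Uniqueness of the extension, which as you rightly say is what carries (\ref{equ:equi-fun}) from ${\rm add}\; T$ to $\mathcal{T}$, is exactly Lemma \ref{lem:appB}(2), and this is where the ``strong'' hypothesis genuinely enters. With these substitutions --- citing Lemma \ref{lem:appB} rather than the definition --- your proof is complete.
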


\begin{proof}
We might use the equivalence $(\Psi, u)$ to transport the triangle action from $\mathcal{T}$ to $\mathbf{K}^b({\rm proj}\mbox{-}A)$; compare Lemma \ref{lem:transport}. More precisely, for each $g\in G$, there is a triangle autoequivalence $(H_g, v_g)$ on   $\mathbf{K}^b({\rm proj}\mbox{-}A)$ together with a natural isomorphism $\delta'_g\colon (\Psi, u) (F_g, \omega_g)\rightarrow (H_g, v_g) (\Psi, u)$. However, we observe that the restriction $H_g$ on ${\rm proj}\mbox{-}A$ is isomorphic to the twisting automorphism $^{\rho(g^{-1})}(-)$; compare Lemma~\ref{lem:stable-hom}. Since ${\rm add}\; T$ and thus ${\rm proj}\mbox{-}A$ are strongly $\mathbf{K}$-standard, we infer from Lemma~\ref{lem:appB} that $(H_g, v_g)$ is isomorphic to $^{\rho(g^{-1})}(-)$, the natural extension of the twisting automorphism on $\mathbf{K}^b({\rm proj}\mbox{-}A)$. So we replace $H_g$ by $^{\rho(g^{-1})}(-)$, and adjust $\delta_g$ suitably; see Lemma~\ref{lem:change}.  Then the result follows immediately.
\end{proof}

\noindent \emph{Proof of Proposition \ref{prop:tri-stable}.} \; We take two $G$-stable tilting objects $T$ and $T'$, which yield two $G$-crossed systems $(A, \rho, c)$ and $(A', \rho', c')$. To show the injectivity, we assume that $f\colon A\rightarrow A'$ is an algebra isomorphism such that $f_*(\rho, c)$ and $(\rho', c')$ are isomorphic; see Definition \ref{defn:equiv}.

The rows of the following commutative diagram are obtained by applying the previous lemma to $T$ and to $T'$, respectively. The right hand column is an $G$-equivariant triangle isomorphism induced by Lemma \ref{lem:fstar}.
\[\xymatrix{
\mathcal{T}\ar@{.>}[d]_{((F, \omega), \partial)} \ar[rr]^-{((\Psi, u), \delta)} && \ar[d]^-{f_*}\mathbf{K}^b({\rm proj}\mbox{-}A)\\
\mathcal{T} \ar[rr]^-{((\Psi', u'), \delta')} && \mathbf{K}^b({\rm proj}\mbox{-}A')
}\]
Then there is a unique $[(F, \omega), \partial]\in {\rm Aut}_\vartriangle(\mathcal{T}; G)$ making the diagram commute. Since $f_*(A)\simeq A'$, we infer that $F(T)\simeq T'$. We are done. \hfill $\square$

\subsection{Bijections between stable tilting objects}

Let $G$ be a finite abelian group, which splits over $k$.  We fix a $k$-linear triangle $G$-action $\{(F_g, \omega_g), \varepsilon_{g,h}|\; g,h\in G\}$ on $\mathcal{T}$. The induction functor ${\rm Ind}\colon \mathcal{T}\rightarrow \mathcal{T}^G$ is a triangle functor.  Consider the dual $\widehat{G}$-action on $\mathcal{T}^G$. So we  have the set ${\rm Tilt}^{\widehat{G}}(\mathcal{T}^G)$ of isoclasses  of basic $\widehat{G}$-stable tilting objects in $\mathcal{T}^G$, which carries an action by ${\rm Aut}_\vartriangle(\mathcal{T}^G; \widehat{G})$.

We have the following triangle analogue  of Proposition \ref{prop:stable-bij}, where the first bijection is related to \cite[Theorem 4.1]{Nov}.

\begin{thm}\label{thm:tilting}
 Let $G$ be a finite abelian group, which splits over $k$. Then there is a bijection
 $$\iota\colon {\rm Tilt}^G(\mathcal{T})\longrightarrow {\rm Tilt}^{\widehat{G}}(\mathcal{T}^G), T\mapsto \iota(T)$$
 such that ${\rm add}\; \iota(T)={\rm add}\; {\rm Ind}(T)$. Moreover, $\iota$ induces a bijection on orbit sets
 $${\rm Tilt}^G(\mathcal{T})/{{\rm Aut}_\vartriangle(\mathcal{T}; G)}\longrightarrow {\rm Tilt}^{\widehat{G}}(\mathcal{T}^G)/{{\rm Aut}_\vartriangle(\mathcal{T}^G; \widehat{G})}.$$
\end{thm}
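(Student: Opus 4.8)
The plan is to deduce the theorem from the additive Proposition~\ref{prop:stable-bij}, applied to $\mathcal{T}$ viewed merely as a Hom-finite idempotent complete additive category, and then to account for the extra triangulated data. Since $G$ splits over $k$, the order $|G|$ is invertible, so by Lemma~\ref{lem:pretri} the category $\mathcal{T}^G$ is pre-triangulated with translation $\Sigma^G$, and it is again Hom-finite and Krull--Schmidt; thus ${\rm Tilt}^{\widehat{G}}(\mathcal{T}^G)$ is defined. Proposition~\ref{prop:stable-bij} already supplies a bijection $\iota\colon {\rm Stab}^G(\mathcal{T})\to {\rm Stab}^{\widehat{G}}(\mathcal{T}^G)$ characterised by ${\rm add}\,\iota(T)={\rm add}\,{\rm Ind}(T)$, together with its compatibility with the equivariantization isomorphism. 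As the conditions (T1) and (T2) depend only on ${\rm add}\,T$ and on ${\rm thick}\langle T\rangle$, being tilting is invariant under replacing an object by an additively equivalent one; hence it suffices to prove that $\iota$ and $\iota^{-1}$ send tilting objects to tilting objects, i.e. that ${\rm Ind}(T)$ is tilting whenever $T$ is, and that $U(Y,\alpha)$ is tilting whenever $(Y,\alpha)$ is.

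For the forward direction, let $T\in{\rm Tilt}^G(\mathcal{T})$. Since ${\rm Ind}$ is a triangle functor, $(\Sigma^G)^i{\rm Ind}(T)\cong{\rm Ind}(\Sigma^i T)$, and the adjunction ${\rm Ind}\dashv U$ of (\ref{equ:adj1}) gives ${\rm Hom}_{\mathcal{T}^G}({\rm Ind}(T),(\Sigma^G)^i{\rm Ind}(T))\cong\bigoplus_{h\in G}{\rm Hom}_\mathcal{T}(T,F_h\Sigma^i(T))$; each $F_h$ is a triangle autoequivalence fixing $T$, so every summand reduces to ${\rm Hom}_\mathcal{T}(T,\Sigma^i T)$, which vanishes for $i\neq 0$ by (T1) for $T$. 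This verifies (T1) for ${\rm Ind}(T)$. For (T2), the thick subcategory ${\rm thick}\langle{\rm Ind}(T)\rangle$ contains ${\rm Ind}(X)$ for every $X\in{\rm thick}\langle T\rangle=\mathcal{T}$; as the counit $\epsilon\colon{\rm Ind}\,U\to{\rm Id}_{\mathcal{T}^G}$ is a split epimorphism (Lemma~\ref{lem:separable}), each $(Y,\beta)$ is a direct summand of ${\rm Ind}(Y)$ and hence lies in ${\rm thick}\langle{\rm Ind}(T)\rangle$. Thus ${\rm thick}\langle{\rm Ind}(T)\rangle=\mathcal{T}^G$, and ${\rm Ind}(T)$ is tilting.

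For the converse, let $(Y,\alpha)\in{\rm Tilt}^{\widehat{G}}(\mathcal{T}^G)$. Here $U$ has trivial connecting isomorphism, so $\Sigma^i U(Y,\alpha)=U(\Sigma^G)^i(Y,\alpha)$; combining the adjunction $U\dashv{\rm Ind}$ of (\ref{equ:adj2}) with the monad isomorphism $N\cong\widehat{M}$ of (\ref{equ:iso-monad}) yields ${\rm Hom}_\mathcal{T}(U(Y,\alpha),\Sigma^i U(Y,\alpha))\cong\bigoplus_{\chi\in\widehat{G}}{\rm Hom}_{\mathcal{T}^G}((Y,\alpha),(\Sigma^G)^i F_\chi(Y,\alpha))$. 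Because $(Y,\alpha)$ is $\widehat{G}$-stable, each summand equals ${\rm Hom}_{\mathcal{T}^G}((Y,\alpha),(\Sigma^G)^i(Y,\alpha))$, which vanishes for $i\neq0$ by (T1) for $(Y,\alpha)$; this gives (T1) for $U(Y,\alpha)$. For (T2), the counit $\epsilon'\colon U\,{\rm Ind}\to{\rm Id}_\mathcal{T}$ of (\ref{equ:adj2}) is a split epimorphism, so each $X$ is a summand of $U{\rm Ind}(X)$, which lies in ${\rm thick}\langle U(Y,\alpha)\rangle$ since $U$ is a triangle functor and ${\rm thick}\langle(Y,\alpha)\rangle=\mathcal{T}^G$. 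Hence ${\rm thick}\langle U(Y,\alpha)\rangle=\mathcal{T}$, and $U(Y,\alpha)$ is tilting.

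Finally, the orbit bijection is formal once Theorem~\ref{thm:duality-iso-tri} is available. The equivariantization isomorphism $(-)^G\colon{\rm Aut}_\vartriangle(\mathcal{T};G)\to{\rm Aut}_\vartriangle(\mathcal{T}^G;\widehat{G})$ sends $[(F,\omega),\delta]$ to the class whose underlying autoequivalence is $(F,\delta)^G$, and the natural isomorphism $\xi$ of (\ref{equ:xi}) provides $(F,\delta)^G{\rm Ind}(T)\cong{\rm Ind}\,F(T)$. Comparing additive closures gives $\iota(F(T))\cong((F,\omega),\delta)^G(\iota(T))$, so $\iota$ intertwines the ${\rm Aut}_\vartriangle(\mathcal{T};G)$-action with the ${\rm Aut}_\vartriangle(\mathcal{T}^G;\widehat{G})$-action along the group isomorphism $(-)^G$; the induced map on orbit sets is therefore a bijection. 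The step requiring the most care is the transfer of (T1) and (T2) through the (co)induction adjunctions: one must match the translation functors $(\Sigma^G)^i{\rm Ind}\cong{\rm Ind}\,\Sigma^i$ and $U\Sigma^G=\Sigma U$, use (\ref{equ:iso-monad}) together with $\widehat{G}$-stability to collapse the sum over characters, and invoke the split (co)units to secure thick generation; the orbit statement itself is then purely formal.
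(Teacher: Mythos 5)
Your proposal is correct and follows essentially the same route as the paper: reduce to Proposition~\ref{prop:stable-bij}, verify (T1) for ${\rm Ind}(T)$ via the adjunction (\ref{equ:adj1}) and (T2) via thick-subcategory arguments plus the split counit of Lemma~\ref{lem:separable}, and obtain the orbit bijection from Theorem~\ref{thm:duality-iso-tri} together with the isomorphism $\xi$ of (\ref{equ:xi}). The only cosmetic difference is in the surjectivity step, where you check that $U(Y,\alpha)$ is tilting directly using the adjunction (\ref{equ:adj2}) and the monad isomorphism (\ref{equ:iso-monad}), whereas the paper reverses its forward computation for ${\rm Ind}(Y_0)$ and uses the preimage thick subcategory; these are dual phrasings of the same argument.
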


\begin{proof}
To see that $\iota$ is well defined, it suffices to  claim that the basic object $\iota(T)$ is tilting in $\mathcal{T}^G$. Indeed, by the adjunction (\ref{equ:adj1}) we have
$${\rm Hom}_{\mathcal{T}^G}({\rm Ind}(T), (\Sigma^G)^n {\rm Ind}(T))\simeq \bigoplus_{h\in G} {\rm Hom}_\mathcal{T}(T, \Sigma^n F_h(T))=0,$$
for each $n\neq 0$, where we use $T\simeq F_h(T)$ and the condition (T1) for $T$. It remains to show that ${\rm thick}\langle {\rm Ind}(T)\rangle =\mathcal{T}^G$. For this end, we consider the full subcategory $$\mathcal{N}=\{X\in \mathcal{T}\; |\; {\rm Ind}(X)\in {\rm thick}\langle {\rm Ind}(T)\rangle\}.$$
Since ${\rm Ind}$ is a triangle functor, it follows that $\mathcal{N}$ is a thick triangulated subcategory, which certainly contains $T$. We conclude that $\mathcal{N}=\mathcal{T}$. Take an arbitrary object $(Y, \alpha)$ in $\mathcal{T}^G$. By Lemma \ref{lem:separable}, $(Y, \alpha)$ is a direct summand of ${\rm Ind}(Y)$, while the latter  lies in ${\rm thick}\langle {\rm Ind}(T)\rangle$ by the previous conclusion. Hence $(Y, \alpha)$ lies in ${\rm thick}\langle {\rm Ind}(T)\rangle$.  Then we are done with (T2) for $\iota(T)$.

By Proposition \ref{prop:stable-bij}, the map $\iota$ is injective. For the surjectivity, let $(Y, \alpha)$ be a basic $\widehat{G}$-stable tilting object in $\mathcal{T}^G$. As in the proof of Proposition \ref{prop:stable-bij}, there is a basic $G$-stable object $Y_0$ such that $\iota(Y_0)=(Y, \alpha)$. We claim that $Y_0$ is a tilting object in $\mathcal{T}$. This will complete the proof of the bijectivity of $\iota$.

By reversing the argument above, we infer that $Y_0$ satisfies (T1). For (T2), we consider the full subcategory $$\mathcal{M}=\{(X, \beta)\in \mathcal{T}^G\; |\; U(X, \beta)=X\in {\rm thick}\langle Y_0\rangle \},$$
which is a thick triangulated subcategory containing $(Y, \alpha)$. We conclude that $\mathcal{M}=\mathcal{T}^G$. Then for an arbitrary object $Z$ in $\mathcal{T}$, ${\rm Ind}(Z)$ lies in $\mathcal{M}$. It follows that $U{\rm Ind}(Z)$ lies in ${\rm thick}\langle Y_0\rangle$, which forces that so does $Z$.  This proves (T2) for $Y_0$ and thus the claim.

For the second bijection, it suffices to claim that $\iota$ is compatible with the equivariantization isomorphism in Theorem \ref{thm:duality-iso-tri}. This is analogous to the last paragraph in the proof of Proposition \ref{prop:stable-bij}. We omit the details.
\end{proof}

\section{Weighted projective lines of tubular type}

In this section, we apply the bijections on stable tilting objects to weighted projective lines of tubular type. The obtained bijection relates the stable tilting complexes on weighted projective lines of  different tubular types.

We assume that $k$ is algebraically closed, whose characteristic is different from $2$ or $3$.

\subsection{Graded automorphisms} We will fix the notation for this section. Let $H$ be an abelian group, which is written additively. Let $R=\oplus_{h\in H}R_h$ be a finitely generated $H$-graded commutative algebra. We denote by ${\rm gr}\mbox{-}R$ the abelian category of finitely generated $H$-graded right $R$-modules. Such modules are denoted by $M=\oplus_{h\in H}M_h$. We denote by ${\rm qgr}\mbox{-}R={\rm gr}\mbox{-}R/{\rm gr}_0\mbox{-}R$ the quotient abelian category, where ${\rm gr}_0\mbox{-}R$ denotes the Serre subcategory formed by finite dimensional modules.

For an element $w\in H$, we denote by $(w)\colon {\rm gr}\mbox{-}R\rightarrow {\rm gr}\mbox{-}R$ the \emph{degree-shift automorphism}. For a graded module $M$, we obtain a shifted module $M(w)=M$,  which is graded by $M(w)_h=M_{h+w}$. The degree-shift automorphism acts on morphisms by the identity. Moreover, $(w)$ descends to an automorphism on ${\rm qgr}\mbox{-}R$, which will be still denoted by $(w)$.

By a \emph{graded automorphism} on $R$, we mean a pair $(g, \psi)$, where $\psi\colon H\rightarrow H$ is a group automorphism and $g\colon R\rightarrow R$ is an algebra automorphism such that $g(R_h)=R_{\psi(h)}$ for each $h\in H$. It induces the \emph{twisting automorphism}  $^g(-)$ on both ${\rm gr}\mbox{-}R$ and ${\rm qgr}\mbox{-}R$, where we suppress $\psi$ in the notation. More precisely, for a graded module $M$, the twisted module ${^gM}=M$ is graded by $(^gM)_h=M_{\psi(h)}$, whose $R$-action is given by $m_\circ r=m.g(r)$ for $r\in R$.

It is well known that weighted projective lines \cite{GL87} of tubular types are of weight types $(2,2,2,2)$, $(3,3,3)$, $(4,4,2)$ and $(6,3,2)$; compare \cite{Rin}. For the type $(2,2,2,2)$, there is a parameter $\lambda\in k$, which is not $0$ or $1$. Following \cite{GL87}, we list their homogeneous coordinate algebras explicitly as follows.
\begin{align*}
S(2,2,2,2; \lambda)&=k[X_1, X_2, X_3, X_4]/(X_3^2-(X_2^2-X_1^2), X_4^2-(X_2^2-\lambda X_1^2));\\
S(3,3,3)&=k[Y_1, Y_2, Y_3]/(Y_3^3-(Y_2^3-Y_1^3));\\
S(4,4,2)&=k[Z_1, Z_2, Z_3]/(Z_3^2-(Z_2^4-Z_1^4));\\
S(6,3,2)&=k[U_1, U_2, U_3]/(U_3^2-(U_2^3-U_1^6)).
\end{align*}
Moreover, we will use letters in the lower case to represent their images in the quotient algebras. For example, $y_i$ will represent the image of $Y_i$ in $S(3,3,3)$.

The algebra $S(2,2,2,2; \lambda)$ is graded by the abelian group $L(2,2,2,2)$, which is generated  by $\vec{x}_1, \vec{x}_2, \vec{x}_3$ and $\vec{x}_4$ subject to the relations $2\vec{x}_1=2\vec{x}_2=2\vec{x}_3=2\vec{x}_4$. More precisely, we have ${\rm deg} \; x_i=\vec{x}_i$. The category ${\rm coh}\mbox{-}\mathbb{X}(2,2,2,2; \lambda)$ of coherent sheaves on the corresponding weighted projective line $\mathbb{X}(2,2,2,2; \lambda)$ is identified with ${\rm qgr}\mbox{-} S(2,2,2,2; \lambda)$.

 Similarly, the algebra $S(3,3,3)$ is graded by the ablian group $L(3,3,3)$, which is generated by $\vec{y}_1, \vec{y}_2$ and $\vec{y}_3$ with the relations $3\vec{y}_1=3\vec{y}_2=3\vec{y}_3$; here, this common value is denoted by $\vec{c}$. The grading is given by ${\rm deg}\;  y_i=\vec{y}_i$. The category ${\rm coh}\mbox{-}\mathbb{X}(3,3,3)$ of coherent sheaves on the corresponding weighted projective line $\mathbb{X}(3,3,3)$ is identified with ${\rm qgr}\mbox{-} S(3,3,3)$. The Auslander-Reiten translation $\tau$ on  ${\rm coh}\mbox{-}\mathbb{X}(3,3,3)$ is given by the degree-shift $(\vec{\omega})$, where $\vec{\omega}=\vec{c}-\vec{y}_1-\vec{y}_2-\vec{y}_3$ is the \emph{dualizing element}. Similar remarks hold for $\mathbb{X}(4,4,2)$ and $\mathbb{X}(6,3,2)$. For example, the Auslander-Reiten translation $\tau$  of ${\rm coh}\mbox{-}\mathbb{X}(4,4,2)$ is given by $(\vec{\omega})$, where $\vec{\omega}=\vec{c}-\vec{z}_1-\vec{z}_2-\vec{z}_3$ in $L(4,4,2)$.

 In what follows, we fix two elements $\sqrt{-1}$ and $\epsilon$ in $k$, where $\epsilon^2-\epsilon+1=0$ is satisfied. In Table 1, we consider the following graded automorphisms, where the automorphisms $\psi_i$'s  on the grading groups are naturally induced by $g_i$'s.

{\renewcommand\arraystretch{1.5}
 \begin{table}[h]
\caption{The graded automorphisms on the coordinate algebras}
\begin{tabular}{|c|c|}
  \hline
  $(g_1, \psi_1)$ on $S(2,2,2,2; -1)$  & $x_1\mapsto \sqrt{-1}x_1, \; x_2\mapsto x_2,\;  x_3\mapsto x_4, \; x_4\mapsto x_3$ \\
  \hline
  $(g_2, \psi_2)$ on $S(2,2,2,2; \epsilon)$  & $x_1\mapsto x_2,\;  x_2\mapsto x_3, \; x_3\mapsto \sqrt{-1}x_1,\;  x_4\mapsto \sqrt{-1}\epsilon x_4$ \\
  \hline
  $(g_3, \psi_3)$ on $S(3,3,3)$  & $y_1\mapsto y_2,\;  y_2\mapsto y_1,\;  y_3\mapsto \epsilon y_3$ \\
    \hline
\end{tabular}
\end{table}}

We denote by $C_2$ and $C_3$ the cyclic groups of order two and three, respectively. We observe that the degree-shift automorphism $^{g_1^2}(-)$ is isomorphic to the identity functor. Moreover, there is a natrual isomorphism
$$c\colon {^{g_1^2}(-)}\longrightarrow {\rm Id}_{{\rm coh}\mbox{-}\mathbb{X}(2,2,2,2; -1)}$$
such that $({^{g_1}(-)}, c)$ is a compatible pair of order two; see Definition \ref{defn:comp}. In more details, for a graded module $M$, the isomorphism
$$c_M\colon {^{g_1^2}(M)}\longrightarrow M$$
is given such that $(c_M)_{\vec{l}}=\gamma(\vec{l}){\rm Id}_{M_{\vec{l}}}$ for each $\vec{l}\in L(2,2,2,2)$. Here, the group homomorphism $\gamma\colon L(2,2,2,2)\rightarrow \{\pm 1\}$ is defined by $\gamma(\vec{x}_1)=-1$ and $\gamma(\vec{x}_2)=\gamma(\vec{x}_3)=\gamma(\vec{x}_4)=1$.

By the compatible pair  $({^{g_1}(-)}, c)$ of order two,  we obtain a $k$-linear $C_2$-action on ${\rm coh}\mbox{-}\mathbb{X}(2,2,2,2; -1)$, which is uniquely \emph{determined} by $(g_1, \psi_1)$; see Corollary \ref{cor:cyclic-act}. Here, we observe that $Z({\rm coh}\mbox{-}\mathbb{X}(2,2,2,2; -1))=k$. So the action is independent of the choice of $c$.

In a similar way, we obtain a $k$-linear $C_3$-action on ${\rm coh}\mbox{-}\mathbb{X}(2,2,2,2; \epsilon)$ determined by $(g_2, \psi_2)$, and a $k$-linear $C_2$-action on ${\rm coh}\mbox{-}\mathbb{X}(3,3,3)$ by $(g_3, \psi_3)$. These actions extend naturally to their bounded derived categories.

\subsection{Relating different tubular types} In this subsection, we use the equivariantization to relate weighted projective lines of different tubular types. We apply Theorems \ref{thm:duality-iso-tri} and \ref{thm:tilting}  to relate certain stable tilting complexes on them.

We explain the relation between $\mathbb{X}(4,4,2)$ and $\mathbb{X}(2,2,2,2;-1)$ in some detail. The dualizing element $\vec{\omega}$ in $L(4,4,2)$ has order $4$. We consider the cyclic group $\mathbb{Z}(2\vec{\omega})$ of order two, which has a strict action on ${\rm coh}\mbox{-}\mathbb{X}(4,4,2)$ by the degree-shift. We will consider the category $({\rm coh}\mbox{-}\mathbb{X}(4,4,2))^{\mathbb{Z}(2\vec{\omega})}$ of $\mathbb{Z}(2\vec{\omega})$-equivariant sheaves.  We identify $C_2$ with the character group $\widehat{\mathbb{Z}(2\vec{\omega})}$. Thus we obtain the dual $C_2$-action on $({\rm coh}\mbox{-}\mathbb{X}(4,4,2))^{\mathbb{Z}(2\vec{\omega})}$; see Subsection 4.3. Recall the $C_2$-action on ${\rm coh}\mbox{-}\mathbb{X}(2,2,2,2;-1)$ determined by the graded automorphism $(g_1, \psi_1)$.

\begin{prop}\label{4,4,2}
Keep the notation as above. Then the following statements hold.
\begin{enumerate}
\item[(1)] There is an equivalence of categories
$${\rm coh}\mbox{-}\mathbb{X}(2,2,2,2; -1)\stackrel{\sim}\longrightarrow ({\rm coh}\mbox{-}\mathbb{X}(4,4,2))^{\mathbb{Z}(2\vec{\omega})},$$
which is equivariant with respect to the above two $C_2$-actions.
    \item[(2)] There is an equivalence of categories
    $$ ({\rm coh}\mbox{-}\mathbb{X}(2,2,2,2; -1))^{C_2} \stackrel{\sim} \longrightarrow {\rm coh}\mbox{-}\mathbb{X}(4,4,2).$$
    \item[(3)] There is a bijection between the sets of isoclasses
   \begin{equation*}
    \left\{
    \begin{aligned}
    &\tau^2\mbox{-stable tilting complexes} \\
    &\qquad  \mbox{ on } \mathbb{X}(4,4,2)
    \end{aligned}
    \right\}
    \stackrel{\iota} \longrightarrow
       \left\{
    \begin{aligned}
    & g_1\mbox{-stable tilting complexes } \\
    & \qquad \mbox{ on } \mathbb{X}(2,2,2,2;-1).
    \end{aligned}
    \right\}
    \end{equation*}
    Moreover, for two $\tau^2$-stable tilting complexes $T_1$ and $T_2$, there is a triangle autoquivalence $F$ on $\mathbf{D}^b({\rm coh}\mbox{-}\mathbb{X}(4,4,2))$ with $F(T_1)\simeq T_2$ if and only if there is  a triangle autoequivalence $F'$ on  $\mathbf{D}^b({\rm coh}\mbox{-}\mathbb{X}(2,2,2,2;-1))$ satisfying that $F'(\iota(T_1))\simeq \iota(T_2)$ and that $F'$ commutes with $^{g_1}(-)$.
\end{enumerate}
\end{prop}

Here, a tilting complex $T$ on $\mathbb{X}(4,4,2)$ is \emph{$\tau^2$-stable} provided that $\tau^2(T)\simeq T$; see \cite{Jas}. This is equivalent to being $\mathbb{Z}(2\vec{\omega})$-stable, since $\tau$ is given by $(\vec{\omega})$. Similarly, a tilting complex $T'$ on $\mathbb{X}(2,2,2,2;-1)$ is \emph{$g_1$-stable} if $T'\simeq {^{g_1}{T'}}$, that is, it is $C_2$-stable with respect to the $C_2$-action determined by $(g_1, \psi_1)$.

\begin{proof}
In this proof, we set $\mathbb{X}=\mathbb{X}(2,2,2,2;-1)$ and $\mathbb{Y}=\mathbb{X}(4,4,2)$. We denote by $\mathcal{O}_\mathbb{X}$ and $\mathcal{O}_\mathbb{Y}$ their structure sheaves.

The equivalence in (1) is obtained explicitly in \cite[Proposition~3.2]{CC17}, which will be denoted by  $G$. We observe that $G(\mathcal{O}_\mathbb{X})={\rm Ind}(\mathcal{O}_\mathbb{Y})$, where ${\rm Ind}\colon {\rm coh}\mbox{-}\mathbb{X}(4,4,2) \rightarrow ({\rm coh}\mbox{-}\mathbb{X}(4,4,2))^{\mathbb{Z}(2\vec{\omega})}$ is the induction functor; consult the proof of \cite[Proposition~2.4]{CC17}. In particular, the dual $C_2$-action fixes ${\rm Ind}(\mathcal{O}_\mathbb{Y})$; see Subsection 4.3.

Transporting the dual $C_2$-action on $({\rm coh}\mbox{-}\mathbb{X}(4,4,2))^{\mathbb{Z}(2\vec{\omega})}$ via a quasi-inverse of $G$, we obtain the $C_2$-action on ${\rm coh}\mbox{-}\mathbb{X}(2,2,2,2; -1)$. It follows that the transported action fixes $\mathcal{O}_\mathbb{X}$, that is, given by an automorphism of $\mathbb{X}$; see \cite[Proposition 3.1]{LM}.

By investigating the action on simple sheaves, it is not hard to see that the transported action coincides with the one determined  by $(g_1, \psi_1)$, proving (1). More precisely, by the same argument, we infer that the transported action fixes the simple sheaves concentrated on the ideals $(x_1)$ and $(x_2)$, respectively. Moreover, it swaps the simple sheaves on $(x_3)$ and $(x_4)$; compare \cite[Proposition 3.1]{LM}.

The statement (2) follows from (1) and  Theorem \ref{thm:duality}. We mention that the equivalences in (1) and (2) extend naturally to their bounded derived categories and the corresponding equivariantizations; see \cite[Proposition 4.5]{Chen15}. Moreover, as the Auslander-Reiten translation, the degree-shift automorphism $(\vec{\omega})$ lies in the center of ${\rm Aut}_\vartriangle(\mathbf{D}^b({\rm coh}\mbox{-}\mathbb{Y}))$. It follows that ${\rm Aut}^{\mathbb{Z}(2\vec{\omega})}_\vartriangle(\mathbf{D}^b({\rm coh}\mbox{-}\mathbb{Y}))={\rm Aut}_\vartriangle(\mathbf{D}^b({\rm coh}\mbox{-}\mathbb{Y}))$.    In view of Remark \ref{rem:surj}, (3) follows immediately from Theorem \ref{thm:tilting}.
\end{proof}

There is a similar relation between $\mathbb{X}(2,2,2,2;\epsilon)$ and $\mathbb{X}(6,3,2)$. The dualizing element $\vec{\omega}=\vec{c}-\vec{u}_1-\vec{u}_2-\vec{u}_3$ in $L(6,3,2)$ has order $6$. The cyclic subgroup $\mathbb{Z}(2\vec{\omega})$ has order $3$, which strictly acts on ${\rm coh}\mbox{-}\mathbb{X}(6,3,2)$ by the degree-shift. We consider the category $({\rm coh}\mbox{-}\mathbb{X}(6,3,2))^{\mathbb{Z}(2\vec{\omega})}$ of $\mathbb{Z}(2\vec{\omega})$-equivariant sheaves. Identifying $C_3$ as the character group $\widehat{\mathbb{Z}(2\vec{\omega})}$, we have the dual $C_3$-action on $({\rm coh}\mbox{-}\mathbb{X}(6,3,2))^{\mathbb{Z}(2\vec{\omega})}$. On the other hand, we have the $C_3$-action on ${\rm coh}\mbox{-}\mathbb{X}(2,2,2,2;\epsilon)$ determined by the graded automorphism $(g_2,\psi_2)$.

The following result is analogous to Proposition \ref{4,4,2}. We omit the details.

\begin{prop}\label{6,3,2 (I)}
Keep the notation as above. Then the following statements hold.
\begin{enumerate}
\item[(1)] There is an equivalence of categories
$${\rm coh}\mbox{-}\mathbb{X}(2,2,2,2; \epsilon)\stackrel{\sim}\longrightarrow ({\rm coh}\mbox{-}\mathbb{X}(6,3,2))^{\mathbb{Z}(2\vec{\omega})},$$
which is equivariant with respect to the above two $C_3$-actions.
    \item[(2)] There is an equivalence of categories
    $$ ({\rm coh}\mbox{-}\mathbb{X}(2,2,2,2; \epsilon))^{C_3} \stackrel{\sim} \longrightarrow {\rm coh}\mbox{-}\mathbb{X}(6,3,2).$$
    \item[(3)] There is a bijection between the sets of isoclasses
      \begin{equation*}
    \left\{
    \begin{aligned}
    &\tau^2\mbox{-stable tilting complexes} \\
    & \qquad \mbox{ on } \mathbb{X}(6,3,2)
    \end{aligned}
    \right\}
    \stackrel{\iota} \longrightarrow
       \left\{
    \begin{aligned}
    & g_2\mbox{-stable tilting complexes } \\
    & \qquad \mbox{ on } \mathbb{X}(2,2,2,2;\epsilon).
    \end{aligned}
    \right\}
    \end{equation*}
    Moreover, for two $\tau^2$-stable tilting complexes $T_1$ and $T_2$, there is a triangle autoquivalence $F$ on $\mathbf{D}^b({\rm coh}\mbox{-}\mathbb{X}(6,3,2))$ with $F(T_1)\simeq T_2$ if and only if there is  a triangle autoequivalence $F'$ on  $\mathbf{D}^b({\rm coh}\mbox{-}\mathbb{X}(2,2,2,2;\epsilon))$ satisfying that $F'(\iota(T_1))\simeq \iota(T_2)$ and that $F'$ commutes with $^{g_2}(-)$. \hfill $\square$
\end{enumerate}
\end{prop}

In $L(6,3,2)$, the cyclic group $\mathbb{Z}(3\vec{\omega})$ has order two. Identifying $C_2$ as the character group $\widehat{\mathbb{Z}(3\vec{\omega})}$, we obtain the dual $C_2$-action on $({\rm coh}\mbox{-}\mathbb{X}(6,3,2))^{\mathbb{Z}(3\vec{\omega})}$. On the other hand, we have the $C_2$-action on ${\rm coh}\mbox{-}\mathbb{X}(3,3,3)$ determined by the graded automorphism $(g_3, \psi_3)$.

\begin{prop}\label{6,3,2 (II)}
Keep the notation as above. Then the following statements hold.
\begin{enumerate}
\item[(1)] There is an equivalence of categories
$${\rm coh}\mbox{-}\mathbb{X}(3,3,3)\stackrel{\sim}\longrightarrow ({\rm coh}\mbox{-}\mathbb{X}(6,3,2))^{\mathbb{Z}(3\vec{\omega})},$$
which is equivariant with respect to the above two $C_2$-actions.
    \item[(2)] There is an equivalence of categories
    $$ ({\rm coh}\mbox{-}\mathbb{X}(3,3,3))^{C_2} \stackrel{\sim} \longrightarrow {\rm coh}\mbox{-}\mathbb{X}(6,3,2).$$
    \item[(3)] There is a bijection between the sets of isoclasses
      \begin{equation*}
    \left\{
    \begin{aligned}
    &\tau^3\mbox{-stable tilting complexes} \\
    & \qquad \mbox{ on } \mathbb{X}(6,3,2)
    \end{aligned}
    \right\}
    \stackrel{\iota} \longrightarrow
       \left\{
    \begin{aligned}
    & g_3\mbox{-stable tilting complexes } \\
    & \qquad \mbox{ on } \mathbb{X}(3,3,3).
    \end{aligned}
    \right\}
    \end{equation*}
     Moreover, for two $\tau^3$-stable tilting complexes $T_1$ and $T_2$, there is a triangle autoquivalence $F$ on $\mathbf{D}^b({\rm coh}\mbox{-}\mathbb{X}(6,3,2))$ with $F(T_1)\simeq T_2$ if and only if there is  a triangle autoequivalence $F'$ on  $\mathbf{D}^b({\rm coh}\mbox{-}\mathbb{X}(3,3,3))$ satisfying that $F'(\iota(T_1))\simeq \iota(T_2)$ and that $F'$ commutes with $^{g_3}(-)$. \hfill $\square$
\end{enumerate}
\end{prop}

\begin{rem}
(1) In Propositions \ref{4,4,2}-\ref{6,3,2 (II)}, the equivalences in (2) might be deduced from a general result \cite[Proposition 3]{Len16}. Here, we emphasize that they are dual to the equivalences in (1). This duality allows us to apply Theorems \ref{thm:duality-iso-tri} and \ref{thm:tilting}.

(2) Propositions \ref{4,4,2} and \ref{6,3,2 (I)} relate the classification of $\tau^2$-stable tilting complexes in \cite{Jas} to that of $g_i$-stable tilting complexes on weighted projective lines of weight type $(2,2,2,2)$. Such a relation is implicitly indicated in \cite[p.30]{Jas} and \cite[Remark~3.9(3)]{CC17}.

Indeed, based on the classification of tilting complexes for the tubular type $(2,2,2,2)$ in \cite[Chapter 10]{Mel} (see also \cite[Example 3.3]{Sko} and  \cite[Figure 1]{BPe}),  these relation might provide a different approach to the classification of $\tau^2$-stable tilting complexes. In view of Corollary \ref{cor:cyclc-cross}, the latter classification boils down to certain elements in the outer automorphism groups of algebras. Proposition \ref{6,3,2 (II)} indicates that the classification of $\tau^3$-stable tilting complexes on $\mathbb{X}(6,3,2)$ might also be of interest. \hfill $\square$
\end{rem}

\section{Forgetful and obstruction homomorphisms}

In this section, we investigate when the forgetful homomorphism  (\ref{equ:forg}) is surjective. It turns out that the obstruction homomorphism from the autoequivalence group to the second cohomological group plays a role.

Throughout, we work over a fixed field $k$. Let $\mathcal{C}$ be a $k$-linear category such that its center $Z(\mathcal{C})=k$.  Let $G$ be a group with a fixed $k$-linear $G$-action $\{F_g, \varepsilon_{g, h}|\; g, h\in G\}$ on $\mathcal{C}$.

\subsection{The obstruction homomorphism}  Take a $k$-linear autoequivalence $F\colon \mathcal{C}\rightarrow \mathcal{C}$ with $[F]\in {\rm Aut}^G(\mathcal{C})$, that is, $FF_g$ is isomorphic to $F_g F$ for each $g\in G$. In general, this does not give rise to a $G$-equivariant functor. We choose a natural isomorphism $\delta_g\colon FF_g\rightarrow F_gF$ for each $g\in G$.

For any  $g, h\in G$, we claim that there is a unique nonzero scalar $\sigma_F(g, h)\in k^*$ satisfying
\begin{align}\label{equ:sigma}
\delta_{gh}\circ F\varepsilon_{g, h}=\sigma_F(g, h)  \varepsilon_{g, h}F\circ F_g\delta_h\circ \delta_gF_h.
\end{align}
Indeed, we apply Lemma \ref{lem:nat} to the automorphism $\delta_{gh}\circ F\varepsilon_{g, h}\circ (\varepsilon_{g, h}F\circ F_g\delta_h\circ \delta_gF_h)^{-1}$ of the autoequivalence $F_{gh}F\colon \mathcal{C}\rightarrow \mathcal{C}$.

The following result implies that $\sigma_F$ defines a $2$-cocycle of $G$ with values in $k^*$.

\begin{lem}
Keep the notation and assumptions as above. Then we have
$$\sigma_F(gh, f) \sigma_F(g, h) = \sigma_F(g, hf)\sigma_F(h, f) $$
for any $g, h, f\in G$.
\end{lem}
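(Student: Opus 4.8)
The plan is to compute one and the same natural isomorphism $FF_gF_hF_f \longrightarrow F_{ghf}F$ in two different ways and to read off the cocycle relation by comparing scalars. The starting point is associativity of the action: applying the functor $F$ to the $2$-cocycle condition (\ref{equ:2-coc}) for the triple $(g,h,f)$ and then post-composing with $\delta_{ghf}$ gives
$$\delta_{ghf}\circ F(\varepsilon_{gh,f}\circ \varepsilon_{g,h}F_f)=\delta_{ghf}\circ F(\varepsilon_{g,hf}\circ F_g\varepsilon_{h,f}),$$
so the two bracketings produce the \emph{same} natural transformation $\Phi\colon FF_gF_hF_f\to F_{ghf}F$. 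The whole argument is an evaluation of $\Phi$ along each bracketing.

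First I would expand the left-hand side. Applying (\ref{equ:sigma}) to the pair $(gh,f)$, and then to the pair $(g,h)$ whiskered on the right by $F_f$, peels off the two scalars $\sigma_F(gh,f)$ and $\sigma_F(g,h)$ and leaves a composite in which $F$ must be pushed all the way past $F_g,F_h,F_f$. The only nonformal manipulation needed is to commute $F_{gh}\delta_f$ past $\varepsilon_{g,h}FF_f$, which is precisely the interchange (Godement) law for the horizontal composite $\varepsilon_{g,h}\star\delta_f$. After this single step one reaches
$$\Phi=\sigma_F(gh,f)\,\sigma_F(g,h)\,\Psi,\qquad \Psi=\big(\varepsilon_{gh,f}\circ\varepsilon_{g,h}F_f\big)F\circ F_gF_h\delta_f\circ F_g\delta_hF_f\circ\delta_gF_hF_f .$$
Symmetrically, expanding the right-hand side via (\ref{equ:sigma}) for $(g,hf)$ and then for $(h,f)$ whiskered on the left by $F_g$, with one interchange step commuting $\delta_gF_{hf}$ past $FF_g\varepsilon_{h,f}$, yields
$$\Phi=\sigma_F(g,hf)\,\sigma_F(h,f)\,\Psi',\qquad \Psi'=\big(\varepsilon_{g,hf}\circ F_g\varepsilon_{h,f}\big)F\circ F_gF_h\delta_f\circ F_g\delta_hF_f\circ\delta_gF_hF_f .$$
By (\ref{equ:2-coc}) once more, the two leading factors $(\varepsilon_{gh,f}\circ\varepsilon_{g,h}F_f)F$ and $(\varepsilon_{g,hf}\circ F_g\varepsilon_{h,f})F$ coincide, and the remaining tails are literally identical, so $\Psi=\Psi'$.

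Finally, comparing the two expressions for $\Phi$ gives $\sigma_F(gh,f)\sigma_F(g,h)\,\Psi=\sigma_F(g,hf)\sigma_F(h,f)\,\Psi$. Since $\Psi$ is an isomorphism of functors (a composite of the invertible $\delta$'s and $\varepsilon$'s), and since the scalars lie in $k=Z(\mathcal{C})$, cancelling $\Psi$ forces the desired identity $\sigma_F(gh,f)\sigma_F(g,h)=\sigma_F(g,hf)\sigma_F(h,f)$; concretely, evaluating at any nonzero object and using that $\Psi$ is invertible reduces the equality to an equality of scalars via Lemma \ref{lem:nat}.

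I expect the main obstacle to be purely bookkeeping rather than conceptual: one must get the whiskering conventions and the two interchange-law applications exactly right so that the two ``tails'' $\Psi$ and $\Psi'$ come out as the very same composite, after which (\ref{equ:2-coc}) handles the prefactors and the $k$-linearity of $\mathcal{C}$ finishes the comparison of scalars. The conceptual content is simply the observation that the single isomorphism $FF_gF_hF_f\to F_{ghf}F$ admits two bracketed decompositions whose scalar discrepancy is governed by $\sigma_F$.
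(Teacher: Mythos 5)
Your proof is correct and takes essentially the same route as the paper's: both start from $\delta_{ghf}$ composed with $F$ applied to the $2$-cocycle identity (\ref{equ:2-coc}), expand each bracketing by two applications of (\ref{equ:sigma}) together with the interchange law (which the paper uses implicitly in its displayed computations), and conclude by identifying the two prefactors via (\ref{equ:2-coc}) and cancelling the common invertible tail to compare scalars.
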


\begin{proof}
By (\ref{equ:2-coc}) we have $\delta_{ghf}\circ F\varepsilon_{gh, f}\circ F\varepsilon_{g, h}F_f=\delta_{ghf}\circ F\varepsilon_{g, hf}\circ FF_g\varepsilon_{h, f}$. Applying (\ref{equ:sigma}) twice to the left hand side, we obtain $$\sigma_F(gh, f)\sigma_F(g, h) (\varepsilon_{gh, f} F\circ \varepsilon_{g, h}F_fF)\circ (F_gF_h\delta_f\circ F_g\delta_hF_f\circ \delta_gF_hF_f).$$ Similarly, we have that the right hand side equals $$\sigma_F(g, hf)\sigma_F(h, f) (\varepsilon_{g, hf}F\circ F_g\varepsilon_{h,f}F)\circ (F_gF_h\delta_f\circ F_g\delta_hF_f\circ \delta_gF_hF_f).$$
Applying (\ref{equ:2-coc}) again, we infer the required identity.
\end{proof}

The $2$-cocycle $\sigma_F$ depends on the choice of the isomorphisms $\delta_g$'s. Take another set of isomorphisms $\delta'_g\colon FF_g\rightarrow F_gF$, which yields another $2$-cocycle $\sigma'_F$. By Lemma~\ref{lem:nat} there is a unique $\lambda(g)\in k^*$ with $\delta'_g=\lambda(g)\delta_g$ for each $g\in G$. By comparing (\ref{equ:sigma}) for $\sigma_F(g, h)$ and for $\sigma'_F(g, h)$, we infer that
\begin{align}\label{equ:sigma-2}
\sigma'_F(g, h)=\sigma_F(g, h) \lambda(gh)\lambda(g)^{-1}\lambda(h)^{-1}.
\end{align}
It follows that the cohomological class $[\sigma_F]\in H^2(G, k^*)$ is independent of the choice of the isomorphisms $\delta_g$'s. Moreover, the class $[\sigma_F]$ is trivial if and only if $F$ lifts to a $G$-equivariant endofunctor.

\begin{lem}
Keep the notation and assumptions as above. Then the following two statements hold.
\begin{enumerate}
\item Assume that $\gamma \colon F\rightarrow F'$ is a natural isomorphism between two $k$-linear autoequivalences. Then $[\sigma_F]=[\sigma_{F'}]$.
\item For two $k$-linear autoequivalences $F_1, F_2$ on $\mathcal{C}$, we have $[\sigma_{F_1F_2}]=[\sigma_{F_1}\sigma_{F_2}]$.
\end{enumerate}
\end{lem}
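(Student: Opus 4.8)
The plan is to prove both statements by carefully tracking how the defining relation \eqref{equ:sigma} transforms, using the uniqueness provided by Lemma~\ref{lem:nat} throughout to pin down scalars. For part (1), suppose $\gamma\colon F\rightarrow F'$ is a natural isomorphism, and that we have chosen isomorphisms $\delta_g\colon FF_g\rightarrow F_gF$ defining $\sigma_F$. I would first transport these to isomorphisms $\delta'_g\colon F'F_g\rightarrow F_gF'$ by setting $\delta'_g = F_g\gamma\circ \delta_g\circ \gamma^{-1}F_g$, which is manifestly a natural isomorphism $F'F_g\rightarrow F_gF'$. The key computation is then to substitute this expression into \eqref{equ:sigma} written for $F'$ and check that the $\gamma$-conjugation factors cancel cleanly, leaving exactly $\sigma_F(g,h)$ as the scalar. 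Concretely, one expands $\delta'_{gh}\circ F'\varepsilon_{g,h}$ and $\varepsilon_{g,h}F'\circ F_g\delta'_h\circ \delta'_gF_h$, pushing all occurrences of $\gamma$ and $\gamma^{-1}$ to the outside using naturality of $\gamma$ and of the $\varepsilon_{g,h}$; the interior reduces to the relation \eqref{equ:sigma} for $F$, and since $\gamma$ is an honest natural isomorphism (not a scalar), it contributes no extra factor. Hence $\sigma_{F'}=\sigma_F$ for this particular choice of $\delta'_g$, and by the choice-independence established in \eqref{equ:sigma-2} we conclude $[\sigma_{F'}]=[\sigma_F]$.

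For part (2), I would choose isomorphisms $\delta^{(1)}_g\colon F_1F_g\rightarrow F_gF_1$ and $\delta^{(2)}_g\colon F_2F_g\rightarrow F_gF_2$ realizing $\sigma_{F_1}$ and $\sigma_{F_2}$ respectively, and then build the natural candidate for the composite: set
$$\delta_g = F_1\delta^{(2)}_g\circ \delta^{(1)}_gF_2\colon\ F_1F_2F_g\longrightarrow F_gF_1F_2.$$
This is precisely the composition rule for equivariant structures used elsewhere in the paper, so it is the right object to track. The plan is then to substitute this $\delta_g$ into \eqref{equ:sigma} for the autoequivalence $F_1F_2$ and expand. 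The left-hand side $\delta_{gh}\circ (F_1F_2)\varepsilon_{g,h}$ should, after inserting the definitions and using that $F_1,F_2$ are functors, decompose into an $F_1$-image of the $\sigma_{F_2}$-relation composed with the $\sigma_{F_1}$-relation applied after $F_2$. Each application of \eqref{equ:sigma} for $F_2$ (inside $F_1$) and for $F_1$ (after $F_2$) releases a scalar; since scalars in $Z(\mathcal{C})=k$ are central and commute past all functors and natural transformations, they factor out as $\sigma_{F_1}(g,h)\,\sigma_{F_2}(g,h)$ multiplying the fully rearranged right-hand side. Reading off the unique scalar via Lemma~\ref{lem:nat} then gives $\sigma_{F_1F_2}(g,h)=\sigma_{F_1}(g,h)\,\sigma_{F_2}(g,h)$ on the nose for this choice of $\delta_g$, whence $[\sigma_{F_1F_2}]=[\sigma_{F_1}\sigma_{F_2}]$.

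The main obstacle in both parts is purely bookkeeping: the relation \eqref{equ:sigma} is an equation between natural transformations of the composite autoequivalence $F_{gh}F$ (respectively $F_{gh}F_1F_2$), and verifying the scalar identities requires carefully inserting the chosen $\delta_g$, then repeatedly invoking the interchange law for horizontal and vertical composition of natural transformations together with the $2$-cocycle condition \eqref{equ:2-coc} to line up the $\varepsilon_{g,h}$ terms. The conceptual content is slight—the scalars simply multiply because $F_1,F_2$ are functors and scalars are central—but making the cancellations explicit, especially the $\gamma$-conjugation telescoping in part (1) and the nested two-fold application of \eqref{equ:sigma} in part (2), is where care is needed. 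The crucial structural facts that make everything go through are that $Z(\mathcal{C})=k$ (so the ambiguity is exactly a single scalar, controlled by Lemma~\ref{lem:nat}) and the choice-independence formula \eqref{equ:sigma-2}, which lets me verify each identity for one convenient choice of the $\delta_g$'s and then pass freely to cohomology classes.
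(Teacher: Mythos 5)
Your proposal is correct and takes essentially the same route as the paper: in part (1) you use the identical conjugation $\delta'_g=F_g\gamma\circ\delta_g\circ\gamma^{-1}F_g$, and in part (2) the same composite structure maps, with the scalars extracted exactly as in the paper's ``direct calculation'' (the key point being the interchange law, Lemma \ref{lemA:rule}, to slide $\delta^{(1)}_g$ past $\delta^{(2)}_h$). One small slip: your formula should read $\delta_g=\delta^{(1)}_gF_2\circ F_1\delta^{(2)}_g$ (the paper's composition rule for equivariant functors); as written, $F_1\delta^{(2)}_g\circ\delta^{(1)}_gF_2$ does not typecheck, though your stated source and target make the intent unambiguous.
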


\begin{proof}
(1) Take natural isomorphisms $\delta_g\colon FF_g\rightarrow F_gF$ and set $$\delta'_g=F_g\gamma\circ \delta_g\circ \gamma^{-1}F_g\colon F'F_g\longrightarrow F_gF'.$$ It follows that
\begin{align*}
\delta'_{gh}\circ F'\varepsilon_{g, h}&=\sigma_F(g, h)F_{gh}\gamma \circ \varepsilon_{g, h}F\circ F_g\delta_h\circ \delta_gF_h\circ \gamma^{-1}F_gF_h,\\
&=\sigma_F(g, h)\varepsilon_{g, h}F'\circ F_g\delta'_h\circ \delta'_gF_h. \end{align*}
Then we infer that $\sigma_{F'}(g, h)=\sigma_{F}(g, h)$.

(2) Write $F=F_1F_2$. For each $g\in G$, we take natural isomorphisms $\delta_g^1\colon F_1 F_g\rightarrow F_gF_1$ and $\delta_g^2\colon F_2F_g\rightarrow F_gF_2$. Set $\delta_g=\delta_g^1F_2\circ F_1\delta_g^2\colon FF_g\rightarrow F_gF$. It follows by direct calculation that $\sigma_F(g,h)=\sigma_{F_1}(g,h)\sigma_{F_2}(g,h)$.
\end{proof}

The above lemma implies that the following group homomorphism is well defined
$$\sigma\colon {\rm Aut}^G(\mathcal{C})\longrightarrow H^2(G, k^*), \; [F]\mapsto [\sigma_F].$$
We will call $\sigma$ the \emph{obstruction homomorphism} of the given $G$-action.

We observe a group homomorphism
$$\rho\colon \widehat{G}\longrightarrow {\rm Aut}(\mathcal{C}; G)$$
  sending $\chi$ to the $G$-equivariant functor $[{\rm Id}_\mathcal{C}, (\chi(g)F_g)_{g\in G}]$; compare Subsection 5.2.

The following result implies that the forgetful homomorphism (\ref{equ:forg})  is surjective if and only if $\sigma$ is trivial. We mention that the result extends \cite[Theorem 6(1)]{Pl}.

\begin{prop}\label{prop:longex}
Let $\{F_g, \varepsilon_{g, h}|\; g, h\in G\}$ be a $k$-linear $G$-action on $\mathcal{C}$. Assume that $Z(\mathcal{C})=k$. Then there is an exact sequence of groups
\begin{align}
  1\longrightarrow \widehat{G}\stackrel{\rho}\longrightarrow {\rm Aut}(\mathcal{C}; G)\stackrel{\phi}\longrightarrow {\rm Aut}^G(\mathcal{C})\stackrel{\sigma}\longrightarrow H^2(G, k^*),
\end{align}
where $\phi$ is the forgetful homomorphism and $\sigma$ is the obstruction homomorphism of the given $G$-action, respectively.
\end{prop}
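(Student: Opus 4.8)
The plan is to establish exactness separately at each of the three non-trivial terms $\widehat{G}$, ${\rm Aut}(\mathcal{C}; G)$ and ${\rm Aut}^G(\mathcal{C})$. In each case the key simplification is that, since $Z(\mathcal{C})=k$, every natural automorphism of a $k$-linear autoequivalence is multiplication by a scalar; more precisely, by Lemma \ref{lem:nat} a natural transformation $F_g\to F_g$ is given by a uniquely determined element of $k$. This reduces all three verifications to elementary identities among scalars indexed by $G$. For injectivity of $\rho$ (exactness at $\widehat{G}$), I would suppose $\rho(\chi)$ is the identity class, so that $({\rm Id}_\mathcal{C}, (\chi(g){\rm Id}_{F_g})_{g})$ is isomorphic to the identity equivariant functor via some natural isomorphism $\phi\colon {\rm Id}_\mathcal{C}\to {\rm Id}_\mathcal{C}$, necessarily a nonzero scalar; feeding this into the compatibility relation (\ref{equ:iso-equi}) and using that $F_g\phi=\phi F_g$ collapses it to $\chi(g)\phi=\phi$, whence $\chi(g)=1$ for every $g$.

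For exactness at ${\rm Aut}(\mathcal{C}; G)$, the inclusion ${\rm im}\,\rho\subseteq\ker\phi$ is immediate because the underlying functor of $\rho(\chi)$ is ${\rm Id}_\mathcal{C}$. For the reverse inclusion I would take $[F,\delta]\in\ker\phi$, so that $F\cong{\rm Id}_\mathcal{C}$, and use Lemma \ref{lem:change} to replace $(F,\delta)$ by an isomorphic equivariant functor with $F={\rm Id}_\mathcal{C}$ exactly. Then each $\delta_g\colon F_g\to F_g$ is multiplication by a scalar $\lambda(g)\in k^*$, and substituting $\delta_g=\lambda(g){\rm Id}_{F_g}$ into the equivariance condition (\ref{equ:equi-fun}) (taken with $F'_g=F_g$ and $\varepsilon'=\varepsilon$) and cancelling the common factor $\varepsilon_{g,h}$ yields precisely $\lambda(gh)=\lambda(g)\lambda(h)$. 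Hence $\lambda\in\widehat{G}$ and $[F,\delta]=\rho(\lambda)$, giving $\ker\phi\subseteq{\rm im}\,\rho$.

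Exactness at ${\rm Aut}^G(\mathcal{C})$ is the heart of the matter, and hinges on matching the two defining formulas. I would observe that the relation (\ref{equ:sigma}) defining $\sigma_F$ coincides with the equivariance relation (\ref{equ:equi-fun}) exactly when all scalars $\sigma_F(g,h)$ equal $1$. Consequently, if $[F]=\phi([F,\delta])$ lies in the image of $\phi$, then the witnessing family $\delta$ already forces $\sigma_F\equiv 1$, so $[\sigma_F]$ is trivial and $[F]\in\ker\sigma$. Conversely, if $[\sigma_F]=1$ then $\sigma_F$ is a $2$-coboundary, and comparing with the transformation rule (\ref{equ:sigma-2}) shows that rescaling $\delta_g$ to $\lambda(g)\delta_g$ for a suitable $\lambda\colon G\to k^*$ replaces $\sigma_F$ by the constant cocycle $1$; with this choice (\ref{equ:sigma}) becomes (\ref{equ:equi-fun}), so $(F,\delta)$ is a genuine $G$-equivariant functor and $[F]=\phi([F,\delta])\in{\rm im}\,\phi$.

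I expect this final step to be the main obstacle, though the difficulty is bookkeeping rather than depth: one must verify that the adjustment factors $\lambda(gh)\lambda(g)^{-1}\lambda(h)^{-1}$ occurring in (\ref{equ:sigma-2}) sweep out exactly the group $B^2(G,k^*)$ of $2$-coboundaries, so that triviality of the class $[\sigma_F]$ is truly equivalent to the existence of a normalizing $\lambda$, and one must keep the inversion conventions in (\ref{equ:sigma}) and (\ref{equ:sigma-2}) consistent throughout. Once these scalar identities are pinned down, the remaining verifications are routine applications of Lemma \ref{lem:nat} and the definitions.
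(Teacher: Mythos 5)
Your proposal is correct and follows essentially the same route as the paper's own proof: the paper likewise handles exactness at ${\rm Aut}(\mathcal{C}; G)$ by reducing a kernel element to $({\rm Id}_\mathcal{C}, (\gamma_g)_{g\in G})$ and invoking Lemma \ref{lem:nat} to get $\gamma_g=\chi(g)F_g$ for a character $\chi$, and it gets exactness at ${\rm Aut}^G(\mathcal{C})$ from the observation (a direct consequence of (\ref{equ:sigma-2}), exactly your coboundary-matching argument) that $[\sigma_F]$ is trivial if and only if $F$ admits some equivariant structure $\delta$. One citation nitpick: the replacement of $(F,\delta)$ by an isomorphic equivariant functor whose underlying functor is exactly ${\rm Id}_\mathcal{C}$ is not Lemma \ref{lem:change} (which concerns isomorphisms of $G$-actions, not of equivariant functors), but the routine transport $\delta'_g=F_g\gamma\circ\delta_g\circ\gamma^{-1}F_g$ along $\gamma\colon F\rightarrow{\rm Id}_\mathcal{C}$, which is the computation carried out in part (1) of the lemma preceding the proposition.
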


\begin{proof}
The homomorphism $\rho$ is clearly injective. On the other hand, if a $G$-equivariant endofunctor lies in the kernel of $\phi$, then it is isomorphic to $({\rm Id}_\mathcal{C}, (\gamma_g)_{g\in G})$. Now applying Lemma \ref{lem:nat}, we infer that $\gamma_g=\chi(g)F_g$ for some character $\chi$. This proves the exactness at ${\rm Aut}(\mathcal{C}; G)$.

As mentioned above, the class $[\sigma_F]$ is trivial if and only if $(F, (\delta_g)_{g\in G})$ is a $G$-equivariant functor for some natural isomorphisms $\delta_g$'s. Then the exactness at ${\rm Aut}^G(\mathcal{C})$ follows.
\end{proof}

\subsection{An isomorphism of groups} In this subsection, we assume that the  group $G$ is finite abelian, and splits over $k$. We assume further that $Z(\mathcal{C}^G)=k$. Applying Proposition \ref{prop:longex} to the dual $\widehat{G}$-action on $\mathcal{C}^G$, we obtain an exact sequence
\begin{align}
  1\longrightarrow G \stackrel{\rho} \longrightarrow {\rm Aut}(\mathcal{C}^G; \widehat{G}) \stackrel{\widehat{\phi}} \longrightarrow {\rm Aut}^{\widehat{G}}(\mathcal{C}^G) \stackrel{\sigma}\longrightarrow H^2(\widehat{G}, k^*).
\end{align}
Here, we identify by (\ref{equ:ev}) $G$ with the character group of $\widehat{G}$, and $\widehat{\phi}$ denotes the forgetful homomorphism.

From the given $G$-action, we have the group homomorphism $G\rightarrow {\rm Aut}(\mathcal{C})$ sending $g$ to $[F_g]$, whose image lies in ${\rm Aut}^G(\mathcal{C})$. Moreover, its image is a normal subgroup of ${\rm Aut}^G(\mathcal{C})$. The corresponding quotient group is denoted by ${\rm Aut}^G(\mathcal{C})/G$, where we abuse $G$ with its image.  For the dual $\widehat{G}$-action on $\mathcal{C}^G$, we have the quotient group ${\rm Aut}^{\widehat{G}}(\mathcal{C}^G)/{\widehat{G}}$.

\begin{prop}\label{prop:ano-iso-group}
Let $G$ be a finite abelian group, which splits over $k$. Assume that $\mathcal{C}$ is idempotent complete satisfying that $Z(\mathcal{C})=k=Z(\mathcal{C}^G)$. We assume further that the forgetful homomorphisms $\phi$ and $\widehat{\phi}$ are both surjective.  Then there is a unique isomorphism ${\rm Aut}^G(\mathcal{C})/G \stackrel{\sim}\longrightarrow {\rm Aut}^{\widehat{G}}(\mathcal{C}^G)/{\widehat{G}}$ of groups, which fills into the following commutative diagram
\[\xymatrix{
{\rm Aut}(\mathcal{C}; G) \ar[d] \ar[rr]^-{(-)^G} && {\rm Aut}(\mathcal{C}^G; \widehat{G}) \ar[d]\\
 {\rm Aut}^G(\mathcal{C})/G\ar@{.>}[rr]^-{\sim} && {\rm Aut}^{\widehat{G}}(\mathcal{C}^G)/{\widehat{G}},
}\]
where the upper row is the equivariantization isomorphism in Theorem \ref{thm:duality-iso}, and the vertical homomorphisms are induced by the forgetful homomorphisms.
\end{prop}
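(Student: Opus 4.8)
The plan is to obtain the desired bottom isomorphism as the map that the equivariantization isomorphism $(-)^G$ of Theorem \ref{thm:duality-iso} induces after passing to appropriate quotients. Write $V$ for the left-hand vertical map ${\rm Aut}(\mathcal{C}; G)\to {\rm Aut}^G(\mathcal{C})/G$, namely the composite of $\phi$ with the canonical projection, and $\widehat{V}$ for the analogous right-hand map ${\rm Aut}(\mathcal{C}^G; \widehat{G})\to {\rm Aut}^{\widehat{G}}(\mathcal{C}^G)/\widehat{G}$. Since $\phi$ and $\widehat{\phi}$ are surjective by hypothesis and the two canonical projections are surjective, both $V$ and $\widehat{V}$ are surjective group homomorphisms. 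The quotients involved make sense because the image of $G$ in ${\rm Aut}^G(\mathcal{C})$ and the subgroup ${\rm Act}(\mathcal{C}; G)$ of ${\rm Aut}(\mathcal{C}; G)$ are already known to be normal, and likewise on the dual side.

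The key step is to identify the kernels of $V$ and $\widehat{V}$ with the subgroups ${\rm Act}$. First I would observe that the image of $G$ in ${\rm Aut}^G(\mathcal{C})$ under $g\mapsto [F_g]$ consists exactly of the classes $[F]$ with $F$ isomorphic to some $F_a$; hence the preimage $\phi^{-1}(\text{image of }G)$ equals ${\rm Act}(\mathcal{C}; G)$ by the very definition of the latter. Thus $\ker V={\rm Act}(\mathcal{C}; G)$, and $V$ induces an isomorphism ${\rm Aut}(\mathcal{C}; G)/{\rm Act}(\mathcal{C}; G)\xrightarrow{\sim}{\rm Aut}^G(\mathcal{C})/G$. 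Applying the same argument to the dual $\widehat{G}$-action gives $\ker\widehat{V}={\rm Act}(\mathcal{C}^G; \widehat{G})$ together with an isomorphism ${\rm Aut}(\mathcal{C}^G; \widehat{G})/{\rm Act}(\mathcal{C}^G; \widehat{G})\xrightarrow{\sim}{\rm Aut}^{\widehat{G}}(\mathcal{C}^G)/\widehat{G}$.

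Next I would invoke Proposition \ref{prop:Act}, which states precisely that $(-)^G$ carries ${\rm Act}(\mathcal{C}; G)$ isomorphically onto ${\rm Act}(\mathcal{C}^G; \widehat{G})$. Combined with the kernel identifications this yields $\ker V=((-)^G)^{-1}(\ker\widehat{V})$, so that $\widehat{V}\circ(-)^G$ factors through $V$; because both vertical maps are surjective and $(-)^G$ is an isomorphism, the induced map on quotients is an isomorphism. Transporting it along the two quotient isomorphisms of the previous paragraph produces the required isomorphism ${\rm Aut}^G(\mathcal{C})/G\xrightarrow{\sim}{\rm Aut}^{\widehat{G}}(\mathcal{C}^G)/\widehat{G}$, which makes the square commute by construction. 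Uniqueness is immediate: since $V$ is surjective, any map $\theta$ closing the diagram is forced by the relation $\theta\circ V=\widehat{V}\circ(-)^G$.

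I do not expect a serious obstacle, as the statement reduces to a diagram chase once the kernels are pinned down; the only point needing genuine care is the identification $\ker V={\rm Act}(\mathcal{C}; G)$, which rests on recognizing that $\phi^{-1}$ of the image of $G$ is exactly the set of classes $[F,\delta]$ with $F\simeq F_a$ for some $a\in G$. Everything else is a formal consequence of Theorem \ref{thm:duality-iso}, Proposition \ref{prop:Act}, and the surjectivity hypotheses on $\phi$ and $\widehat{\phi}$.
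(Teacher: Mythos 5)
Your proposal is correct and follows essentially the same route as the paper: identify $\ker V$ with ${\rm Act}(\mathcal{C}; G)$ (and dually), then combine the equivariantization isomorphism of Theorem \ref{thm:duality-iso} with the restriction result of Proposition \ref{prop:Act} to pass to quotients. The paper's proof is just a terser version of this argument; your explicit treatment of the kernel identification and of uniqueness fills in details the paper leaves implicit.
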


\begin{proof}
By the very definition of ${\rm Act}(\mathcal{C}; G)$ in Subsection 5.1, the forgetful homomorphism  induces an isomorphism
$${\rm Aut}(\mathcal{C}; G)/{{\rm Act}(\mathcal{C}; G)}\stackrel{\sim}\longrightarrow {\rm Aut}^G(\mathcal{C})/G.$$
The same argument yields an isomorphism
$${\rm Aut}(\mathcal{C}^G; \widehat{G})/{{\rm Act}(\mathcal{C}^G; \widehat{G})}\stackrel{\sim}\longrightarrow {\rm Aut}^{\widehat{G}}(\mathcal{C}^G)/{\widehat{G}}.$$
Now the required isomorphism follows  Theorem \ref{thm:duality-iso} and Proposition \ref{prop:Act}.
\end{proof}

\begin{rem}
(1) Let us mention a simple case where Proposition \ref{prop:ano-iso-group} applies. Let $G$ be a cyclic group of order $d$ with $d\geq 2$. We assume that the field $k$ has a primitive $d$-th root of unity, and each element has a $d$-th root. Then $G$ splits over $k$, and $H^2(G, k^*)=0=H^2(\widehat{G}, k^*)$. We obtain an isomorphism between ${\rm Aut}^G(\mathcal{C})/G$ and ${\rm Aut}^{\widehat{G}}(\mathcal{C}^G)/{\widehat{G}}$. Take $F=F_g$ for a generator $g$ of $G$. Then ${\rm Aut}^G(\mathcal{C})$ is just the centralizer of $[F]$ in ${\rm Aut}(\mathcal{C})$.

(2) There is a triangle version of Proposition \ref{prop:ano-iso-group}. More precisely, let $\mathcal{T}$ be a pre-triangulated category with a triangle $G$-action such that $Z_\vartriangle(\mathcal{T})=k=Z_\vartriangle(\mathcal{T}^G)$. Assume that the forgetful homomorphisms $\phi\colon {\rm Aut}_\vartriangle(\mathcal{T}; G) \rightarrow {\rm Aut}_\vartriangle^G(\mathcal{T})$ and $\widehat{\phi}\colon {\rm Aut}_\vartriangle(\mathcal{T}^G; \widehat{G})\rightarrow {\rm Aut}_\vartriangle^{\widehat{G}}(\mathcal{T}^G)$ are both surjective. Then we have an isomorphism
$${\rm Aut}_\vartriangle^G(\mathcal{T})/G \stackrel{\sim}\longrightarrow {\rm Aut}_\vartriangle^{\widehat{G}}(\mathcal{T}^G)/{\widehat{G}},$$
which is compatible with the equivariantization isomorphism in Theorem \ref{thm:duality-iso-tri}. \hfill $\square$
\end{rem}

\begin{appendix}

\section{Identities for group actions}

We collect some identities for an arbitrary group action, and provide complete proofs.

Let  $G$ be a group, which is written multiplicatively and whose unit is denoted by $e$.  Let $\mathcal{C}$ be a category. We denote by ${\rm Id}_\mathcal{C}\colon \mathcal{C}\rightarrow \mathcal{C}$ the identity endofunctor.

For two functors $F\colon \mathcal{C}\rightarrow \mathcal{C}'$ and $F'\colon \mathcal{C}'\rightarrow \mathcal{C}''$, their composition is denoted by $F'F\colon \mathcal{C}\rightarrow \mathcal{C}''$. Let $F'_1\colon \mathcal{C}'\rightarrow \mathcal{C}''$  and $F'_2\colon \mathcal{C}'\rightarrow \mathcal{C}''$ be two functors.  We denote by ${\rm Nat}(F', F_1')$ the class of natural transformations from $F'$ to $F_1'$. For two natural transformations $\eta\colon F'\rightarrow F'_1$ and $\delta \colon F_1'\rightarrow F'_2$, we denote by $\delta\circ \eta\colon F'\rightarrow F'_2$ their composition. Let $\eta\colon F'\rightarrow F'_1$ be a natural transformation and $F''\colon \mathcal{C}''\rightarrow \mathcal{C}'''$ be a functor. We denote by $F''\eta F\colon F''F'F\rightarrow F''F'_1F$  the induced natural transformation.

\begin{lem}\label{lemA:rule}
Let $\eta\colon F_1\rightarrow F_2$ be a natural transformation between functors from $\mathcal{C}$ to $\mathcal{C}'$, and let $\eta'\colon F_1'\rightarrow F'_2$ be a natural transformation between functors from  $\mathcal{C}'$ to $\mathcal{C}''$. Then we have $\eta'F_2\circ F'_1\eta =F'_2\eta \circ \eta' F_1$.
\end{lem}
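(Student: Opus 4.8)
The plan is to verify the claimed equality componentwise. Both $\eta'F_2\circ F'_1\eta$ and $F'_2\eta\circ \eta'F_1$ are natural transformations $F'_1F_1\rightarrow F'_2F_2$ between functors $\mathcal{C}\rightarrow \mathcal{C}''$, so it suffices to check that their components at each object $X$ of $\mathcal{C}$ agree. First I would unwind the whiskering notation to read off these components: the component of $\eta'F_2\circ F'_1\eta$ at $X$ is $(\eta')_{F_2(X)}\circ F'_1(\eta_X)$, while the component of $F'_2\eta\circ \eta'F_1$ at $X$ is $F'_2(\eta_X)\circ (\eta')_{F_1(X)}$.

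The key observation is then that $\eta_X\colon F_1(X)\rightarrow F_2(X)$ is a morphism in $\mathcal{C}'$, and the naturality of $\eta'\colon F'_1\rightarrow F'_2$ applied to this morphism yields precisely the commuting square
$$(\eta')_{F_2(X)}\circ F'_1(\eta_X)=F'_2(\eta_X)\circ (\eta')_{F_1(X)}.$$
This is exactly the equality of the two components computed above, so the two natural transformations coincide at $X$; as $X$ is arbitrary, they are equal as natural transformations $F'_1F_1\rightarrow F'_2F_2$.

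There is essentially no obstacle here: the statement is the standard interchange (Godement) law, and the only care needed is bookkeeping — correctly extracting each component from the whiskering notation and confirming that the source and target functors match ($F'_1F_1$ and $F'_2F_2$) on both sides. Once the components are written out, the desired identity is literally the naturality square for $\eta'$ evaluated at the morphism $\eta_X$, and no further computation is required.
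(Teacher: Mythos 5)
Your proof is correct and is exactly the paper's argument, spelled out in detail: the paper's one-line proof ("the identity follows from the naturality of $\eta'$") is precisely your observation that, componentwise, both sides reduce to the naturality square of $\eta'$ evaluated at the morphism $\eta_X\colon F_1(X)\rightarrow F_2(X)$. Nothing further is needed.
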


\begin{proof}
The identity follows from the naturality of $\eta'$.
\end{proof}

The following fact is standard.

\begin{lem}\label{lemA:Nat}
Assume that $F\colon \mathcal{C}\rightarrow \mathcal{C}'$  and $F''\colon \mathcal{C}''\rightarrow \mathcal{C}'''$  are two equivalences of categories. Then the map ${\rm Nat}(F', F_1')\rightarrow {\rm Nat}(F''F'F, F''F_1'F)$ sending $\eta$ to $F''\eta F$ is bijective. Moreover, $\eta$ is a natural isomorphism if and only if so is $F''\eta F$. \hfill $\square$
\end{lem}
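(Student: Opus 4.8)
The plan is to factor the whiskering map $\eta \mapsto F''\eta F$ into two elementary operations and treat each separately. Since $F''\eta F = F''(\eta F)$, the assignment in question is the composite
\[
{\rm Nat}(F', F_1') \xrightarrow{\;(-)F\;} {\rm Nat}(F'F, F_1'F) \xrightarrow{\;F''(-)\;} {\rm Nat}(F''F'F, F''F_1'F)
\]
of right-whiskering by $F$ followed by left-whiskering by $F''$. It therefore suffices to prove that each of these two arrows is a bijection, and the two ``moreover'' directions will drop out of the same analysis.

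The second arrow is a special case of a standard fact: for any parallel functors $G, H\colon \mathcal{D}\to \mathcal{C}''$, left-whiskering by the fully faithful $F''$ gives a bijection ${\rm Nat}(G,H)\to {\rm Nat}(F''G, F''H)$, $\zeta\mapsto F''\zeta$. Indeed, full faithfulness of $F''$ supplies componentwise bijections ${\rm Hom}_{\mathcal{C}''}(G(X), H(X))\to {\rm Hom}_{\mathcal{C}'''}(F''G(X), F''H(X))$, and faithfulness guarantees that a lifted family of components is natural exactly when its $F''$-image is (apply $F''$ to both sides of each naturality square). Applying this with $G=F'F$ and $H=F_1'F$ settles the second arrow, using only that $F''$ is fully faithful.

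For the first arrow I would fix a quasi-inverse $F^{-1}$ together with the unit isomorphism $a\colon {\rm Id}_{\mathcal{C}}\to F^{-1}F$ and counit isomorphism $b\colon FF^{-1}\to {\rm Id}_{\mathcal{C}'}$. For injectivity, naturality of $\eta$ along $b_Y$ expresses $\eta_Y = F_1'(b_Y)\circ \eta_{F(F^{-1}Y)}\circ F'(b_Y)^{-1}$, so $\eta$ is entirely recovered from $\eta F$. For surjectivity, given $\theta\colon F'F\to F_1'F$ I would define $\eta_Y = F_1'(b_Y)\circ \theta_{F^{-1}(Y)}\circ F'(b_Y)^{-1}$, then verify that $\eta$ is natural and that $\eta F = \theta$, the latter reducing to naturality of $\theta$ along $a_X$ together with the triangle identity $b_{F(X)}\circ F(a_X)={\rm Id}_{F(X)}$. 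I expect this compatibility check to be the only genuinely fiddly point, though it is entirely routine.

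Finally, for the ``moreover'' assertion I would use that a natural transformation is an isomorphism precisely when all its components are. If $\eta$ is an isomorphism then each $\eta_{F(X)}$ is, hence so is $F''(\eta_{F(X)})=(F''\eta F)_X$, so $F''\eta F$ is an isomorphism. Conversely, if $F''\eta F$ is an isomorphism then each $F''(\eta_{F(X)})$ is one; since a fully faithful functor reflects isomorphisms, every $\eta_{F(X)}$ is an isomorphism, and essential surjectivity of $F$ together with the naturality identity $\eta_Y = F_1'(b_Y)\circ \eta_{F(F^{-1}Y)}\circ F'(b_Y)^{-1}$ upgrades this to $\eta_Y$ being an isomorphism for every $Y$, so $\eta$ is a natural isomorphism.
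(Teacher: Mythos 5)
Your proof is correct. Note that the paper offers no argument at all for this lemma --- it is declared standard, with the $\square$ placed in the statement itself --- so there is nothing to compare against; your factorization into right-whiskering by $F$ followed by left-whiskering by $F''$ is precisely the standard argument one would supply. Both halves check out: left-whiskering by the fully faithful $F''$ is bijective because components lift uniquely and faithfulness transports naturality back down, and your formula $\eta_Y = F_1'(b_Y)\circ \eta_{F(F^{-1}Y)}\circ F'(b_Y)^{-1}$ correctly yields both injectivity and the surjectivity construction for right-whiskering, as well as the isomorphism-reflection statement at the end. One point should be made explicit: your verification of $\eta F=\theta$ invokes the triangle identity $b_{F(X)}\circ F(a_X)={\rm Id}_{F(X)}$, and this does \emph{not} hold for an arbitrary pair of natural isomorphisms $a\colon {\rm Id}_\mathcal{C}\to F^{-1}F$ and $b\colon FF^{-1}\to {\rm Id}_{\mathcal{C}'}$; you must say that you choose the quadruple $(F,F^{-1},a,b)$ to be an adjoint equivalence, which is always possible by the standard modification of the counit. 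With that one sentence added, the argument is complete.
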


Recall that a \emph{$G$-action} $\{F_g, \varepsilon_{g, h}|\; g, h \in G\}$ on $\mathcal{C}$ consists of an  autoequivalence $F_g\colon \mathcal{C}\rightarrow \mathcal{C}$ for each $g\in G$, and a natural isomorphism  $\varepsilon_{g, h}\colon F_gF_h\rightarrow F_{gh}$ for each pair $g, h$ of elements in $G$, which are subject to the following conditions
\begin{align}\label{equA:2-cocycle}
\varepsilon_{gh, k}\circ \varepsilon_{g, h}F_k=\varepsilon_{g, hk}\circ F_g\varepsilon_{h, k}
\end{align}
for all $g, h, k$ in $G$.

We deduce from Lemma \ref{lemA:Nat} that there is a unique isomorphism $u\colon F_e\rightarrow {\rm Id}_\mathcal{C}$ such that $F_e u=\varepsilon_{e,e}$. We call $u$ the \emph{unit} of this $G$-action.

\begin{lem}\label{lemA:G-action}
For a given $G$-action $\{F_g, \varepsilon_{g, h}|\; g, h \in G\}$ on $\mathcal{C}$, the following statements hold.
\begin{enumerate}
\item For each pair $g, k$ in $G$, we have $\varepsilon_{g, e}F_k=F_g\varepsilon_{e, k}$.
\item For each $k\in G$, we have $\varepsilon_{e, k}=uF_k$. In particular, we have $\varepsilon_{e, e}=uF_e$.
\item For each $g\in G$, we have $\varepsilon_{g,e}=F_g u$.
\end{enumerate}
\end{lem}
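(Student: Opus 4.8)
The plan is to derive all three identities from the $2$-cocycle condition (\ref{equA:2-cocycle}) by specializing the group elements to the unit $e$, combined with the defining relation $F_e u = \varepsilon_{e,e}$ of the unit and the cancellation of whiskerings by equivalences afforded by Lemma \ref{lemA:Nat}. The only delicate point is the order in which the three parts are proved, since part (3) will require the commutativity $F_e u = u F_e$, which itself is a byproduct of part (2); I will therefore prove (1), then (2), then (3).

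For (1), I would set $h = e$ in (\ref{equA:2-cocycle}). Since $ge = g$ and $ek = k$, both sides acquire a common outer factor $\varepsilon_{g,k}$, turning the identity into $\varepsilon_{g,k}\circ \varepsilon_{g,e}F_k = \varepsilon_{g,k}\circ F_g\varepsilon_{e,k}$. As $\varepsilon_{g,k}$ is a natural isomorphism it may be cancelled on the left, yielding $\varepsilon_{g,e}F_k = F_g\varepsilon_{e,k}$ at once.

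For (2), I would specialize (1) to $g = e$, obtaining $\varepsilon_{e,e}F_k = F_e\varepsilon_{e,k}$. Substituting the defining relation $\varepsilon_{e,e} = F_e u$ rewrites the left-hand side as $F_e(uF_k)$, so that $F_e(uF_k) = F_e\varepsilon_{e,k}$. Since $F_e$ is an equivalence, Lemma \ref{lemA:Nat} (applied with the right-hand whiskering functor taken to be the identity) shows that left whiskering by $F_e$ is injective on natural transformations, whence $\varepsilon_{e,k} = uF_k$. Taking $k = e$ here gives $\varepsilon_{e,e} = uF_e$, and comparing with the definition $\varepsilon_{e,e} = F_e u$ produces the commutativity $F_e u = uF_e$.

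Finally, for (3) I would specialize (1) to $k = e$, giving $\varepsilon_{g,e}F_e = F_g\varepsilon_{e,e} = F_g(F_e u)$. Using the commutativity just established, $F_g(F_e u) = F_g(uF_e) = (F_g u)F_e$, so $\varepsilon_{g,e}F_e = (F_g u)F_e$. Right whiskering by the equivalence $F_e$ is again injective by Lemma \ref{lemA:Nat}, and cancelling it yields $\varepsilon_{g,e} = F_g u$. The only genuine obstacle here is bookkeeping: one must keep the left/right whiskering conventions straight and invoke Lemma \ref{lemA:Nat} with the correct choice of the two outer equivalences, and one must respect the dependency of (3) on the commutativity supplied by (2).
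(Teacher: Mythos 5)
Your proof is correct and follows essentially the same route as the paper's: part (1) comes from setting $h=e$ in the $2$-cocycle condition (\ref{equA:2-cocycle}) and cancelling the isomorphism $\varepsilon_{g,k}$, part (2) from specializing (1) to $g=e$ and cancelling the whiskering by the equivalence $F_e$ via Lemma \ref{lemA:Nat}, and part (3) from specializing (1) to $k=e$ and again cancelling $F_e$. The only cosmetic difference is that in (3) you pass through the commutativity $F_eu=uF_e$, whereas the paper substitutes $\varepsilon_{e,e}=uF_e$ from (2) directly; these are the same identity.
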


\begin{proof}
We take $h=e$ in (\ref{equA:2-cocycle}) and notice that $\varepsilon_{g, k}$ is an isomorphism. Then we infer (1). Taking $g=e$ in (1), we have $F_euF_k=\varepsilon_{e,e}F_k=F_e\varepsilon_{e, k}$.  Applying Lemma \ref{lemA:Nat}, we infer (2). Taking $k=e$ in (1) and applying (2), we have $\varepsilon_{g, e}F_e=F_g\varepsilon_{e, e}=F_guF_e$. We now deduce (3) from Lemma \ref{lemA:Nat}.
\end{proof}

We will describe an extension of (\ref{equA:2-cocycle}).  For $n\geq 3$ and $g_1, g_2,\cdots, g_n\in G$, we define inductively a natural isomorphism
\begin{align}
\varepsilon_{g_1, g_2, \cdots, g_n}\colon F_{g_1}F_{g_2}\cdots F_{g_n}\longrightarrow F_{g_1g_2\cdots g_n}
\end{align}
by $\varepsilon_{g_1, g_2, \cdots, g_n}=\varepsilon_{g_1\cdots g_{n-1}, g_n}\circ \varepsilon_{g_1, g_2, \cdots, g_{n-1}}F_{g_n}$. In particular, we have the following isomorphism
\begin{align}\label{equA:varepsilon}
\varepsilon_g^{(n)}=\varepsilon_{g, g, \cdots, g}\colon F_g^n=F_g F_g \cdots F_g \longrightarrow F_{g^n}.
\end{align}
Moreover, we define $\varepsilon_{g}^{(2)}=\varepsilon_{g, g}$ and $\varepsilon_{g}^{(1)}={\rm Id}_{F_g}$. By convention, we set $\varepsilon_{g}^{(0)}=u^{-1}$, where $u$ is the unit of the $G$-action and $F_g^0={\rm Id}_\mathcal{C}$.

Recall from \cite[Section 4]{DGNO} that a $G$-action on $\mathcal{C}$ is the same as  a monoidal functor from $\underline{G}$ to the category of endofunctors on $\mathcal{C}$. Here,  $\underline{G}$ is the category whose objects are elements in $G$ and whose morphisms are just the identities on objects; it  has a canonical monoidal structure induced from the multiplication of $G$. Then the following proposition might be deduced from the coherence theorem \cite[Theorem~1.6]{Eps}; compare \cite[Theorem~5.4]{Shi}.

\begin{prop}\label{propA:gen-2coc}
Let $n\geq 3$, $m\geq 1$ and $i\geq 2$ satisfying $m+i\leq n$. Then we have
{\small \begin{align*}
\varepsilon_{g_1, g_2, \cdots, g_n}= \varepsilon_{g_1, \cdots, g_m, g_{m+1}\cdots g_{m+i}, g_{m+i+1}, \cdots, g_n}\circ F_{g_1}\cdots F_{g_m}\varepsilon_{g_{m+1}, \cdots, g_{m+i}} F_{g_{m+i+1}} \cdots F_{g_n}.
\end{align*}}
\end{prop}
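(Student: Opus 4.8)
The plan is to prove the identity by induction on $n$, after first isolating the essential computation --- the case in which exactly the last two factors are grouped --- as a self-contained lemma. Throughout I would use only the recursive definition $\varepsilon_{g_1,\dots,g_n}=\varepsilon_{g_1\cdots g_{n-1},g_n}\circ\varepsilon_{g_1,\dots,g_{n-1}}F_{g_n}$, the $2$-cocycle condition (\ref{equA:2-cocycle}), and the interchange law of Lemma \ref{lemA:rule}.

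First I would establish the special case ``grouping the last two factors'': for any sequence $h_1,\dots,h_\ell$ with $\ell\ge 3$,
$$\varepsilon_{h_1,\dots,h_\ell}=\varepsilon_{h_1,\dots,h_{\ell-2},\,h_{\ell-1}h_\ell}\circ F_{h_1}\cdots F_{h_{\ell-2}}\varepsilon_{h_{\ell-1},h_\ell}.$$
Writing $p=h_1\cdots h_{\ell-2}$, $a=h_{\ell-1}$, $b=h_\ell$ and $\phi=\varepsilon_{h_1,\dots,h_{\ell-2}}$, I would unfold both sides by the recursive definition: the left side becomes $\varepsilon_{pa,b}\circ(\varepsilon_{p,a}F_b)\circ(\phi F_aF_b)$, and the right side becomes $\varepsilon_{p,ab}\circ(\phi F_{ab})\circ(F_{h_1}\cdots F_{h_{\ell-2}}\varepsilon_{a,b})$. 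Applying Lemma \ref{lemA:rule} to $\phi$ and $\varepsilon_{a,b}$ rewrites the right side as $\varepsilon_{p,ab}\circ(F_p\varepsilon_{a,b})\circ(\phi F_aF_b)$, and then the instance of (\ref{equA:2-cocycle}) with $(g,h,k)=(p,a,b)$ turns $\varepsilon_{p,ab}\circ F_p\varepsilon_{a,b}$ into $\varepsilon_{pa,b}\circ\varepsilon_{p,a}F_b$, matching the left side. The point to stress is that $\phi$ enters only as an abstract natural transformation, so no coherence of the prefix is required; this is the computational heart of the whole argument.

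Then I would run the main induction on $n$, distinguishing whether the grouped block reaches the final index. If $m+i<n$, I would peel off the last factor $g_n$ via the recursive definition, apply the inductive hypothesis to $\varepsilon_{g_1,\dots,g_{n-1}}$ (same block, one fewer trailing factor), whisker by $F_{g_n}$, and reassemble by the recursive definition once more to append $g_n$ to the grouped sequence. If $m+i=n$, so that $g_n$ is the last element of the block, I would again peel off $g_n$, apply the inductive hypothesis to the prefix $\varepsilon_{g_1,\dots,g_{n-1}}$ (grouping the block of size $i-1$, a vacuous step when $i=2$), and on the other side unfold $\varepsilon_{g_{m+1},\dots,g_n}=\varepsilon_{v,g_n}\circ\varepsilon_{g_{m+1},\dots,g_{n-1}}F_{g_n}$ with $v=g_{m+1}\cdots g_{n-1}$. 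Both resulting composites then share the rightmost factor $F_{g_1}\cdots F_{g_m}\varepsilon_{g_{m+1},\dots,g_{n-1}}F_{g_n}$, so it suffices to identify the remaining left-hand composites; that remaining identity is exactly the lemma of the previous paragraph applied to the length-$(m+2)$ sequence $(g_1,\dots,g_m,v,g_n)$, and hence holds. The base case $n=3$ is forced to be $m=1$, $i=2$, which is precisely the lemma for $\ell=3$.

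I expect the only genuine difficulty to be notational: keeping the whiskerings $F_{g_1}\cdots F_{g_m}(-)F_{g_{m+i+1}}\cdots F_{g_n}$ straight, and correctly matching the multiplied index $g_{m+1}\cdots g_{m+i}$ against the recursive definition at each reassembly. The conceptual content is entirely concentrated in the last-two-factors lemma; once that is in place, both inductive cases reduce to bookkeeping driven by the recursive definition of $\varepsilon_{g_1,\dots,g_n}$ together with a single recognition of the reduced identity as an instance of that lemma.
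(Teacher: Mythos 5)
Your proposal is correct. It follows the paper's overall skeleton---induction on $n$, with the case $m+i<n$ handled exactly as in the paper (peel off $g_n$, apply the inductive hypothesis, reassemble by the recursive definition)---but it resolves the essential case $m+i=n$ by a genuinely different decomposition. The paper splits that case into $m\geq 2$ and $m=1$: for $m\geq 2$ it collapses the prefix $(g_1,\ldots,g_m)$ into the single entry $g_1\cdots g_m$ so that the inductive hypothesis applies to a strictly shorter sequence, then uses Lemma \ref{lemA:rule}, and finally regroups the initial block; the case $m=1$ is handled separately via (\ref{equA:2-cocycle}). You instead shrink the block from the right---inductive hypothesis on $(g_1,\ldots,g_{n-1})$ with block $(g_{m+1},\ldots,g_{n-1})$---and then invoke your last-two-factors lemma on the sequence $(g_1,\ldots,g_m,v,g_n)$. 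That lemma is precisely the instance $i=2$, $m+i=n$ of Proposition \ref{propA:gen-2coc}, proved directly (no induction) from the recursive definition, Lemma \ref{lemA:rule} and (\ref{equA:2-cocycle}); isolating it is what lets you treat all $m\geq 1$ uniformly and avoid the paper's dichotomy. It also buys a small gain in rigor: the paper's case $m\geq 2$ ends by citing ``the first case'' for the identity $\varepsilon_{g_1,g_2,\cdots,g_n}=\varepsilon_{g_1\cdots g_m,g_{m+1},\cdots,g_n}\circ \varepsilon_{g_1,\cdots,g_m}F_{g_{m+1}}\cdots F_{g_n}$, which groups an \emph{initial} block (``$m=0$'' in the proposition's indexing) and so is not literally an instance of the statement being proved; the paper's method extends to cover it, but your route never needs that extension. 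The one point to make explicit in a full write-up is the convention $\varepsilon_{h_1}={\rm Id}_{F_{h_1}}$ (the paper's $\varepsilon_g^{(1)}={\rm Id}_{F_g}$), so that the unfolding in your lemma at $\ell=3$ and the ``vacuous'' step at $i=2$ parse correctly.
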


\vskip 5pt

\begin{proof}
We use induction on $n$. The case that $n=3$ is due to (\ref{equA:2-cocycle}). Assume that $n\geq 4$. We divide into three cases.

For the first case, we assume that $m+i<n$, that is, $F_{g_n}$ does appear in the right hand side. Then we are done by the following identity:
{\tiny \begin{align*}
&\varepsilon_{g_1, \cdots, g_m, g_{m+1}\cdots g_{m+i}, g_{m+i+1}, \cdots, g_n}\circ F_{g_1}\cdots F_{g_m}\varepsilon_{g_{m+1}, \cdots, g_{m+i}} F_{g_{m+i+1}} \cdots F_{g_n}\\
= &\varepsilon_{g_1\cdots g_{n-1}, g_n} \circ \varepsilon_{g_1,\cdots ,g_m, g_{m+1}\cdots g_{m+i}, g_{m+i+1}, \cdots, g_{n-1}} F_{g_n}\circ F_{g_1}\cdots F_{g_m}\varepsilon_{g_{m+1}, \cdots, g_{m+i}} F_{g_{m+i+1}} \cdots F_{g_n}\\
=&\varepsilon_{g_1\cdots g_{n-1}, g_n}\circ \varepsilon_{g_1, \cdots, g_{n-1}}F_{g_n}\\
=&\varepsilon_{g_1, g_2,\cdots g_n}.
\end{align*}}Here, the second equality uses the induction hypothesis.

For the second case, we assume that $m+i=n$ and $m\geq 2$. Then we are done by the following identity:
 \begin{align*}
&\varepsilon_{g_1, \cdots, g_{m}, g_{m+1}\cdots g_n}\circ F_{g_1}\cdots F_{g_m}\varepsilon_{g_{m+1}, \cdots, g_n}\\
=&\varepsilon_{g_1\cdots g_m, g_{m+1}\cdots g_n}\circ \varepsilon_{g_1, \cdots, g_m}F_{g_{m+1}\cdots g_n}\circ F_{g_1}\cdots F_{g_m}\varepsilon_{g_{m+1}, \cdots, g_n}\\
=&\varepsilon_{g_1\cdots g_m, g_{m+1}\cdots g_n}\circ F_{g_1\cdots g_m}\varepsilon_{g_{m+1}, \cdots, g_n}\circ \varepsilon_{g_1,\cdots, g_m}F_{g_{m+1}}\cdots F_{g_n}\\
=&\varepsilon_{g_1\cdots g_m, g_{m+1}, \cdots, g_n} \circ \varepsilon_{g_1,\cdots, g_m}F_{g_{m+1}}\cdots F_{g_n}\\
=& \varepsilon_{g_1, g_2, \cdots, g_n}.
\end{align*}
Here, the second equality uses Lemma \ref{lemA:rule}, and the third uses the fact that $m\geq 2$ and then the induction hypothesis. The last equality uses the first case.

For the final case, we assume that $m+i=n$ and $m=1$. By definition we have $\varepsilon_{g_2, g_3, \cdots, g_n}=\varepsilon_{g_2\cdots g_{n-1}, g_n}\circ \varepsilon_{g_2, \cdots g_{n-1}} F_{g_n}$. Then we are done by the following identity:
\begin{align*}
\varepsilon_{g_1, g_2\cdots g_n} \circ F_{g_1}\varepsilon_{g_2, g_3, \cdots, g_n}& =  \varepsilon_{g_1, g_2\cdots g_n} \circ F_{g_1} \varepsilon_{g_2\cdots g_{n-1}, g_n}\circ F_{g_1}\varepsilon_{g_2, \cdots g_{n-1}} F_{g_n}\\
&= \varepsilon_{g_1\cdots g_{n-1}, g_n}\circ \varepsilon_{g_1,g_2\cdots g_{n-1}}F_{g_n} \circ F_{g_1}\varepsilon_{g_2, \cdots g_{n-1}} F_{g_n}\\
&= \varepsilon_{g_1\cdots g_{n-1}, g_n}\circ \varepsilon_{g_1, \cdots, g_{n-1}}F_{g_n}\\
&=\varepsilon_{g_1, g_2, \cdots, g_n}.
\end{align*}
Here, the second equality uses (\ref{equA:2-cocycle}) and the third uses the induction hypothesis. We are done.
\end{proof}

\begin{rem}
Hideto Asashiba observes that a $G$-action on a small category $\mathcal{C}$ coincides with a pseudo-functor $X$ from $\mathcal{G}$ to the $2$-category formed by small categories such that $X(\ast)=\mathcal{C}$; compare \cite[p.135 and Definitions 2.1 and 6.1(3)]{Asa13}. Here, $\mathcal{G}$ denotes the category with only one object $\ast$ and the Hom-set $G$. For pseudo-functors, we refer to \cite[Section 7.5]{Bor}. Then the above result also follows from the general result in \cite{Pow}. \hfill $\square$
\end{rem}

\begin{lem}\label{lemA:twocases}
Let $g\in G$, $i, j\geq 0$ and $d\geq 1$. The following statements hold.
\begin{enumerate}
\item We have $\varepsilon_g^{(i+j)}=\varepsilon_{g^i, g^j}\circ F_{g^i}\varepsilon_g^{(j)}\circ \varepsilon_g^{(i)}F_g^j.$
\item Assume that $g^d=e$ and that $i+j\geq d$. Then we have $$\varepsilon_g^{(i+j-d)}\circ F_g^{i+j-d}u\circ F_g^{i+j-d}\varepsilon_g^{(d)}=\varepsilon_{g^i, g^j}\circ F_{g^i}\varepsilon_g^{(j)}\circ \varepsilon_g^{(i)}F_g^j.$$
\end{enumerate}
\end{lem}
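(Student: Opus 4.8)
The plan is to establish (1) first, by induction on $j$ with $i\ge 0$ fixed, and then to deduce (2) from (1) together with the unit identities of Lemma \ref{lemA:G-action} and the interchange rule of Lemma \ref{lemA:rule}. Throughout I would use the defining recursion $\varepsilon_g^{(n)}=\varepsilon_{g^{n-1},g}\circ\varepsilon_g^{(n-1)}F_g$, which holds for all $n\ge 1$ (the case $n=1$ being a consequence of $\varepsilon_{e,g}=uF_g$ from Lemma \ref{lemA:G-action}(2), giving $\varepsilon_g^{(1)}=\mathrm{Id}_{F_g}$).

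For (1), the base case $j=0$ reads $\varepsilon_g^{(i)}=\varepsilon_{g^i,e}\circ F_{g^i}u^{-1}\circ\varepsilon_g^{(i)}$, which collapses to an identity once one inserts $\varepsilon_{g^i,e}=F_{g^i}u$ from Lemma \ref{lemA:G-action}(3); the symmetric case $i=0$ uses $\varepsilon_{e,g^j}=uF_{g^j}$ and one application of Lemma \ref{lemA:rule} to slide $u$ past $\varepsilon_g^{(j)}$. For the inductive step I would peel off the last factor via the recursion, $\varepsilon_g^{(i+j+1)}=\varepsilon_{g^{i+j},g}\circ\varepsilon_g^{(i+j)}F_g$, substitute the induction hypothesis for $\varepsilon_g^{(i+j)}$, and whisker on the right by $F_g$. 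The leading pair $\varepsilon_{g^{i+j},g}\circ\varepsilon_{g^i,g^j}F_g$ is then rewritten by the $2$-cocycle condition (\ref{equA:2-cocycle}) applied to $(g^i,g^j,g)$ as $\varepsilon_{g^i,g^{j+1}}\circ F_{g^i}\varepsilon_{g^j,g}$, after which functoriality of $F_{g^i}$ and the recursion $\varepsilon_g^{(j+1)}=\varepsilon_{g^j,g}\circ\varepsilon_g^{(j)}F_g$ reassemble the answer in the required shape $\varepsilon_{g^i,g^{j+1}}\circ F_{g^i}\varepsilon_g^{(j+1)}\circ\varepsilon_g^{(i)}F_g^{j+1}$. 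This closes the induction. (Alternatively, for $i\ge 1$ and $j\ge 2$ one may group the last $j$ copies directly by Proposition \ref{propA:gen-2coc}, decompose the resulting $\varepsilon_{g,\dots,g,g^j}$ by definition, and reorder the two middle factors by Lemma \ref{lemA:rule}, treating $i=0$ and $j\le 1$ by hand; I would lead with the induction since it is self-contained.)

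For (2), the key observation is that its right-hand side is literally the right-hand side of (1) for the same $i,j$, hence equals $\varepsilon_g^{(i+j)}$; it therefore suffices to show that the left-hand side also equals $\varepsilon_g^{(i+j)}$. Writing $r=i+j-d\ge 0$ and setting $v=u\circ\varepsilon_g^{(d)}\colon F_g^d\to\mathrm{Id}_{\mathcal C}$ (this is the transformation denoted $c$ in the cyclic application), the left-hand side is $\varepsilon_g^{(r)}\circ F_g^{r}v$. On the other hand, applying (1) to the split $(r,d)$ and using $g^d=e$ together with $\varepsilon_{g^r,e}=F_{g^r}u$ from Lemma \ref{lemA:G-action}(3) yields $\varepsilon_g^{(r+d)}=F_{g^r}v\circ\varepsilon_g^{(r)}F_g^{d}$. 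A single instance of the interchange rule Lemma \ref{lemA:rule}, applied to the inner transformation $v\colon F_g^d\to\mathrm{Id}_{\mathcal C}$ and the outer transformation $\varepsilon_g^{(r)}\colon F_g^{r}\to F_{g^r}$, gives $\varepsilon_g^{(r)}\circ F_g^{r}v=F_{g^r}v\circ\varepsilon_g^{(r)}F_g^{d}$. Since $r+d=i+j$, both sides equal $\varepsilon_g^{(i+j)}$, and (2) follows.

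The whiskering bookkeeping is routine; the one genuine subtlety, and the step I expect to be the main obstacle, is invoking the interchange law Lemma \ref{lemA:rule} in the correct direction to reorder the two whiskered natural transformations (in the base cases of (1) and in the matching step of (2)). A secondary point to handle with care is the degenerate indices $i=0$, $j=0$, and $r=0$, where $\varepsilon_g^{(0)}=u^{-1}$ enters and the unit identities of Lemma \ref{lemA:G-action} must be applied explicitly rather than absorbed into the generic argument.
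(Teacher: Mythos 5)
Your proof is correct. Where the paper disposes of (1) in one line by applying the generalized cocycle identity (Proposition \ref{propA:gen-2coc}) twice---once to collapse the first $i$ factors and once to collapse the last $j$---you re-prove the needed special case by induction on $j$, using only the defining recursion $\varepsilon_g^{(n)}=\varepsilon_{g^{n-1},g}\circ\varepsilon_g^{(n-1)}F_g$ and one instance of the $2$-cocycle condition (\ref{equA:2-cocycle}) per step; this is more self-contained (it never needs the coherence-type Proposition \ref{propA:gen-2coc}) at the cost of redoing, in a special case, the induction that proves that proposition. Your edge-case bookkeeping ($j=0$, and $n=1$ in the recursion) via Lemma \ref{lemA:G-action}(2),(3) is exactly what the paper's remark after (1) alludes to; the separate treatment of $i=0$ is redundant for your induction but harmless. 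For (2) the two arguments are essentially the same computation: both reduce to showing that the left-hand side equals $\varepsilon_g^{(i+j)}$ and then invoke (1) for the pair $(i,j)$. The only difference is that the paper reaches $\varepsilon_g^{(i+j)}$ by again citing Proposition \ref{propA:gen-2coc} twice (after sliding $u$ past $\varepsilon_g^{(i+j-d)}$ by Lemma \ref{lemA:rule} and using $\varepsilon_{g^{i+j-d},e}=F_{g^{i+j-d}}u$), whereas you reach it by reusing (1) for the split $(i+j-d,d)$, bundling $v=u\circ\varepsilon_g^{(d)}$ into a single transformation before interchanging; your organization is marginally cleaner, since it exhibits (2) as a formal consequence of (1), Lemma \ref{lemA:rule} and Lemma \ref{lemA:G-action}(3).
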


\begin{proof}
(1) follows by applying Proposition \ref{propA:gen-2coc} twice. We mention that if $i=0$ or $j=0$, one needs  Lemma \ref{lemA:G-action}(2) and (3).

For (2), we have the following identity
\begin{align*}
&\varepsilon^{(i+j-d)}_g\circ {F_g}^{i+j-d}u\circ F_g^{i+j-d}\varepsilon_g^{(d)}\\
  =& {F}_{g^{i+j-d}} u\circ \varepsilon^{(i+j-d)}_g{F_{g^d}}\circ {F}_g^{i+j-d}{\varepsilon}_{g}^{(d)}\\
  =& {\varepsilon}_{g^{i+j-d}, g^d} \circ \varepsilon^{(i+j-d)}_g{F_{g^d}}\circ {F}_g^{i+j-d}{\varepsilon}_{g}^{(d)}\\
  = & \varepsilon^{(i+j)}_g.
 \end{align*}
 Here,  the first equality uses Lemma \ref{lemA:rule} and the assumption that $g^d=e$,  the second uses Lemma \ref{lemA:G-action}(3), and the last one is obtained by applying  Proposition \ref{propA:gen-2coc} twice. Now the required identity follows by combining the above identity with (1).
\end{proof}

\section{Strongly $\mathbf{K}$-standard categories}

We recall from \cite{CY} basic facts on strongly $\mathbf{K}$-standard additive categories, which plays a subtle role in the study of stable tilting objects in Section 7.

 Throughout, let $k$ be a field. All categories and functors are assumed to be $k$-linear.

Let $\mathcal{A}$ be a $k$-linear additive category. Denote by $\mathbf{K}^b(\mathcal{A})$ the bounded homotopy category, whose translation is denoted by $\Sigma$. For each object $M$ in $\mathcal{A}$ and integer $n$, we denote  by $\Sigma^n(M)$ the stalk complex concentrated on degree $-n$. Those complexes form a full subcategory  $\Sigma^n(\mathcal{A})$ of $\mathbf{K}^b(\mathcal{A})$. In particular, $\mathcal{A}$ is identified with $\Sigma^0(\mathcal{A})$.

For an additive endofunctor $F$ on $\mathcal{A}$, we denote by $\mathbf{K}^b(F)\colon \mathbf{K}^b(\mathcal{A})\rightarrow \mathbf{K}^b(\mathcal{A})$ its natural extension on complexes, which is a triangle functor with a trivial connecting isomorphism. Similarly, a natural transformation $\eta\colon F\rightarrow F'$ extends to a natural transformation $\mathbf{K}^b(\eta)\colon\mathbf{K}^b(F) \rightarrow \mathbf{K}^b(F') $ between triangle functors.

Recall that $Z(\mathcal{A})$ is the center of $\mathcal{A}$ and $Z_\vartriangle(\mathbf{K}^b(\mathcal{A}))$ is the triangle center of $\mathbf{K}^b(\mathcal{A})$. There is an algebra homomorphism
\begin{align}\label{equ:res}
Z_\vartriangle(\mathbf{K}^b(\mathcal{A}))\longrightarrow Z(\mathcal{A}), \quad \lambda\mapsto \lambda|_\mathcal{A},
\end{align}
where $\lambda|_\mathcal{A}$ denotes the restriction. This homomorphism admits a section, which sends $\mu\in Z(\mathcal{A})$ to $\mathbf{K}^b(\mu)$.

The following notions are introduced  in \cite[Sections 3 and 4]{CY}. By \cite[Lemmas~4.2 and 4.3]{CY}, the definition in (2) is equivalent to the original one \cite[Definition~4.1]{CY}.

\begin{defn}\label{defn:sstand}
(1) Let $(F, \omega)\colon \mathbf{K}^b(\mathcal{A})\rightarrow \mathbf{K}^b(\mathcal{A})$ be a triangle endofunctor. We say that $(F, \omega)$ is a \emph{pseudo-identity} provided that $F(X)=X$ for each complex $X$ and its restriction $F|_{\Sigma^n(\mathcal{A})}\colon \Sigma^n(\mathcal{A})\rightarrow \Sigma^n(\mathcal{A})$ equals the identity for each $n$.
(2) The additive category $\mathcal{A}$ is \emph{$\mathbf{K}$-standard}, provided that each pseudo-identity on $\mathbf{K}^b(\mathcal{A})$ is isomorphic to ${\rm Id}_{\mathbf{K}^b(\mathcal{A})}$, the genuine identity functor. If in addition, the homomorphism (\ref{equ:res}) is injective, then $\mathcal{A}$ is called \emph{strongly $\mathbf{K}$-standard}. \hfill $\square$
\end{defn}

 The following observation is due to \cite[Lemma 4.4]{CY}.

\begin{lem}\label{lem:appB}
Let $\mathcal{A}$ be a $\mathbf{K}$-standard category with two autoequivalences $F_1$ and $F_2$. The following statements hold.
\begin{enumerate}
\item[(1)] Assume that $(F, \omega)$ is a triangle autoequivalence on $\mathbf{K}^b(\mathcal{A})$ satisfying $F(\mathcal{A})\subseteq \mathcal{A}$. If $\mathcal{A}$ is idempotent complete,  then there is an isomorphism $(F, \omega)\simeq \mathbf{K}^b(F|_\mathcal{A})$ of triangle functors.
    \item[(2)] Assume further that $\mathcal{A}$ is strongly $\mathbf{K}$-standard. Then any natural isomorphism $\eta\colon \mathbf{K}^b(F_1)\rightarrow \mathbf{K}^b(F_2)$ is equal to $\mathbf{K}^b(\eta|_\mathcal{A})$. \hfill $\square$
\end{enumerate}
\end{lem}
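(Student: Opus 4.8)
The plan is to treat the two parts separately, disposing of (2) first, since it is the cleaner statement and it isolates exactly the role played by strong $\mathbf{K}$-standardness. So suppose $\eta\colon \mathbf{K}^b(F_1)\to \mathbf{K}^b(F_2)$ is a natural isomorphism of triangle functors. I would first restrict to the degree-zero stalks: since $\mathbf{K}^b(F_i)|_{\mathcal A}=F_i$, the restriction $\eta|_{\mathcal A}$ is a natural isomorphism $F_1\to F_2$, and I then form the natural automorphism $\zeta:=\mathbf{K}^b(\eta|_{\mathcal A})^{-1}\circ \eta$ of the triangle autoequivalence $\mathbf{K}^b(F_1)$. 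By construction $\zeta|_{\mathcal A}=(\eta|_{\mathcal A})^{-1}\circ \eta|_{\mathcal A}=\mathrm{id}_{F_1}$. Composing with $\mathbf{K}^b(F_1)^{-1}$ to reduce to the identity functor and using that $\zeta$ respects the connecting isomorphisms, the triangle refinement of Lemma \ref{lem:nat} lets me write $\zeta=\lambda\,\mathbf{K}^b(F_1)$ for a unique $\lambda\in Z_\vartriangle(\mathbf{K}^b(\mathcal A))$. Evaluating at $M\in\mathcal A$ gives $\lambda_{F_1M}=\zeta_M=\mathrm{id}$, and since the autoequivalence $F_1$ of $\mathcal A$ is dense this forces $\lambda|_{\mathcal A}=\mathrm{id}_{\mathcal A}$ in $Z(\mathcal A)$. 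Thus $\lambda$ and the unit of $Z_\vartriangle(\mathbf{K}^b(\mathcal A))$ have the same image under the homomorphism (\ref{equ:res}); as $\mathcal A$ is strongly $\mathbf{K}$-standard, (\ref{equ:res}) is injective, so $\lambda=\mathrm{id}$, whence $\zeta=\mathrm{id}$ and $\eta=\mathbf{K}^b(\eta|_{\mathcal A})$.

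For (1), I would first check that $F_0:=F|_{\mathcal A}$ is an autoequivalence of $\mathcal A$. Full faithfulness is immediate, because $F$ is fully faithful and $\mathrm{Hom}_{\mathbf{K}^b(\mathcal A)}(M,N)=\mathrm{Hom}_{\mathcal A}(M,N)$ for $M,N\in\mathcal A$. For density I would invoke the stupid-truncation weight structure on $\mathbf{K}^b(\mathcal A)$, whose heart is $\mathcal A$: the triangle equivalence $(F,\omega)$ with $F(\mathcal A)\subseteq\mathcal A$ preserves both aisles (they are generated under extensions and shifts by $\mathcal A$, which $F$ preserves via $\omega$), hence so does its quasi-inverse, giving $F^{-1}(\mathcal A)\subseteq\mathcal A$ and therefore the density of $F_0$. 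Then $\mathbf{K}^b(F_0)$ is a triangle autoequivalence with trivial connecting isomorphism, and I may form $P:=\mathbf{K}^b(F_0)^{-1}\circ(F,\omega)$, a triangle autoequivalence preserving $\mathcal A$ with $P|_{\mathcal A}=F_0^{-1}F_0\cong\mathrm{Id}_{\mathcal A}$.

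The endgame is then to show $P\simeq \mathrm{Id}_{\mathbf{K}^b(\mathcal A)}$. Adjusting $P$ along the isomorphism $F_0^{-1}F_0\cong\mathrm{Id}_{\mathcal A}$ (cf. Lemma \ref{lem:change} and Lemma \ref{lemA:Nat}), and using that $P$ is a triangle functor so that its behaviour on each $\Sigma^n(\mathcal A)$ is governed by its behaviour on $\mathcal A$ through the connecting isomorphism, I may assume $P|_{\Sigma^n(\mathcal A)}=\mathrm{Id}$ for every $n$. The remaining point is to promote $P$ to a genuine pseudo-identity in the sense of Definition \ref{defn:sstand}(1), that is, to arrange $P(X)=X$ on all complexes $X$; here I would use the equivalence of the two formulations of $\mathbf{K}$-standardness from \cite[Lemmas 4.2 and 4.3]{CY}. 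Once $P$ is identified with a pseudo-identity, $\mathbf{K}$-standardness gives $P\simeq\mathrm{Id}_{\mathbf{K}^b(\mathcal A)}$, hence $(F,\omega)\simeq\mathbf{K}^b(F_0)$. The hard part will be exactly this last rectification: passing from a triangle autoequivalence that is the identity on each stalk subcategory $\Sigma^n(\mathcal A)$ to an honest pseudo-identity fixing every complex. This is the technical heart of \cite[Lemma 4.4]{CY}, and it is precisely the phenomenon that the subtle notion of (strong) $\mathbf{K}$-standardness is designed to control.
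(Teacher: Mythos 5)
The paper never proves this lemma: it is quoted from \cite[Lemma 4.4]{CY} and closed with a $\square$, so the only ``proof'' on offer is that citation, and your attempt has to be measured against it. Your proof of part (2) is correct and essentially self-contained: restricting $\eta$ to $\mathcal{A}$, forming $\zeta=\mathbf{K}^b(\eta|_{\mathcal A})^{-1}\circ\eta$, writing $\zeta=\lambda\,\mathbf{K}^b(F_1)$ with $\lambda\in Z_\vartriangle(\mathbf{K}^b(\mathcal A))$, and then killing $\lambda$ by the injectivity of (\ref{equ:res}) is exactly the right mechanism, and it cleanly isolates where \emph{strong} $\mathbf{K}$-standardness (rather than $\mathbf{K}$-standardness) enters. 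One caveat you should state explicitly: your step ``$\zeta$ respects the connecting isomorphisms'' silently assumes that $\eta$ is a morphism of \emph{triangle} functors, i.e.\ $\Sigma\eta=\eta\Sigma$. That reading is forced: for a bare natural isomorphism the statement is false --- already for $\mathcal{A}={\rm proj}\mbox{-}k$ one has $Z(\mathbf{K}^b(\mathcal A))\cong\prod_{n\in\mathbb{Z}}k$, so there are natural automorphisms of the identity functor which restrict to the identity on $\mathcal A$ but do not commute with $\Sigma$, hence are not of the form $\mathbf{K}^b(-)$. Since the hypothesis is needed anyway, flagging it strengthens rather than weakens your argument.

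Part (1), by contrast, is a reduction rather than a proof. The preliminary steps are sound: full faithfulness of $F|_{\mathcal A}$ is immediate; the weight-structure argument for density works, although the heart of the stupid weight structure is the retract-closure of $\mathcal A$ inside $\mathbf{K}^b(\mathcal A)$, not $\mathcal A$ itself, so you need $\mathcal A$ to be retract-closed (e.g.\ idempotent complete) --- harmless in all of the paper's applications (${\rm add}\,T$ in a Krull--Schmidt category, ${\rm proj}\mbox{-}A$), but it should be said; and passing to $P=\mathbf{K}^b(F_0)^{-1}\circ(F,\omega)$ with $P|_{\mathcal A}\cong{\rm Id}_{\mathcal A}$ is fine. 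But the endgame --- upgrading a triangle autoequivalence that is (after adjustment) the identity on every $\Sigma^n(\mathcal A)$ to a genuine pseudo-identity fixing all complexes, so that Definition \ref{defn:sstand}(2) applies --- is precisely what you defer to \cite[Lemmas 4.2 and 4.3]{CY}, as you yourself admit. This step cannot be waved through: one must control $P$ on morphisms between non-stalk complexes (in effect, on the connecting maps of brutal truncation triangles), and that control is exactly what $\mathbf{K}$-standardness encodes. So, read as a blind proof, (1) has a genuine gap at its technical heart; read as a reconstruction of the literature it is defensible, since the paper outsources the entire lemma to the same reference, and your reduction plus the equivalence of the two definitions of $\mathbf{K}$-standardness is indeed how \cite[Lemma 4.4]{CY} is organized.
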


The following is an immediate consequence.

\begin{cor}
Assume that $\mathcal{A}$ is strongly $\mathbf{K}$-standard and idempotent complete. Let $G$ be a group. Then there is a bijection between the sets of isoclasses
$$\{G\mbox{-actions on }\mathcal{A} \}{/\simeq} \; \longleftrightarrow  \;  \{\mbox{triangle } G\mbox{-actions on } \mathbf{K}^b(\mathcal{A}) \mbox{ fixing } \mathcal{A}\}{/\simeq}$$
sending a $G$-action  $\{F_g, \varepsilon_{g, h} |\; g, h\in G\}$ on $\mathcal{A}$ to    $\{\mathbf{K}^b(F_g), \mathbf{K}^b(\varepsilon_{g, h})|\; g, h\in G\}$, its natural extension  on $\mathbf{K}^b(\mathcal{A})$. \hfill $\square$
\end{cor}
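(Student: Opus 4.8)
The plan is to show the assignment $\{F_g,\varepsilon_{g,h}\}\mapsto\{\mathbf{K}^b(F_g),\mathbf{K}^b(\varepsilon_{g,h})\}$ is a well-defined map on isoclass sets and then prove injectivity and surjectivity, the last being the crux. Two elementary properties of the natural extension will be used repeatedly: it is compatible with composition and whiskering, so that $\mathbf{K}^b(F)\mathbf{K}^b(F')=\mathbf{K}^b(FF')$ and $\mathbf{K}^b(\eta)\mathbf{K}^b(F')=\mathbf{K}^b(\eta F')$ (and likewise on the other side); and it is faithful, since $\mathbf{K}^b(\eta)|_\mathcal{A}=\eta$. For well-definedness I would first check that $\{\mathbf{K}^b(F_g),\mathbf{K}^b(\varepsilon_{g,h})\}$ really is a triangle $G$-action fixing $\mathcal{A}$: applying $\mathbf{K}^b(-)$ to the $2$-cocycle identity (\ref{equ:2-coc}) and using compatibility with composition and whiskering gives (\ref{equ:2-coc}) for the extended data; each $\mathbf{K}^b(F_g)$ is a triangle autoequivalence with trivial connecting isomorphism, so condition (\ref{equ:tr-ac}) holds automatically, and by construction $\mathbf{K}^b(F_g)(\mathcal{A})\subseteq\mathcal{A}$. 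If two $G$-actions on $\mathcal{A}$ are isomorphic via $\delta_g\colon F_g\to F'_g$ subject to (\ref{equ:iso-action}), then the $\mathbf{K}^b(\delta_g)$ satisfy (\ref{equ:iso-action}) for the extensions, so the map descends to isoclasses.

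For injectivity, suppose the extensions of two $G$-actions $\{F_g,\varepsilon_{g,h}\}$ and $\{F'_g,\varepsilon'_{g,h}\}$ are isomorphic as triangle $G$-actions, via triangle natural isomorphisms $\theta_g\colon\mathbf{K}^b(F_g)\to\mathbf{K}^b(F'_g)$ satisfying (\ref{equ:iso-action}). Since $\mathcal{A}$ is strongly $\mathbf{K}$-standard, Lemma \ref{lem:appB}(2) gives $\theta_g=\mathbf{K}^b(\delta_g)$ with $\delta_g=\theta_g|_\mathcal{A}$. Rewriting both sides of (\ref{equ:iso-action}) as $\mathbf{K}^b(-)$ of a single natural transformation, via the compatibilities above, and invoking faithfulness of the extension, I obtain (\ref{equ:iso-action}) for the $\delta_g$ on $\mathcal{A}$. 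Thus the two $G$-actions on $\mathcal{A}$ are already isomorphic.

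For surjectivity, let $\{(G_g,\omega_g),\eta_{g,h}\}$ be a triangle $G$-action fixing $\mathcal{A}$, so $G_g(\mathcal{A})\subseteq\mathcal{A}$. Put $F_g=G_g|_\mathcal{A}$; restricting $\eta_{g,g^{-1}}$ together with the unit shows $F_g$ is an autoequivalence of $\mathcal{A}$ with quasi-inverse $F_{g^{-1}}$. By Lemma \ref{lem:appB}(1) there are isomorphisms of triangle functors $\theta_g\colon(G_g,\omega_g)\to\mathbf{K}^b(F_g)$. Transporting the given action along the $\theta_g$ (the triangle analogue of Lemma \ref{lem:change}, the connecting isomorphisms all being trivial) yields an isomorphic triangle $G$-action $\{\mathbf{K}^b(F_g),\varepsilon'_{g,h}\}$, where each $\varepsilon'_{g,h}\colon\mathbf{K}^b(F_gF_h)=\mathbf{K}^b(F_g)\mathbf{K}^b(F_h)\to\mathbf{K}^b(F_{gh})$ is determined by (\ref{equ:iso-action}). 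Since $\varepsilon'_{g,h}$ is then a natural isomorphism between natural extensions, Lemma \ref{lem:appB}(2) forces $\varepsilon'_{g,h}=\mathbf{K}^b(\varepsilon_{g,h})$ with $\varepsilon_{g,h}=\varepsilon'_{g,h}|_\mathcal{A}$. Then $\{F_g,\varepsilon_{g,h}\}$ is a $G$-action on $\mathcal{A}$ (its $2$-cocycle identity being the restriction of that for $\{\mathbf{K}^b(F_g),\varepsilon'_{g,h}\}$, by faithfulness), and its image is $\{\mathbf{K}^b(F_g),\varepsilon'_{g,h}\}$, isomorphic to the prescribed action.

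I expect the delicate point to be the decisive step of the surjectivity: after transporting along the $\theta_g$, the structure isomorphisms $\varepsilon'_{g,h}$ are a priori arbitrary natural isomorphisms on $\mathbf{K}^b(\mathcal{A})$, and the whole content is that they are honest extensions of morphisms of $\mathcal{A}$. This is exactly what strong $\mathbf{K}$-standardness buys through Lemma \ref{lem:appB}(2); without it the restriction to $\mathcal{A}$ need not be full on natural transformations between extended functors, and the argument breaks down. A secondary, routine care point is verifying that $G_g|_\mathcal{A}$ is genuinely an autoequivalence and that restriction is compatible with all the composition and whiskering operations invoked.
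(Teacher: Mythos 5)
Your proof is correct and follows exactly the route the paper intends: the corollary is stated there as an immediate consequence of Lemma \ref{lem:appB}, and your argument is precisely the natural unwinding of that, using part (1) to replace each triangle autoequivalence fixing $\mathcal{A}$ by the extension $\mathbf{K}^b(G_g|_\mathcal{A})$ and part (2) together with faithfulness of $\mathbf{K}^b(-)$ on natural transformations to descend the structure isomorphisms and the isomorphisms of actions to $\mathcal{A}$. Your identification of strong $\mathbf{K}$-standardness (via Lemma \ref{lem:appB}(2)) as the decisive ingredient in the surjectivity step is exactly the subtlety the paper's appendix is designed to address.
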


Here, the action is said to \emph{fix} $\mathcal{A}$, provided that each  autoequivalence $F$ involved satisfying $F(\mathcal{A})=\mathcal{A}$, that is, $F(\mathcal{A})$ and $\mathcal{A}$ coincide up to the isomorphism closure in $\mathbf{K}^b(\mathcal{A})$.

For a finite dimensional algebra $A$, we denote by ${\rm proj}\mbox{-}A$ the category of finitely generated projective right $A$-modules. Examples of strongly $\mathbf{K}$-standard categories are ${\rm proj}\mbox{-}A$, provided that $A$ is \emph{triangular}, that is, its Gabriel quiver has no oriented cycles; see \cite[Proposition 4.6]{CY}.

\begin{prop}\label{prop:homot}
Let $A$ and $B$ be two finite dimensional algebras. Assume that there is a triangle equivalence $\mathbf{K}^b({\rm proj}\mbox{-}A)\rightarrow \mathbf{K}^b({\rm proj}\mbox{-}B)$. Then ${\rm proj}\mbox{-}A$ is (strongly) $\mathbf{K}$-standard if and only if so is ${\rm proj}\mbox{-}B$.
\end{prop}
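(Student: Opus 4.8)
The statement is symmetric in $A$ and $B$, so it suffices to prove that if ${\rm proj}\mbox{-}A$ is (strongly) $\mathbf{K}$-standard, then so is ${\rm proj}\mbox{-}B$. Write $\mathcal{A}={\rm proj}\mbox{-}A$ and $\mathcal{B}={\rm proj}\mbox{-}B$, and fix a triangle equivalence $\Phi\colon \mathbf{K}^b(\mathcal{A})\rightarrow \mathbf{K}^b(\mathcal{B})$ with a quasi-inverse $\Phi^{-1}$. The plan is to transport data along $\Phi$ and reduce everything to one independence statement. The difficulty to keep in mind throughout is that $\Phi$ does \emph{not} respect the stalk structure of Definition \ref{defn:sstand}: the complex $T=\Phi(A)$ is a tilting object of $\mathbf{K}^b(\mathcal{B})$ with ${\rm End}_{\mathbf{K}^b(\mathcal{B})}(T)\cong A$ and $\Phi(\mathcal{A})={\rm add}\,T$, but $T$ is genuinely a complex, so $\Phi$ carries the standard stalks $\Sigma^n(\mathcal{A})$ to the ``silting stalks'' $\Sigma^n({\rm add}\,T)$ rather than to $\Sigma^n(\mathcal{B})$.

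First I would handle the pseudo-identities. Let $(G,\nu)$ be a pseudo-identity on $\mathbf{K}^b(\mathcal{B})$; I want $(G,\nu)\simeq {\rm Id}$. Set $G'=\Phi^{-1}G\Phi$. Since $G$ fixes every object, it fixes each object of ${\rm add}\,T$, whence $G'(\mathcal{A})=\Phi^{-1}G({\rm add}\,T)=\Phi^{-1}({\rm add}\,T)=\mathcal{A}$ up to isomorphism closure. Thus $G'$ is a triangle autoequivalence of $\mathbf{K}^b(\mathcal{A})$ with $G'(\mathcal{A})\subseteq\mathcal{A}$, and Lemma \ref{lem:appB}(1), applied using the $\mathbf{K}$-standardness of $\mathcal{A}$, gives $G'\simeq \mathbf{K}^b(g)$ with $g=G'|_{\mathcal{A}}$ an autoequivalence of $\mathcal{A}$ fixing objects up to isomorphism. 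Equivalently $G\simeq \Phi\,\mathbf{K}^b(g)\,\Phi^{-1}$, so $G$ is now controlled up to the single autoequivalence $g$ of $\mathcal{A}$, and it remains to prove $g\simeq {\rm id}_{\mathcal{A}}$.

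The reduction above makes the genuine content transparent. Translating the hypothesis that $G$ restricts to the identity on the standard stalks $\Sigma^n(\mathcal{B})$ and conjugating by $\Phi$, the functor $\mathbf{K}^b(g)$ becomes isomorphic to the identity on each silting stalk $\Sigma^n({\rm add}\,M)$, where $M=\Phi^{-1}(B)$ is a tilting object of $\mathbf{K}^b(\mathcal{A})$ with ${\rm End}(M)\cong B$. Thus proving $g\simeq{\rm id}_\mathcal{A}$ is exactly the assertion that a transformation behaving as a pseudo-identity \emph{with respect to the tilting object $M$} is trivial; since ${\rm add}\,M\simeq\mathcal{B}$ and $\mathbf{K}^b({\rm add}\,M)\simeq\mathbf{K}^b(\mathcal{B})$, this is precisely the $\mathbf{K}$-standardness of $\mathcal{B}$ again. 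Hence the formal transfer by itself is circular: the substance is the \emph{independence of $\mathbf{K}$-standardness from the chosen tilting object}, and this is where I would invoke \cite{CY}. Concretely, for a finite dimensional algebra the category $\mathbf{K}^b({\rm proj}\mbox{-}A)$ has an essentially unique dg enhancement, so $\Phi$ lifts to a dg quasi-equivalence; $\mathbf{K}$-standardness is an intrinsic property of this enhancement (the relevant obstruction lying in a Hochschild-type group and being insensitive to the choice of degree-zero heart), and is therefore inherited by $\mathcal{B}$. I expect this enhancement/independence step to be the main obstacle; everything else is formal.

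Finally, for the strongly $\mathbf{K}$-standard case one must also transport the injectivity of the map (\ref{equ:res}). The triangle center is a triangle-equivalence invariant, so $\Phi$ induces an isomorphism $Z_\vartriangle(\mathbf{K}^b(\mathcal{A}))\xrightarrow{\sim} Z_\vartriangle(\mathbf{K}^b(\mathcal{B}))$, and it identifies $Z(\mathcal{A})\cong Z(A)$ with $Z({\rm add}\,T)\cong Z({\rm End}(T))$. Under these identifications the map (\ref{equ:res}) for $A$ becomes restriction of triangle-central transformations to the silting subcategory ${\rm add}\,T$, while for $B$ it is restriction to ${\rm add}\,B$; since vanishing on a generating set of stalks and its shifts pins a triangle-central transformation down only up to ghosts, this injectivity is again a silting-independent condition, settled by the same enhancement argument. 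Combining the two parts shows that ${\rm proj}\mbox{-}B$ is (strongly) $\mathbf{K}$-standard, as required.
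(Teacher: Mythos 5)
Your reduction is fine as far as it goes: conjugating a pseudo-identity $G$ on $\mathbf{K}^b({\rm proj}\mbox{-}B)$ to $G'=\Phi^{-1}G\Phi$, noting that $G'$ preserves ${\rm proj}\mbox{-}A$ up to isomorphism, and applying Lemma \ref{lem:appB}(1) to write $G'\simeq \mathbf{K}^b(g)$. But the proof stops exactly where the content begins, and you say so yourself: proving $g\simeq {\rm id}_{\mathcal{A}}$ is the triviality of pseudo-identities formed relative to the tilting subcategory ${\rm add}\,M$ rather than relative to ${\rm proj}\mbox{-}A$, i.e.\ the very independence statement in question. The mechanism you propose to break this circle is not a proof. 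Even granting that $\mathbf{K}^b({\rm proj}\mbox{-}A)$ has a unique dg enhancement, uniqueness does not provide dg lifts of the triangle functors in play --- neither of $\Phi$ nor of $G$; liftability of triangle equivalences is exactly the standardness problem of \cite[Section 3]{Ric}, which this paper records as open and which Appendix B is designed to circumvent, not presuppose. The assertion that $\mathbf{K}$-standardness is ``an intrinsic property of the enhancement, insensitive to the degree-zero heart,'' with an obstruction ``in a Hochschild-type group,'' is a restatement of the desired conclusion, not an argument. The same unproved claim is what carries your strong case: transporting the injectivity of (\ref{equ:res}) requires knowing that a triangle-central transformation vanishing on a tilting subcategory and its shifts (rather than on the standard stalks) vanishes identically, which, as you yourself observe, is obstructed by ghost maps and is again the same independence question.

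The paper closes the gap by a different, non-circular mechanism imported from \cite{CY}: the analogues for $\mathbf{K}^b({\rm proj})$ of \cite[Proposition 5.8, Theorem 5.10]{CY} assert that when ${\rm proj}\mbox{-}A$ is $\mathbf{K}$-standard, \emph{every} triangle equivalence out of $\mathbf{K}^b({\rm proj}\mbox{-}A)$ is standard, i.e.\ induced by a two-sided tilting complex in the sense of Rickard. With this in hand, the argument of \cite[Lemma 5.12]{CY} runs: for a pseudo-identity $G$ on $\mathbf{K}^b({\rm proj}\mbox{-}B)$, both $\Phi$ and $G\Phi$ are equivalences out of $\mathbf{K}^b({\rm proj}\mbox{-}A)$ and hence standard, so $G\simeq (G\Phi)\circ \Phi^{-1}$ is standard, standardness being stable under composition and quasi-inverse; finally, a \emph{standard} pseudo-identity is isomorphic to the identity, because the pseudo-identity conditions force the underlying bimodule complex to be the regular bimodule. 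The non-formal input is therefore Rickard's derived Morita theory, not enhancement uniqueness; completing your proof would require supplying precisely these ingredients.
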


\begin{proof}
The same argument as in \cite[Lemma 5.12]{CY} works in this situation. We mention that there is an analogue of \cite[Proposition 5.8 and Theorem 5.10]{CY} for the bounded homotopy category of projective modules.
\end{proof}

We do not know whether Proposition \ref{prop:homot} still holds if we replace the categories of projective modules by arbitrary additive categories.  On the other hand, we conjecture that ${\rm proj}\mbox{-}A$ is $\mathbf{K}$-standard for any finite dimensional algebra $A$. Indeed, by applying \cite[Sections 5 and 6]{CY}, this conjecture implies that any derived equivalence between finite dimensional algebras is standard. The latter is a well-known open question  in \cite[Section 3]{Ric}.
\end{appendix}

\vskip 10pt

\noindent {\bf Acknowledgements.}\quad  The authors are indebted to Henning Krause and Helmut Lenzing for their continued support. X.W. is very grateful to Hideto Asashiba for helpful discussions and to David Ploog for many useful comments. We thank the referee for many helpful suggestions.

This work is supported by the National Natural Science Foundation of China (No.s 11971398, 11971449 and 11801473), the Fundamental Research Funds for Central Universities of China (No.s 20720180002 and 20720180006),   and the Alexander von Humboldt Stiftung.

\bibliography{}

\vskip 15pt

 \noindent {\tiny  \noindent Jianmin Chen, Shiquan Ruan\\
School of Mathematical Sciences, \\
Xiamen University, Xiamen, 361005, Fujian, PR China.\\
E-mail: chenjianmin@xmu.edu.cn, sqruan@xmu.edu.cn\\}
\vskip 3pt

{\tiny \noindent    Xiao-Wu Chen \\
 Key Laboratory of Wu Wen-Tsun Mathematics, Chinese Academy of Sciences,\\
School of Mathematical Sciences, University of Science and Technology of China,\\
No. 96 Jinzhai Road, Hefei, 230026, Anhui, P.R. China.\\
E-mail: xwchen@mail.ustc.edu.cn, URL: http://home.ustc.edu.cn/$^\sim$xwchen}

\end{document}